\newcommand{\blu}[1]{{\color{blue}{#1}}}
\DeclareMathOperator{\Hom}{Hom}
\newcommand*{\id}{\textup{id}}
\numberwithin{equation}{section}
\theoremstyle{plain}
\newtheorem{thm}{Theorem}[section]
\newtheorem{lem}[thm]{Lemma}
\newtheorem{prop}[thm]{Proposition}
 \newtheorem{cor}[thm]{Corollary}
\newtheorem{defi}[thm]{Definition}
\theoremstyle{remark}
\newtheorem{rem}[thm]{Remark}
\numberwithin{equation}{section}
\newcommand\inv{^{-1}}
\newcommand{\ot}{\otimes}
\newcommand{\beq}{\begin{equation}}
\newcommand{\eeq}{\end{equation}}
\newcommand{\cL}{\mathcal{L}}
\newcommand{\cR}{\mathcal{R}}
\newcommand{\BB}{\overline{B}}
\newcommand{\M}{\mathcal{M}}
\newcommand{\one}[1]{{#1}{}_{\scriptscriptstyle{(1)}}}
\newcommand{\two}[1]{{#1}{}_{\scriptscriptstyle{(2)}}}
\newcommand{\three}[1]{{#1}{}_{\scriptscriptstyle{(3)}}}
\newcommand{\tuno}[1]{{#1}{}{}^{\scriptscriptstyle{<1>}}}
\newcommand{\tdue}[1]{{#1}{}{}^{\scriptscriptstyle{<2>}}}
\newcommand{\un}{{}^{\scriptscriptstyle{<1>}}}
\newcommand{\du}{{}^{\scriptscriptstyle{<2>}}}
\newcommand{\yi}[1]{{#1}{}{}^{\scriptscriptstyle{[1]}}}
\newcommand{\er}[1]{{#1}{}{}^{\scriptscriptstyle{[2]}}}
\newcommand{\p}{{}_{\scriptscriptstyle{\hat{+}}}}
\newcommand{\np}{{}_{\scriptscriptstyle{\hat{[+]}}}}
\newcommand{\m}{{}_{\scriptscriptstyle{\hat{-}}}}
\newcommand{\nm}{{}_{\scriptscriptstyle{\hat{[-]}}}}
\newcommand{\dcrosss}[2]{\prescript{#1}{}\bowtie_#2}
\newcommand{\dcrossto}{\dcrosss \tau\omega}
\newcommand{\dcrossok}{\dcrosss \omega\kappa}
\newcommand{\dcrosstt}{{\,\prescript{\tau}{}\bowtie_\tau}}
\newcommand{\dcrosstk}{{\,\prescript{\tau}{}\bowtie}_\kappa}
\newcommand{\dcrosskk}{{\,\prescript{\kappa}{}\bowtie}_\kappa}
\newcommand{\dcrosskt}{{\,\prescript{\kappa}{}\bowtie}_\tau}
\newcommand{\lbiprod}{{>\!\!\!\triangleleft\kern-.33em\cdot}}
\newcommand{\rbiprod}{{\cdot\kern-.33em\triangleright\!\!\!<}}
\newcommand{\z}{{}_{\scriptscriptstyle{(0)}}}
\newcommand{\rz}{{}_{\scriptscriptstyle{[0]}}}
\renewcommand{\o}{{}_{\scriptscriptstyle{(1)}}}
\newcommand{\ro}{{}_{\scriptscriptstyle{[1]}}}
\newcommand{\mo}{{}_{\scriptscriptstyle{(-1)}}}
\newcommand{\rmo}{{}_{\scriptscriptstyle{[-1]}}}
\renewcommand{\t}{{}_{\scriptscriptstyle{(2)}}}
\newcommand{\rt}{{}_{\scriptscriptstyle{[2]}}}
\newcommand{\mt}{{}_{\scriptscriptstyle{(-2)}}}
\renewcommand{\th}{{}_{\scriptscriptstyle{(3)}}}
\newcommand{\rth}{{}_{\scriptscriptstyle{[3]}}}
\newcommand{\mth}{{}_{\scriptscriptstyle{(-3)}}}
\newcommand{\fo}{{}_{\scriptscriptstyle{(4)}}}
\newcommand{\di}{{\diamond_{B}}}
\newcommand{\la}{{\triangleright}}
\newcommand{\ra}{{\triangleleft}}
\newcommand{\bla}{{\blacktriangleright}}
\newcommand{\bra}{{\blacktriangleleft}}
\DeclareMathOperator{\tens}{\otimes}
\newcommand{\CM}{\mathcal{M}}
\newcommand{\can}{{\rm can}}
\begin{document}

\author{Xiao Han, Peter Schauenburg}
\address[]{\textit{Xiao Han},
Queen Mary University of London.
}
\email{x.h.han@qmul.ac.uk}

\address[]{\textit{Peter Schauenburg},
Universit\'e de Bourgogne.
}
\email{peter.schauenburg@u-bourgogne.fr}
%


\keywords{Hopf algebroid, bialgebroid, quantum group, Hopf Galois extensions}

\title{Hopf Galois extensions of Hopf algebroids}
%

\begin{abstract}
 We study Hopf Galois extensions of Hopf algebroids as a generalization of the theory for Hopf algebras. More precisely, we introduce  (skew-)regular comodules and generalize the structure theorem for relative Hopf modules. Also, we show that if $N\subseteq P$ is a left $\mathcal{L}$-Galois extension and $\Gamma$ is a 2-cocycle of $\mathcal{L}$, then for the twisted comodule algebra ${}_{\Gamma}P$, $N\subseteq{}_{\Gamma}P$ is a left Hopf Galois extension of the twisted Hopf algebroid $\mathcal{L}^{\Gamma}$. We study twisted Drinfeld doubles of Hopf algebroids as examples for the Drinfeld twist theory. Finally, we introduce cleft extension and $\sigma$-twisted crossed products of Hopf algebroids. Moreover, we show the equivalence of cleft extensions, $\sigma$-twisted crossed products, and Hopf Galois extensions with normal basis properties, which generalize the theory of cleft extensions of Hopf algebras.
 \end{abstract}

\maketitle

\section{Introduction}

Similarly to the relation between Hopf algebras and groups, Hopf algebroids can be viewed as a quantization of Groupoids. There are several different definitions of Hopf algebroids, in this paper, we will mainly focus on the Hopf algebroids introduced in \cite{schau1} and \cite{schau3}. Namely, a Hopf algebroid is a left Hopf algebroid and an anti-left Hopf algebroid.

For a left bialgebroid $\mathcal{L}$ over a noncommutative ring $B$, a left $\mathcal{L}$-Galois extension can be  defined in a natural way. However, there is no obvious definition of a  Hopf Galois extension on the right-hand side; the direct analog of the canonical map in the definition of "Galois" for the ordinary Hopf case is not well defined in the Hopf algebroid case. To deal with this problem we rather study anti-right $\mathcal{L}$-Galois extensions. As a regular left
comodule of itself, $\mathcal{L}$ is a left Hopf algebroid, if and only if $\BB\subseteq \mathcal{L}$ is a left $\BB$-Galois extension. Similarly, as a regular right $\cL$-comodule, $\cL$ is an anti-left Hopf algebroid if and only if  $B\subseteq \mathcal{L}$ is an anti-right $\cL$-Galois extension. For a classical Hopf algebra $H$ and a left $H$-comodule $P$, one can always make $P$ into a right $H$ comodule, by defining the right coaction $\delta:p\to p\z\ot S^{-1}(p\mo)$. In order to generalize this property to Hopf algebroid, we introduce the notion of a skew regular left $\cL$-comodule $P$, which means the map $\phi: \cL\ot_{B}P\to \cL\di P$,  $X\ot p\mapsto p\mo X\ot  p\z$ is bijective. We show that if $\cL$ is an anti-left Hopf algebroid and $P$ is a left $\cL$-comodule, then $P$ is skew regular. Moreover, we show that any left bialgebroid which admits a faithfully flat left Hopf Galois extension is a left Hopf algebroid; this generalizes the main result in \cite{schau2}. Similarly, we show any left bialgebroid admits a faithfully flat skew regular left Hopf Galois extension is an anti-right Hopf algebroid. Next, as a generalization of the structure theorem in \cite{schneider} we study the structure theorem of Hopf modules of Hopf algebroids, namely, for any faithfully flat left $\cL$-Galois extension $N\subseteq P$, we have the category equivalence ${}_{N}\M\simeq {}_{P}^{\cL}\M$. In addition, if $P$ is skew regular, we also have $\M_{N}\simeq {}^{\cL}\M_{P}$. We also study all these  facts on anti-right Hopf Galois extensions of Hopf algebroids.

 As the main result of \cite{HM22}, if $\cL$ is a (anti-)left Hopf algebroid and $\Gamma$ is a 2-cocycle on $\cL$, then the Drinfeld twist bialgebroid $\cL^{\Gamma}$ is also a (anti-)left Hopf algebroid. Motivated by this result, we study the Drinfeld twist in relation with Hopf Galois extensions and show that if $N\subseteq P$ is a (skew regular) left Hopf $\cL$-extension, then $N\subseteq {}_{\Gamma}P$ is also a (skew regular) left Hopf Galois extension, where ${}_{\Gamma}P$ is the left hand twisted comodule algebra of $\cL^{\Gamma}$. This generalizes the result for the Hopf algebra case \cite{MS}. We find a nice example based on  generalized Drinfeld Doubles. Recall that \cite{schau1}, given a shew pairing $\tau$ between two left $B$-bialgebroids $\cL$ and $\Pi$, one can construct a new bialgbroid $\Pi\dcrosstt \cL$. If there is another skew pairing $\kappa$, one can similarly define a $B^{e}$-ring $\Pi\dcrosstk \cL$ which is a left $\Pi\dcrosstt \cL$-comodule and right $\Pi\dcrosskk \cL$-comodule induced by a one-side cocycle twist respectively. Also, $\Pi\dcrosstt \cL$ is a 2-cocycle twist of $\Pi\dcrosskk \cL$. Moreover, if both $\cL$ and $\Pi$ are Hopf algebroid, then $\BB\subseteq\Pi\dcrosstk \cL$ is a left Hopf Galois extension and $B\subseteq\Pi\dcrosskk \cL$ is an anti-right Hopf Galois extension.

In \cite{BB}, cleft extensions of full Hopf algebroids (a bialgebroid with antipode and compatible left and right bialgebroid structure) have already been studied. As full Hopf algebroids are a much more restrictive structure, we are motivated to generalize the theory for cleft extensions of Hopf algebroids. We know that for a Hopf algebra $H$, $H$ is a cleft extension of itself with the cleaving map given by the identity map and the inverse of the cleaving map given by the antipode. However, since there is no antipode for a Hopf algebroid $\cL$, $\cL$ is  not even a cleft extension of itself. So to define a appropriate notion of left $\cL$-cleft extension $P$, we should not require a $\cL$-colinear map $\gamma$ which is convolution invertible. Instead, we  require the map $j:\cL\ot_{\BB}P\to \cL\di P$, $X\ot p\mapsto X\o\ot \gamma(X\t)p$ to be bijective. It is not hard to see this  recovers the usual definition in the  case of Hopf algebras since the inverse of $j$ can be given in terms of $\gamma^{-1}$, namely, $j^{-1}(h)=h\o\ot \gamma^{-1}(h\t)$.
Although the requirement of a $\cL$-cleft extensions seems weaker than the one for classical Hopf algebras, we can still show the equivalence of cleft extensions and Hopf Galois extensions with the normal basis property.  We can also prove the equivalence of these two notions with a suitable generalization of crossed products, but the generalization of the classical Hopf algebra case poses additional problems.
To wit, a left cleft extension $N\subset P$ over a Hopf algebra $H$ is isomorphic to a crossed product $H\#_\sigma N$ defined with respect to a generalized right action of $H$ on $N$ and two-cocycle $\sigma$. If $H$ is replaced by a Hopf algebroid $\cL$ however, there is no well-defined notion of (generalized) right action of $\cL$ on an algebra $N$, e.~g.\ already no notion of right $\cL$-module algebra.
We solve this by using a more intricate notion of crossed product using a weak left action of the co-opposite $\cL^{\operatorname{cop}}$ as well as a two cocycle on $\cL^{\operatorname{cop}}$. In the Hopf algebra case, the new construction translates to the classical construction using the antipode. Finally, we will also study the equivalence classes of cleft extensions and twisted crossed products.

\subsection*{Acknowledgements} Xiao Han was supported by the European Union's Horizon 2020 research and innovation program under the Marie Sklodowska-Curie grant agreement No 101027463 at the early stage of the project. Xiao Han was supported by COST Action CA21109 for Short-Term Scientific Mission. Xiao Han is grateful to the Institut de Mathematiques de Bourgogne
(Dijon) for hospitality.

\section{Basic algebraic preliminaries} \label{sec2}

In this section, we will recall some basic definitions and notation. Let $B$ be an unital algebra over a field $k$. We denote the opposite algebra by $\BB$ and let $B\to \BB$, $b\mapsto\Bar{b}$ for any $b\in B$ be the obvious $k$-algebra antiisomorphism. Define $B^{e}:=B\ot \BB$, so $B$ and $\BB$ are obvious subalgebras of $B^{e}$. Let $M, N$ be $B^{e}$-bimodules. We define
\begin{align*}
    M\di N:=\int_{b} {}_{\Bar{b}}M\ot {}_{b}N:=&M\ot N/\langle \Bar{b}m\ot n-m\ot bn|b\in B, m\in M, n\in N\rangle\\
    M\ot_{B} N:=\int_{b} M_{b}\ot {}_{b}N:=&M\ot N/\langle mb\ot n-m\ot bn|b\in B, m\in M, n\in N\rangle\\
    M\ot_{\BB} N:=\int_{b} M_{\Bar{b}}\ot {}_{\Bar{b}}N:=&M\ot N/\langle m\Bar{b}\ot n-m\ot \Bar{b}n|b\in B, m\in M, n\in N\rangle\\
\end{align*}
For convenience, we also define $N\ot^{B}M=\int_{b} {}_{b}N\ot M_{b}$ and  $N\ot^{\BB}M=\int_{b} {}_{\Bar{b}}N\ot M_{\Bar{b}}$. Moreover, we define
\begin{align*}
    \int^{b}M_{\Bar{b}}\ot N_{b}:=\{\sum_{i}m_{i}\ot n_{i}\in M\ot N\quad |\quad m_{i}\Bar{b}\ot n_{i}=m_{i}\ot n_{i}b, \forall b\in B\}.
\end{align*}

The symbol $\int^{b}$ and $\int^{c}$ commute, also $\int_{b}$ and $\int_{c}$ commute. However, in general, the symbol $\int^{b}$ and $\int_{c}$ doesn't commute. For any $\BB$-bimodule $M$ and any $B$-bimodule $N$, we also define
\begin{align*}
    M\times_{B}N:=\int^{a}\int_{b} {_{\Bar{b}}M_{\Bar{a}}}\ot {_bN_a}.
\end{align*}
$M\times_{B}N$ is called Takeuchi product of $M$ and $N$. If $P$ is a $B^{e}$-bimodule, then $P\times_{B}N$ is a $B$-bimodule with $B$ acting on $P$. Similarly, $M\times_{B}P$ is a $\BB$-bimodule with $\BB$ acting on $P$. If both $M$ and $N$ are $B^{e}$-bimodule, then $M\times N$ is also a $B^{e}$-bimodule. However, the product $\times_{B}$ is neither associative and unital on the category of $B^{e}$-bimodules. For any $M,N, P\in {}_{B^{e}}\M_{B^{e}}$, we can define
\begin{align*}
    M\times_{B}P\times_{B}N:=\int^{a,b}\int_{c,d} {}_{\Bar{c}}M_{\Bar{a}}\ot {}_{c,\Bar{d}}P_{a, \Bar{b}}\ot {}_{d}N_{b},
\end{align*}
where $\int^{a,b}:=\int^{a}\int^{b}$ and $\int_{c,d}:=\int_{c}\int_{d}$. There are maps
\begin{align*}
    &\alpha:(M\times_{B}P)\times_{B} N\to M\times_{B}P\times_{B}N,\quad m\ot p\ot n\mapsto m\ot p\ot n,\\
    &\alpha':M\times_{B}(P\times N)\to M\times_{B}P\times_{B}N,\quad m\ot p\ot n\mapsto m\ot p\ot n.\\
\end{align*}
Notice that neither $\alpha$ nor $\alpha'$ are isomorphisms  in general. For the rest of the paper, we will assume that all $B$-module and $\BB$-module structures are faithfully flat. In particular this impliest that $\alpha$ and $\alpha'$ are in fact isomorphisms. 

\subsection{Hopf algebroids}

Here we recall the basic definitions (cf. \cite{BW}, \cite{Boehm}). Let $B$ be a unital algebra over a field $k$.
A {\em $B$-ring}   means a unital algebra in the monoidal category ${}_B\CM_B$ of $B$-bimodules. Likewise,  a  {\em $B$-coring} is a coalgebra in ${}_B\CM_B$. Morphisms of $B$-(co)rings are defined to be morphisms of (co)algebras, but in the category ${}_B\CM_B$.

Specifying a unital $B$-ring $\cL$ is equivalent to specifying a unital algebra $\cL$ (over $k$) and an algebra map $\eta:B\to \cL$. Left and right multiplication in $\cL$ pull back to left and right $B$-actions as a bimodule (so $bXc=\eta(b)X\eta(c)$ for all $b,c\in B$ and $X\in \cL$) and the product descends to the product $\mu_B:\cL\tens_B\cL\to \cL$ with $\eta$ the unit map. Conversely, given
$\mu_B$ we can pull back to an associative product on $\cL$ with unit $\eta(1)$.

Now suppose that $s:B\to \cL$ and $t:\BB\to \cL$ are algebra maps with images that commute. Then $\eta(b\tens c)=s(b)t(c)$ is an algebra map $\eta: B^e\to \cL$, where $B^e=B\tens \BB$, and is equivalent to making $\cL$ a $B^e$-ring. The left $B^e$-action part of this is equivalent to a $B$-bimodule structure
\begin{equation}\label{eq:rbgd.bimod}
b.X.c=b\Bar{c}X:= s(b) t(c)X
\end{equation}
for all $b,c\in B$ and $X\in \cL$. Similarly, the right $B^{e}$-action part of this is equivalent to another $B$-bimodule structure
\begin{equation}\label{eq:rbgd.bimod1}
c^{.}X^{.}b=Xb\Bar{c}:= Xs(b) t(c).
\end{equation}

If $\cL$ and $\mathcal R$ are two $B^e$-rings, then the Takeuchi product $\cL\times_B\mathcal R$ is an algebra with multiplication given by $(\ell\otimes r)(\ell'\otimes r')=\ell\ell' \otimes rr'$ for $\ell\ot r,\ell'\ot r'\in \cL\times_B\mathcal R$.

\begin{defi}\label{def:left.bgd} Let $B$ be a unital algebra. A left $B$-bialgebroid (or left bialgebroid over $B$) is an algebra $\cL$ and (`source' and `target') commuting algebra maps $s:B\to \cL$ and $t:\BB\to \cL$ (thus making $\cL$ a $B^e$-ring), such that
\begin{itemize}
\item[(i)]  There are two left $B^{e}$-module maps, the coproduct $\Delta:\cL\to \cL\times_{B} \cL$ and counit $\varepsilon:\cL\to B$ satisfying
\begin{align*}
&\alpha\circ(\Delta\times_{B}\id)\circ\Delta=\alpha'\circ(\id\times_{B}\Delta)\circ\Delta:\cL\to \cL\times_{B}\cL\times_{B}\cL\\    &\varepsilon(X\o)X\t=\overline{\varepsilon(X\t)}X\o=X,
\end{align*}
for any $X\in\cL$, where we use the sumless Sweedler index to denote the coproduct, namely, $\Delta(X)=X\o\ot X\t$.
\item[(ii)] The coproduct is an algebra map. The counit $\varepsilon$ is a left character in the following sense:
\begin{equation*}\varepsilon(1_{\cL})=1_{B}, \quad \varepsilon(X\varepsilon(Y))=\varepsilon(XY)=\varepsilon(X\overline{\varepsilon(Y)})\end{equation*}
for all $X,Y\in \cL$ and $a\in B$.
\end{itemize}
\end{defi}

\begin{rem}
   Condition (i) imply $\cL$ is a $B$-coring with the $B$-bimodule structure given by (\ref{eq:rbgd.bimod}). By condition (ii), we can define an algebra map $\hat{\varepsilon}: \cL\to \operatorname{End}(B)$ by $\hat{\varepsilon}(X)(b):=\varepsilon(X b)$. Moreover, the image of $\alpha\circ(\Delta\times_{B}\id)\circ\Delta=\alpha'\circ(\id\times_{B}\Delta)\circ\Delta$ could be also denoted by the Sweedler index, namely, $\alpha\circ(\Delta\times_{B}\id)\circ\Delta(X)=\alpha'\circ(\id\times_{B}\Delta)\circ\Delta(X)=X\o\ot X\t\ot X\th\in \cL\times_{B}\cL\times_{B}\cL$. Unlike the bialgebra case, the elements $X\o\o\ot X\o\t\ot X\t$, $X\o\ot X\t\o\ot X\t\t$ and $X\o\ot X\t\ot X\th$ are not the same, since they belong to three different vector spaces, even if in many interesting cases both $\alpha$ and $\alpha'$ are isomorphisms.
   Morphisms between left $B$-bialgebroids are $B$-coring maps which are also $B^e$-ring map maps.
\end{rem}


\begin{defi}\label{defHopf}
A left bialgebroid $\cL$ over $B$ is a left Hopf algebroid (\cite{schau1}, Thm and Def 3.5.) if
\[\lambda: \cL\ot_{\BB}\cL\to \cL\di\cL,\quad
    \lambda(X\ot Y)=\one{X}\ot \two{X}Y\]
is invertible.\\
Similarly, A left bialgebroid $\cL$ over $B$ is a anti-left Hopf algebroid if
\[\mu: \cL\ot^{B}\cL\to \cL\di \cL,\quad
    \mu(X\ot Y)=\one{Y}X\ot \two{Y}\]
is invertible. A left $B$-bialgebroid is a $B$-Hopf algebroid if it is a left Hopf algebroid and anti-left Hopf algebroid.
\end{defi}
In the following we will always use the balanced tensor product explained as above. If $B=k$ then this reduces to the map $\cL\tens\cL\to \cL\tens\cL$ given by $h\tens g\mapsto h\o\tens h\t g$ which for a usual bialgebra has an inverse, namely $h\tens g\mapsto h\o\tens (Sh\t)g$ if and only if there is an antipode. We adopt the shorthand
\begin{equation}\label{X+-} X_{+}\ot_{\BB}X_{-}:=\lambda^{-1}(X\di 1)\end{equation}
\begin{equation}\label{X[+][-]} X_{[-]}\ot^{B}X_{[+]}:=\mu^{-1}(1\di X).\end{equation}
So for a Hopf algebra $H$, $h_{+}\ot h_{-}=h\o\ot S(h\t)$ and $h_{[-]}\ot h_{[+]}= S^{-1}(h\o)\ot h\t$ for any $h\in H$.
We recall from \cite[Prop.~3.7]{schau1} that for a left Hopf algebroid, and any $X, Y\in \cL$ and $a,a',b,b'\in B$,
\begin{align}
    \one{X_{+}}\di{}\two{X_{+}}X_{-}&=X\di{}1\label{equ. inverse lamda 1};\\
    \one{X}{}_{+}\ot_{\BB}\one{X}{}_{-}\two{X}&=X\ot_{\BB}1\label{equ. inverse lamda 2};\\
    (XY)_{+}\ot_{\BB}(XY)_{-}&=X_{+}Y_{+}\ot_{\BB}Y_{-}X_{-}\label{equ. inverse lamda 3};\\
    1_{+}\ot_{\BB}1_{-}&=1\ot_{\BB}1\label{equ. inverse lamda 4};\\
    \one{X_{+}}\di{}\two{X_{+}}\ot_{\BB}X_{-}&=\one{X}\di{}\two{X}{}_{+}\ot_{\BB}\two{X}{}_{-}\label{equ. inverse lamda 5};\\
    X_{+}\ot\one{X_{-}}\ot{}\two{X_{-}}&=X_{++}\ot X_{-}\ot{}X_{+-}\in\int^{a,b}\int_{c,d}{}_{\Bar{a}}\cL_{\Bar{c}}\ot {}_{\Bar{d}}\cL_{\Bar{b}}\ot {}_{\Bar{c},d}\cL_{b,\Bar{a}}
    \label{equ. inverse lamda 6};\\
    X&=X_{+}\overline{\varepsilon(X_{-})}\label{equ. inverse lamda 7};\\
    X_{+}X_{-}&=\varepsilon(X)\label{equ. inverse lamda 8};\\
    (a\Bar{a'}Xb\Bar{b'})_{+}\ot_{\BB}(a\Bar{a'}Xb\Bar{b'})_{-}&=aX_{+}b\ot_{\BB}b'X_{-}a'\label{equ. inverse lamda 9};\\
    \Bar{b}X_{+}\ot_{\BB}X_{-}&=X_{+}\ot_{\BB}X_{-}\Bar{b}\label{equ. inverse lamda 10}.
\end{align}
Similarly, for an anti-left Hopf algebroid, from \cite{BS} we have
\begin{align}
    \one{X_{[+]}}X_{[-]}\di{}\two{X_{[+]}}&=1\di{}X\label{equ. inverse mu 1};\\
    \two{X}{}_{[-]}\one{X}\ot^{B}\two{X}{}_{[+]}&=1\ot^{B}X\label{equ. inverse mu 2};\\
    (XY)_{[-]}\ot^{B}(XY)_{[+]}&=Y_{[-]}X_{[-]}\ot^{B}X_{[+]}Y_{[+]}\label{equ. inverse mu 3};\\
    1_{[-]}\ot^{B}1_{[+]}&=1\ot^{B}1\label{equ. inverse mu 4};\\
    X_{[-]}\ot^{B}\one{X_{[+]}}\di{}\two{X_{[+]}}&=\one{X}{}_{[-]}\ot^{B}\one{X}{}_{[+]}\di{}\two{X}\label{equ. inverse mu 5};\\
    \one{X_{[-]}}\ot{}\two{X_{[-]}}\ot X_{[+]}&=X_{[+][-]}\ot{}X_{[-]}\ot X_{[+][+]}\in \int^{a,b}\int_{c,d}{}_{c,\Bar{d}}\cL_{a,\Bar{b}}\ot {}_{d}\cL_{b}\ot {}_{a}\cL_{c}\label{equ. inverse mu 6};\\
    X&=X_{[+]}\varepsilon(X_{[-]})\label{equ. inverse mu 7};\\
    X_{[+]}X_{[-]}&=\overline{\varepsilon(X)}\label{equ. inverse mu 8};\\
    (a\Bar{a'}Xb\Bar{b'})_{[-]}\ot^{B}(a\Bar{a'}Xb\Bar{b'})_{[+]}&=\Bar{b}X_{[-]}\Bar{a}\ot^{B}\Bar{a'}X_{[+]}\Bar{b'}\label{equ. inverse mu 9};\\
    X_{[-]}b\ot^{B}X_{-}&=X_{[+]}\ot^{B}b X_{-}\label{equ. inverse mu 10}.
\end{align}

\begin{prop}\label{prop. anti left and left Galois maps}
    If $\cL$ is a left and anti-left Hopf algebroid, then we have
    \begin{align*}
        X_{[+]}\ot X_{[-]+}\ot X_{[-]-}&=X\t{}_{[+]}\ot X\t{}_{[-]}\ot X\o\in \cL\ot_{B}\cL\ot_{\BB}\cL;\\
        X_{+}\ot X_{-[+]}\ot X_{-[-]}&=X\o{}_{+}\ot X\o{}_{-}\ot X\t\in \cL\ot_{\BB}\cL\ot_{B}\cL;\\
        X_{[+]+}\ot X_{[-]}\ot X_{[+]-}&=X_{+[+]}\ot X_{+[-]}\ot X_{-}\in \int^{c,d}\int_{a,b} {}_{c, \Bar{d}}\cL_{\Bar{a}, b}\ot {}_{b}\cL_{c}\ot {}_{\Bar{a}}\cL_{\Bar{d}},
    \end{align*}
   for any $X\in\cL$.
\end{prop}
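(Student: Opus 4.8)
All three formulas are the Hopf-algebroid analogues of the classical interplay between $S$, $S^{-1}$ and the comultiplication spelled out after \eqref{X[+][-]}, and the plan is to prove each of them by exploiting the bijectivity of $\lambda$ and $\mu$. The recurring device is to apply to both sides a \emph{partial} Galois map, i.e.\ $\lambda$ or $\mu$ to a suitable adjacent pair of tensor legs; since $\lambda$ and $\mu$ are bijective and all $B$- and $\BB$-module structures are faithfully flat, such a map is injective, so it suffices to verify the resulting equation. After the partial map is applied one is left with an equality of elements of balanced double tensor products, into which one feeds the relations \eqref{equ. inverse lamda 1}--\eqref{equ. inverse lamda 10} and \eqref{equ. inverse mu 1}--\eqref{equ. inverse mu 10} together with coassociativity and the counit axioms of Definition~\ref{def:left.bgd}.

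For the first identity I would apply $\id\ot\lambda$, that is $\lambda$ to the last two legs (balanced over $\BB$). On the left-hand side this merely undoes the $\lambda^{-1}$ hidden in $X_{[-]+}\ot_{\BB}X_{[-]-}=\lambda^{-1}(X_{[-]}\di 1)$ and returns $X_{[+]}\ot(X_{[-]}\di 1)$. On the right-hand side it produces $X\t{}_{[+]}\ot\bigl(\one{X\t{}_{[-]}}\di\two{X\t{}_{[-]}}X\o\bigr)$; here \eqref{equ. inverse mu 6} rewrites $\Delta(X\t{}_{[-]})$ together with its passenger $X\t{}_{[+]}$ in terms of iterated $\mu$-translations, and then \eqref{equ. inverse mu 2}, after re-bracketing the iterated coproduct via coassociativity, collapses the leftover $\two{X\t{}_{[-]}}X\o$ to $1$ while restoring $X_{[+]}$, the counit axioms finishing the identification with $X_{[+]}\ot(X_{[-]}\di 1)$. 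The second identity is proved in the same style, applying $\lambda\ot\id$ to the first two legs, replacing $\Delta(X_+)$ by means of \eqref{equ. inverse lamda 5}, and then collapsing the resulting $X\t{}_{+}(X\t{}_{-})_{[+]}\ot(X\t{}_{-})_{[-]}$ to $1\ot X\t$ by a short auxiliary argument that applies $\mu$ and invokes \eqref{equ. inverse mu 1} and \eqref{equ. inverse lamda 1}; alternatively, one may deduce the second identity from the first applied to a co-opposite bialgebroid, once the interchange of $\lambda$, $\mu$ and of the two Hopf-algebroid conditions under co-opposition is made precise.

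The third identity lives in the Takeuchi-type subspace $\int^{c,d}\int_{a,b} {}_{c, \Bar{d}}\cL_{\Bar{a}, b}\ot {}_{b}\cL_{c}\ot {}_{\Bar{a}}\cL_{\Bar{d}}$, so I would first check that both sides actually lie there; for the left-hand side this follows from the Takeuchi-membership statements already contained in \eqref{equ. inverse lamda 6} (applied to $X_{[+]}$) and \eqref{equ. inverse mu 6}, and symmetrically for the right-hand side. The equality itself can then be obtained either by one more application of the partial-Galois-map method --- apply $\lambda$ or $\mu$ to an adjacent pair of legs and collapse using \eqref{equ. inverse lamda 1}, \eqref{equ. inverse mu 1} and the counit axioms --- or by reducing it to the first two identities after using \eqref{equ. inverse mu 6} (resp.\ \eqref{equ. inverse lamda 6}) to trade the inner $\lambda$-translation of $X_{[+]}$ for one of $X_{[-]}$ (resp.\ the inner $\mu$-translation of $X_+$ for one of $X_-$).

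The only genuine difficulty is bookkeeping. One must track at every step which balanced tensor product ($\di$, $\ot_{B}$, $\ot^{B}$, $\ot_{\BB}$) and which of the two $B$-bimodule structures \eqref{eq:rbgd.bimod}, \eqref{eq:rbgd.bimod1} is in force, so that the partial maps $\id\ot\lambda$, $\lambda\ot\id$, $\mu\ot\id$, $\id\ot\mu$ are well defined and --- crucially --- injective; this is where the standing faithful-flatness hypothesis, hence the fact that $\alpha$ and $\alpha'$ are isomorphisms and that iterated coproducts and translations may be freely re-bracketed, is repeatedly used. The relations \eqref{equ. inverse lamda 9}, \eqref{equ. inverse lamda 10}, \eqref{equ. inverse mu 9}, \eqref{equ. inverse mu 10}, describing how the translation maps absorb the $B$- and $\BB$-actions, are exactly what one needs to push these actions across tensor signs and to check that every intermediate expression --- and in particular the membership claim for the third identity --- lands in the correct space.
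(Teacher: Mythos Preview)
Your proposal is correct and follows the same strategy as the paper: for each identity, apply a partial Galois map ($\lambda$ or $\mu$ on an adjacent pair of legs) to both sides and reduce using \eqref{equ. inverse lamda 1}--\eqref{equ. inverse mu 10}; the paper's proof is exactly this, stated in one line per identity. The only difference is in the second identity: the paper applies $\id_{\cL}\ot\mu$, which immediately undoes the $\mu^{-1}$ in $X_{-[+]}\ot X_{-[-]}$ on the left-hand side, whereas your choice of $\lambda\ot\id$ works but forces the auxiliary step $Y_{+}Y_{-[+]}\ot_B Y_{-[-]}=1\ot_B Y$ (which you then prove by applying $\mu$ anyway); for the third identity the paper specifies $\mu\ot\id_{\cL}$, which is one of the options you list.
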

\begin{proof}
    The first equality can be proved by comparing the result of $\id_{\cL}\ot \lambda $ on both sides. The second
 equality can be proved by comparing the result of $\id_{\cL}\ot \mu$ on both sides. The third
 equality can be proved by comparing the result of $\mu\ot \id_{\cL}$ on both sides.
\end{proof}

\section{Hopf Galois extensions of Hopf algebroids}

In this section, we will introduce the notions of a (skew)  regular comodule of left bialgebroid and Hopf Galois extensions.

\subsection{Comodules of left bialgebroids}

By \cite{schau1}, we have the definition of left comodules of left algebroids.

\begin{defi}
    Let $\cL$ be a left bialgebroid over $B$. A left $\cL$ comodule is a $B$-bimodule $\Gamma$, together with a $B$-bimodule map ${}_{L}\delta: \Gamma\to \cL\times_{B}\Gamma\subseteq\cL\di \Gamma$, written ${}_{L}\delta(p)=p\mo\ot p\z$ (${}_{L}\delta$ is a $B$-bimodule map in the sense that
${}_{L}\delta(bpb')=bp\mo b'\ot p\z$), such that
\begin{align*}
    &\alpha\circ (\Delta\times_{B}\id_{\Gamma})\circ {}_{L}\delta=\alpha'\circ(\id_{\cL}\times_{B}{}_{L}\delta)\circ {}_{L}\delta:\Gamma\to \cL\times_{B} \cL \times_{B} \Gamma, p\mapsto p\mt\ot p\mo\ot p\z\\
    &\varepsilon(p\mo)p\z=p,
\end{align*}
   for any $p\in \Gamma$. A left $\cL$-comodule $\Gamma$ is called skew regular, if
\[\phi: \cL\ot^{B} P\to \cL\di P, \quad \phi(X\ot p)=p\mo X\ot p\z\]
is invertible. We adopt the shorthand
\begin{align}
    p\ro\ot^{B} p\rz=\phi^{-1}(1\di p).
\end{align}

\end{defi}

\begin{rem}
    Since a left bialgebroid $\cL$ is in particular a coring over $B$, there is already a notion of comodule over $\cL$. By definition, such a comodule over the coring $\cL$ has just one $B$-module structure. As proved in [Section 1.4 of \cite{HHP}], the two notions coincide: A left comodule over $\cL$ in the sense of the definition above is quite obviously a left comodule over the coring $\cL$, but also given a comodule over the coring $\cL$, there is a unique second $B$-module structure which makes it a comodule in the bialgebroid sense.
\end{rem}

\begin{defi}
    Let $\cL$ be a left bialgebroid over $B$, a right $\cL$-comodule is a $\BB$-bimodule $\Gamma$, together with a $\BB$-bimodule map $\delta_{L}: \Gamma\to \Gamma\times_{B}\cL\subseteq\Gamma\di \cL$, written $\delta_{L}(p)=p \z\di p\o$ ($\delta_{L}$ is a $\BB$-bimodule map in the sense that $\delta_{L}(\Bar{b}p\Bar{b'})=p\z\ot \Bar{b}p\o\Bar{b'}$), such that
    \begin{align*}
        &\alpha\circ (\delta_{L}\times_{B}\id_{\cL})\circ \delta_{L}=\alpha'\circ (\id_{\Gamma}\times_{B} \Delta)\circ \delta_{L}:\Gamma\to \Gamma\times_{B} \cL \times_{B} \cL, p\mapsto p\z\ot p\o\ot p\t\\
    &\overline{\varepsilon(p\o)}p\z=p,
    \end{align*}
for any $p\in \Gamma$. A right $\cL$-comodule $\Gamma$ is a regular right $\cL$-comodule, if
\[\psi: P\ot_{\BB} \cL\to P\di \cL, \quad \psi(p\ot_{\BB} X)=p\z \di p\o X\]
is invertible. We adopt the shorthand
\begin{align}
    p\rz\ot_{\BB} p\rmo=\psi^{-1}(P \di 1).
\end{align}
\end{defi}

\begin{rem}
   The definition of a skew regular left comodule is such that $\cL$ is skew regular as a left comodule over itself if and only if it is a anti-left Hopf algebroid. If $B=k$ and $\cL$ is a Hopf algebra with bijective antipode, then  any left $\cL$-comodule $\Gamma$ is skew regular and $\forall p\in \Gamma$, $\phi^{-1}(X\ot p)=S^{-1}(p\mo)X\ot p\z$. In this case, all the right comodules of $\cL$ are regular, with $\psi^{-1}(p\ot X)=p\z\ot S(p\o)X$.
   \end{rem}
In the following proposition, we will see the properties of the inverse of the map $\phi$ generalize the properties of the inverse of the anti-Galois map.

\begin{prop}\label{prop. properties of skew regular comodules}
    Let $\Gamma$ be a skew regular left comodule of a left $B$-bialgebroid $\cL$, then we have
    \begin{align}
        \label{equ. skew regular map 1}
        p\rz\mo p\ro\di p\rz\z=&1\di p,\\
        \label{equ. skew regular map 2}
        p\z\ro p\mo\ot^{B} p\z\rz=&1\ot^{B}p,\\
        \label{equ. skew regular map 4}
        (bpb')\ro\ot^{B} (bpb')\rz=&\Bar{b'}p\ro \Bar{b}\ot^{B} p\rz,\\
        \label{equ. skew regular map 5}
        p\ro b \ot^{B} p\rz=&p\ro\ot^{B} bp\rz
    \end{align}
    Moreover, $\Gamma$ is a right comodule of $\cL$, with coaction $\delta_{L}(p):=p\rz\ot p\ro$,  for any $p\in \Gamma$ and $b, b'\in B$.
\end{prop}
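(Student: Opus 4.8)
The plan is to derive the four identities and then the right-comodule structure from one engine, the bijectivity of $\phi$. Whenever the identity to be proved lives in $\cL\ot^{B}P$ I will apply $\phi$ to both sides, simplify, and invoke injectivity of $\phi$; whenever it lives in $\cL\di P$ I apply $\phi^{-1}$ instead. The only extra inputs are the defining formula $\phi(X\ot p)=p\mo X\ot p\z$, the relations $\phi\circ\phi^{-1}=\id$ and $\phi^{-1}\circ\phi=\id$, the $B$-bilinearity of ${}_{L}\delta$, and coassociativity of ${}_{L}\delta$. This is precisely the scheme by which the identities \eqref{equ. inverse mu 1}--\eqref{equ. inverse mu 10} for anti-left Hopf algebroids are proved in \cite{BS}: the present statements are the skew-regular-comodule analogues and specialise to them when $P=\cL$, in which case $\phi=\mu$.

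Identity \eqref{equ. skew regular map 1} is immediate: by the formula for $\phi$, $\phi(p\ro\ot^{B}p\rz)=p\rz\mo p\ro\di p\rz\z$, and the left side equals $\phi(\phi^{-1}(1\di p))=1\di p$. For \eqref{equ. skew regular map 4} and \eqref{equ. skew regular map 5} I apply $\phi$ to both sides; rewriting the right-hand side with the $B$-bilinearity of ${}_{L}\delta$ and collapsing both sides to a common element ($1\di bpb'$, respectively $b\di p$) by means of \eqref{equ. skew regular map 1} and the defining relations of $\di$, injectivity of $\phi$ gives the claim. For \eqref{equ. skew regular map 2}, applying $\phi$ to the left-hand side --- with $p\z\rz$ in the $P$-slot --- yields $(p\z\rz)\mo\,p\z\ro\,p\mo\di(p\z\rz)\z$; using coassociativity of ${}_{L}\delta$ to split off the inner coaction and then \eqref{equ. skew regular map 1}, this reduces to $p\mo\di p\z=\phi(1\ot^{B}p)$, and injectivity of $\phi$ concludes.

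For the final assertion, equip $\Gamma$ with the $\BB$-bimodule structure obtained from its $B$-bimodule structure by interchanging the two sides, so $\Bar{b}\cdot p\cdot\Bar{b'}=b'pb$. Then $\delta_{L}(p):=p\rz\ot p\ro$ lands in $\Gamma\times_{B}\cL$ and is a $\BB$-bimodule map precisely by \eqref{equ. skew regular map 4} and \eqref{equ. skew regular map 5}. Counitality $\overline{\varepsilon(p\ro)}p\rz=p$ follows by applying the counit to the $\cL$-tensorand of \eqref{equ. skew regular map 1} (or of \eqref{equ. skew regular map 2}), using that $\varepsilon$ is a left character together with the left-comodule counit axiom $\varepsilon(p\mo)p\z=p$. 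Coassociativity of $\delta_{L}$ --- the identity $(p\rz)\rz\ot(p\rz)\ro\ot p\ro=p\rz\ot(p\ro)\o\ot(p\ro)\t$ in $\Gamma\times_{B}\cL\times_{B}\cL$, i.e.\ the analogue of \eqref{equ. inverse mu 6} --- is obtained by applying a suitable map assembled from $\phi$ and $\Delta$ to both sides, reducing by \eqref{equ. skew regular map 1}, \eqref{equ. skew regular map 2}, coassociativity of ${}_{L}\delta$ and of $\Delta$, and invoking injectivity; here $\alpha$ and $\alpha'$ are used as isomorphisms, which is legitimate under the blanket faithful-flatness hypothesis.

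The hard part is this last coassociativity step. Beyond the bookkeeping --- one must track the three balanced products $\ot^{B}$, $\di$, $\times_{B}$, the two $B^{e}$-bimodule structures \eqref{eq:rbgd.bimod} and \eqref{eq:rbgd.bimod1} on $\cL$, and Sweedler indices that do not obey naive coassociativity --- one has to choose the auxiliary map so that equality in the triple balanced tensor product, which is defined only up to $\alpha$ and $\alpha'$, is actually detected. A subsidiary recurring point is to verify, at each use, that the auxiliary multiplication maps on $\cL\di P$ appearing in the short arguments for \eqref{equ. skew regular map 1}--\eqref{equ. skew regular map 5} descend to the quotient, and to order the proof so that \eqref{equ. skew regular map 4} and \eqref{equ. skew regular map 5} are in hand before they are used in \eqref{equ. skew regular map 2} and in the final assertion.
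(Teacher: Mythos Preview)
Your approach is correct and is the same as the paper's: bijectivity of $\phi$ is the sole engine, applied to both sides of each identity, and the right-comodule structure is read off from \eqref{equ. skew regular map 4}, \eqref{equ. skew regular map 5} together with counitality and coassociativity. The paper in fact omits the verification of \eqref{equ. skew regular map 1}--\eqref{equ. skew regular map 5} entirely and only writes out the right-comodule part.

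One refinement worth noting: for coassociativity the paper does not assemble a map from $\phi$ and $\Delta$ as you suggest, but rather uses the single bijection
\[
\phi_{12,3}\colon X\ot Y\ot p\longmapsto p\mt X\ot p\mo Y\ot p\z,
\]
built from the \emph{iterated left coaction} (its bijectivity follows from that of $\phi$ by factoring through two $\phi$-like steps). This is applied to the identity in the form with $\Gamma$ in the last slot,
\[
p\ro\o\ot p\ro\t\ot p\rz \;=\; p\rz\ro\ot p\ro\ot p\rz\rz,
\]
rather than in the $\Gamma\times_B\cL\times_B\cL$ form you wrote; both sides are sent to $1\ot 1\ot p$ using \eqref{equ. skew regular map 1} twice. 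This sidesteps the need to involve $\Delta$ at all. Your outline is otherwise accurate; the minor claim that \eqref{equ. skew regular map 5} is needed for \eqref{equ. skew regular map 2} is not --- only \eqref{equ. skew regular map 1} and coassociativity of ${}_{L}\delta$ enter there.
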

\begin{proof}
   We only show $\Gamma$ is a right $\cL$-comodule with the $\BB$-bimodule structure given  by $\Bar{b}p\Bar{b'}:=b'pb$ for any $b,b'\in B$ and $p\in \Gamma$.  We can check the coaction map is well defined and the image belongs to the Takeuchi product by (\ref{equ. skew regular map 5}). By (\ref{equ. skew regular map 4}) the coaction is $B$-bilinear. First, we can see
   \begin{align*}
      \overline{\varepsilon(p\ro)} p\rz =& p\rz \varepsilon(p\ro)=\varepsilon(p\rz\mo) p\rz\z \varepsilon(p\ro)=\varepsilon(p\rz\mo \overline{\varepsilon(p\ro)}) p\rz\z\\
      =&\varepsilon(p\rz\mo p\ro)p\rz\z=p.
   \end{align*}
    We can also see
    \begin{align}\label{equ. skew regular map 6}
        p\ro\o\ot p\ro\t\ot p\rz=p\rz\ro\ot p\ro \ot p\rz\rz\in \int_{a,b}{}_{a,\Bar{b}}\cL_{a}\ot{}_{b}\cL\ot \Gamma_{a},
    \end{align}
    for any $p\in \Gamma$. Define a bijective map $\phi_{12,3}:\int_{a,b}{}_{a,\Bar{b}}\cL_{a}\ot{}_{b}\cL\ot \Gamma_{a}\to \cL\di{}\cL\di \Gamma$ by $\phi_{12,3}(X\ot Y\ot p)=p\mt X\ot p\mo Y \ot p\z$. Then on the one hand, by applying $\phi_{12,3}$ on the left hand side of (\ref{equ. skew regular map 6}), we get
    \begin{align*}
        \phi_{12,3}(p\ro\o\ot p\ro\t\ot p\rz)=1\ot 1\ot p.
    \end{align*}
    On the other hand, by applying $\phi_{12,3}$ on the right hand side of (\ref{equ. skew regular map 6}), we get
    \begin{align*}
        \phi_{12,3}(p\rz\ro\ot p\ro \ot p\rz\rz)=&p\rz\rz\mt p\rz\ro\ot p\rz\rz\mo p\ro\ot p\rz\rz\z\\
        =&p\rz\rz\mo p\rz\ro\ot p\rz\rz\z\mo p\ro\ot p\rz\rz\z\z\\
        =&1 \ot p\rz\mo p\ro\ot p\rz\z\\
        =&1\ot 1\ot p.
    \end{align*}
    So we have (\ref{equ. skew regular map 6}).
\end{proof}

\begin{lem}\label{lem. anti-left Hopf gives skew regular}
    If $\cL$ is an anti-left $B$-Hopf algebroid, then any left $\cL$-comodule $\Gamma$ is skew regular. More precisely,
    \[p\ro\ot^{B}p\rz=p\mo{}_{[-]}\ot^{B} \varepsilon(p\mo{}_{[+]})p\z.\]
    Moreover, we have
    \begin{align}\label{equ. skew regular map 7}
        p\ro\ot^{B} p\rz\mo\di{} p\rz\z=p\mo{}_{[-]}\ot^{B} p\mo{}_{[+]}\di{} p\z\in \int^{a,d}\int_{b,c}{}_{b}\cL_{a}\ot{}_{a,\Bar{c}}\cL_{b,\Bar{d}}\ot {}_{c}\Gamma_{d},
    \end{align}
    for any $p\in\Gamma$.
\end{lem}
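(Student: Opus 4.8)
\emph{Proof idea.} The strategy is to realise $\phi$ as a composite of the invertible map $\mu$ with split monomorphisms, and so read off an explicit two-sided inverse. Throughout one uses that, by the standing faithful flatness hypothesis of Section~\ref{sec2}, the canonical comparison maps between iterated coends such as $\cL\ot^B(\cL\di\Gamma)$, $(\cL\ot^B\cL)\di\Gamma$ and $\cL\di\cL\di\Gamma$ are isomorphisms; these are suppressed from the notation below. All the module-structure constraints (which of the several $B$- and $\BB$-module structures on $\cL$ and $\Gamma$, the relevant Takeuchi conditions) are exactly those already used implicitly in the definitions of $\mu$ and $\phi$.

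I would first record three ingredients. (i) \emph{The intertwining identity} $(\id_\cL\di {}_L\delta)\circ\phi=(\mu\di\id_\Gamma)\circ(\id_\cL\ot^B {}_L\delta)$: both sides send $X\ot p$ to $\one{p\mo}X\di \two{p\mo}\di p\z$, which is immediate from coassociativity of the coaction, $\one{p\mo}\ot\two{p\mo}\ot p\z=p\mo\ot(p\z)\mo\ot(p\z)\z$, together with $\mu(X\ot Y)=\one Y X\ot\two Y$. (ii) \emph{Splittings.} The comodule counit $\varepsilon_\Gamma:\cL\di\Gamma\to\Gamma$, $X\di p\mapsto\varepsilon(X)p$, satisfies $\varepsilon_\Gamma\circ {}_L\delta=\id_\Gamma$ by the counit axiom $\varepsilon(p\mo)p\z=p$; hence $r:=\id_\cL\ot^B\varepsilon_\Gamma$ splits $\id_\cL\ot^B {}_L\delta$, and likewise $\id_\cL\di {}_L\delta$ is split monic. (iii) \emph{A formula for $\mu^{-1}$:} right-multiplying the first leg of \eqref{equ. inverse mu 1} by $X$ (a well-defined operation on $\cL\di\cL$) gives $\mu(Y_{[-]}X\ot Y_{[+]})=X\di Y$, i.e.\ $\mu^{-1}(X\di Y)=Y_{[-]}X\ot^B Y_{[+]}$.

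Now set $\theta:=r\circ(\mu^{-1}\di\id_\Gamma)\circ(\id_\cL\di {}_L\delta):\cL\di\Gamma\to\cL\ot^B\Gamma$. Unwinding the definition with (iii) yields $\theta(X\di p)=p\mo{}_{[-]}X\ot^B\varepsilon(p\mo{}_{[+]})p\z$, in particular $\theta(1\di p)=p\mo{}_{[-]}\ot^B\varepsilon(p\mo{}_{[+]})p\z$. That $\theta\circ\phi=\id$ is then formal: by (i) and (ii),
\[\theta\circ\phi=r\circ(\mu^{-1}\di\id)\circ(\mu\di\id)\circ(\id\ot^B {}_L\delta)=r\circ(\id\ot^B {}_L\delta)=\id .\]
For $\phi\circ\theta=\id$ it suffices, since $\id_\cL\di {}_L\delta$ is monic, to check $(\id\di {}_L\delta)\circ\phi\circ\theta=\id\di {}_L\delta$; by (i) this reduces to $(\mu\di\id)\circ(\id\ot^B {}_L\delta)\circ\theta=\id\di {}_L\delta$, and expanding $(\id\ot^B {}_L\delta)\theta(X\di p)$, applying coassociativity of the coaction and then $\mu$, one is reduced (after routine use of the counit axiom) to the single $\cL$-identity
\[s(\varepsilon(\one Y{}_{[+]}))\,\two Y\,\one Y{}_{[-]}=\overline{\varepsilon(Y)}\qquad(Y\in\cL),\]
which follows from \eqref{equ. inverse mu 5} (contract its last two legs with $\varepsilon$, then multiply the two remaining legs) together with the counit axiom and \eqref{equ. inverse mu 8}. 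Hence $\phi$ is bijective with $\phi^{-1}=\theta$, so $\Gamma$ is skew regular and $p\ro\ot^B p\rz=\phi^{-1}(1\di p)=p\mo{}_{[-]}\ot^B\varepsilon(p\mo{}_{[+]})p\z$.

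Finally, for \eqref{equ. skew regular map 7} one applies the bijective map $\mu\di\id_\Gamma$ to both sides. The left-hand side is $(\id_\cL\ot^B {}_L\delta)(\phi^{-1}(1\di p))$, and by (i), $(\mu\di\id)\circ(\id\ot^B {}_L\delta)\circ\phi^{-1}(1\di p)=(\id\di {}_L\delta)\circ\phi\circ\phi^{-1}(1\di p)=1\di p\mo\di p\z$; applying $\mu\di\id$ to the right-hand side $p\mo{}_{[-]}\ot^B p\mo{}_{[+]}\di p\z$ gives $\one{p\mo{}_{[+]}}p\mo{}_{[-]}\di \two{p\mo{}_{[+]}}\di p\z=1\di p\mo\di p\z$ by \eqref{equ. inverse mu 1}, so the two sides agree. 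The main obstacle I anticipate is not any one of these algebraic steps but the pervasive bookkeeping: checking that $\theta$, $r$, $\varepsilon_\Gamma$, the ``right-multiply the first leg'' map on $\cL\di\cL$, and the $\varepsilon$-contraction $\cL\di\cL\to\cL$ are well defined on the relevant coends, and that each coassociativity substitution is performed in the correct balanced tensor product; once this is settled, everything rests on (i)--(iii) and the identities \eqref{equ. inverse mu 1}, \eqref{equ. inverse mu 5}, \eqref{equ. inverse mu 8}.
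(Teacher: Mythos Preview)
Your proof is correct. It differs from the paper's in organisation: you isolate the intertwining identity $(\id\di {}_L\delta)\circ\phi=(\mu\di\id)\circ(\id\ot^B {}_L\delta)$ and the splitting $r$ of $\id\ot^B{}_L\delta$, which makes $\theta\circ\phi=\id$ a one-line formal consequence; the paper instead verifies this direction by a short direct computation using \eqref{equ. inverse mu 2}. For the other direction, $\phi\circ\theta=\id$, your route through the split monic $\id\di{}_L\delta$ is an extra layer --- one can just compute $\phi(\theta(X\di p))$ directly --- and either way one lands on the $\cL$-identity $s(\varepsilon(Y\o{}_{[+]}))\,Y\t\,Y\o{}_{[-]}=\overline{\varepsilon(Y)}$, which both you and the paper obtain from \eqref{equ. inverse mu 5}, the counit axiom, and \eqref{equ. inverse mu 8}. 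Your structural packaging has the advantage of explaining \emph{why} the formula for $\phi^{-1}$ should look as it does (pull back through $\mu^{-1}$, then retract along the coaction), whereas the paper simply writes down the candidate and verifies it. For \eqref{equ. skew regular map 7} the two arguments coincide: apply $\mu\di\id_\Gamma$ to both sides and invoke \eqref{equ. inverse mu 1}. The well-definedness checks you rightly flag as the main labour are precisely what the paper spends most of its proof on, introducing and verifying an auxiliary map on the relevant coend.
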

\begin{proof}

Since $X_{[-]}\ot X_{[+]}\in \int^{a}\int_{b}{}_{b}\cL_{a}\ot{}_{a,}\cL_{b}$, and $p\mo\ot p\z\in \int^{a}\int_{b}{}_{\Bar{b}}\cL_{\Bar{a}}\ot {}_{b}\cL_{a}$, we can see $p\mo{}_{[-]}\ot^{B} p\mo{}_{[+]}\di{} p\z\in \int^{a,d}\int_{b,c}{}_{b}\cL_{a}\ot{}_{a,\Bar{c}}\cL_{b,\Bar{d}}\ot {}_{c}\Gamma_{d}$. Let $X\ot Y\ot p \in \int^{a,d}\int_{b,c}{}_{b}\cL_{a}\ot{}_{a,\Bar{c}}\cL_{b,\Bar{d}}\ot {}_{c}\Gamma_{d}$, we can see the map $\phi_{(3.5)}:\int^{a,d}\int_{b,c}{}_{b}\cL_{a}\ot{}_{a,\Bar{c}}\cL_{b,\Bar{d}}\ot {}_{c}\Gamma_{d}\to \int^{a}\int_{b}{}_{b}\cL_{a}\ot {}_{a}\Gamma_{b}$ given by
\begin{align}
    \phi_{(3.5)}(X\ot Y\ot p)=X\ot \varepsilon(Y)p
\end{align}
is well defined. Indeed,
\begin{align*}
    \phi_{(3.5)}(bX\ot Y\ot p)=&bX\ot \varepsilon(Y)p=X\ot \varepsilon(Y)pb=X\ot \varepsilon(Y\Bar{b})p\\
    =&X\ot \varepsilon(Yb)p=\phi_{(3.2)}(X\ot Yb\ot p),
\end{align*}
and
\begin{align*}
\phi_{(3.5)}(X\ot \Bar{b}Y\ot p)=X\ot \varepsilon(\Bar{b}Y)p=X\ot \varepsilon(Y)bp
=\phi_{(3.5)}(X\ot Y\ot bp),
\end{align*}
so $\phi_{(3.5)}$ factors through all the balanced tensor products. Moreover,
\begin{align*}
    Xa\ot \varepsilon(Y)p=&\phi_{(3.5)}(X a\ot Y\ot p)=\phi_{(3.5)}(X\ot aY\ot p)=X\ot \varepsilon(aY)p\\
    =&X\ot a\varepsilon(Y)p,
\end{align*}
for any $X\ot Y\ot p\in \int^{a,d}\int_{b,c}{}_{b}\cL_{a}\ot{}_{a,\Bar{c}}\cL_{b,\Bar{d}}\ot {}_{c}\Gamma_{d}$. To see  $\phi$ is invertible, we have
    on the one hand,
    \begin{align*}
        \phi(p\mo{}_{[-]}\ot^{B} \varepsilon(p\mo{}_{[+]})p\z)=& \varepsilon(p\mt{}_{[+]})p\mo p\mt{}_{[-]}\di{}p\z\\
        =&\varepsilon(p\mo{}_{[+]}\o)p\mo{}_{[+]}\t p\mo{}_{[-]}\di{}p\z\\
        =&p\mo{}_{[+]}p\mo{}_{[-]}\di{}p\z\\
        =&1\di{}p.
    \end{align*}
    On the other hand,
    \begin{align*}
        p\mo{}_{[-]} p\mt\ot^{B} \varepsilon(p\mo{}_{[+]})p\z=1\ot^{B} \varepsilon(p\mo)p\z=1\ot^{B} p.
    \end{align*}
    We can prove (\ref{equ. skew regular map 7}) by comparing the results of
 $\mu\ot \id_{P}$ on both sides of (\ref{equ. skew regular map 7}).

\end{proof}

\begin{prop}
    If $\cL$ is a left $B$-Hopf algebroid and $\Gamma$ is a skew regular left $\cL$-comodule, then we have
    \begin{align}\label{equ. skew regular map 8}
        p\mo\ot p\z\ro \ot p\z\rz=p\ro{}_{-}\ot p\ro{}_{+} \ot p\rz\in \int^{a,b}\int_{c,d}{}_{\Bar{c}}\cL_{\Bar{a}}\ot{}_{\Bar{a},d}\cL_{b, \Bar{c}}\ot{}_{b}\Gamma_{d},
    \end{align}
    for any $p\in \Gamma$.
\end{prop}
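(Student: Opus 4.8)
The plan is to prove \eqref{equ. skew regular map 8} the same way Proposition~\ref{prop. anti left and left Galois maps} is proved: apply an invertible $k$-linear map to both sides and check that the two images coincide. Since $\cL$ is a left Hopf algebroid the map $\lambda$ is invertible, so composing the exchange of the first two tensor legs with $\lambda\ot\id_{\Gamma}$ yields an invertible map
\[
\Psi\colon X\ot Y\ot p\ \longmapsto\ \one{Y}\di\two{Y}X\di p ,
\]
carrying $\int^{a,b}\int_{c,d}{}_{\Bar{c}}\cL_{\Bar{a}}\ot{}_{\Bar{a},d}\cL_{b,\Bar{c}}\ot{}_{b}\Gamma_{d}$ bijectively onto $\cL\di\cL\di\Gamma$. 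First I would check, with the help of \eqref{equ. inverse lamda 9}, \eqref{equ. inverse lamda 10} (for $\lambda^{-1}$), \eqref{equ. skew regular map 4}, \eqref{equ. skew regular map 5} (for $\phi^{-1}$) and the $B$-linearity of the coaction, that both sides of \eqref{equ. skew regular map 8} do lie in the stated subspace and that $\Psi$ is well defined there; it then remains to see that $\Psi$ sends both sides to $p\ro\di 1\di p\rz$.

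For the right-hand side this is immediate: with $X=p\ro{}_{-}$ and $Y=p\ro{}_{+}$ one gets $\Psi\bigl(p\ro{}_{-}\ot p\ro{}_{+}\ot p\rz\bigr)=\one{(p\ro{}_{+})}\di\two{(p\ro{}_{+})}\,p\ro{}_{-}\di p\rz$, and \eqref{equ. inverse lamda 1} applied to $X=p\ro$ collapses the first two legs to $p\ro\di 1$.

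For the left-hand side, $\Psi\bigl(p\mo\ot p\z\ro\ot p\z\rz\bigr)=\one{(p\z\ro)}\di\two{(p\z\ro)}\,p\mo\di p\z\rz$. I would first use \eqref{equ. skew regular map 6}, applied to the comodule element $p\z$, to rewrite $\one{(p\z\ro)}\ot\two{(p\z\ro)}\ot p\z\rz$ as $p\z\rz\ro\ot p\z\ro\ot p\z\rz\rz$; right-multiplying the middle leg by $p\mo$ then turns the expression into $p\z\rz\ro\di p\z\ro\,p\mo\di p\z\rz\rz$. Independently, applying the map $q\mapsto\phi^{-1}(1\di q)$ to the $\Gamma$-leg of \eqref{equ. skew regular map 2} gives the identity $p\z\ro\,p\mo\di p\z\rz\ro\di p\z\rz\rz=1\di p\ro\di p\rz$, and exchanging its first two legs produces exactly $p\z\rz\ro\di p\z\ro\,p\mo\di p\z\rz\rz=p\ro\di 1\di p\rz$. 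Thus $\Psi$ sends the left-hand side to $p\ro\di 1\di p\rz$ as well, and injectivity of $\Psi$ yields \eqref{equ. skew regular map 8}.

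The hard part will not be this Sweedler computation but the bookkeeping of the balanced tensor products around it: I will need to confirm that the leg-exchange entering $\Psi$ (and the one used at the end on the expanded form of \eqref{equ. skew regular map 2}) really is an isomorphism between the relevant Takeuchi-type subspaces, that the right-multiplication of the middle leg by $p\mo$ is compatible with the balancings encoded by $\int^{a}$, $\int^{b}$, $\int_{c}$, $\int_{d}$, and that expanding the $\Gamma$-leg via $\phi^{-1}$ lands in the same space in which $\Psi$ of the left-hand side is computed. Once these compatibilities are in place, the three ingredients \eqref{equ. inverse lamda 1}, \eqref{equ. skew regular map 6} and \eqref{equ. skew regular map 2} are all the argument needs.
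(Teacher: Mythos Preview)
Your proposal is correct and follows essentially the same route as the paper: your map $\Psi$ is exactly the paper's $(\lambda\circ\textup{flip}\ot\id_{\Gamma})$, and the verification on both sides uses precisely \eqref{equ. inverse lamda 1} for the right-hand side and \eqref{equ. skew regular map 6} together with \eqref{equ. skew regular map 2} for the left-hand side. The only cosmetic difference is that the paper applies \eqref{equ. skew regular map 2} directly to collapse $p\z\ro\,p\mo$ to $1$, whereas you phrase the same step via $\phi^{-1}$ and a final leg exchange.
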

\begin{proof}
It is not hard to see $(\lambda\circ \textup{flip}\ot \id):\int^{a,b}\int_{c,d}{}_{\Bar{c}}\cL_{\Bar{a}}\ot{}_{\Bar{a},d}\cL_{b, \Bar{c}}\ot{}_{b}\Gamma_{d}\to \int^{a,b}\int_{c,d}{}_{\Bar{c}, d}\cL_{\Bar{a},b}\ot{}_{c}\cL_{a}\ot{}_{b}\Gamma_{d}$ is a well defined map. Indeed, let $X\ot Y\ot p\in \int^{a,b}\int_{c,d}{}_{\Bar{c}}\cL_{\Bar{a}}\ot{}_{\Bar{a},d}\cL_{b, \Bar{c}}\ot{}_{b}\Gamma_{d}$. We have
\begin{align*}
    (\lambda\circ \textup{flip}\ot \id)(\Bar{b}X\ot Y\ot p)=Y\o\ot Y\t \Bar{b}X\ot p=(\lambda\circ \textup{flip}\ot \id)(X\ot Y\Bar{b} \ot p),
\end{align*}
and
\begin{align*}
     (\lambda\circ \textup{flip}\ot \id)(X\ot bY\ot p)=bY\o\ot Y\t X\ot p=Y\o\ot Y\t X\ot pb= (\lambda\circ \textup{flip}\ot \id)(X\ot Y\ot pb).
\end{align*}
Applying $(\lambda\circ \textup{flip}\ot \id)$ on both sides of (\ref{equ. skew regular map 8}), on the one hand we have,
\begin{align*}
    (\lambda\circ \textup{flip}\ot \id)(p\mo\ot p\z\ro \ot p\z\rz)=&p\z\ro\o\ot p\z\ro\t p\mo\ot p\z\rz\\
    =&p\z\rz\ro\ot p\z\ro p\mo\ot p\z\rz\rz\\
    =&p\ro\ot 1\ot p\rz,
\end{align*}
where the 2nd step uses (\ref{equ. skew regular map 6}). On the other hand,
\begin{align*}
      (\lambda\circ \textup{flip}\ot \id)(p\ro{}_{-}\ot p\ro{}_{+} \ot p\rz)=p\o\ot 1\ot p\rz.
\end{align*}
\end{proof}

Similarly, we can prove

\begin{prop}
    Let $\Gamma$ be a regular right comodule of a left $B$-bialgebroid $\cL$, then we have
    \begin{align}
        \label{equ. regular map 1}
        p\rz\di{} p\rz\o p\rmo=&1\di{}p,\\
        \label{equ. regular map 2}
        p\z\rz\ot_{\BB} p\z\rmo p\o=&p\ot_{\BB}1,\\
        \label{equ. regular map 4}
        (\Bar{b'}p\Bar{b})\rz\ot_{\BB} (\Bar{b'}p\Bar{b})\rmo=&p\rz\ot_{\BB}bp\rmo b',\\
        \label{equ. regular map 5}
        \Bar{b}p\rz\ot_{\BB}p\rmo=&p\rz\ot_{\BB}p\rmo \Bar{b},
    \end{align}
    Moreover, $\Gamma$ is a left comodule of $\cL$, with $B$-bimodule structure  $b'pb:=\Bar{b}p\Bar{b'}$ and  coaction ${}_{L}\delta(p):=p\rmo\ot{}p\rz$. And satisfies
    \begin{align}\label{equ. regular map 6}
        p\rz\ot p\rmo\o\ot{} p\rmo\t=p\rz\rz\ot p\rmo \ot p\rz\rmo,
    \end{align}
    for any $p\in \Gamma$ and any $b, b'\in B$.
\end{prop}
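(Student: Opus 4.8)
The plan is to transcribe the proof of Proposition~\ref{prop. properties of skew regular comodules} under the exchanges $B\leftrightarrow\BB$, $\ot^{B}\leftrightarrow\ot_{\BB}$ and $\phi\leftrightarrow\psi$ (hence $\phi^{-1}\leftrightarrow\psi^{-1}$). Identities \eqref{equ. regular map 1} and \eqref{equ. regular map 2} are simply the two halves of $\psi\circ\psi^{-1}=\id_{\Gamma\di\cL}$ and $\psi^{-1}\circ\psi=\id_{\Gamma\ot_{\BB}\cL}$: the first is obtained by evaluating $\psi$ on $p\rz\ot_{\BB}p\rmo=\psi^{-1}(p\di 1)$. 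For the second I would first record the more general formula $\psi^{-1}(p\di X)=p\rz\ot_{\BB}p\rmo X$ for $X\in\cL$ — the mirror of the identity $\phi^{-1}(X\di q)=q\ro X\ot^{B}q\rz$ that underlies the skew-regular case; it follows from \eqref{equ. regular map 1} together with the fact that right multiplication on the $\cL$-leg is a well-defined operation on $\Gamma\di\cL$ — and then apply $\psi^{-1}$ to $\psi(p\ot_{\BB}X)=p\z\di p\o X$ and specialise $X=1_{\cL}$. Identities \eqref{equ. regular map 4} and \eqref{equ. regular map 5} are the two module-type compatibilities of $\psi^{-1}$; both follow by applying $\psi$ to the two sides and invoking its injectivity, exactly as for \eqref{equ. skew regular map 4} and \eqref{equ. skew regular map 5}.

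For the assertion that $p\mapsto p\rmo\ot p\rz$ is a left $\cL$-coaction for the $B$-bimodule structure $b'pb:=\Bar{b}p\Bar{b'}$: well-definedness of the coaction and the fact that its image lies in $\cL\times_{B}\Gamma$ follow from \eqref{equ. regular map 5}, while $B$-bilinearity follows from \eqref{equ. regular map 4}. The counit axiom $\varepsilon(p\rmo)p\rz=p$ follows from \eqref{equ. regular map 1} by applying $\varepsilon$ to the $\cL$-leg and using the left-character property of $\varepsilon$ together with the counit axiom of the given right coaction on the leg $p\rz$ — the exact mirror of the chain of equalities displayed in the proof of Proposition~\ref{prop. properties of skew regular comodules}. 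Coassociativity of $p\mapsto p\rmo\ot p\rz$ is precisely the identity \eqref{equ. regular map 6}.

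To prove \eqref{equ. regular map 6} I would imitate the proof of \eqref{equ. skew regular map 6}: introduce the ``doubled'' companion of $\psi$, a bijection $\psi_{1,23}$ from the appropriate iterated decorated tensor product onto $\Gamma\di\cL\di\cL$, given on representatives by $\psi_{1,23}(p\ot X\ot Y)=p\z\di p\o X\di p\t Y$ (with $X,Y\in\cL$); then check that this formula factors through all the intervening balanced tensor products (the decorations on the three legs being forced by the given right coaction and by \eqref{equ. regular map 5}), that $\psi_{1,23}$ is bijective, and evaluate it on the two sides of \eqref{equ. regular map 6}. Applied to the left-hand side $p\rz\ot p\rmo\o\ot p\rmo\t$ it returns $p\di 1\di 1$, using multiplicativity of $\Delta$, $\Delta(1_{\cL})=1_{\cL}\ot 1_{\cL}$ and \eqref{equ. regular map 1}; applied to the right-hand side $p\rz\rz\ot p\rmo\ot p\rz\rmo$ it returns $p\di 1\di 1$ as well, by the mirror of the displayed computation with $\phi_{12,3}$, which uses the coassociativity of the right coaction together with \eqref{equ. regular map 1} at two nested depths. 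Injectivity of $\psi_{1,23}$ then yields \eqref{equ. regular map 6}.

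The step I expect to be the main obstacle — exactly as in the skew-regular case — is the construction of $\psi_{1,23}$: pinning down the correct decorated iterated tensor product serving as its domain, checking that the defining formula descends to it, and establishing its bijectivity. This last point is where the standing faithful-flatness hypothesis is genuinely used, since it is what forces $\alpha$ and $\alpha'$, and hence $\psi_{1,23}$, to be isomorphisms. Once $\psi_{1,23}$ and the auxiliary formula $\psi^{-1}(p\di X)=p\rz\ot_{\BB}p\rmo X$ are available, everything else is a routine transcription of the skew-regular argument.
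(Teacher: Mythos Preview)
Your proposal is correct and is exactly the intended approach: the paper itself offers no proof beyond the phrase ``Similarly, we can prove'', and the similarity it invokes is precisely to Proposition~\ref{prop. properties of skew regular comodules}, whose argument you have accurately transcribed under the exchange $\phi\leftrightarrow\psi$, $\ot^{B}\leftrightarrow\ot_{\BB}$. Your identification of the doubled map $\psi_{1,23}(p\ot X\ot Y)=p\z\di p\o X\di p\t Y$ as the mirror of $\phi_{12,3}$, and of \eqref{equ. regular map 6} as the coassociativity of the induced left coaction, matches the paper's skew-regular computation step for step.
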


\begin{lem}\label{lem. left Hopf gives regular}
    If $\cL$ is a left $B$-Hopf algebroid, then any right $\cL$-comodule $\Gamma$ is regular. More precisely,
    \[p\rz\ot^{B} p\rmo=\overline{\varepsilon(p\o{}_{+})}p\z \ot^{B}p\o{}_{-} \]
    Moreover, we have
    \begin{align}\label{equ. regular map 7}
        p\rz\z\ot{}p\rz\o\ot p\rmo=p\z\ot{} p\o{}_{+}\ot p\o{}_{-},
    \end{align}
    for any $p\in\Gamma$.
\end{lem}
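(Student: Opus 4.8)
The statement is precisely the mirror image, under the duality exchanging $B$ with $\BB$, left comodules with right comodules, and the anti-left Hopf structure maps $(-)_{[\pm]}$ with the left Hopf structure maps $(-)_{\pm}$, of Lemma~\ref{lem. anti-left Hopf gives skew regular}; under this duality the identities (\ref{equ. inverse mu 1})--(\ref{equ. inverse mu 10}) turn into (\ref{equ. inverse lamda 1})--(\ref{equ. inverse lamda 10}). So the plan is to transport that proof. First I would observe that $\psi$ is right $\cL$-linear for the action of $\cL$ by right multiplication on the last tensor factor (both on $P\ot_{\BB}\cL$ and on $P\di\cL$), so that a two-sided inverse of $\psi$ is determined by its value on $P\di 1$, and that value is forced to be the candidate
\[p\di 1\ \longmapsto\ \overline{\varepsilon(p\o{}_{+})}\,p\z\ot_{\BB}p\o{}_{-},\]
built from the right coaction, the map $\lambda^{-1}$ and $\varepsilon$, where $p\o{}_{+}\ot_{\BB}p\o{}_{-}=\lambda^{-1}(p\o\di 1)$; this is exactly the displayed formula for $p\rz\ot_{\BB}p\rmo$. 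I would then check that this candidate is well defined, i.e. that it factors through the defining relations of $P\di\cL$, lands in $P\ot_{\BB}\cL$, and has image in the correct Takeuchi-type subspace, using (\ref{equ. inverse lamda 9}), (\ref{equ. inverse lamda 10}) and the left-character property of $\varepsilon$ --- this is the mirror of the well-definedness check for the auxiliary map introduced in the proof of Lemma~\ref{lem. anti-left Hopf gives skew regular}.

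Next I would verify the two triangle identities, again literally mirroring the two short computations in that proof. For $\psi$ applied to the candidate on $P\di 1$: re-expand the right coaction once more on the $\Gamma$-leg, use coassociativity of the coaction to identify the new $\cL$-leg with a coproduct factor of $p\o$, then use (\ref{equ. inverse lamda 5}) to carry $(-)_{\pm}$ past the coproduct, the counit axiom $\varepsilon(X\o)X\t=X$, and finally (\ref{equ. inverse lamda 8}) ($X_{+}X_{-}=\varepsilon(X)$) together with the comodule counit axiom $\overline{\varepsilon(p\o)}p\z=p$, collapsing everything to $p\di 1$. For the candidate applied (right $\cL$-linearly) to $\psi(p\ot_{\BB}1)=p\z\di p\o$: re-expand, then use coassociativity, (\ref{equ. inverse lamda 2}) ($\one{X}{}_{+}\ot_{\BB}\one{X}{}_{-}\two{X}=X\ot_{\BB}1$) and the counit axioms to recover $p\ot_{\BB}1$; right $\cL$-linearity upgrades this to all of $P\ot_{\BB}\cL$. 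Together these show $\Gamma$ is regular with $\psi^{-1}$ the candidate, proving the first two assertions.

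Finally, for (\ref{equ. regular map 7}) I would argue as for (\ref{equ. skew regular map 7}): both sides lie in a space on which $\id_{\Gamma}\ot\lambda$, with $\lambda$ applied to the two $\cL$-legs, is injective --- checking this injectivity is itself the usual bookkeeping with the module structures --- and after applying $\id_{\Gamma}\ot\lambda$ the right-hand side collapses by (\ref{equ. inverse lamda 1}) while the left-hand side is rewritten using (\ref{equ. regular map 1})--(\ref{equ. regular map 6}), giving the claimed equality. The genuine work throughout is not the Sweedler calculus but keeping track of the many balanced and Takeuchi-type tensor products and of the precise $B$- and $\BB$-actions on each leg, so that the candidate map and the auxiliary injective map are well defined; granting that, the argument is a mechanical translation of the anti-left Hopf case, and the main obstacle is exactly this bookkeeping.
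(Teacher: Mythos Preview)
Your proposal is correct and matches the paper's approach exactly: the paper gives no separate proof for this lemma, treating it as the evident mirror of Lemma~\ref{lem. anti-left Hopf gives skew regular} under the left/right, $B/\BB$, $(-)_{[\pm]}/(-)_{\pm}$ symmetry, which is precisely the translation you outline. Your sketch of the two triangle identities and of the verification of~(\ref{equ. regular map 7}) via $\id_\Gamma\ot\lambda$ is the correct transport of that proof.
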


\begin{prop}
    If $\cL$ is an anti-left $B$-Hopf algebroid and $\Gamma$ is a regular right $\cL$-comodule, then we have
    \begin{align}\label{equ. regular map 8}
        p\z\rz\ot p\z\rmo\ot p\o=p\rz\ot p\rmo{}_{[+]}\ot p\rmo{}_{[-]},
    \end{align}
    for any $p\in \Gamma$.
\end{prop}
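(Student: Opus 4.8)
The plan is to follow, mutatis mutandis, the proof of the preceding proposition (the one establishing \eqref{equ. skew regular map 8}), with the anti-left Hopf Galois map $\mu$ now playing the role that the left Hopf Galois map $\lambda$ played there. Since $\cL$ is anti-left Hopf, $\mu\colon\cL\ot^{B}\cL\to\cL\di\cL$ is bijective, so the map $\Phi:=\id_{\Gamma}\ot(\mu\circ\textup{flip})$ — acting, after flipping the two $\cL$-legs, as $\mu$ on the last two tensorands — is injective on the Takeuchi-refined space in which the two sides of \eqref{equ. regular map 8} live; this space is the mirror, with $B$ and $\BB$ interchanged and the order of the tensor factors reversed, of the space appearing in \eqref{equ. skew regular map 8}. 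Hence it suffices to check that $\Phi$ sends both sides of \eqref{equ. regular map 8} to the same element of $\Gamma\ot\cL\di\cL$. A routine preliminary is to verify that $\Phi$, together with the auxiliary map used below, descends to all the relevant balanced tensor products, by the same module-compatibility computations as in the lemmas above.

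Applying $\Phi$ to the right hand side: the flip turns $p\rmo{}_{[+]}\ot p\rmo{}_{[-]}$ into $p\rmo{}_{[-]}\ot p\rmo{}_{[+]}$, and then $\mu$ together with \eqref{equ. inverse mu 1} collapses $\one{p\rmo{}_{[+]}}p\rmo{}_{[-]}\di\two{p\rmo{}_{[+]}}$ to $1\di p\rmo$; thus $\Phi$ sends the right hand side to $p\rz\ot 1\ot p\rmo$.

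Applying $\Phi$ to the left hand side: it first produces $p\z\rz\ot p\z\rmo\o p\o\ot p\z\rmo\t$. I would then apply the mixed coassociativity \eqref{equ. regular map 6} to the element $p\z\in\Gamma$, rewriting $p\z\rz\ot p\z\rmo\o\ot p\z\rmo\t$ as $p\z\rz\rz\ot p\z\rmo\ot p\z\rz\rmo$; multiplying the middle leg on the right by $p\o$ then turns the image of the left hand side into $p\z\rz\rz\ot p\z\rmo p\o\ot p\z\rz\rmo$. This is the image of $p\z\rz\ot_{\BB}p\z\rmo p\o$ under the well-defined map $q\ot_{\BB}X\mapsto q\rz\ot X\ot q\rmo$, and by \eqref{equ. regular map 2} — which says $p\z\rz\ot_{\BB}p\z\rmo p\o=p\ot_{\BB}1$ — it therefore equals $p\rz\ot 1\ot p\rmo$, the same element obtained on the right. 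Injectivity of $\Phi$ then yields \eqref{equ. regular map 8}.

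The manipulations are short once the identities \eqref{equ. inverse mu 1}, \eqref{equ. regular map 6}, and \eqref{equ. regular map 2} are in hand; the step I expect to be the main obstacle is the bookkeeping with the one-sided and Takeuchi tensor products — identifying the precise space in which \eqref{equ. regular map 8} is an equality and checking that $\Phi$, the flip, and the auxiliary map $q\ot_{\BB}X\mapsto q\rz\ot X\ot q\rmo$ are all well defined there, so that each rewriting above is legitimate. This is exactly the kind of verification already carried out for the skew-regular case and throughout the preceding lemmas, and I would handle it in the same style.
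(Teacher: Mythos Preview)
Your proposal is correct and is precisely the mirror of the paper's explicit proof of \eqref{equ. skew regular map 8}: where the paper applies $(\lambda\circ\textup{flip})\ot\id$ to the first two tensorands and invokes \eqref{equ. skew regular map 6} and \eqref{equ. skew regular map 2}, you apply $\id\ot(\mu\circ\textup{flip})$ to the last two and invoke \eqref{equ. regular map 6} and \eqref{equ. regular map 2}. The paper itself does not spell out a proof of this proposition (it is covered by the blanket ``similarly'' preceding the regular-right-comodule statements), so your write-up is exactly what one would supply.
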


\subsection{Left Hopf Galois extensions}

\begin{defi}
Given a left bialgebroid over $B$,    a left $\cL$-comodule algebra $P$ is a $B$-ring and a left $\cL$-comodule, such that the coaction is a $B$-ring map. Let $N={}^{co\cL}P$ be the left invariant subalgebra of $P$, $N\subseteq P$ is called a left $\cL$-Galois extension if the left canonical map ${}^{L}\can: P\ot_{N} P\to \cL\di{}P$ given by
    \[{}^{L}\can(p\ot_{N}q)=p\mo\di{}p\z q,\]
    is bijective. If $N\subseteq P$ is a
left $\cL$-Galois extension, the inverse of ${}^{L}\can$ can be determined by the left translation map:
    \begin{align}
        {}^{L}\tau:={}^{L}\can^{-1}|_{\cL\di{}1}:\cL\to P\ot_{N}P, \qquad X\mapsto \tuno{X}\otimes_{N}\tdue{X}.
    \end{align}
    If the underlying $\cL$-comodule structure of $P$ is skew regular, we call $N\subseteq P$ a skew regular $\cL$-Galois extension. If $P$ is a faithfully flat left $N$-module, we call $N\subseteq P$ a faithfully flat $\cL$-Galois extension.
\end{defi}
 We can see $\can\inv(X\ot p)=\tuno{X}\otimes\tdue{X}p$.
We note that $^L\can$ is a left $B$-module map in the sense that \begin{equation}\label{Lcanlin}
        {^L\can}(p\ot bq)=p_{(-1)}\overline b\ot p_{(0)}q
    \end{equation}
    because the image of the coaction map lies in the Takeuchi product. In addition, the map is obviously right $P$-linear, and also left $B$-linear in a suitable sense, because the coaction is.
As a generalization of \cite{schau2}, we have

\begin{lem}\label{lem. left Hopf Galois induce left Hopf}
    If a left $B$-bialgebroid $\cL$ admits a faithfully flat left $\cL$-Galois extension $N\subseteq P$, then $\cL$ is a left Hopf algebroid. More precisely,
    \[X_{+}\ot X_{-}\ot 1=\tuno{X}\mo\ot \tdue{X}\mo\ot \tuno{X}\z \tdue{X}\z\in \int_{a,b} {}_{\Bar{b}}\cL_{\Bar{a}}\ot{}_{\Bar{a}}\cL\ot {}_{b}P.\]

\end{lem}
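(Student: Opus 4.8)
The plan is to prove that the map $\lambda\colon\cL\ot_{\BB}\cL\to\cL\di\cL$, $\lambda(X\ot Y)=\one{X}\ot\two{X}Y$, from Definition~\ref{defHopf} is bijective by descending its bijectivity along the faithfully flat extension $P$, and then to read the displayed formula off from that descent. I write ${}^{L}\can\colon P\ot_{N}P\to\cL\di P$, $p\ot q\mapsto p\mo\di p\z q$, for the (bijective) canonical map and ${}^{L}\tau\colon\cL\to P\ot_{N}P$, $X\mapsto\tuno X\ot_{N}\tdue X$, for the translation map, so that ${}^{L}\can\circ{}^{L}\tau=(-\di 1_{P})$ by definition. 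I would use only that ${}^{L}\can$ is right $P$-linear and is $B$-linear in the sense of \eqref{Lcanlin} — hence so is its inverse — together with the coassociativity of comodules; in particular I would not invoke any of the identities for $X_{+},X_{-}$ recalled after Definition~\ref{defHopf}, since those presuppose exactly the conclusion.

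The heart of the argument is to manufacture, purely out of ${}^{L}\can$, two isomorphisms $\Phi_{1},\Phi_{2}$ from $P\ot_{N}P\ot_{N}P$ onto suitable iterated Takeuchi tensor products built from two copies of $\cL$ and one of $P$, by applying ${}^{L}\can$ twice in the two possible orders. Concretely, $\Phi_{1}$ applies ${}^{L}\can$ to the first two tensorands, uses $(\cL\di P)\ot_{N}P\cong\cL\di(P\ot_{N}P)$, and then applies $\cL\di{}^{L}\can$, giving on elements $\Phi_{1}(p\ot q\ot r)=p\mo\o\di p\mo\t q\mo\di p\z q\z r$; dually, $\Phi_{2}$ applies ${}^{L}\can$ to the last two tensorands, rebrackets $P\ot_{N}(\cL\di P)$ so as to pull the $\cL$-factor to the front, and applies ${}^{L}\can$ again, producing the same data but with the two $\cL$-slots (and their attendant $B^{e}$-module structures) in the opposite arrangement. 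A short computation with comodule coassociativity then yields $\Phi_{1}=(\lambda\di\id_{P})\circ\Psi_{0}$, where $\Psi_{0}(p\ot q\ot r)=p\mo\ot q\mo\ot p\z q\z r$ lies in $\int_{a,b}{}_{\Bar b}\cL_{\Bar a}\ot{}_{\Bar a}\cL\ot{}_{b}P$, together with $\Psi_{0}=\rho\circ\Phi_{2}$ for the natural reordering isomorphism $\rho$ interchanging the two $\cL$-slots (available because, under the standing hypotheses, the comparison maps $\alpha,\alpha'$ between iterated Takeuchi products are isomorphisms). Hence $\lambda\di\id_{P}=\Phi_{1}\circ\Phi_{2}^{-1}\circ\rho^{-1}$ is an isomorphism, and since $P$ is faithfully flat as a $B$-module the functor $-\di P$ is faithfully exact and reflects this: $\lambda$ itself is bijective, i.e.\ $\cL$ is a left Hopf algebroid, and $X_{+}\ot_{\BB}X_{-}:=\lambda^{-1}(X\di 1)$ is now defined, with $\lambda(X_{+}\ot_{\BB}X_{-})=X\di 1$.

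For the displayed identity I would set $\xi(X):=\tuno X\mo\ot\tdue X\mo\ot\tuno X\z\tdue X\z$, check that it is well defined in $\int_{a,b}{}_{\Bar b}\cL_{\Bar a}\ot{}_{\Bar a}\cL\ot{}_{b}P$, and observe that $\xi(X)=\Psi_{0}\big({}^{L}\tau(X)\ot_{N}1_{P}\big)$. Then, using the factorization $\Phi_{1}=(\lambda\di\id_{P})\circ\Psi_{0}$ from the previous step,
\[(\lambda\di\id_{P})(\xi(X))=\Phi_{1}\big(\tuno X\ot_{N}\tdue X\ot_{N}1_{P}\big)=X\di 1_{\cL}\di 1_{P},\]
the last equality using nothing but ${}^{L}\can({}^{L}\tau(X))=X\di 1_{P}$ and ${}^{L}\can(1_{P}\ot_{N}1_{P})=1_{\cL}\di 1_{P}$. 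On the other hand $(\lambda\di\id_{P})\big((X_{+}\ot_{\BB}X_{-})\di 1_{P}\big)=X\di 1_{\cL}\di 1_{P}$, and $\lambda\di\id_{P}$ is injective; comparing gives $\xi(X)=(X_{+}\ot_{\BB}X_{-})\di 1_{P}$, which is the assertion.

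The step I expect to be the real obstacle is purely organisational: one has to pin down the iterated Takeuchi tensor products precisely enough that (i) $\Phi_{1}$ and $\Phi_{2}$ really target the same object once $\rho$ is inserted, and $\rho$ genuinely is an isomorphism there — this is exactly where the standing faithful flatness assumptions, and with them the isomorphy of $\alpha$ and $\alpha'$, are indispensable; (ii) the functor $-\di P$ along which one descends honestly reflects isomorphisms; and (iii) $\xi(X)$ respects all the balancing relations defining $\int_{a,b}{}_{\Bar b}\cL_{\Bar a}\ot{}_{\Bar a}\cL\ot{}_{b}P$. The algebra, by contrast — iterating ${}^{L}\can$ and using right $P$-linearity and comodule coassociativity — is light, and, notably, uses nothing about $\cL$ beyond the defining property of ${}^{L}\tau$.
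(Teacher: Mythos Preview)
Your proposal is correct and follows essentially the same strategy as the paper: iterate the canonical map twice on $P\ot_{N}P\ot_{N}P$ in the two possible orders, relate the resulting composites via $\lambda\di\id_{P}$, and descend along faithful flatness. The paper packages your ``left-down'' composite $\Psi_{0}$ directly as $(P\ot{}^{L}\can)$ followed by a single map ${}^{L}\can_{1,3}\colon P\ot_{N}(\cL\di P)\to\cL\ot_{\BB}\cL\di P$, $p\ot X\ot q\mapsto p\mo\ot X\ot p\z q$, rather than factoring it as your $\rho\circ\Phi_{2}$; this sidesteps the ``pull the $\cL$-factor to the front'' rebracketing of $P\ot_{N}(\cL\di P)$ (which is not literally a reordering of tensor factors and is the one point where your description is a little loose), but the content is identical. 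Your derivation of the displayed formula by chasing ${}^{L}\tau(X)\ot_{N}1_{P}$ through the diagram is exactly what the paper's commutative square encodes.
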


\begin{proof}
        It is sufficient to  observe the following diagram  commutes:
     \[
\begin{tikzcd}
  &P\ot_{N} P\ot_{N}P \arrow[d, "P\ot {}^{L}\can"] \arrow[r, "{}^{L}\can\ot P"] & \cL\di{} P \ot_{N}P \arrow[dd, "\cL\ot {}^{L}\can"] &\\
  &P\ot_{N} (\cL \di{} P) \arrow[d, " {}^{L}\can_{1,3}"]\quad&\qquad&\\
   & \cL\ot_{\BB} \cL \di{}P   \arrow[r, "\lambda\ot P"] & \cL\di{} \cL \di{}P, &
\end{tikzcd}
\]
where the map ${}^{L}\can_{1,3}: P\ot_{N} (\cL \di{} P)\to \cL\ot_{\BB}\cL\di{}P$ is given by
\begin{align*}
    {}^{L}\can_{1,3}(p\ot X\ot q)=p\mo\ot X\ot p\z q.
\end{align*}
All maps in the diagram are well-defined as they can be viewed as tensor product of module maps with $P$ or $\cL$. For the map $^ L\can_{1,3}$ in particular this is due to the linearity in \eqref{Lcanlin}, up to an identification. Now by flatness all the maps except for $\lambda\otimes P$ are isomorphisms by assumption, and so by faithful flatness of $P$ we can conclude that $\lambda$ is an isomorphism as well.


\end{proof}

\begin{lem}\label{lem. left skew regular Hopf Galois induce anti-left Hopf}
    If a left $B$-bialgebroid $\cL$ admits a faithfully flat skew regular left $\cL$-Galois extension $N\subseteq P$, then $\cL$ is an anti-left Hopf algebroid. More precisely,
    \[X_{[+]}\ot 1\ot X_{[-]}=\tuno{X}\rz\mo\ot \tuno{X}\rz\z\tdue{X}\ot \tuno{X}\ro \in \int_{a,b} {}_{\Bar{a}}\cL_{b}\ot {}_{a}P\ot {}_{b}P.\]
\end{lem}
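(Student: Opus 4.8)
The plan is to mimic the proof of Lemma~\ref{lem. left Hopf Galois induce left Hopf}, replacing the left canonical map with a variant that incorporates the skew-regular structure of $P$, and then to conclude that the map $\mu$ (whose invertibility defines the anti-left Hopf property) is an isomorphism by a faithful-flatness argument. Concretely, I would build a commutative diagram in which the top and vertical arrows are all built from $\mathrm{(}^{L}\mathrm{can)}$, from the isomorphism $\phi$ of the skew-regular comodule $P$, and from identity maps tensored with $P$ or $\cL$; the only arrow not manifestly an isomorphism is $\mu\otimes P$ (or $\mu\otimes$ something faithfully flat). Since $N\subseteq P$ is faithfully flat and ${}^{L}\can$ is an isomorphism, the outer arrows will be isomorphisms, forcing $\mu$ itself to be invertible.

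First I would set up the relevant maps. Recall $\phi\colon\cL\ot^{B}P\to\cL\di P$, $\phi(X\ot p)=p\mo X\ot p\z$, is invertible with $\phi^{-1}(1\di p)=p\ro\ot^{B}p\rz$; and ${}^{L}\can\colon P\ot_{N}P\to\cL\di P$, ${}^{L}\can(p\ot_{N}q)=p\mo\di p\z q$, is invertible. The key new auxiliary map should be a three-fold analogue, say
\[
\Phi\colon P\ot_{N}(\cL\ot^{B}P)\longrightarrow \cL\ot^{B}\bigl(\cL\di P\bigr),\qquad p\ot X\ot q\mapsto p\mo X\ot\bigl(p\mt \di p\z q\bigr),
\]
(or some similar regrouping) which should be an isomorphism because it is $P\ot_{N}(-)$ applied, up to reassociation using the Takeuchi-product identifications (valid under our standing faithful flatness assumption), to the isomorphism ${}^{L}\can$ after moving a tensor factor. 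Then the target formula for $X_{[+]}\ot 1\ot X_{[-]}$ should emerge by chasing the element $1\di 1\di X$ (or $1\otimes^{B}(1\di X)$) around the diagram: applying ${}^{L}\can^{-1}$ introduces $\tuno{X}\ot_{N}\tdue{X}$, applying $\phi^{-1}$ to the $P$-factor introduces the $(-)\ro,(-)\rz$ indices, and $\mu^{-1}$ relates these to $X_{[+]}\ot^{B}X_{[-]}$ via the shorthand $X_{[-]}\ot^{B}X_{[+]}=\mu^{-1}(1\di X)$. The explicit formula $X_{[+]}\ot 1\ot X_{[-]}=\tuno{X}\rz\mo\ot\tuno{X}\rz\z\,\tdue{X}\ot\tuno{X}\ro$ is exactly what one reads off, using $\tuno{X}\z\tdue{X}\in N$-invariance (so its coaction is trivial, i.e.\ lands in $1\di P$) together with the skew-regular identity \eqref{equ. skew regular map 1}: $p\rz\mo\, p\ro\di p\rz\z=1\di p$.

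The main obstacle will be verifying that the auxiliary map $\Phi$ (and the other intermediate maps) are well defined on the various balanced and coend tensor products and that the diagram actually commutes — this requires careful bookkeeping of which $B$- or $\BB$-module structure is being used in each slot, exactly as in the paper's handling of $^{L}\can_{1,3}$ and its linearity property \eqref{Lcanlin}. In particular one must check the coend constraints: the element $\tuno{X}\rz\mo\ot\tuno{X}\rz\z\tdue{X}\ot\tuno{X}\ro$ must lie in $\int_{a,b}{}_{\Bar a}\cL_{b}\ot{}_{a}P\ot{}_{b}P$, which follows from the membership \eqref{equ. skew regular map 6} for skew-regular comodules together with the Takeuchi-membership of the coaction on $P$. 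Once well-definedness and commutativity are in place, the faithful-flatness conclusion is immediate: all arrows except the one involving $\mu$ are isomorphisms, hence so is $\mu\otimes(\text{f.\,flat})$, hence $\mu$; and the displayed formula for $\mu^{-1}(1\di X)=X_{[-]}\ot^{B}X_{[+]}$ is obtained by applying the appropriate projection (e.g.\ $\varepsilon$ on the middle $\cL$-slot, or evaluating the $P$-slots via the invariant $\tuno{X}\z\tdue{X}$) to the element chased through the diagram, exactly as the analogous formula in Lemma~\ref{lem. left Hopf Galois induce left Hopf} is extracted.
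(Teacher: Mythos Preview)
Your overall strategy—build a commutative square with $\mu\otimes P$ as one edge and isomorphisms on the other three, then invoke faithful flatness—is exactly the paper's. But you are overcomplicating the diagram. There is no need for an auxiliary $\Phi$ that mixes ${}^{L}\can$ and $\phi$ into a single three-fold map; instead the paper keeps them separate:
\[
\begin{tikzcd}
  \cL\ot^{B} P\ot_{N}P \arrow[d, "\cL\ot {}^{L}\can"'] \arrow[r, "\phi\ot P"] & \cL\di P \ot_{N}P \arrow[d, "\cL\ot {}^{L}\can"] \\
   \cL\ot^{B} \cL \di P   \arrow[r, "\mu\ot P"] & \cL\di \cL \di P.
\end{tikzcd}
\]
Commutativity is a one-line check: both composites send $X\ot p\ot q$ to $p\mt X\ot p\mo\ot p\z q$. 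The three arrows $\phi\ot P$ and the two copies of $\cL\ot{}^{L}\can$ are isomorphisms by hypothesis (using the standing flatness assumptions to pass $\cL\ot^{B}-$ and $-\ot_{N}P$ through), so $\mu\ot P$ is an isomorphism and hence $\mu$ is.

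Your proposed $\Phi\colon P\ot_{N}(\cL\ot^{B}P)\to\cL\ot^{B}(\cL\di P)$ has an awkward source (what is the left $N$-module structure on $\cL\ot^{B}P$ compatible with $\ot^{B}$?), and the formula $p\mo X\ot(p\mt\di p\z q)$ reverses the coassociativity indexing. These are fixable, but the point is that you can avoid the issue entirely: put $\phi$ on its own edge rather than fusing it with ${}^{L}\can$. The explicit formula for $X_{[+]}\ot 1\ot X_{[-]}$ then drops out by chasing $X\ot 1\di 1\in\cL\ot^{B}\cL\di P$ around the square using ${}^{L}\can^{-1}$ and $\phi^{-1}$, exactly as you anticipated.
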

\begin{proof}
    It is sufficient to observe the following diagram commute:
     \[
\begin{tikzcd}
  &\cL\ot^{B} P\ot_{N}P \arrow[d, "\cL\ot {}^{L}\can"] \arrow[r, "\phi\ot P"] & \cL\di{} P \ot_{N}P \arrow[d, "\cL\ot {}^{L}\can"] &\\
   & \cL\ot^{B} \cL \di{}P   \arrow[r, "\mu\ot P"] & \cL\di{} \cL \di{}P, &
\end{tikzcd}
\]
it is not hard to see all the maps expect for $\mu\ot P$ are well defined isomorphism, so by the faithful flatness of $P$, $\mu$ is bijective.

\end{proof}

There is a useful proposition for Hopf Galois extensions below:
\begin{prop}
    Let $N\subseteq P$ be a faithfully flat left $\cL$-Galois extension, then we have

\begin{align}\label{equ. translation map 1}
  \tuno{X}\mo \di \tuno{X}\z \ot_{N} \tdue{X} &= X\o\di{}\tuno{X\t}\ot_{N}  \tdue{X\t},\\
\label{equ. translation map 2}
~~ \tdue{X}\mo\ot{}\tuno{X}  \ot \tdue{X}\z &= X_{-}\ot{}\tuno{X_{+}}\ot\tdue{X_{+}}\in \int_{b}{}_{\Bar{b}}\cL\ot P\ot_{N} {}_{b}P,\\
\label{equ. translation map 3}
\tuno{X}\mo\di{}  \tuno{X}\z  \tdue{X} &= X\di{}1,\\
\label{equ. translation map 4}
    \tuno{p\mo}\ot_{N}\tdue{p\mo}p\z&=p\ot_{N}1,\\
\label{equ. translation map 4.5}
    nX\tuno{}\ot_{N}X\tdue{}=&
    X\tuno{}\ot_{N}X\tdue{}n,\\
\label{equ. translation map 5}
\tuno{(aX b )}\ot_{N}\tdue{(aX b)}
&=a\tuno{X}b\ot_{N}\tdue{X},\\
\label{equ. translation map 6}
\tuno{(\Bar{a}X  \Bar{b})}\ot_{N}\tdue{(\Bar{a}X  \Bar{b})}
&=\tuno{X}\ot_{N}b\tdue{X}a,\\
\label{equ. translation map 6.5}
\tuno{X}\tdue{X}
&=\varepsilon(X),\\
\label{equ. translation map 7}
\tuno{(XY)}\ot_{N}\tdue{(XY)}&=\tuno{X}\tuno{Y}\ot_{N}\tdue{Y}\tdue{X},\\
\label{equ. translation map 7.5}
\tuno{X_{+}}\ot_{N} \tuno{X_{-}}\ot_{N} \tdue{X_{-}}\tdue{X_{+}}&=\tuno{X}\ot_{N}\tdue{X}\ot_{N} 1,
\end{align}
for any $X, Y\in \cL$, $p\in P$, $n\in N$ and $a, b\in B$. If $P$ is a skew regular faithfully flat left $\cL$-Galois extension, then we have
\begin{align}
    \label{equ. skew translation map 1}
    p\rz \tuno{p\ro}\ot_{N} \tdue{p\ro}=&1\ot_{N}p,\\
    \label{equ. skew regular map 3}
        (pq)\ro\ot^{B}(pq)\rz=&q\ro p\ro\ot^{B} p\rz q\rz,\\
    \label{equ. skew translation map 2}
    \tuno{X}\ot_{N}\tdue{X}\rz\di{}\tdue{X}\ro=&\tuno{X\o}\ot_{N}\tdue{X\o}\di{}X\t,\\
    \label{equ. skew translation map 3}
    \tuno{X}\rz\ot_{N}\tdue{X}\ot{} \tuno{X}\ro=&\tuno{X_{[+]}}\ot_{N}\tdue{X_{[+]}}\ot{} X_{[-]}\in\int_{b}P_{b}\ot_{N}P\ot {}_{b}\cL,\\
    \label{equ. skew translation map 4}
    \tuno{X_{[+]}} \tuno{X_{[-]}}\ot_{N} \tdue{X_{[-]}}\ot_{N} \tdue{X_{[+]}}=&1\ot_{N}\tuno{X}\ot_{N} \tdue{X}.
\end{align}
\end{prop}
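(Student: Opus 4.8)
The plan is to prove the identities \eqref{equ. translation map 1}--\eqref{equ. translation map 7.5} in the order given, deriving each one by applying a suitable injective map to both sides and using either the definition of ${}^L\can$ and its inverse ${}^L\tau$, or an already-proved identity. Throughout, the guiding principle is that ${}^L\can$ is bijective and $P$-linear, so to prove an equality in $P\ot_N P$ (or in an iterated tensor product involving $P\ot_N P$) it suffices to apply ${}^L\can$ (resp. ${}^L\can\ot P$ or $P\ot{}^L\can$, etc.) and check the equality downstairs in spaces built from $\cL$ and $P$, where everything is governed by the coaction axioms and the left Hopf algebroid identities \eqref{equ. inverse lamda 1}--\eqref{equ. inverse lamda 10} (and, for the skew-regular part, the anti-left Hopf identities \eqref{equ. inverse mu 1}--\eqref{equ. inverse mu 10} together with Proposition~\ref{prop. properties of skew regular comodules}, Lemma~\ref{lem. anti-left Hopf gives skew regular}, and Proposition~\ref{prop. anti left and left Galois maps}).

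\textbf{The core identities.} First I would establish \eqref{equ. translation map 3} and \eqref{equ. translation map 4}, which are just the two halves of the statement ${}^L\can\circ{}^L\tau=\id$ and ${}^L\tau\circ{}^L\can|_{\text{suitable}}=\id$ unwound: \eqref{equ. translation map 3} says ${}^L\can(\tuno X\ot_N\tdue X)=X\di 1$, which is the definition of ${}^L\tau$; \eqref{equ. translation map 4} follows by applying ${}^L\can$ to both sides and using the coaction coassociativity $p\mt\ot p\mo\ot p\z$. Identity \eqref{equ. translation map 6.5} is obtained from \eqref{equ. translation map 3} by applying $\varepsilon\di P$ and the counit axiom $\varepsilon(X\mo)X\z=X$. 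The linearity identities \eqref{equ. translation map 5}, \eqref{equ. translation map 6}, \eqref{equ. translation map 4.5} come from the $B$- and $N$-linearity of ${}^L\can$ (the source/target appearing through the Takeuchi condition, cf.\ \eqref{Lcanlin}) together with injectivity of ${}^L\can$; \eqref{equ. translation map 4.5} additionally uses that $\tdue X n$ makes sense because $\tdue X\in P$ and $n\in N={}^{co\cL}P$, so ${}^L\can$ is unchanged. For the multiplicativity \eqref{equ. translation map 7}, apply ${}^L\can$ to the right-hand side: ${}^L\can(\tuno X\tuno Y\ot_N\tdue Y\tdue X)=(\tuno X\tuno Y)\mo\ot(\tuno X\tuno Y)\z\tdue Y\tdue X$; since the coaction is an algebra map this is $\tuno X\mo\tuno Y\mo\ot\tuno X\z\tuno Y\z\tdue Y\tdue X$, and using \eqref{equ. translation map 3} twice (first on $Y$, collapsing $\tuno Y\z\tdue Y$, then on $X$) this equals $(XY)\mo\ot(XY)\z=\mathrm{can}$ of the left-hand side. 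Then \eqref{equ. translation map 1} and \eqref{equ. translation map 2} express compatibility of ${}^L\tau$ with $\Delta$ and with $\lambda^{-1}$; \eqref{equ. translation map 1} follows by applying $\cL\di{}^L\can$ and using coassociativity and \eqref{equ. translation map 3}, while \eqref{equ. translation map 2} is proved by applying $\lambda\ot P$, using \eqref{X+-}, \eqref{equ. inverse lamda 1}, and \eqref{equ. translation map 1}. Finally \eqref{equ. translation map 7.5} follows from \eqref{equ. translation map 7} applied with the factorization $(X_+)(X_-)$ together with \eqref{equ. inverse lamda 8} $X_+X_-=\varepsilon(X)$ and the counit normalization, comparing in $P\ot_N P\ot_N P$ via ${}^L\can\ot_N P$.

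\textbf{The skew-regular identities.} For \eqref{equ. skew translation map 1}--\eqref{equ. skew translation map 4} I would reuse the same device but now applied to the map $\phi$ of Proposition~\ref{prop. properties of skew regular comodules} alongside ${}^L\can$. Identity \eqref{equ. skew translation map 1} says $\phi^{-1}$ is a section: apply ${}^L\can$ (with $P$ acting), or more directly apply $\phi$ in the first leg and use \eqref{equ. skew regular map 1}. For \eqref{equ. skew regular map 3}, which asserts the map $p\mapsto p\ro\ot^B p\rz$ is anti-multiplicative, apply $\phi$ to both sides and use that the coaction on the comodule algebra $P$ is an algebra map together with \eqref{equ. skew regular map 1} twice — this is the exact mirror of the argument for \eqref{equ. translation map 7}. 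Identities \eqref{equ. skew translation map 2} and \eqref{equ. skew translation map 3} relate ${}^L\tau$ to the right coaction $\delta_L(p)=p\rz\ot p\ro$ and to $\mu^{-1}$; for \eqref{equ. skew translation map 3} I expect to apply $P\ot_N P\ot\mu$ and reduce via \eqref{equ. skew regular map 7} of Lemma~\ref{lem. anti-left Hopf gives skew regular} together with Proposition~\ref{prop. anti left and left Galois maps}, while \eqref{equ. skew translation map 2} comes out of \eqref{equ. translation map 1} by post-composing with $\phi$ in the appropriate legs. Finally \eqref{equ. skew translation map 4} follows from \eqref{equ. skew translation map 3} together with \eqref{equ. translation map 7} and \eqref{equ. inverse mu 8}, comparing in $P\ot_N P\ot_N P$.

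\textbf{Main obstacle.} The routine parts are the linearity and multiplicativity identities, where the only care needed is bookkeeping of which $B$- or $\BB$-action sits on which leg (the source/target distinction) so that all balanced tensor products and Takeuchi subspaces are respected. The genuinely delicate step is \eqref{equ. skew translation map 3} (and, to a lesser extent, \eqref{equ. translation map 2} and \eqref{equ. skew translation map 4}): here one is simultaneously juggling the left translation map ${}^L\tau$, the skew-regular structure $\phi^{-1}$, and the anti-left Hopf structure $\mu^{-1}$, and the identity lives in a triple tensor product $\int_b P_b\ot_N P\ot{}_b\cL$ whose well-definedness must itself be checked; verifying that the chosen injective test map is well defined on this space — in the same style as the verification of $\phi_{(3.5)}$ in the proof of Lemma~\ref{lem. anti-left Hopf gives skew regular} and of $\lambda\circ\mathrm{flip}\ot\id$ in the proof preceding it — is where the real work lies. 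Once that map is in hand, the identity reduces, via Proposition~\ref{prop. anti left and left Galois maps}, to an identity already proved.
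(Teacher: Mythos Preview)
Your overall strategy matches the paper's: test each identity by applying an injective map built from ${}^L\can$, $\lambda$, $\mu$, or $\phi$ to both sides, reducing to the coaction axioms and the (anti-)left Hopf identities. Most of your individual sketches (for \eqref{equ. translation map 3}--\eqref{equ. translation map 7}, \eqref{equ. skew translation map 2}, \eqref{equ. skew translation map 3}) line up with what the paper does.

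There is one genuine methodological difference. For \eqref{equ. translation map 2} the paper does not proceed by applying $\lambda\ot P$; instead it equips both $P\ot_N P$ and $\cL\di P$ with left $\cL$-comodule structures, namely ${}_L\tilde\delta(p\ot_N q)=q\mo\di(p\ot_N q\z)$ and ${}_L\delta(X\di p)=X_-p\mo\di(X_+\di p\z)$, verifies (using \eqref{equ. inverse lamda 6}) that these are coactions and that ${}^L\can$ is $\cL$-colinear between them, and concludes that ${}^L\tau$ is colinear, which is exactly \eqref{equ. translation map 2}. Your direct route can be made to work, but the comodule argument is cleaner and absorbs the well-definedness checks you flag as the ``main obstacle''. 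Correspondingly, for \eqref{equ. translation map 7.5} the paper invokes \eqref{equ. translation map 2} and \eqref{equ. translation map 4} rather than \eqref{equ. translation map 7}; your route via \eqref{equ. translation map 7} and $X_+X_-=\varepsilon(X)$ also works once you apply ${}^L\can\ot_N P$ and use \eqref{equ. inverse lamda 5}, but note that \eqref{equ. translation map 7} must be read component-wise on a representative of $X_+\ot_{\BB}X_-$.

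One sketch needs correction. For \eqref{equ. skew translation map 1} your second option (``apply $\phi$ in the first leg'') does not make sense: the element lives in $P\ot_N P$, and $\phi$ has domain $\cL\ot^B P$, so there is no $\cL$-leg for $\phi$ to act on. Your first option (apply ${}^L\can$) does work, but unpacking it requires precisely the ingredients the paper uses: \eqref{equ. translation map 1} to rewrite the coaction on $\tuno{p\ro}$, then \eqref{equ. skew regular map 6} and \eqref{equ. skew regular map 1} to collapse the $\cL$-leg to $1$. The paper phrases this as first checking directly that $p\rz\tuno{p\ro}\ot_N\tdue{p\ro}\in N\ot_N P$ (same computation), and then moving the first factor across $\ot_N$ and using \eqref{equ. translation map 6.5}.
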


\begin{proof}
We can see (\ref{equ. translation map 3}) and (\ref{equ. translation map 4}) can be given by the definition of translation map. (\ref{equ. translation map 4.5}), (\ref{equ. translation map 5}) and (\ref{equ. translation map 6}) can be shown as the image of the canonical map on both hand sides of the equalities are the same.
    To show (\ref{equ. translation map 2}), we observe that $P\ot_{N}P$ is a left $\cL$-comodule with the coaction given by
    \[{}_{L}\tilde{\delta}(p\ot_{N}q)=q\mo\di{}(p\ot_{N}q\z),\]
   where the underlying $B$-bimodule structure on $P\ot_{N}P$ is given by $b.(p\ot_{N}{}q).b'=p\ot_{N}bqb',$ for any $b, b'\in B$ and $p, q\in P$, it is not hard to see the image of the coaction belongs to the Takeuchi product. We can also see $\cL\di{}P$ is  a $\cL$-comodule with the coaction given by
   \[
   {}_{L}\delta(X\di{}p)=X_{-}p\mo\di{}(X_{+}\di{}p\z),
   \]
where the underlying $B$-bimodule structure of $\cL\di P$ is given by $b.(X\di{}p).b'=X \Bar{b}\di{}pb'$, for any $b, b'\in B$, $X\in\cL$
   and $p\in P$. It is not hard to see ${}_{L}\delta$ is $B$-bimodule map with its image belonging to the Takeuchi product and ${}_{L}\delta$ also factors through all the balanced tensor products. Moreover, by using
   \[X_{+}\ot\one{X_{-}}\ot{}\two{X_{-}}=X_{++}\ot X_{-}\ot{}X_{+-},\] we can see the coaction is coassociative. It is not hard to see ${}^{L}\can$ is a $B$-bimodule map from $P\ot_{N}P$ to $\cL\di{}P$ and moreover, it is a left $\cL$-comodule map. So the translation map is a $\cL$-colinear map as well and this results (\ref{equ. translation map 2}). We can see (\ref{equ. translation map 7.5}) can be proved by (\ref{equ. translation map 2}) and (\ref{equ. translation map 4}). If $P$ is a skew regular left $\cL$-comodule, then we can see $p\rz\tuno{p\ro}\ot_{N}\tdue{p\ro}\in N\ot_{N}P$. Indeed,
   \begin{align*}
       ({}_{L}\delta\ot_{N}\id) (p\rz\tuno{p\ro}\ot_{N}\tdue{p\ro})=&p\rz\mo \tuno{p\ro}\mo\ot p\rz\z \tuno{p\ro}\z\ot \tdue{p\ro}\\
       =&p\rz\mo p\ro\o\ot p\rz\z\tuno{p\ro\t}\ot \tdue{p\ro\t}\\
       =&p\rz\rz\mo p\rz\ro\ot p\rz\rz\z\tuno{p\ro}\ot \tdue{p\ro}\\
       =&1\ot p\rz\tuno{p\ro}\ot \tdue{p\ro},
   \end{align*}
   where the 2nd step uses (\ref{equ. translation map 1}), the 3rd step uses (\ref{equ. skew regular map 6}). So we can show (\ref{equ. skew translation map 1}), since $p\rz\tuno{p\ro}\ot_{N} \tdue{p\ro}=1\ot_{N} p\rz\tuno{p\ro} \tdue{p\ro}=1\ot_{N}p$. We can show (\ref{equ. skew translation map 2}) by comparing the results of the map $\id\ot \phi$ on both sides of (\ref{equ. skew translation map 2}). By Lemma \ref{lem. left skew regular Hopf Galois induce anti-left Hopf}, $\cL$ is an anti-left Hopf algebroid, so we can show (\ref{equ. skew translation map 3}) by comparing the results of the map ${}^{L}\can\ot{} \id$ on both sides of (\ref{equ. skew translation map 3}). By applying $\id\ot {}^{L}\can$ on the left hand side of (\ref{equ. skew translation map 4}), we get
   \begin{align*}
       \tuno{X_{[+]}} &\tuno{X_{[-]}}\ot \tdue{X_{[-]}}\mo\ot \tdue{X_{[-]}}\z\tdue{X_{[+]}}\\
       =&\tuno{X_{[+]}} \tuno{X_{[-]+}}\ot X_{[-]-}\ot \tdue{X_{[-]+}}\tdue{X_{[+]}}\\
       =&\tuno{X\t{}_{[+]}} \tuno{X\t{}_{[-]}}\ot X\o\ot \tdue{X\t{}_{[-]}}\tdue{X\t{}_{[+]}}\\
       =&1\ot X\ot 1,
   \end{align*}
   where the 1st step uses (\ref{equ. translation map 2}), the 2nd step uses Proposition \ref{prop. anti left and left Galois maps}. Since we have the same result by applying on the right hand side of (\ref{equ. skew translation map 4}), so we have (\ref{equ. skew translation map 4}).
\end{proof}

Recall that for a left comodule algebra $P$ of a left $B$-bialgebroid $\cL$, a left-left (resp. left-right) relative Hopf module $M\in {}^{\cL}_{P}\M$ (resp. ${}^{\cL}\M_{P}$) is a left (resp. right) $P$-module in the category of left $\cL$-comodules. We also have generalized the theorem given by Schneider \cite{schneider}.

\begin{thm}\label{thm. fundamental theorem for left Hopf Galois extensions}
Let $\cL$ be a left $B$-bialgebroid and $N\subseteq P$ be a faithfully flat left $\cL$-Galois extension, then
\begin{align*}    {}^{\cL}_{P}\M\to {}_{N}\M,\qquad& M\mapsto {}^{co\cL}M\\
    {}_{N}\M\to {}^{\cL}_{P}\M,\qquad& \Lambda\mapsto P\ot_{N}\Lambda
\end{align*}
   are quasi-inverse category equivalences. Moreover, if $P$ is skew regular left $\cL$-comodule, then
\begin{align*}    {}^{\cL}\M_{P}\to \M_{N},\qquad& M\mapsto {}^{co\cL}M\\
    \M_{N}\to {}^{\cL}\M_{P},\qquad& \Lambda\mapsto \Lambda\ot_{N}P
\end{align*}
   are quasi-inverse category equivalences.
\end{thm}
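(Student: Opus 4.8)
The plan is to exhibit, for each of the two functor pairs, a unit and a counit natural isomorphism. First one checks the functors are well defined: $P\ot_N\Lambda$ carries a left-left relative Hopf module structure, with $P$ acting by left multiplication and coaction ${}_L\delta(p\ot_N\lambda)=p\mo\di(p\z\ot_N\lambda)$, which is well defined because the coaction of $P$ is an algebra map and $N={}^{co\cL}P$ is coinvariant; likewise $M\mapsto{}^{co\cL}M$ lands in ${}_N\M$ since ${}^{co\cL}M$ is an $N$-submodule. For the second pair one invokes Proposition~\ref{prop. properties of skew regular comodules} and Lemma~\ref{lem. left skew regular Hopf Galois induce anti-left Hopf}: since $P$ is skew regular, $P$ is also a right $\cL$-comodule, $\cL$ is an anti-left Hopf algebroid, and $N\subseteq P$ is moreover an anti-right $\cL$-Galois extension; then $\Lambda\ot_N P$ is a left-right relative Hopf module via right multiplication of $P$ and the left coaction of its $P$-factor. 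The unit of the first adjunction is $\eta_\Lambda\colon\Lambda\to{}^{co\cL}(P\ot_N\Lambda)$, $\lambda\mapsto 1\ot_N\lambda$, and the counit is $\epsilon_M\colon P\ot_N{}^{co\cL}M\to M$, $p\ot_N m\mapsto pm$; that these are morphisms in the respective categories, and natural, is routine.

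The counit isomorphism rests on the Galois condition (together with mere flatness of $P$ over $N$, not full faithful flatness). By Lemma~\ref{lem. left Hopf Galois induce left Hopf} $\cL$ is a left Hopf algebroid, so the left translation map ${}^L\tau$ is available, and I would set $\kappa_M\colon M\to P\ot_N M$, $\kappa_M(m)=\tuno{m\mo}\ot_N\tdue{m\mo}m\z$; its well-definedness over the various balanced tensor products is precisely \eqref{equ. translation map 5}--\eqref{equ. translation map 6}. Using the comodule coassociativity together with \eqref{equ. translation map 2} (and \eqref{equ. inverse lamda 2}, \eqref{equ. inverse lamda 5} to rearrange the iterated coaction) one shows $\tdue{m\mo}m\z\in{}^{co\cL}M$, so $\kappa_M$ lifts to a map $M\to P\ot_N{}^{co\cL}M$. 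Then $\epsilon_M\circ\kappa_M=\id_M$ is immediate from \eqref{equ. translation map 6.5} and the comodule counit axiom $\varepsilon(m\mo)m\z=m$, while $\kappa_M\circ\epsilon_M=\id$ follows from \eqref{equ. translation map 4}: after the injection $P\ot_N{}^{co\cL}M\hookrightarrow P\ot_N M$ (injective by flatness of $P$ over $N$) this identity becomes exactly \eqref{equ. translation map 4} acted on by $m$. Hence $\epsilon_M$ is bijective with inverse $\kappa_M$.

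The unit isomorphism is a faithfully flat descent argument. The augmented complex $\Lambda\to P\ot_N\Lambda\rightrightarrows P\ot_N P\ot_N\Lambda$, with the two parallel maps inserting a $1$ in the two available slots, becomes split exact after applying $P\ot_N(-)$ (the contracting homotopy multiplies the first two tensor factors); as $P$ is faithfully flat over $N$, the complex itself is exact, so $\Lambda$ is the equalizer of the two parallel maps. Transporting this equalizer along the isomorphism ${}^L\can\ot_N\id_\Lambda\colon P\ot_N P\ot_N\Lambda\xrightarrow{\ \sim\ }(\cL\di P)\ot_N\Lambda=\cL\di(P\ot_N\Lambda)$ --- which is bijective by the Galois hypothesis and commutes with $\ot_N\Lambda$ by the standing faithful flatness of the $B$-module structures --- the two parallel maps become $x\mapsto 1_\cL\di x$ and the relative Hopf module coaction of $P\ot_N\Lambda$; hence the equalizer is identified with ${}^{co\cL}(P\ot_N\Lambda)$, and unwinding the identifications the comparison map is $\eta_\Lambda$, which is therefore bijective. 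This proves the first equivalence. The second equivalence is obtained by the same two arguments, with the left canonical and translation maps and the identities \eqref{equ. translation map 1}--\eqref{equ. translation map 7.5} replaced throughout by the anti-right analogues for the anti-right $\cL$-Galois extension $N\subseteq P$, and the skew translation-map identities \eqref{equ. skew translation map 1}--\eqref{equ. skew translation map 4} taking over the role of the ones used above.

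I expect the main obstacle to be twofold. Conceptually it is the verification that $\tdue{m\mo}m\z\in{}^{co\cL}M$: this is the bialgebroid incarnation of the single computation at the heart of the classical structure theorem, and it must be carried out keeping track of the double coaction of $M$ in the correct one of the iterated coend spaces $\int^{\ast}\int_{\ast}\cdots$ while applying the translation-map relations. On the technical side, every map above has to be checked to descend through the intended balanced tensor product ($\di$, $\ot_B$, $\ot^B$) and to respect the Takeuchi-product constraints, and this is exactly where the standing hypothesis that all $B$- and $\BB$-module structures are faithfully flat (making $\alpha$ and $\alpha'$ isomorphisms) is used --- repeatedly, and with extra care in the skew-regular second half.
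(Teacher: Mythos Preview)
Your treatment of the first equivalence is correct and actually more detailed than the paper's: the paper just writes down the maps $\eta\mapsto 1\ot_N\eta$ and $m\mapsto\tuno{m\mo}\ot_N\tdue{m\mo}m\z$ and states they are mutually inverse to the evident candidates, leaving the faithfully-flat-descent verification of $\Lambda\cong{}^{co\cL}(P\ot_N\Lambda)$ to the reader. Your descent argument fills exactly that gap.

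The second half, however, has a genuine slip. You assert that under skew regularity $N\subseteq P$ is an anti-right $\cL$-Galois extension and propose to run the proof with the anti-right translation map. But an anti-right $\cL$-Galois extension requires $P$ to be a right $\cL$-comodule \emph{algebra}, and the right coaction $p\mapsto p\rz\di p\ro$ induced from skew regularity satisfies $(pq)\ro=q\ro p\ro$ by \eqref{equ. skew regular map 3}: it is an algebra map only for $P^{\op}$, not for $P$ (this is precisely Lemma~\ref{lem. P opposite has Galois structure}). So the object you invoke does not exist as stated. One can salvage the idea by passing to $N^{\op}\subseteq P^{\op}$ and then identifying ${}^{\cL}\M_P$ with ${}_{P^{\op}}\M^{\cL}$ via skew regularity on objects, but this detour needs to be written out (including ${}^{co\cL}M=M^{co\cL}$), and it is not what you wrote.

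The paper avoids this entirely: it keeps the \emph{left} translation map throughout and uses skew regularity only on the comodule $M$. Since $\cL$ is anti-left Hopf (Lemma~\ref{lem. left skew regular Hopf Galois induce anti-left Hopf}), every left $\cL$-comodule $M$ is skew regular (Lemma~\ref{lem. anti-left Hopf gives skew regular}), and the counit inverse is
\[
m\longmapsto m\rz\,\tuno{m\ro}\ot_N\tdue{m\ro},
\]
with the key input being the identity \eqref{equ. skew translation map 1} showing this lands in ${}^{co\cL}M\ot_N P$, together with \eqref{equ. skew translation map 2}--\eqref{equ. skew translation map 3} for well-definedness. Your overall architecture (unit via descent, counit via an explicit translation-map inverse) is right; only the second half should be rewritten along these lines rather than through a non-existent anti-right Galois structure on $P$ itself.
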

\begin{proof}
 The $\cL$-comodule and $P$-module structure on $P\ot_{N}\Lambda$ is given by the structure on $P$.     The isomorphism $\Lambda\simeq {}^{co\cL}(P\ot_{N}\Lambda)$ for any $\Lambda\in {}_{N}\M$ can be given by $\eta\mapsto 1\ot \eta$. The isomorphism $M\simeq P\ot_{N}{}^{co\cL}M$ for any $M\in {}^{\cL}_{P}\M$ can be given by $m\mapsto \tuno{m\mo}\ot_{N}\tdue{m\mo} m\z$ with inverse $p\ot_{N}\eta\mapsto p \eta$.

    In the case that $P$ is skew regular, the $\cL$-comodule and $P$-module structure on $\Lambda\ot_{N}P$ is given by the structure on $P$. The isomorphism $\Lambda\simeq {}^{co\cL}(\Lambda\ot_{N}p)$ for any $\Lambda\in \M{}_{N}$ can be given by $\eta\mapsto \eta\ot 1$. The isomorphism $M\simeq {}^{co\cL}M\ot_{N}P$ for any $M\in {}^{\cL}\M_{P}$ can be given by $m\mapsto m\rz \tuno{m\ro}\ot_{N}\tdue{m\ro}$ with inverse $\eta\ot_{N}p\mapsto \eta p$.
\end{proof}

\subsection{Anti-right Hopf Galois extensions}

We also have right comodules of left bialgebroids, with slightly different properties.

\begin{defi}
Given a left $B$-bialgebroid $\cL$,  a right $\cL$-comodule algebra $P$ is a $\BB$-ring and a right $\cL$-comodule, such that the coaction is a $\BB$-ring map. Let $M:=P{}^{co\cL}$ be the right invariant subalgebra of $P$, $M\subseteq P$ is called a anti-right $\cL$-Galois extension if the anti-right canonical map $\hat{\can}^{L}: P\ot_{M} P\to P\di{}\cL$ given by
    \[\hat{\can}^{L}(p\ot_{M}q)=p\z q\di{}p\o ,\]
    is bijective. If $M\subseteq P$ is a
anti-right $\cL$-Galois extension, the inverse of $\hat{\can}^{L}$ can be determined by the anti-right translation map:
    \begin{align}
        \hat{\tau}^{L}:=(\hat{\can}^{L})^{-1}|_{1\di{}\cL}:\cL\to P\ot_{M}P, \qquad X\mapsto \yi{X}\otimes_{M}\er{X}.
    \end{align}
     If the underlying $\cL$-comodule structure of $P$ is regular, we call $M\subseteq P$ a regular anti-right $\cL$-Galois extension. If $P$ is a faithfully flat right $M$-module, we call $M\subseteq P$ a faithfully flat $\cL$-Galois extension.
\end{defi}

\begin{lem}\label{lem. anti Galois induce anti-left Hopf}
    If a left bialgebroid $\cL$ admits an anti-right $\cL$-Galois extension $M\subseteq P$, then $\cL$ is an anti-left Hopf algebroid. More precisely,
    \[1\ot X_{[+]}\ot X_{[-]}=\yi{X}\z \er{X}\z\ot \yi{X}\o\ot \er{X}\o\in P\di \cL\ot_{B}\cL .\]
\end{lem}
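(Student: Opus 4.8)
The plan is to prove this as the ``anti-right'' counterpart of Lemma~\ref{lem. left Hopf Galois induce left Hopf} (and of Lemma~\ref{lem. left skew regular Hopf Galois induce anti-left Hopf}), with the left canonical map $^{L}\can$ replaced by the anti-right canonical map $\hat{\can}^{L}$ and the left $\cL$-comodule structure replaced throughout by the right one. Since $\cL$ is itself a right $\cL$-comodule algebra over $\BB\subseteq\cL$ with coaction $\Delta$, and the associated anti-right canonical map $\cL\ot_{B}\cL\to\cL\di{}\cL$ agrees, up to the flip of the two $\cL$-tensorands, with $\mu$, it suffices to show that $\mu$ is bijective; the displayed formula then falls out of a diagram chase. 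As a preliminary I would record that $\hat{\can}^{L}$ is right $P$-linear, $\hat{\can}^{L}((p\ot_{M}q)q')=\hat{\can}^{L}(p\ot_{M}q)q'$, and $B$-linear in the sense dual to \eqref{Lcanlin}, $\hat{\can}^{L}(p\ot_{M}q)\Bar{b}=p\z q\Bar{b}\di{}p\o$, since the image of the coaction lies in the Takeuchi product; this legitimizes tensoring $\hat{\can}^{L}$ with other modules and applying it in various tensor positions.

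The heart of the argument is the commutative diagram
\[
\begin{tikzcd}[row sep=large, column sep=large]
P\ot_{M}P\ot_{M}P \arrow[r, "{\id\ot\hat{\can}^{L}}"] \arrow[d, "{\hat{\can}^{L}\ot\id}"'] & P\ot_{M}(P\di{}\cL) \arrow[r, "{\hat{\can}^{L}_{1,3}}"] & (P\di{}\cL)\ot^{B}\cL \arrow[d, "{P\di{}\mu}"] \\
(P\di{}\cL)\ot_{M}P \arrow[rr, "{\kappa}"'] & & P\di{}\cL\di{}\cL,
\end{tikzcd}
\]
where $\hat{\can}^{L}_{1,3}\big(p\ot_{M}(q\di{}X)\big)=(p\z q\di{}X)\ot^{B}p\o$ (well defined on the balanced tensor products and landing in the Takeuchi product, by the linearity above — exactly as $^{L}\can_{1,3}$ in Lemma~\ref{lem. left Hopf Galois induce left Hopf}), $\kappa\big((a\di{}X)\ot_{M}b\big)=a\z b\di{}a\o\di{}X$ is $\hat{\can}^{L}$ applied in the first and third legs, and $P\di{}\mu$ is $\mu$ applied to the two $\cL$-legs, $(a\di{}X)\ot^{B}Y\mapsto a\di{}\one{Y}X\di{}\two{Y}$. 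Chasing $p\ot_{M}q\ot_{M}r$ around the diagram, the path through $(P\di{}\cL)\ot^{B}\cL$ produces $p\z q\z r\di{}p\o q\o\di{}p\t$ after using coassociativity of $\Delta$, and the path through $(P\di{}\cL)\ot_{M}P$ produces the same after using coassociativity of the $P$-coaction; no identity from \eqref{equ. inverse mu 1}--\eqref{equ. inverse mu 10} is used, consistently with the fact that anti-left Hopfness of $\cL$ is precisely what is being established. Now every arrow except $P\di{}\mu$ is obtained from the bijection $\hat{\can}^{L}$ and the (bijective, onto the Takeuchi product) coaction by tensoring over $M$, $B$ or $\BB$ with modules that are flat by the standing hypotheses and by faithful flatness of $P$ over $M$; hence they are isomorphisms, and therefore so is $P\di{}\mu$. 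Since $P$ is a faithfully flat $\BB$-module by the standing assumption, $P\di{}(-)$ is faithfully exact (and the standing assumptions make the iterated Takeuchi-type products associative), so $\mu$ is an isomorphism and $\cL$ is an anti-left Hopf algebroid.

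With $\mu$ invertible, $X_{[-]}\ot^{B}X_{[+]}=\mu^{-1}(1\di{}X)$ is defined, and the displayed formula is verified by applying the injective map ``$\mu$ on the two $\cL$-legs'' (injective since $\mu$ is now an isomorphism and $P\di{}(-)$ is faithful) to both sides: the left side becomes $1\di{}1\di{}X$ directly from \eqref{equ. inverse mu 1}, while the right side becomes the same after a short computation using $\hat{\can}^{L}(\yi{X}\ot_{M}\er{X})=1\di{}X$, the $\cL$-colinearity of the translation map $\hat{\tau}^{L}$ (established, as for \eqref{equ. translation map 2}, from the fact that $\hat{\can}^{L}$ is a morphism of right $\cL$-comodules and using the now-available anti-left Hopf structure of $\cL$), and \eqref{equ. inverse mu 7}--\eqref{equ. inverse mu 8}.

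The step I expect to be the main obstacle is the bookkeeping in the second paragraph: pinning down the correct $B$- and $\BB$-bimodule structures on $(P\di{}\cL)\ot_{M}P$, $P\ot_{M}(P\di{}\cL)$ and $(P\di{}\cL)\ot^{B}\cL$, verifying that $\hat{\can}^{L}_{1,3}$, $\kappa$ and $P\di{}\mu$ are well defined on the balanced tensor products and take values in the Takeuchi products, and checking that the two composites around the diagram genuinely coincide — this is where the standing faithful-flatness of all $B$- and $\BB$-module structures (making $\alpha$, $\alpha'$ invertible) and the faithful flatness of $P$ over $M$ have to be used with care.
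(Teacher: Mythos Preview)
Your approach is correct and is exactly the anti-right analogue of the proof of Lemma~\ref{lem. left Hopf Galois induce left Hopf}, which is what the paper intends (it does not write out a separate proof for this lemma). One small cleanup: the displayed formula falls out directly from the diagram chase (via the flip of the two $\cL$-tensorands you already noted) using only $\hat{\can}^{L}(\yi{X}\ot_{M}\er{X})=1\di X$ and coassociativity of the $P$-coaction --- your separate verification in the third paragraph invoking the colinearity of $\hat{\tau}^{L}$ and \eqref{equ. inverse mu 7}--\eqref{equ. inverse mu 8} is unnecessary.
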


\begin{lem}\label{lem. regular anti Galois induce left Hopf}
    If a left bialgebroid $\cL$ admits a regular anti-right $\cL$-Galois extension $M\subseteq P$, then $\cL$ is a left Hopf algebroid. More precisely,
    \[X_{+}\ot 1\ot X_{-}=\yi{X}\rz\o\ot \yi{X}\rz\z\er{X}\ot \yi{X}\rmo\in \int_{a,b}{}_{a}\cL{}_{\Bar{b}}\ot {}_{\Bar{a}}\cL\ot {}_{\Bar{b}}\cL.\]
\end{lem}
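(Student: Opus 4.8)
The plan is to reproduce, on the anti-right side, the argument used for Lemma~\ref{lem. left skew regular Hopf Galois induce anti-left Hopf}. Conceptually the cleanest route passes through the co-opposite bialgebroid $\cL^{\operatorname{cop}}$ (over $\BB$): a right $\cL$-comodule algebra becomes a left $\cL^{\operatorname{cop}}$-comodule algebra by swapping the two legs of $\delta_{L}$, the regularity map $\psi$ turns into the skew-regularity map for $\cL^{\operatorname{cop}}$, the anti-right canonical map $\hat{\can}^{L}$ turns into the left canonical map for $\cL^{\operatorname{cop}}$ with the same subalgebra of coinvariants, and ``$\cL$ is a left Hopf algebroid'' is exactly ``$\cL^{\operatorname{cop}}$ is an anti-left Hopf algebroid''; so Lemma~\ref{lem. left skew regular Hopf Galois induce anti-left Hopf} applied to $\cL^{\operatorname{cop}}$ gives the statement, and translating back the ``More precisely'' formula there yields the displayed one.

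Carrying this out by hand (the version I would actually write down) goes as follows. One builds a commutative square, analogous to the one in the proof of Lemma~\ref{lem. left skew regular Hopf Galois induce anti-left Hopf}, starting from $P\ot_{M}P\ot_{\BB}\cL$: along one composite one applies $\hat{\can}^{L}$ to the first two tensorands, landing in $(P\di\cL)\ot_{\BB}\cL$, and then applies $\lambda$ to the $\cL\ot_{\BB}\cL$ part, landing in $P\di\cL\di\cL$; along the other composite one applies $\psi$ to the last two tensorands, landing in $P\ot_{M}(P\di\cL)$, and then a $(1,3)$-type variant of $\hat{\can}^{L}$ (coact on the outer $P$, absorb the inner $P$ into it, leave the inner $\cL$ untouched), landing again in $P\di\cL\di\cL$. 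As in Lemma~\ref{lem. left skew regular Hopf Galois induce anti-left Hopf}, each arrow is a module map tensored with $P$ or $\cL$ and is well defined precisely because the coactions take values in Takeuchi products (the same point as in \eqref{Lcanlin}); all arrows except the one built from $\lambda$ are isomorphisms (the $\hat{\can}^{L}$-arrows because $\hat{\can}^{L}$ is bijective, the $\psi$-arrow because $\psi$ is bijective, and the standing faithful-flatness assumption makes $\alpha,\alpha'$ and the iterated $\di$ behave). Commutativity is a direct element computation using coassociativity of the right $\cL$-coaction on $P$ together with \eqref{equ. regular map 6}. Faithful flatness of $P$ over $M$ then forces $\lambda$, hence $\cL$ to be a left Hopf algebroid.

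For the explicit formula, having inverted the square one chases $1_{P}\ot_{M}1_{P}\ot_{\BB}X$ through it, or equivalently one computes $\lambda^{-1}(X\di 1)=X_{+}\ot_{\BB}X_{-}$ directly in terms of the anti-right translation map $\htau^{L}$ and the inverse regularity map $p\mapsto p\rz\ot_{\BB}p\rmo$: feeding $\htau^{L}(X)=\yi{X}\ot_{M}\er{X}$ into the composite and simplifying with \eqref{equ. regular map 1}, \eqref{equ. regular map 2} and \eqref{equ. regular map 6} collapses the middle tensorand to $1_{P}$ and produces $X_{+}\ot 1\ot X_{-}=\yi{X}\rz\o\ot \yi{X}\rz\z\er{X}\ot \yi{X}\rmo$. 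Alternatively, one may simply take this formula as the definition of $\lambda^{-1}$ and verify $\lambda\circ\lambda^{-1}=\id$ and $\lambda^{-1}\circ\lambda=\id$ by hand, using \eqref{equ. inverse lamda 1} and the translation-map relations; this avoids any flatness hypothesis but is more laborious.

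The step I expect to be the main obstacle is not a single computation but the bookkeeping around it: pinning down which $B$- or $\BB$-module structures sit on the various spaces $P\di\cL$, $(P\di\cL)\ot_{\BB}\cL$, $P\ot_{M}(P\di\cL)$ and $P\di\cL\di\cL$, checking that each arrow of the square factors through the correct balanced tensor product (this is where the Takeuchi-valued coactions, as in \eqref{Lcanlin}, do the work), and running the coassociativity argument that proves commutativity. Once the square is set up correctly, the flatness conclusion and the extraction of the displayed identity are routine.
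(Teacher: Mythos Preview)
Your proposal is correct and matches the paper's intended argument: the paper states Lemma~\ref{lem. regular anti Galois induce left Hopf} without proof, as the anti-right counterpart of Lemma~\ref{lem. left skew regular Hopf Galois induce anti-left Hopf}, and your co-opposite reduction is precisely the clean way to turn that analogy into a proof. The commuting-square and direct-verification alternatives you sketch are the same content unpacked by hand.

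One small point worth flagging: the lemma as stated omits the phrase ``faithfully flat'' that appears in Lemma~\ref{lem. left skew regular Hopf Galois induce anti-left Hopf}. Your square argument (and the co-opposite reduction to Lemma~\ref{lem. left skew regular Hopf Galois induce anti-left Hopf}) needs faithful flatness of $P$ over $M$ to descend from $P\ot\lambda$ to $\lambda$; only your third route, verifying directly that the displayed formula inverts $\lambda$, avoids that hypothesis. This is almost certainly an omission in the statement rather than a stronger claim, but it is good that you noticed the distinction. Also, in your description of the square be careful: the naive square with $P\ot_{M}\psi$ on one side and $\hat{\can}^{L}\ot\cL$ on the other does \emph{not} commute, because $\psi$, $\lambda$ and $\hat{\can}^{L}$ all coact on the left factor; one really does need the $(1,3)$-variant you allude to (coact on the \emph{first} $P$, multiply into the $\cL$ sitting in the third slot), which is exactly what the co-opposite translation of the map ${}^{L}\can_{1,3}$ from Lemma~\ref{lem. left Hopf Galois induce left Hopf} produces.
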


There is a useful proposition for anti-right Hopf Galois extensions below:
\begin{prop}\label{prop. left Hopf Galois extension}
    Let $M\subseteq P$ be a faithfully flat anti-right $\cL$-Galois extension, then we have

\begin{align}\label{equ. anti translation map 1}
  \yi{X}\z \ot_{M} \er{X} \ot{} \yi{X}\o &= \yi{X\o}\ot_{M}\er{X\o}\ot{}X\t\in\int_{b}{}_{\Bar{b}}P\ot_{M}P\ot {}_{b}\cL,\\
\label{equ. anti translation map 2}
~~ \yi{X}\ot_{M}\er{X}\z\di{}\er{X}\o &= \yi{X_{[+]}}\ot_{M}\er{X_{[+]}}\di{}X_{[-]},\\
\label{equ. anti translation map 3}
\yi{X}\z \er{X}\di{}\yi{X}\o &= 1\di{}X,\\
\label{equ. anti translation map 4}
    \yi{p\o}\ot_{M}\er{p\o}p\z&=p\ot_{M}1,\\
 \label{equ. anti translation map 4.5}
    mX\yi{}\ot_{M}X\er{}=&
    X\yi{}\ot_{M}X\er{}m,\\
\label{equ. anti translation map 5}
\yi{(aX b )}\ot_{M}\er{(aXb )}
&=\yi{X}\ot_{M}\Bar{b}\er{X}\Bar{a},\\
\label{equ. anti translation map 6}
\yi{(\Bar{a}X  \Bar{b})}\ot_{M}\er{(\Bar{a}X  \Bar{b})}
&=\Bar{a}\yi{X}\Bar{b}\ot_{M}\er{X},\\
\label{equ. anti translation map 6.5}
\yi{X}\er{X}
&=\overline{\varepsilon(X)},\\
\label{equ. anti translation map 7}
\yi{(XY)}\ot_{M}\er{(XY)}&=\yi{X}\yi{Y}\ot_{M}\er{Y}\er{X},\\
\label{equ. anti translation map 7.5}
\yi{X_{[+]}}\ot_{M} \yi{X_{[-]}}\ot_{M} \er{X_{[-]}}\er{X_{[+]}}&=\yi{X}\ot_{M}\er{X}\ot_{M} 1,
\end{align}
for any $X, Y\in \cL$, $p\in P$, $m\in M$ and $a, b\in B$. If $P$ is a regular right $\cL$-comodule, then we have
\begin{align}
    \label{equ. regular anti translation map 1}
    p\rz \yi{p\rmo}\ot_{M} \er{p\rmo}=&1\ot_{M}p,\\
    \label{equ. regular map 3}
        (pq)\rz\ot_{\BB}(pq)\rmo=& p\rz q\rz\ot_{\BB} q\ro p\ro\\
    \label{equ. regular anti translation map 2}
    \yi{X}\ot_{M}\er{X}\rz\di \er{X}\rmo=&\yi{X\t}\ot_{M}\er{X\t}\di X\o,\\
    \label{equ. regular anti translation map 3}
    \yi{X}\rz\ot_{M}\er{X}\ot{} \yi{X}\rmo=&\yi{X_{+}}\ot_{M}\er{X_{+}}\ot{} X_{-}\in \int_{b}P_{\Bar{b}}\ot_{M}P\ot {}_{\Bar{b}}\cL,\\
    \label{equ. regular anti translation map 4}
    \yi{X_{+}} \yi{X_{-}}\ot \er{X_{-}}\ot \er{X_{+}}=&1\ot\yi{X}\ot \er{X}.
\end{align}
\end{prop}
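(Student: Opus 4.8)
The statement to be proved is Proposition~\ref{prop. left Hopf Galois extension}, which collects a long list of identities for the anti-right translation map $\hat\tau^L$ associated to a faithfully flat anti-right $\cL$-Galois extension $M\subseteq P$. The overall strategy is the standard one for such "translation map calculus" results: each identity lives in a balanced tensor product onto which one of the canonical/Galois maps (or $\phi$, $\psi$, $\mu$, $\lambda$) acts injectively (being bijective), so it suffices to apply the appropriate bijection to both sides and check equality there, where everything unwinds to the defining relations $\hat{\can}^L(\yi{X}\ot_M\er{X})=1\di X$ together with the bialgebroid axioms and the formulas \eqref{equ. inverse mu 1}--\eqref{equ. inverse mu 10}. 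This is the exact mirror of the proof already given for Proposition on left Hopf Galois extensions (the one containing \eqref{equ. translation map 1}--\eqref{equ. skew translation map 4}), so I would organize the proof to parallel that one step by step.

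\textbf{Order of the argument.} First I would dispose of the "formal" identities that follow directly from the definition of $\hat\tau^L$ and $\hat{\can}^L$: \eqref{equ. anti translation map 3} and \eqref{equ. anti translation map 4} are immediate (they are $\hat{\can}^L$ applied to $\hat\tau^L$ and $\hat\tau^L$ applied to $\hat{\can}^L$), and \eqref{equ. anti translation map 4.5}, \eqref{equ. anti translation map 5}, \eqref{equ. anti translation map 6} follow by checking that $\hat{\can}^L$ takes the same value on both sides, using $\BB$-bilinearity and the Takeuchi condition. Identity \eqref{equ. anti translation map 6.5} comes from \eqref{equ. anti translation map 3} by applying $\varepsilon$ in the appropriate leg (or directly from the counit axiom), and \eqref{equ. anti translation map 7} follows from uniqueness of $(\hat{\can}^L)^{-1}$ by checking $\hat{\can}^L$ on the proposed expression for $\hat\tau^L(XY)$, using that $\hat{\can}^L$ is right $P$-linear and that the coaction on $P$ is a $\BB$-ring map. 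For the comodule-map identities \eqref{equ. anti translation map 1} and \eqref{equ. anti translation map 2} I would, exactly as in the left-handed proof, equip $P\ot_M P$ and $P\di\cL$ with suitable right $\cL$-comodule structures (coming from the coaction on $P$ together with, for $P\di\cL$, the anti-left Hopf structure furnished by Lemma~\ref{lem. anti Galois induce anti-left Hopf}, using \eqref{equ. inverse mu 6}), verify that $\hat{\can}^L$ is $\cL$-colinear, and conclude that $\hat\tau^L$ is colinear, which is precisely \eqref{equ. anti translation map 1}; \eqref{equ. anti translation map 2} is then obtained from this by the same kind of manipulation, and \eqref{equ. anti translation map 7.5} follows from \eqref{equ. anti translation map 1} and \eqref{equ. anti translation map 4}. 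For the last block, assuming now that the right $\cL$-comodule $P$ is regular (so Lemma~\ref{lem. regular anti Galois induce left Hopf} gives $\cL$ left Hopf and $M\subseteq P$ is automatically nice), I would prove \eqref{equ. regular map 3} from the regularity maps of Proposition on regular right comodules (the analogue of \eqref{equ. skew regular map 3}), then \eqref{equ. regular anti translation map 1} by showing $p\rz\yi{p\rmo}\ot_M\er{p\rmo}$ is $\cL$-coinvariant (applying the coaction and using \eqref{equ. anti translation map 1} and \eqref{equ. regular map 6}) and collapsing it via $p\rz\yi{p\rmo}\er{p\rmo}=p$; identities \eqref{equ. regular anti translation map 2}, \eqref{equ. regular anti translation map 3}, \eqref{equ. regular anti translation map 4} are then each verified by applying $\id\ot\psi$, $\hat{\can}^L\ot\id$, and $\id\ot\hat{\can}^L$ respectively to both sides, reducing to Proposition~\ref{prop. anti left and left Galois maps} and the mixed relations there.

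\textbf{Bookkeeping of the tensor-product placements.} The genuinely delicate part is not any single computation but making sure that every displayed element really lies in the asserted equalizer/coequalizer of balanced tensor products, and that each map I apply ($\id\ot\psi$, $\hat{\can}^L\ot\id$, $\lambda$, $\mu$, $\phi$, etc.) is well defined on the relevant space. In the non-commutative-base setting the symbols $\int^b$ and $\int_c$ do not commute, so one must track the $B$- and $\BB$-actions on every leg exactly as in \eqref{equ. inverse lamda 6}, \eqref{equ. inverse mu 6}, and Proposition~\ref{prop. anti left and left Galois maps}; the standing faithful-flatness hypothesis is what makes the maps $\alpha,\alpha'$ isomorphisms and lets us identify iterated Takeuchi products, and it is also what licenses cancelling $P$ or $\cL$ from a tensor product to deduce an equality of the remaining factors. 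I would therefore, for each identity, first state in which space it lives (copying the placement data from the statement), then name the injective map used to verify it, then reduce to a short chain of rewrites citing only earlier-displayed formulas.

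\textbf{Main obstacle.} The hard part will be \eqref{equ. regular anti translation map 3} and \eqref{equ. regular anti translation map 4}: these simultaneously involve the anti-right translation map $\hat\tau^L$, the left-Hopf structure $(-)_+,(-)_-$ (available only because $P$ is assumed regular, via Lemma~\ref{lem. regular anti Galois induce left Hopf}), and the regular-comodule data, so applying the chosen bijection does not immediately collapse things — one has to feed in the compatibility identities of Proposition~\ref{prop. anti left and left Galois maps} between $(-)_+,(-)_-$ and $(-)_{[+]},(-)_{[-]}$ and keep the three tensor legs in their correct Takeuchi positions throughout. Everything else is a faithful mirror image of the already-written left-handed proposition, so I would present those parts tersely ("by the same argument as ...") and spend the detail budget on this last pair.
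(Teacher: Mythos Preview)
Your proposal is correct and matches the paper's approach exactly: the paper gives no proof at all for this proposition, leaving it implicit that one mirrors the argument already written for the left-handed Proposition (the one containing \eqref{equ. translation map 1}--\eqref{equ. skew translation map 4}), which is precisely what you outline. One small labeling slip: in your sketch the comodule-colinearity argument involving the anti-left Hopf structure on $P\di\cL$ (via \eqref{equ. inverse mu 6}) yields \eqref{equ. anti translation map 2}, not \eqref{equ. anti translation map 1} (the latter is the easier one, needing only the coaction on the first factor), and correspondingly \eqref{equ. anti translation map 7.5} follows from \eqref{equ. anti translation map 2} and \eqref{equ. anti translation map 4}, not from \eqref{equ. anti translation map 1}; this mirrors exactly how \eqref{equ. translation map 7.5} was derived from \eqref{equ. translation map 2} and \eqref{equ. translation map 4} in the left-handed case.
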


For anti-right Hopf Galois extensions, we also have generalized the theorem in \cite{schneider}.
\begin{thm}\label{thm. fundamental theorem for anti-right Hopf Galois extensions}
Let $\cL$ be a left $B$-bialgebroid and $N\subseteq P$ be a faithfully flat anti-right $\cL$-Galois extension, then
\begin{align*}    {}_{P}\M^{\cL}\to {}_{N}\M,\qquad& M\mapsto M^{co\cL}\\
    {}_{N}\M\to {}_{P}\M^{\cL},\qquad& \Lambda\mapsto P\ot_{N}\Lambda
\end{align*}
   are quasi-inverse category equivalences. Moreover, if $P$ is regular right $\cL$-comodule, then
\begin{align*}    \M^{\cL}_{P}\to \M_{N},\qquad& M\mapsto M^{co\cL}\\
    \M_{N}\to \M^{\cL}_{P},\qquad& \Lambda\mapsto \Lambda\ot_{N}P
\end{align*}
   are quasi-inverse category equivalences.
\end{thm}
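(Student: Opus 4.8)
The plan is to mirror the proof of Theorem \ref{thm. fundamental theorem for left Hopf Galois extensions}, transported through the (anti-right) translation map $\htau^{L}$ and its properties collected in Proposition \ref{prop. left Hopf Galois extension}. For the first equivalence, given $\Lambda\in{}_{N}\M$, the module $P\ot_{N}\Lambda$ carries the left $\cL$-coaction and left $P$-module structure coming from $P$ in the first tensor factor; one checks directly that ${}^{co\cL}(P\ot_{N}\Lambda)\cong\Lambda$ via $\eta\mapsto 1\ot_{N}\eta$, the inverse being $p\ot_{N}\eta\mapsto \varepsilon(p\mo)\,p\z\eta$ — here faithful flatness is used to identify ${}^{co\cL}(P\ot_N\Lambda)$ with $N\ot_N\Lambda$, exactly as in the case $B=k$. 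Conversely, for $M\in{}_{P}\M^{\cL}$ one defines $M\to P\ot_{N}{}^{co\cL}M$ by $m\mapsto \yi{m\o}\ot_{N}\er{m\o}m\z$ and $p\ot_{N}\eta\mapsto p\eta$ for the other direction; that $\yi{m\o}\ot_N\er{m\o}m\z$ lands in $P\ot_N{}^{co\cL}M$ follows from \eqref{equ. anti translation map 4} and the coassociativity/colinearity of the coaction on $M$, and that the two maps are mutually inverse follows from \eqref{equ. anti translation map 3} and \eqref{equ. anti translation map 4} together with the $\cL$-colinearity of $\hat\can^{L}$. Naturality in $\Lambda$ and $M$ is then routine.

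For the second equivalence one argues the same way but now using that $P$ is a \emph{regular} right $\cL$-comodule, so by Lemma \ref{lem. regular anti Galois induce left Hopf} $\cL$ is in addition a left Hopf algebroid and the extra identities \eqref{equ. regular anti translation map 1}–\eqref{equ. regular anti translation map 4} are available. Given $\Lambda\in\M_{N}$, the module $\Lambda\ot_{N}P$ becomes an object of $\M^{\cL}_{P}$ via the right $P$-module structure and the right $\cL$-coaction on the second factor; the unit isomorphism ${}^{co\cL}(\Lambda\ot_N P)\cong\Lambda$ is $\eta\mapsto\eta\ot_N 1$, again invoking faithful flatness to reduce the fixed points to $\Lambda\ot_N N$. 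For $M\in\M^{\cL}_{P}$ the counit isomorphism $M\cong {}^{co\cL}M\ot_{N}P$ is $m\mapsto m\rz\yi{m\rmo}\ot_{N}\er{m\rmo}$, with inverse $\eta\ot_{N}p\mapsto\eta p$; that the image lies in ${}^{co\cL}M\ot_N P$ is the content of the computation analogous to the one establishing \eqref{equ. skew translation map 1}, using \eqref{equ. regular anti translation map 1}, \eqref{equ. regular map 6} and \eqref{equ. regular anti translation map 3}, and the inversion is \eqref{equ. regular anti translation map 1} together with the defining property of $\htau^{L}$.

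I expect the main obstacle to be the bookkeeping of the various $B$- versus $\BB$-module structures and the Takeuchi-product constraints: one must verify at each stage that the claimed structure maps on $P\ot_{N}\Lambda$, $\Lambda\ot_{N}P$ and on the fixed-point spaces are well defined over the correct balanced tensor products, that the coactions have image in the appropriate Takeuchi product, and that ${}^{co\cL}M$ is genuinely an $N$-submodule on the correct side — these are precisely the points where the Hopf algebroid case departs from the classical Hopf algebra proof, and where faithful flatness (ensuring $\alpha,\alpha'$ are isomorphisms, and that $(-)^{co\cL}$ commutes with the relevant tensor products) does the real work. Once these well-definedness checks are in place, the algebraic identities needed for the isomorphisms are immediate from Proposition \ref{prop. left Hopf Galois extension}, so no further substantial computation is required.
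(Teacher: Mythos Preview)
Your approach is essentially the same as the paper's: the paper also writes down exactly the maps $m\mapsto \yi{m\o}\ot_{N}\er{m\o}m\z$ and $m\mapsto m\rz\yi{m\rmo}\ot_{N}\er{m\rmo}$ with inverses $p\ot_N\eta\mapsto p\eta$ and $\eta\ot_N p\mapsto\eta p$, and appeals to Proposition \ref{prop. left Hopf Galois extension} for the verifications.

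One correction, however: in your treatment of the first equivalence you systematically slip into \emph{left}-comodule language. In the anti-right setting $P$ is a \emph{right} $\cL$-comodule algebra, so $P\ot_N\Lambda$ inherits a right $\cL$-coaction (not a left one), the coinvariants are $(P\ot_N\Lambda)^{co\cL}$ (not ${}^{co\cL}(P\ot_N\Lambda)$), and the inverse of $\eta\mapsto 1\ot_N\eta$ should be written with right Sweedler indices, e.g.\ $p\ot_N\eta\mapsto \overline{\varepsilon(p\o)}\,p\z\,\eta$, rather than $\varepsilon(p\mo)\,p\z\,\eta$. These are transcription errors from the left-side proof rather than a gap in the argument, but they should be fixed before the proposal compiles as a proof; the second half of your sketch is already in the correct right-comodule conventions.
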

\begin{proof}
    The isomorphism $\Lambda\simeq (P\ot_{N}\Lambda)^{co\cL}$ for any $\Lambda\in \M_{N}$ can be given by $\eta\mapsto 1\ot \eta$. The isomorphism $M\simeq P\ot_{N}M^{co\cL}$ for any $M\in {}^{\cL}\M_{P}$ can be given by $m\mapsto \yi{m\o}\ot_{N}\er{m\o}m\z$ with inverse $p\ot_{N}\eta\mapsto p \eta$.

    In the case that $P$ is regular,  The isomorphism $\Lambda\simeq (\Lambda\ot_{N}P)^{co\cL}$ for any $\Lambda\in \M{}_{N}$ can be given by $\eta\mapsto \eta\ot 1$. The isomorphism $M\simeq M^{co\cL}\ot_{N}P$ for any $M\in {}^{\cL}\M_{P}$ can be given by $m\mapsto m\rz \yi{m\rmo}\ot_{N}\er{m\rmo}$ with inverse $\eta\ot_{N}p\mapsto \eta p$.
\end{proof}

\begin{lem}\label{lem. P opposite has Galois structure}
    Let $\cL$ be a left $B$-bialgebroid and $N\subseteq P$ be a skew regular left $\cL$-Galois extension, then $N^{op}\subseteq P^{op}$ is a regular anti-right $\cL$-Galois extension. More precisely, the right coaction is given by $\delta_{\cL}:p\mapsto p\rz\di{}p\ro$, and the anti-right translation map is
    \[\hat{\tau}^{\cL}:X\mapsto X\tdue{}\ot_{N^{op}}X\tuno{},\]
    for any $p\in P$ and $X\in \cL$.

    Similarly, if $M\subseteq P$ is a regular anti-right $\cL$-Galois extension, then $M^{op}\subseteq P^{op}$ is a skew regular left Hopf Galois extension of $\cL$. More precisely, the left coaction is given by ${}_{\cL}\delta: p\to p\rmo\di{}p\rz$, and the left translation map is
    \[{}^{\cL}\tau: X\to X\er{}\ot_{M^{op}}X\yi,\]
    for any $p\in P$ and $X\in \cL$.
\end{lem}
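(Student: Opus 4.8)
The plan is to realize the anti-right canonical map $\hat{\can}^{L}$ of $P^{op}$ as a composite of bijections --- the left canonical map ${}^{L}\can$ of $P$, the inverse $\phi^{-1}$ of the skew-regular structure map, and two ``transposition'' isomorphisms that only interchange tensor factors --- so that bijectivity of $\hat{\can}^{L}$, regularity of the right $\cL$-comodule $P^{op}$, and the formula for $\hat{\tau}^{\cL}$ all drop out at once; the second assertion then follows by the mirror argument.

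First I would assemble the ingredients. By Proposition~\ref{prop. properties of skew regular comodules}, $P$ is a right $\cL$-comodule with coaction $\delta_{\cL}(p)=p\rz\di p\ro$ and $\BB$-bimodule structure $\Bar b\,p\,\Bar{b'}:=b'pb$, which makes $P^{op}$ a $\BB$-bimodule; this is precisely the $\BB$-bimodule structure underlying the $\BB$-ring $P^{op}$ with structure map $\Bar b\mapsto\eta(b)$, and one checks via \eqref{equ. skew regular map 3} (multiplicativity) and $\delta_{P}(1)=1\di1$ (unit) that $\delta_{\cL}$ is a $\BB$-ring map, so $P^{op}$ is a right $\cL$-comodule algebra. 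From \eqref{equ. skew regular map 1}, i.e.\ $p\rz\mo p\ro\di p\rz\z=1\di p$, and the well-defined endomorphism $Y\di w\mapsto YX\di w$ of $\cL\di P$, one gets $\phi(p\ro X\ot^{B}p\rz)=p\rz\mo p\ro X\di p\rz\z=X\di p$, hence $\phi^{-1}(X\di p)=p\ro X\ot^{B}p\rz$. Finally introduce the transpositions
\[
f\colon\cL\ot^{B}P\xrightarrow{\ \sim\ }P^{op}\di\cL,\quad X\ot p\mapsto p\di X,\qquad
g\colon P^{op}\ot_{\BB}\cL\xrightarrow{\ \sim\ }\cL\di P,\quad p\ot X\mapsto X\di p,
\]
which are well defined precisely because the $\BB$-bimodule structure on $P^{op}$ comes from that of $P$ (so that $p\,\Bar b=\eta(b)\,p$ inside $P$) and the coactions land in the respective Takeuchi products. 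Then $f\circ\phi^{-1}\circ g$ is the map $p\ot X\mapsto p\rz\di p\ro X$, which is exactly the structure map of $P^{op}$ as a right $\cL$-comodule; being a composite of bijections it proves $P^{op}$ \emph{regular}.

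Next I would determine the coinvariants and the canonical map. Since $\delta_{\cL}(p)=f(\phi^{-1}(1\di p))$ with $f,\phi$ bijective, one has $p\in(P^{op})^{co\cL}$ iff $\phi^{-1}(1\di p)=1\ot^{B}p$ iff $\phi(1\ot^{B}p)=1\di p$ iff $p\mo\di p\z=1\di p$ iff $p\in{}^{co\cL}P=N$; hence the right-invariant subalgebra of $P^{op}$ is $N^{op}$. With $h\colon P\ot_{N}P\xrightarrow{\ \sim\ }P^{op}\ot_{N^{op}}P^{op}$, $p\ot q\mapsto q\ot_{N^{op}}p$, the composite $C:=f\circ\phi^{-1}\circ{}^{L}\can\circ h^{-1}$ is a bijection $P^{op}\ot_{N^{op}}P^{op}\to P^{op}\di\cL$, and I claim $C=\hat{\can}^{L}$. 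Both are right $P^{op}$-linear (for $C$ this uses that $h^{-1}$ sends the right $P^{op}$-action to the left $P$-action on the first tensorand, that ${}^{L}\can$ is left $P$-linear for the module structure $r\cdot(Y\di w)=r\mo Y\di r\z w$ on $\cL\di P$, and that $\phi$, hence $\phi^{-1}$, and $f$ intertwine these --- all immediate from multiplicativity and Takeuchi-compatibility of the coaction of $P$), so it suffices to compare them on the generators $p\ot_{N^{op}}1$: there $\hat{\can}^{L}(p\ot_{N^{op}}1)=\delta_{\cL}(p)$ while $C(p\ot_{N^{op}}1)=f(\phi^{-1}({}^{L}\can(1\ot_{N}p)))=f(\phi^{-1}(1\di p))=f(p\ro\ot^{B}p\rz)=p\rz\di p\ro=\delta_{\cL}(p)$. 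Thus $N^{op}\subseteq P^{op}$ is a regular anti-right $\cL$-Galois extension. Restricting $C^{-1}=h\circ{}^{L}\can^{-1}\circ\phi\circ f^{-1}$ to $1\di\cL$: $f^{-1}(1\di X)=X\ot^{B}1\mapsto\phi(X\ot^{B}1)=X\di1\mapsto{}^{L}\tau(X)=\tuno{X}\ot_{N}\tdue{X}\mapsto\tdue{X}\ot_{N^{op}}\tuno{X}$, which is $\hat{\tau}^{\cL}(X)=X\tdue{}\ot_{N^{op}}X\tuno{}$, as claimed.

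The second statement follows by the symmetric argument, now starting from a regular anti-right $\cL$-Galois extension $M\subseteq P$: replace $\phi^{-1}$ by $\psi^{-1}$ (with formula read off from \eqref{equ. regular map 1} in the same way), ${}^{L}\can$ by $\hat{\can}^{L}$, and \eqref{equ. skew regular map 1}, \eqref{equ. skew regular map 3} by \eqref{equ. regular map 1}, \eqref{equ. regular map 3}; this realizes the left canonical map of $P^{op}$ as a composite of bijections, giving the left coaction ${}_{\cL}\delta(p)=p\rmo\di p\rz$, left translation map $X\mapsto X\er{}\ot_{M^{op}}X\yi{}$, and skew regularity of $P^{op}$. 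The step I expect to cost the most care is the one in the second paragraph --- checking that $f$, $g$, $h$ really are well defined on the balanced tensor products and that $\phi^{-1}(X\di p)=p\ro X\ot^{B}p\rz$, i.e.\ keeping the interlocking $B$- and $\BB$-bimodule structures straight; once this is in place, the rest reduces to \eqref{equ. skew regular map 1}--\eqref{equ. skew regular map 3} and the definitions.
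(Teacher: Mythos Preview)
Your proof is correct, and it takes a genuinely different route from the paper's.

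The paper argues more computationally: after observing (via Proposition~\ref{prop. properties of skew regular comodules}) that $\delta_\cL$ is a $\BB$-ring map, it checks the coinvariants directly using \eqref{equ. skew regular map 1} and \eqref{equ. skew regular map 2}, and then verifies that the proposed $\hat\tau^{\cL}(X)=\tdue{X}\ot_{N^{op}}\tuno{X}$ really is the anti-right translation map by appealing to the identities \eqref{equ. skew translation map 1} and \eqref{equ. skew translation map 2} established earlier for skew regular Galois extensions; regularity of $P^{op}$ is then simply noted as coming from the original left coaction. Your argument is more structural: you never invoke \eqref{equ. skew translation map 1} or \eqref{equ. skew translation map 2}, but instead factor $\hat{\can}^{L}$ as $f\circ\phi^{-1}\circ{}^{L}\can\circ h^{-1}$ and $\psi$ as $f\circ\phi^{-1}\circ g$, so that bijectivity, regularity, and the translation-map formula all follow at once from the bijectivity of the ingredients together with \eqref{equ. skew regular map 1}, \eqref{equ. skew regular map 2}, and \eqref{equ. skew regular map 3}. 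The paper's approach is shorter on the page because it cashes in identities already proved; your approach makes the passage from the left to the anti-right picture transparently a matter of transposition and $\phi^{-1}$, and in particular explains \emph{why} the translation map is the flip of ${}^{L}\tau$ rather than merely verifying it.
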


\begin{proof}
    We only show the first half of the lemma. By Proposition \ref{prop. properties of skew regular comodules}, the coaction is a $\BB$-ring map. Moreover, we can see the right coinvariant subalgebra is $N^{op}$. More precisely, if $n\in N^{op}$, then
    \begin{align*}
        n\rz\di{}n\ro=n\z\rz\di{} n\z\ro n\mo=n\di{} 1.
    \end{align*}
    Conversely, if $m\in (P^{op})^{co\cL}$, then
    \begin{align*}
        m\mo\di{}m\z=m\rz\mo m\ro\di{}m\rz\z=1\di{}m.
    \end{align*}
    By (\ref{equ. skew translation map 1}) and (\ref{equ. skew translation map 2}), we can check $\hat{\tau}^{\cL}$ is the anti-right translation map. The regular right comodule structure is given by the original left $\cL$-coaction.
\end{proof}

\section{Drinfeld twist of Hopf algebroids}\label{sec. Drinfeld twist of Hopf algebroids}
In this section, we will study the Drinfeld twist theory of Hopf Galois extensions.

\begin{defi}\label{def. left handed 2 cocycle} cf. \cite{HM22, HM23,Boehm}
Let $\cL$ be a left $B$-bialgebroid. An \textup{invertible normalised 2-cocycle} on
$\cL$ is a convolution invertible element $\Gamma\in {}_{B^{e}}\Hom(\cL\otimes_{B^e}\cL, B)$ such that
\begin{align*}(i)\quad &\Gamma(X, \Gamma(\one{Y}, \one{Z})\two{Y}\two{Z})=\Gamma(\Gamma(\one{X}, \one{Y})\two{X}\two{Y}, Z),\\
(ii)\quad
 &\Gamma(1_{\cL}, X)=\varepsilon(X)=\Gamma(X, 1_{\cL}),\\
(iii)\quad &\Gamma(X, Yb)=\Gamma(X, Y\Bar{b})\end{align*}
for all $X, Y, Z\in \cL$. The collection of such 2-cocycles of $\cL$ over $B$ will be denoted $Z^{2}(\cL, B)$.
\end{defi}
Here (iii) implies the minimal condition in \cite{HM22} that $\Gamma^{-1}$ should be a right-handed 2-cocycle in the sense
\begin{align}\label{equ. right handed 2-cocycle}
    &\Gamma^{-1}(X, \overline{\Gamma^{-1}(\two{Y}, \two{Z})}\one{Y}\one{Z})=\Gamma^{-1}(\overline{\Gamma^{-1}(\two{X}, \two{Y})}\one{X}\one{Y}, Z),\\ &\Gamma^{-1}(1_{\cL}, X)=\varepsilon(X)=\Gamma^{-1}(X, 1_{\cL}),\\
    &\Gamma^{-1}(X, Yb)=\Gamma^{-1}(X, Y\Bar{b})
\end{align}
and also implies that $\Gamma^{-1}$ obeys (ii).
By \cite{HM22}, we have
\begin{lem}\label{lemma. 2 cocycle and its inverse}
Let $\cL$ be a left bialgebroid and $\Gamma\in {}_{B^{e}}\Hom(\cL\otimes_{B^e}\cL, B)$ be an invertible normalised  2-cocycle with inverse $\Gamma^{-1}$. Then
\begin{align}
    \Gamma(\one{X}, \one{Y}\one{Z})\Gamma^{-1}(\two{X}\two{Y}, \two{Z})&=\Gamma(X\Gamma^{-1}(\one{Y}, Z), \two{Y}),\label{equ. 2 cocycle and its inverse1}\\
    \Gamma(\one{X}\one{Y}, \one{Z})\Gamma^{-1}(\two{X}, \two{Y} \two{Z})&=\Gamma^{-1}(X, \overline{\Gamma(\two{Y}, Z)}\one{Y}).\label{equ. 2 cocycle and its inverse2}
\end{align}
\end{lem}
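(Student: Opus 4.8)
\emph{Proof proposal.}
Both identities are formal consequences of the cocycle condition (i) of Definition~\ref{def. left handed 2 cocycle} together with the convolution-invertibility of $\Gamma$; the cleanest organisation I see is the Hopf-algebroid version of the classical Doi computation for $2$-cocycles, phrased via a truncated Amitsur/cobar complex. Work in the convolution monoid $C$ of $B^e$-bimodule maps $\cL\otimes_{B^e}\cL\otimes_{B^e}\cL\to B$, with product $(f\star g)(X,Y,Z)=f(\one X,\one Y,\one Z)\,g(\two X,\two Y,\two Z)$ and the counit-type unit; associativity is coassociativity of $\Delta$, and well-definedness over the balanced tensor products uses $B^e$-bilinearity together with condition (iii) (and its $\Gamma^{-1}$-version \eqref{equ. right handed 2-cocycle}). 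Introduce coface maps $\partial_0,\partial_1,\partial_2,\partial_3\colon{}_{B^e}\Hom(\cL\otimes_{B^e}\cL,B)\to C$: $\partial_1 f(X,Y,Z)=f(XY,Z)$, $\partial_2 f(X,Y,Z)=f(X,YZ)$, and $\partial_0$, $\partial_3$ absorb the left, resp.\ right, outer variable into the counit via $\hat\varepsilon$ (with the placement of $s$, $t$, $\varepsilon$ chosen so that each $\partial_i$ becomes a unital homomorphism of convolution monoids). Granting that, $\partial_i\Gamma^{-1}$ is the two-sided $\star$-inverse of $\partial_i\Gamma$.

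The point is that condition (i), once the bars are untangled, says precisely $\partial_0\Gamma\star\partial_2\Gamma=\partial_3\Gamma\star\partial_1\Gamma$ in $C$. For \eqref{equ. 2 cocycle and its inverse1} I would $\star$-multiply this on the left by $\partial_0\Gamma^{-1}$ and on the right by $\partial_1\Gamma^{-1}$; by $\partial_0\Gamma^{-1}\star\partial_0\Gamma=\mathrm{unit}$ and $\partial_1\Gamma\star\partial_1\Gamma^{-1}=\mathrm{unit}$ this collapses to $\partial_2\Gamma\star\partial_1\Gamma^{-1}=\partial_0\Gamma^{-1}\star\partial_3\Gamma$, which is exactly \eqref{equ. 2 cocycle and its inverse1} written out. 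For \eqref{equ. 2 cocycle and its inverse2} I would instead $\star$-multiply on the left by $\partial_3\Gamma^{-1}$ and on the right by $\partial_2\Gamma^{-1}$, landing on $\partial_1\Gamma\star\partial_2\Gamma^{-1}=\partial_3\Gamma^{-1}\star\partial_0\Gamma$; this is the second identity. Concretely this is the ``insert $\Gamma\star\Gamma^{-1}=\mathrm{unit}$ in the right place inside condition (i) and cancel'' trick, so one can equally well run it by bare hands without the cobar language.

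The algebraic skeleton above is classical; the genuine content, and the only place the Hopf-algebroid structure intervenes, is the $B$/$\overline B$-module bookkeeping, which I expect to be the main obstacle. Three points need care. (a) One must fix the unit of $C$ and the exact definitions of $\partial_0,\partial_3$ (the correct $s$/$t$ placements) so that $\star$ is well defined over each $\otimes_{B^e}$ and all the $\partial_i$ are unital algebra maps; this is where \eqref{equ. right handed 2-cocycle} and the counit axioms $\varepsilon(\one X)\two X=X=\overline{\varepsilon(\two X)}\one X$ get used. (b) One must check that $\Gamma(X,\Gamma(\one Y,\one Z)\two Y\two Z)$ really equals $(\partial_0\Gamma\star\partial_2\Gamma)(X,Y,Z)$ on the nose, i.e.\ that the scalar $\Gamma(\one Y,\one Z)\in B$ sits on the side forced by left $B^e$-linearity; condition (iii), $\Gamma(X,Yb)=\Gamma(X,Y\overline b)$, is exactly what makes this go through, and symmetrically on the right. (c) When reading off the conclusions one must recognise $\Gamma(X\Gamma^{-1}(\one Y,Z),\two Y)$ and $\Gamma^{-1}(X,\overline{\Gamma(\two Y,Z)}\one Y)$ as the ``scalar pushed back inside a $\Gamma^{\pm1}$'' forms of $\partial_0\Gamma^{-1}\star\partial_3\Gamma$ and $\partial_3\Gamma^{-1}\star\partial_0\Gamma$, the overlines being precisely the bookkeeping device that places $\Gamma^{\pm1}(\cdot,\cdot)\in B$ on the correct side. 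With (a)--(c) in place, both displayed equalities are immediate.
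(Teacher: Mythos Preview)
Your approach is correct. The paper does not actually prove this lemma; it simply attributes it to \cite{HM22} and states the result, so there is no proof to compare against in the present text. The convolution-monoid reformulation you give is the standard and correct way to organise the argument: once the cocycle condition (i) is recognised as $\partial_0\Gamma\star\partial_2\Gamma=\partial_3\Gamma\star\partial_1\Gamma$ in the convolution algebra on $\cL^{\otimes_{B^e}3}$, both identities follow by the one-sided cancellations you describe, and your points (a)--(c) are exactly the places where the $B^e$-linearity, condition (iii), and the counit identities enter. The argument in \cite{HM22} is essentially the ``bare hands'' version you allude to at the end of your second paragraph --- insert $\Gamma\star\Gamma^{-1}$ (or $\Gamma^{-1}\star\Gamma$) at the right spot inside (i) and cancel --- so your cobar framing is a tidier packaging of the same computation rather than a genuinely different route.
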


Let $\cL$ be a left $B$-bialgebroid and $\Gamma\in {}_{B^{e}}\Hom(\cL\otimes_{B^e}\cL, B)$ be an invertible normalised 2-cocycle, we can define a new left $B$-bialgebroid $\cL^\Gamma$ with twisted product
\begin{align}\label{twistprod}
    X\cdot_{\Gamma} Y:=\Gamma(\one{X}, \one{Y})\overline{\Gamma^{-1}(\three{X}, \three{Y})}\two{X}\two{Y},
\end{align}
 and the original coproduct, counit, source, target maps.

\begin{lem}
    Let $\cL$ be a left bialgebroid over $B$, and $\Gamma$ be an invertible normalized 2-cocycle on $\cL$. Then the category of left $\cL$-comodule ${}^{\cL}\mathcal{M}$ is equivalent to the category of left $\cL^{\Gamma}$-comodule ${}^{\cL^{\Gamma}}\mathcal{M}$ as monoidal category.
\end{lem}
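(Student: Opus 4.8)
The plan is to construct an explicit monoidal functor that is the identity on underlying objects and morphisms, and whose monoidal structure is built from the $2$-cocycle $\Gamma$. First I would define, for a left $\cL$-comodule $(P,{}_L\delta)$, the \emph{same} $B$-bimodule $P$ with the \emph{same} coaction $p\mapsto p\mo\ot p\z$, and check that this is a left $\cL^{\Gamma}$-comodule. Since $\cL^{\Gamma}$ and $\cL$ have the same underlying coalgebra (same coproduct, counit, source, target), the coassociativity and counitality axioms of the coaction are literally unchanged, so nothing needs to be verified here; the functor on objects and morphisms is the identity. The content is entirely in the monoidal structure.

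Next I would supply the tensor constraint. For $\cL$-comodules $P,Q$, the tensor product $P\ot_B Q$ carries the diagonal coaction $p\ot q\mapsto \one{p\mo}\one{q\mo}\ot (p\z\ot q\z)$ (using that the product of $\cL$ descends, and that the image of the diagonal coaction lands in the Takeuchi product). In $\cL^{\Gamma}$ the same formula with $\cdot_\Gamma$ in place of the product defines the tensor product in ${}^{\cL^{\Gamma}}\mathcal{M}$. The natural transformation $\xi_{P,Q}\colon (P\ot_B Q)^{\cL} \to (P\ot_B Q)^{\cL^{\Gamma}}$ (identity on underlying spaces) should be accompanied by a twist in the \emph{other} direction on one of the coaction slots; concretely one checks that the $\cL^{\Gamma}$-diagonal coaction, written out via \eqref{twistprod}, differs from the $\cL$-diagonal coaction by the cocycle, and that the associativity of this discrepancy is exactly cocycle condition (i) of Definition~\ref{def. left handed 2 cocycle} (together with Lemma~\ref{lemma. 2 cocycle and its inverse} to move between $\Gamma$ and $\Gamma^{-1}$), while compatibility with the unit object $B$ (with its trivial coaction via $s$) is the normalization condition (ii). Thus the hexagon/pentagon coherence diagrams for the monoidal functor reduce precisely to the $2$-cocycle identities, and the map $\xi_{P,Q}$ is invertible because $\Gamma$ is convolution invertible, with inverse expressed through $\Gamma^{-1}$. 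The inverse functor is the analogous construction with $\Gamma^{-1}$, which is a $2$-cocycle by the discussion around \eqref{equ. right handed 2-cocycle}.

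I expect the main obstacle to be purely bookkeeping rather than conceptual: one must be careful that all the maps $\xi_{P,Q}$ are well defined on the balanced tensor products $\ot_B$ and that their images genuinely lie in the Takeuchi products, which requires repeatedly invoking condition (iii) of Definition~\ref{def. left handed 2 cocycle} (the $\Gamma(X,Yb)=\Gamma(X,Y\bar b)$ compatibility) exactly as it was used to pass from (iii) to the right-handed cocycle identities. The second delicate point is orienting the cocycle correctly: because $\cL^{\Gamma}$ twists the product using $\Gamma$ on the first tensor factors and $\overline{\Gamma^{-1}}$ on the third, the natural transformation $\xi_{P,Q}$ must be defined so that the three-fold coassociativity comparison matches \eqref{equ. 2 cocycle and its inverse1}--\eqref{equ. 2 cocycle and its inverse2} rather than a naive double application of (i); getting the variance right is where a sign-free but error-prone computation lies. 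Once $\xi$ is pinned down, strict compatibility with morphisms is immediate since everything is the identity on underlying maps, and the hexagon axioms follow from the cocycle condition by a direct Sweedler-notation computation.
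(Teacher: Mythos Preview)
Your overall strategy matches the paper's exactly: the functor is the identity on objects and morphisms (since $\cL$ and $\cL^\Gamma$ share the same underlying $B$-coring), and the monoidal constraint is built from $\Gamma$, with the hexagon reducing to the 2-cocycle condition (i) and the unit constraint to normalization (ii).

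The one genuine gap is that you never write down $\xi_{P,Q}$. Calling it ``identity on underlying spaces \dots accompanied by a twist'' is not a formula, and if you literally mean the identity map on $P\ot_B Q$, that is \emph{not} a comodule map between the two diagonal structures, since $p\mo q\mo\neq p\mo\cdot_\Gamma q\mo$ in general. The paper's definition is
\[
\xi_{V,W}(v\ot_B w)=\Gamma(v\mo,w\mo)\,v\z\ot_B w\z,
\]
with inverse given by the same expression using $\Gamma^{-1}$. Once you have this in hand, your remaining remarks are correct: well-definedness over $\ot_B$ uses condition (iii), and the coherence pentagon is precisely condition (i). Your anticipated need for Lemma~\ref{lemma. 2 cocycle and its inverse} and equations \eqref{equ. 2 cocycle and its inverse1}--\eqref{equ. 2 cocycle and its inverse2} does not arise here; the straight cocycle identity (i) is enough.
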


\begin{proof}
    Given $V$ and $W$ as left $\cL$-comodule, then $V\ot_{B}{}W$ is also left $\cL$-comodule with diagonal coaction and $B$-bimodule structure given by $b(v\ot w)b'=bv\ot wb'$ for any $b,b'\in B$ and $v\ot w\in V\ot_{B}{}W$. The monoidal functor is the identity on the underlying $B$-bimodule, with the corresponding natural transformation given by
    \[\xi_{V, W}:v\ot_{B}{}w\mapsto \Gamma(v\mo, w\mo)v\z\ot_{B}{}w\z,\]
    for any $v\ot w\in V\ot_{B}{}W$. The coherent condition can be given by the 2-cocycle condition of $\Gamma$.
\end{proof}
If $P$ is a left-$\cL$-comodule algebra, the underlying $\cL$-comodule, with the left side twisted product
\begin{align}\label{equ. left side twisted product}
   p\cdot_{\Gamma}q=\Gamma(p\mo, q\mo)p\z q\z,
\end{align}
is a left $\cL^{\Gamma}$-comodule algebra. We denote the new algebra by ${}_{\Gamma}P$.

\begin{thm}\label{thm. twi have one sted left Hopf Galois extensions}
    Let $N\subseteq P$ be a faithfully flat left $\cL$-Galois extension. Then $N\subseteq {}_{\Gamma}P$ is a left $\cL^{\Gamma}$-Galois extension, with the twisted left translation map given by
    \begin{align}
        {}^{\cL^{\Gamma}}\tau(X)=\tuno{X\o{}_{+}}\ot_{N}\Gamma^{-1}(X\o{}_{-}, X\t)\tdue{X\o{}_{+}},
    \end{align}
    for any $X\in \cL$.
\end{thm}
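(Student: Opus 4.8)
The plan is to show that the canonical map for $N\subseteq {}_\Gamma P$ is bijective by exhibiting its inverse explicitly, built from the ordinary translation map of the original Galois extension $N\subseteq P$ together with the cocycle $\Gamma$ and its inverse. First I would compute what the twisted canonical map ${}^{L}\can_\Gamma: {}_\Gamma P\ot_N {}_\Gamma P\to \cL^\Gamma\di {}_\Gamma P$ actually is: using \eqref{equ. left side twisted product}, $p\cdot_\Gamma q = \Gamma(p\mo,q\mo)p\z q\z$, so that ${}^{L}\can_\Gamma(p\ot_N q) = p\mo\di (p\z\cdot_\Gamma q) = p\mo \di \Gamma(p\z{}\mo, q\mo) p\z{}\z q\z$; after using coassociativity this should rewrite as $\Gamma(p\mt, q\mo)\, p\mo \di p\z q\z$, i.e.\ essentially the old canonical map precomposed/postcomposed with multiplication by $\Gamma$-factors. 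The point is that ${}^{L}\can_\Gamma$ differs from ${}^{L}\can$ only by an invertible "twist," so bijectivity transfers.

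Next I would verify that the claimed formula ${}^{\cL^\Gamma}\tau(X) = \tuno{X\o{}_+}\ot_N \Gamma^{-1}(X\o{}_-, X\t)\tdue{X\o{}_+}$ is well defined: one must check it factors through $\ot_N$ (using \eqref{equ. translation map 4.5} and the $B$-bilinearity relations \eqref{equ. translation map 5}, \eqref{equ. translation map 6} for $\tau$, together with condition (iii) of Definition \ref{def. left handed 2 cocycle} and the left-character property of $\varepsilon$), and that it lands in the right space. Then I would show ${}^{L}\can_\Gamma \circ {}^{\cL^\Gamma}\tau = \id_{\cL}$ (extended $P$-linearly gives one inverse identity) by a Sweedler computation: apply ${}^{L}\can_\Gamma$ to ${}^{\cL^\Gamma}\tau(X)$, use \eqref{equ. translation map 3} in the form $\tuno{Y}\mo\di \tuno{Y}\z\tdue{Y} = Y\di 1$ with $Y = X\o{}_+$, bring in \eqref{equ. inverse lamda 1}/\eqref{equ. inverse lamda 5} relating $X\o{}_+$ and its coproduct to $X_+$, and then collapse the $\Gamma,\Gamma^{-1}$ pair against the coproduct using the 2-cocycle identities — this is where Lemma \ref{lemma. 2 cocycle and its inverse} (especially \eqref{equ. 2 cocycle and its inverse1}) and the convolution-inverse property $\Gamma(\one X,\one Y)\Gamma^{-1}(\two X,\two Y)=\varepsilon(XY)$-type relations are needed. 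For the reverse identity ${}^{\cL^\Gamma}\tau$ applied after ${}^{L}\can_\Gamma$ being the identity on ${}_\Gamma P\ot_N {}_\Gamma P$, I would use the standard trick: for $p\ot_N q$, write $q = \tuno{p\mo}\tdue{p\mo}p\z\cdot(\text{stuff})$ via \eqref{equ. translation map 4}, reduce to checking on elements of the form coming from $\cL\di 1$, and invoke faithful flatness if needed to move the invariants past the tensor.

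The main obstacle I expect is the bookkeeping in the Sweedler/cocycle computation for ${}^{L}\can_\Gamma\circ{}^{\cL^\Gamma}\tau = \id$: the formula nests $\Gamma^{-1}(X\o{}_-, X\t)$ inside a translation map whose own $B$-linearity \eqref{equ. translation map 5}--\eqref{equ. translation map 6} must be used to move the scalar around, and simultaneously the product $p\cdot_\Gamma q$ inserts a $\Gamma$ on the first Sweedler leg of the coaction of $\tdue{X\o{}_+}$; reconciling these requires Proposition \ref{prop. anti left and left Galois maps}-style compatibilities between $\Delta$ and the translation map \eqref{equ. translation map 1}, \eqref{equ. translation map 2} and a careful application of the 2-cocycle conditions (i) and \eqref{equ. 2 cocycle and its inverse1} to telescope $\Gamma\cdot\Gamma^{-1}$ down to a counit. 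A secondary subtlety is purely formal: making sure every intermediate expression lives in the correct balanced tensor product / Takeuchi subspace, which is handled by the faithful-flatness assumptions in force (so that the maps $\alpha,\alpha'$ are isomorphisms) exactly as in the proofs of the earlier propositions in Section 3.
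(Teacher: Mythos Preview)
Your proposal is correct and follows essentially the same route as the paper: both arguments verify directly that the stated formula inverts the twisted canonical map by checking the two compositions, using the translation-map identities \eqref{equ. translation map 1}--\eqref{equ. translation map 4}, the left Hopf relations \eqref{equ. inverse lamda 5}--\eqref{equ. inverse lamda 6}, and the cocycle identity \eqref{equ. 2 cocycle and its inverse1} from Lemma~\ref{lemma. 2 cocycle and its inverse} to collapse the $\Gamma\cdot\Gamma^{-1}$ pair. The paper organises the well-definedness check via an auxiliary map $\phi_{(4.1)}$ and carries out both inverse identities on generating elements $p\ot_N 1$ and $X\di 1$, exactly as you outline.
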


\begin{proof}
For any $X\in \cL$,

\begin{align*}
   X\o{}_{+}\ot X\o{}_{-}\ot X\t \in \int^{a,b}\int_{c,d}{}_{\Bar{a}}\cL_{\Bar{c}}\ot{}_{b,\Bar{c}}\cL_{\Bar{a}, d}\ot {}_{d}\cL_{b}.
\end{align*}
We can see the map
\begin{align*}
    \phi_{(4.1)}:\int^{a,b}\int_{c,d}{}_{\Bar{a}}\cL_{\Bar{c}}\ot{}_{b,\Bar{c}}\cL_{\Bar{a}, d}\ot {}_{d}\cL_{b}\to \cL
\end{align*}
given by
\begin{align*}
    \phi_{(4.1)}(X\ot Y\ot Z)=X \overline{(\Gamma^{-1}(Y, Z))}
\end{align*}
is well defined. Indeed,
\begin{align*}
    \phi_{(4.1)}(X\ot Y\ot d Z)=&X \overline{(\Gamma^{-1}(Y, dZ))}=X \overline{(\Gamma^{-1}(Yd, Z))}\\
    =&\phi_{(4.1)}(X\ot Y d\ot  Z).
\end{align*}
Also we have,
\begin{align*}
    \phi_{(4.1)}(X \Bar{c}\ot Y\ot  Z)=&X\Bar{c} \overline{(\Gamma^{-1}(Y, Z))}=X \overline{(\Gamma^{-1}(Y, Z)c)}=X \overline{(\Gamma^{-1}(\Bar{c}Y, Z))}\\
    =&\phi_{(4.1)}(X\ot \Bar{c}Y\ot  Z).
\end{align*}
By (\ref{equ. translation map 6}), the twisted translation map ${}^{L^{\Gamma}}\tau(X)={}^{L}\tau\circ \phi_{(4.1)}(X\o{}_{+}\ot X\o{}_{-}\ot X\t)$, so it is well defined as well. Now, let's check it determines the inverse of the left canonical map. On the one hand, we have
    \begin{align*}
        {}^{L^{\Gamma}}\tau\circ {}^{L^{\Gamma}}\can(p\ot_{N}1)=&\tuno{p\mth{}_{+}}\ot_{N}\Gamma^{-1}(p\mth{}_{-}, p\mt)\Gamma(\tdue{p\mth{}_{+}}\mo, p\mo)\tdue{p\mth{}_{+}}\z p\z\\
        =&\tuno{p\mth{}_{++}}\ot_{N}\Gamma^{-1}(p\mth{}_{-}, p\mt)\Gamma(p\mth{}_{+-}, p\mo)\tdue{p\mth{}_{++}} p\z\\
        =&\tuno{p\mth{}_{+}}\ot_{N}\Gamma^{-1}(p\mth{}_{-}\o, p\mt)\Gamma(p\mth{}_{-}\t, p\mo)\tdue{p\mth{}_{+}} p\z\\
        =&\tuno{p\mt{}_{+}}\ot_{N}\varepsilon(p\mt{}_{-} p\mo)\tdue{p\mt{}_{+}} p\z\\
        =&\tuno{p\mt{}_{+}}\ot_{N}\varepsilon(p\mt{}_{-} \overline{\varepsilon(p\mo)})\tdue{p\mt{}_{+}} p\z\\
        =&\tuno{p\mt{}_{+}}\ot_{N}\varepsilon(p\mt{}_{-} )\tdue{p\mt{}_{+}} \varepsilon(p\mo) p\z\\
                =&\tuno{p\mo}\ot_{N}\varepsilon(\tdue{p\mo}\mo )\tdue{p\mo}\z  p\z\\
        =&\tuno{p\mo}\ot_{N}\tdue{p\mo}p\z\\
        =&p\ot_{N}1,
    \end{align*}
  where the 2nd and 7th steps use (\ref{equ. translation map 2}).  On the other hand,
    \begin{align*}
         {}^{L^{\Gamma}}&\can\circ {}^{L^{\Gamma}}\tau (X)\\
         =&X\o{}_{+}\un\mt\di{}\Gamma(X\o{}_{+}\un\mo, \Gamma^{-1}(X\o{}_{-}, X\t)\tdue{X\o{}_{+}}\mo) X\o{}_{+}\un\z \tdue{X\o{}_{+}}\z\\
         =&X\o{}_{++}\un\mt\di{}\Gamma(X\o{}_{++}\un\mo, \Gamma^{-1}(X\o{}_{-}, X\t) X\o{}_{+-}) X\o{}_{++}\un\z X\o{}_{++}\du\\
         =&X\o{}_{+}\un\mt\di{}\Gamma(X\o{}_{+}\un\mo, \Gamma^{-1}(X\o{}_{-}\o, X\t) X\o{}_{-}\t) X\o{}_{+}\un\z X\o{}_{+}\du\\
         =&X\o{}_{+}\o\di{}\Gamma(X\o{}_{+}\t,  \Gamma^{-1}(X\o{}_{-}\o, X\t)X\o{}_{-}\t)\\
         =&X\o{}_{+}\o\di{}\Gamma(X\o{}_{+}\t, X\o{}_{-}\o X\t)\Gamma^{-1}(X\o{}_{+}\th X\o{}_{-}\t, X\th)\\
         =&X\o{}_{++}\o\di{}\Gamma(X\o{}_{++}\t, X\o{}_{-} X\t)\Gamma^{-1}(X\o{}_{++}\th X\o{}_{+-}, X\th)\\
         =&X\o{}_{+}\o\di{}\Gamma^{-1}(X\o{}_{+}\t X\o{}_{-}, X\t)\\
         =&X\di{}1,
    \end{align*}
    where the 2nd step uses (\ref{equ. translation map 2}), the 5th step uses Lemma \ref{thm. 2 cocycle twist1}.
\end{proof}

Similarly, If $P$ is a right comodule algebra of $\cL$, then the underlying right $\cL$-comodule,
with the right side twisted product
\begin{align}
    p{}_{\Gamma}\cdot q=\overline{\Gamma^{-1}(p\o, q\o)}p\z q\z,
\end{align}
is also a right comodule algebra. Moreover, we have
\begin{thm}\label{thm. twi have one sted anti-right Hopf Galois extensions}
    Let $M\subseteq P$ be a faithfully flat anti-right $\cL$-Galois extension. Then $M\subseteq P_{\Gamma}$ is an anti-right $\cL^{\Gamma}$-Galois extension, with the twisted anti-right translation map given by
    \begin{align}
        \hat{\tau}^{\cL^{\Gamma}}(X)=\yi{X\t{}_{[+]}}\ot_{M}\overline{\Gamma(X\t{}_{[-]}, X\o)}\er{X\t{}_{[+]}},
    \end{align}
    for any $X\in \cL$.
\end{thm}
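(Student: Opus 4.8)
The proof will mirror that of Theorem~\ref{thm. twi have one sted left Hopf Galois extensions}, with the anti-left Hopf algebroid structure of $\cL$ (which exists by Lemma~\ref{lem. anti Galois induce anti-left Hopf}) playing the role the left Hopf algebroid structure played there, and with $X_{[+]},X_{[-]}$, $\overline{\varepsilon(\cdot)}$ and $\Gamma$ taking over the roles of $X_{+},X_{-}$, $\varepsilon(\cdot)$ and $\Gamma^{-1}$. Recall first that the underlying right $\cL$-comodule of $P_{\Gamma}$ coincides with that of $P$, so the twisted anti-right canonical map is $\hat{\can}^{L^{\Gamma}}(p\ot_{M}q)=(p\z{}_{\Gamma}\cdot q)\di p\o$, which on $p\ot_{M}1$ reduces to the coaction $p\z\di p\o$.

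The first step is well-definedness of the proposed formula. Using \eqref{equ. inverse mu 6} together with coassociativity of $\Delta$ one identifies the iterated coend in which $X\t{}_{[+]}\ot X\t{}_{[-]}\ot X\o$ lives; on that space the contraction $X\ot Y\ot Z\mapsto \overline{\Gamma(Y,Z)}X$ is well defined, the two checks being that it factors through the relevant balanced tensor products — one uses property~(iii) of Definition~\ref{def. left handed 2 cocycle} to move a $B$-scalar between the two arguments of $\Gamma$, exactly as in the verification that $\phi_{(4.1)}$ was well defined — and that the resulting $B$-scalar $\overline{\Gamma(X\t{}_{[-]},X\o)}$ is absorbed into the second tensor leg of $\hat{\tau}^{\cL}$ by \eqref{equ. anti translation map 5} (resp.\ \eqref{equ. anti translation map 6}). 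Thus $\hat{\tau}^{\cL^{\Gamma}}$ is the composite of $X\mapsto X\t{}_{[+]}\ot X\t{}_{[-]}\ot X\o$, this contraction, and $\hat{\tau}^{\cL}$, hence well defined.

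Next I would show $\hat{\tau}^{\cL^{\Gamma}}$ is a two-sided inverse of $\hat{\can}^{L^{\Gamma}}$. For $\hat{\can}^{L^{\Gamma}}\circ\hat{\tau}^{\cL^{\Gamma}}=\id$ on $1\di\cL$, expand $\hat{\can}^{L^{\Gamma}}(\hat{\tau}^{\cL^{\Gamma}}(X))$, push the coaction through the first leg using the colinearity \eqref{equ. anti translation map 1} of the anti-right translation map (the mirror of the step in the proof of Theorem~\ref{thm. twi have one sted left Hopf Galois extensions} that cites \eqref{equ. translation map 2}), then use \eqref{equ. inverse mu 6}, the cocycle condition~(i) of Definition~\ref{def. left handed 2 cocycle} and the identities \eqref{equ. 2 cocycle and its inverse1}--\eqref{equ. 2 cocycle and its inverse2} of Lemma~\ref{lemma. 2 cocycle and its inverse} (the mirror of the appeal to Lemma~\ref{thm. 2 cocycle twist1}) to collapse the two occurrences of $\Gamma$, finishing with \eqref{equ. inverse mu 8}, \eqref{equ. anti translation map 3} and \eqref{equ. anti translation map 6.5} to reach $1\di X$. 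For $\hat{\tau}^{\cL^{\Gamma}}\circ\hat{\can}^{L^{\Gamma}}=\id$, since $P\ot_{M}P$ is generated over $P$ by $p\ot_{M}1$ it suffices to evaluate on $p\ot_{M}1$, where $\hat{\can}^{L^{\Gamma}}(p\ot_{M}1)=p\z\di p\o$; then the computation parallels the first display of the proof of Theorem~\ref{thm. twi have one sted left Hopf Galois extensions}: apply $\hat{\tau}^{\cL^{\Gamma}}$, use \eqref{equ. anti translation map 1}, cancel $\Gamma$ against $\overline{\Gamma}$ via Lemma~\ref{lemma. 2 cocycle and its inverse} to produce a counit, simplify with \eqref{equ. inverse mu 7}, \eqref{equ. inverse mu 8} and the left-character property of Definition~\ref{def:left.bgd}, and conclude by \eqref{equ. anti translation map 4} that the result is $p\ot_{M}1$.

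The main obstacle, exactly as in Theorem~\ref{thm. twi have one sted left Hopf Galois extensions}, is the coend/Takeuchi bookkeeping: one must track at each intermediate expression the precise balanced tensor product it lies in so that every contraction against $\Gamma$ or $\overline{\Gamma}$ and every rebracketing of the iterated coproduct is legitimate, and identify exactly which form of the cocycle identity (the analogue of the cited Lemma~\ref{thm. 2 cocycle twist1}, i.e.\ \eqref{equ. 2 cocycle and its inverse2}) effects the key cancellation. No genuinely new ingredient beyond the left-handed proof is needed; alternatively the statement could be deduced from Theorem~\ref{thm. twi have one sted left Hopf Galois extensions} by passing to the co-opposite bialgebroid $\cL^{\operatorname{cop}}$, under which an anti-right $\cL$-Galois extension becomes a left $\cL^{\operatorname{cop}}$-Galois extension and $\Gamma$ corresponds to an invertible normalised $2$-cocycle on $\cL^{\operatorname{cop}}$, but making that dictionary precise is of comparable length to the direct mirror argument.
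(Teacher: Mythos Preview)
Your proposal is correct and follows precisely the approach the paper intends: the paper gives no proof for this theorem, introducing it only with the word ``Similarly'' after the detailed proof of Theorem~\ref{thm. twi have one sted left Hopf Galois extensions}, and your outline is exactly that mirror argument, with the correct substitutions $X_{[\pm]}$ for $X_{\pm}$, $\Gamma$ for $\Gamma^{-1}$, and the anti-right translation identities \eqref{equ. anti translation map 1}--\eqref{equ. anti translation map 7.5} for their left-handed counterparts. Your remark that the computation could alternatively be packaged via $\cL^{\operatorname{cop}}$ is a reasonable observation, though as you note it is not shorter.
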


As a result of Lemma \ref{lem. left Hopf Galois induce left Hopf} and Lemma \ref{lem. anti Galois induce anti-left Hopf}, we can show the Theorem already given in \cite{HM22}

\begin{thm}\label{thm. 2 cocycle twist1}
Let $\cL$ be a (anti-)left $B$-Hopf algebroid and $\Gamma\in {}_{B^{e}}\Hom(\cL\otimes_{B^e}\cL, B)$ be an invertible normalised 2-cocycle. Then the left bialgebroid $\cL^\Gamma$ is a (anti-)left Hopf algebroid.
\end{thm}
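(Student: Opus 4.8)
The plan is to reduce each half of the statement to the characterisation of (anti-)left Hopf algebroids as (anti-)Hopf Galois extensions of the base ring \emph{over $\cL$ itself}, then to transport this Galois property through the Drinfeld twist using Theorem~\ref{thm. twi have one sted left Hopf Galois extensions} (resp.\ Theorem~\ref{thm. twi have one sted anti-right Hopf Galois extensions}), and finally to read ``Hopf'' back off from ``Galois'' via Lemma~\ref{lem. left Hopf Galois induce left Hopf} (resp.\ Lemma~\ref{lem. anti Galois induce anti-left Hopf}).

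Assume $\cL$ is a left Hopf algebroid. Regard $\cL$ as a left $\cL$-comodule algebra over itself (coaction $\Delta$); as noted in the introduction its coinvariant subalgebra is $t(\BB)$, and, up to the obvious identifications, the left canonical map ${}^{L}\can\colon\cL\ot_{t(\BB)}\cL\to\cL\di\cL$ is exactly the map $\lambda$ of Definition~\ref{defHopf}. Hence $\BB\subseteq\cL$ is a left $\cL$-Galois extension, faithfully flat by our standing faithful flatness assumptions. Theorem~\ref{thm. twi have one sted left Hopf Galois extensions} then gives that $\BB\subseteq{}_{\Gamma}\cL$ is a left $\cL^{\Gamma}$-Galois extension. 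A short computation, using $\Delta(t(\bar b))=1\ot t(\bar b)$ together with the normalisation $\Gamma(1_{\cL},-)=\varepsilon=\Gamma(-,1_{\cL})$, shows $t(\bar b)\cdot_{\Gamma}X=t(\bar b)X$, so ${}_{\Gamma}\cL$ coincides with $\cL$ as a left $\BB$-module and is in particular still faithfully flat. Applying Lemma~\ref{lem. left Hopf Galois induce left Hopf} to $\BB\subseteq{}_{\Gamma}\cL$ yields that $\cL^{\Gamma}$ is a left Hopf algebroid.

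The anti-left case is entirely parallel. If $\cL$ is an anti-left Hopf algebroid, then viewing $\cL$ as the regular right $\cL$-comodule algebra over itself the right coinvariants are $s(B)$, and the anti-right canonical map $\hat{\can}^{L}\colon\cL\ot_{s(B)}\cL\to\cL\di\cL$ is identified with the map $\mu$ of Definition~\ref{defHopf}; so $B\subseteq\cL$ is an anti-right $\cL$-Galois extension, faithfully flat as above. Theorem~\ref{thm. twi have one sted anti-right Hopf Galois extensions} produces an anti-right $\cL^{\Gamma}$-Galois extension $B\subseteq\cL_{\Gamma}$, and since Lemma~\ref{lem. anti Galois induce anti-left Hopf} carries no flatness hypothesis it immediately gives that $\cL^{\Gamma}$ is an anti-left Hopf algebroid. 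Combining the two cases, if $\cL$ is a $B$-Hopf algebroid then so is $\cL^{\Gamma}$.

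I expect the only step needing genuine care to be the identification of the canonical maps of the regular extensions $\BB\subseteq\cL$ and $B\subseteq\cL$ with $\lambda$ and $\mu$: this means pinning down the coinvariant subalgebra as the target, resp.\ source, image and tracking the several one-sided $B$- and $\BB$-actions through the balanced tensor products without collapsing distinct copies. The remaining points, in particular that the twist does not destroy faithful flatness, are routine once that dictionary is fixed.
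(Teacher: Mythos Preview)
Your proposal is correct and is precisely the argument the paper has in mind: the one-line justification preceding the theorem invokes Lemma~\ref{lem. left Hopf Galois induce left Hopf} and Lemma~\ref{lem. anti Galois induce anti-left Hopf}, and the implicit input needed to apply them---that the regular extension $\BB\subseteq\cL$ (resp.\ $B\subseteq\cL$) becomes $\cL^{\Gamma}$-Galois after twisting---is exactly Theorem~\ref{thm. twi have one sted left Hopf Galois extensions} (resp.\ Theorem~\ref{thm. twi have one sted anti-right Hopf Galois extensions}) specialised to the regular comodule algebra. One caution when reading the paper: the cross-reference ``Lemma~\ref{thm. 2 cocycle twist1}'' inside the proof of Theorem~\ref{thm. twi have one sted left Hopf Galois extensions} is a typo for Lemma~\ref{lemma. 2 cocycle and its inverse} (the step is a pure cocycle identity), so there is no actual circularity in using that theorem here.
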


\begin{cor}
     Let $N\subseteq P$ be a skew regular faithfully flat left $\cL$- Galois extension, then $N\subseteq {}_{\Gamma}P$ is a skew regular left $\cL^{\Gamma}$-Galois extension.

     Similarly, let $M\subseteq P$ be a regular faithfully flat anti-right $\cL$-Galois extension, then $M\subseteq P_{\Gamma}$ is a regular anti-right $\cL^{\Gamma}$-Galois extension.
\end{cor}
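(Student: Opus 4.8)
The plan is to obtain both assertions by assembling results already proved, with no fresh computation. For the first statement, I would first feed the hypothesis into Lemma~\ref{lem. left skew regular Hopf Galois induce anti-left Hopf}: since $N\subseteq P$ is a faithfully flat skew regular left $\cL$-Galois extension, $\cL$ is an anti-left Hopf algebroid. By Theorem~\ref{thm. 2 cocycle twist1}, the Drinfeld twist $\cL^{\Gamma}$ is then again an anti-left Hopf algebroid. Now I would invoke Lemma~\ref{lem. anti-left Hopf gives skew regular}: over an anti-left Hopf algebroid every left comodule is skew regular, so in particular the underlying left $\cL^{\Gamma}$-comodule of ${}_{\Gamma}P$ is skew regular. (This underlying left comodule is literally the underlying left $\cL$-comodule of $P$, because $\cL^{\Gamma}$ has the same coproduct and counit as $\cL$; only the product has been changed.) Finally, Theorem~\ref{thm. twi have one sted left Hopf Galois extensions} says that $N\subseteq {}_{\Gamma}P$ is a left $\cL^{\Gamma}$-Galois extension. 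Combining the last two facts gives exactly that $N\subseteq {}_{\Gamma}P$ is a skew regular left $\cL^{\Gamma}$-Galois extension.

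The second assertion follows by the symmetric chain. From the hypothesis that $M\subseteq P$ is a regular faithfully flat anti-right $\cL$-Galois extension, Lemma~\ref{lem. regular anti Galois induce left Hopf} gives that $\cL$ is a left Hopf algebroid; Theorem~\ref{thm. 2 cocycle twist1} then gives that $\cL^{\Gamma}$ is a left Hopf algebroid; and Lemma~\ref{lem. left Hopf gives regular} shows that over a left Hopf algebroid every right comodule is regular, hence the underlying right $\cL^{\Gamma}$-comodule of $P_{\Gamma}$ is regular. Since $M\subseteq P_{\Gamma}$ is an anti-right $\cL^{\Gamma}$-Galois extension by Theorem~\ref{thm. twi have one sted anti-right Hopf Galois extensions}, it is a regular anti-right $\cL^{\Gamma}$-Galois extension.

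I do not expect a genuine obstacle here; the corollary is an immediate consequence of the machinery already in place, which is why it is stated as a corollary. The one point I would be careful to spell out is that the regularity lemmas (Lemmas~\ref{lem. anti-left Hopf gives skew regular} and~\ref{lem. left Hopf gives regular}) are being applied to the twisted bialgebroid $\cL^{\Gamma}$, not to $\cL$; this is legitimate precisely because $\cL^{\Gamma}$ shares the coring structure of $\cL$ and inherits the relevant Hopf-type invertibility through Theorem~\ref{thm. 2 cocycle twist1}. If one wanted the inverse of $\phi$ on ${}_{\Gamma}P$ in closed form, it could be written down by specializing the formula of Lemma~\ref{lem. anti-left Hopf gives skew regular} to $\cL^{\Gamma}$ (and likewise for $\psi$ on $P_{\Gamma}$ via Lemma~\ref{lem. left Hopf gives regular}), but this is not needed for the statement.
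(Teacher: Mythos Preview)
Your proposal is correct and follows essentially the same route as the paper's proof: invoke Lemma~\ref{lem. left skew regular Hopf Galois induce anti-left Hopf} to get that $\cL$ is anti-left Hopf, pass this through Theorem~\ref{thm. 2 cocycle twist1} to $\cL^{\Gamma}$, then combine Lemma~\ref{lem. anti-left Hopf gives skew regular} with Theorem~\ref{thm. twi have one sted left Hopf Galois extensions}; the second half is the symmetric chain through Lemmas~\ref{lem. regular anti Galois induce left Hopf}, \ref{lem. left Hopf gives regular} and Theorem~\ref{thm. twi have one sted anti-right Hopf Galois extensions}. Your write-up is in fact more detailed than the paper's, which leaves the second half as ``similar'' and does not spell out the observation that the underlying $\cL^{\Gamma}$-comodule of ${}_{\Gamma}P$ coincides with the $\cL$-comodule of $P$.
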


\begin{proof}

 By Lemma \ref{lem. left skew regular Hopf Galois induce anti-left Hopf}, we know $\cL$ is an anti left Hopf algebroid. By  Theorem \ref{thm. 2 cocycle twist1}, we know $\cL^{\Gamma}$ is also an anti left Hopf algebroid, so the first half of the corrolary is the result of  Lemma \ref{lem. anti-left Hopf gives skew regular} and Thm \ref{thm. twi have one sted left Hopf Galois extensions}. The second half of the theorem is similar.
\end{proof}

\section{Twisted Drinfeld Doubles}
 By \cite{schau1}, given two left bialgebroids, one can define a skew pairing.

\begin{defi}\label{def. dual pairing for left bialgeboids}
    Let $\cL$ and $\Pi$ be two left bialgebroids over $B$, a skew pairing between $\cL$ and $\Pi$ is a linear map $\tau[\bullet |\bullet] :\cL\ot\Pi\to B$ such that:
    \begin{itemize}
        \item $\tau[a\Bar{b}X c\Bar{d}| \alpha]f=a\tau[X|c\Bar{f}\alpha d\Bar{b}]$,
        \item  $\tau[X|\alpha\beta]=\tau[X\o|\alpha \overline{\tau[X\t|\beta]}]=\tau[\overline{\tau[X\t|\beta]}X\o|\alpha ]$,
        \item $\tau[XY|\alpha]=\tau[X \tau[Y|\alpha\o]|\alpha\t]=\tau[X |\tau[Y|\alpha\o]\alpha\t]$,
        \item $\tau[X|1]=\varepsilon(X)$,
        \item $\tau[1|\alpha]=\varepsilon(\alpha)$,
    \end{itemize}
    for all $a, b, c, d, f\in B$, $\alpha, \beta\in \Pi$ and $X, Y\in \cL$.
\end{defi}
It is given in \cite{schau1} that, given  a skew pairing $\tau[\bullet|\bullet]$ between two left Hopf algebroids $\Pi$ and $\cL$, we can construct a left bialgebroid $\Pi\dcrosstt\cL$ on the vector space $\Pi\ot_{B^{e}}\cL$ with
 componentwise  coproduct and the product
\[(\alpha\dcrosstt X)\cdot_{\tau\tau}(\beta\dcrosstt Y)=\alpha \tau[X\o|\beta\o]\beta\t{}_{+}\dcrosstt  \overline{\tau[X\th|\beta\t{}_{-}]}X\t Y,\]
and the counit
\[\varepsilon(\alpha\dcrosstt X)=\varepsilon(\alpha \varepsilon(X)).\]

\begin{lem}
Let $\tau[\bullet|\bullet]$ be a skew pairing between two  Hopf algebroid $\Pi$ and $\cL$, then  $\Pi\dcrosstt\cL$ is a Hopf algebroid, with
\begin{align}\label{equ. left Galois map for skew pairing}
    (\alpha\dcrosstt X)_{+}\ot_{\BB}(\alpha\dcrosstt X)_{-}=\alpha_{+}\dcrosstt X_{+}\ot_{\BB}(1\dcrosstt X_{-})(\alpha_{-}\dcrosstt 1),
\end{align}
and
\begin{align}\label{equ. anti-left Galois map for skew pairing}
    (\alpha\dcrosstt X)_{[-]}\ot^{B}(\alpha\dcrosstt X)_{[+]}=(1\dcrosstt X_{[-]})(\alpha_{[-]}\dcrosstt{}1)\ot^{B}\alpha_{[+]}\dcrosstt{} X_{[+]},
\end{align}
for any $\alpha\dcrosstt{}X\in {}^{\tau}D(\cL)^{\tau}$.

\end{lem}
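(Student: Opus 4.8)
The plan is to verify the two displayed formulas by checking that the claimed elements are sent to $(\alpha\dcrosstt X)\di 1$ and $1\di (\alpha\dcrosstt X)$ respectively under the canonical Galois-type maps $\lambda$ and $\mu$ of $\Pi\dcrosstt\cL$, and then to invoke Lemmas \ref{lem. left Hopf Galois induce left Hopf} and \ref{lem. anti Galois induce anti-left Hopf} (or rather the characterisation in Definition \ref{defHopf}) to conclude that $\Pi\dcrosstt\cL$ is a Hopf algebroid. Since $\Pi$ and $\cL$ are each Hopf algebroids, the shorthands $X_+\ot_{\BB}X_-$, $\alpha_+\ot_{\BB}\alpha_-$, $X_{[-]}\ot^B X_{[+]}$, $\alpha_{[-]}\ot^B\alpha_{[+]}$ are all available, together with the identities \eqref{equ. inverse lamda 1}--\eqref{equ. inverse lamda 10} and \eqref{equ. inverse mu 1}--\eqref{equ. inverse mu 10}. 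The coproduct on $\Pi\dcrosstt\cL$ is componentwise, so $\Delta(\alpha\dcrosstt X)=(\alpha\o\dcrosstt X\o)\ot(\alpha\t\dcrosstt X\t)$; this is the key simplification that lets the computation reduce to the separate structures on the two factors together with the skew pairing $\tau$.

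First I would compute $\lambda$ applied to the right-hand side of \eqref{equ. left Galois map for skew pairing}. Using the componentwise coproduct, $\one{(\alpha_+\dcrosstt X_+)}\di \two{(\alpha_+\dcrosstt X_+)}\cdot_{\tau\tau}\big((1\dcrosstt X_-)(\alpha_-\dcrosstt 1)\big)$ expands, via the explicit product formula $(\beta\dcrosstt Y)\cdot_{\tau\tau}(\gamma\dcrosstt Z)=\beta\,\tau[Y\o|\gamma\o]\,\gamma\t{}_+\dcrosstt \overline{\tau[Y\th|\gamma\t{}_-]}\,Y\t Z$, into an expression involving $\alpha_+\o,\alpha_+\t,\alpha_-$ and $X_+\o,X_+\t,X_-$ and several $\tau$-brackets. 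The strategy is to collapse the $\alpha$-part using \eqref{equ. inverse lamda 2} (in $\Pi$), which turns $\one{\alpha}{}_+\ot\one{\alpha}{}_-\two{\alpha}$ into $\alpha\ot 1$, and similarly collapse the $X$-part using \eqref{equ. inverse lamda 2} in $\cL$ and the normalisation of $\tau$; what should remain is exactly $(\alpha\dcrosstt X)\di 1$. Dually, for \eqref{equ. anti-left Galois map for skew pairing} I would apply $\mu$, i.e.\ compute $\one{(\alpha\dcrosstt X)_{[+]}}(\alpha\dcrosstt X)_{[-]}\di \two{(\alpha\dcrosstt X)_{[+]}}$ on the proposed element, and collapse using \eqref{equ. inverse mu 2} in $\Pi$ and $\cL$ respectively; I expect the mixed $\tau$-terms to telescope by the pairing axioms (the multiplicativity properties $\tau[XY|-]$ and $\tau[-|\alpha\beta]$ of Definition \ref{def. dual pairing for left bialgeboids}).

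The main obstacle, and the step that needs genuine care rather than routine bookkeeping, is controlling the $\tau$-brackets that appear when one expands the product $\cdot_{\tau\tau}$ against the componentwise coproduct: one gets nested brackets like $\tau[X\o|\alpha_-\o]$ interleaved with $(\cdot)_+,(\cdot)_-$ operations on both $\alpha_-$ and $X$, and one must use \eqref{equ. inverse lamda 5}, \eqref{equ. inverse lamda 6} (and their $\mu$-analogues \eqref{equ. inverse mu 5}, \eqref{equ. inverse mu 6}) to move the $\pm$ decorations past the coproduct legs so that the skew-pairing identities can be applied to absorb them into $\varepsilon$. A secondary technical point is the careful tracking of the $B^e$-bimodule balancing — over which tensor factor ($\ot_B$, $\ot_{\BB}$, $\di$) each term lives — since the formulas mix $\Pi\ot_{B^e}\cL$ with the $\ot_{\BB}$ and $\ot^B$ of the Hopf structure; I would handle this by noting, as in Proposition \ref{prop. anti left and left Galois maps}, that it suffices to check the identities after applying an injective map ($\lambda$ or $\mu$) and that well-definedness over the balanced tensor products follows from the pairing axiom $\tau[a\Bar b Xc\Bar d|\alpha]f=a\tau[X|c\Bar f\alpha d\Bar b]$ together with \eqref{equ. inverse lamda 9}, \eqref{equ. inverse lamda 10}. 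Once both canonical maps are shown invertible with the stated inverses, the Hopf algebroid claim for $\Pi\dcrosstt\cL$ is immediate from Definition \ref{defHopf}.
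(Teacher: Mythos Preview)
Your strategy—apply $\lambda$ (resp.\ $\mu$) to the claimed right-hand side and collapse to $(\alpha\dcrosstt X)\di 1$ (resp.\ $1\di(\alpha\dcrosstt X)$), then verify the other composite—is exactly the paper's approach, and it works.

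However, you are overestimating the difficulty of the ``main obstacle''. The nested $\tau$-brackets and the telescoping argument you anticipate never actually appear in the paper's computation, because the paper exploits associativity of $\cdot_{\tau\tau}$ together with the normalisations $\tau[Y|1]=\varepsilon(Y)$ and $\tau[1|\gamma]=\varepsilon(\gamma)$. Concretely, from the product formula one sees immediately that
\[
(\beta\dcrosstt Y)\cdot_{\tau\tau}(1\dcrosstt Z)=\beta\dcrosstt YZ,
\qquad
(\beta\dcrosstt 1)\cdot_{\tau\tau}(\gamma\dcrosstt 1)=\beta\gamma\dcrosstt 1,
\]
so one computes $(\alpha_+\t\dcrosstt X_+\t)\big((1\dcrosstt X_-)(\alpha_-\dcrosstt 1)\big)$ by first absorbing $(1\dcrosstt X_-)$ to get $\alpha_+\t\dcrosstt X_+\t X_-$, using $X_+\o\di X_+\t X_-=X\di 1$ from \eqref{equ. inverse lamda 1}, and then absorbing $(\alpha_-\dcrosstt 1)$ to get $\alpha_+\t\alpha_-\dcrosstt 1$, finishing with \eqref{equ. inverse lamda 1} for $\Pi$. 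No genuine use of the multiplicativity axioms of $\tau$ or of \eqref{equ. inverse lamda 5}--\eqref{equ. inverse lamda 6} is needed; only the counit normalisations of $\tau$ and \eqref{equ. inverse lamda 1}, \eqref{equ. inverse lamda 7} enter. The anti-left case is entirely parallel using \eqref{equ. inverse mu 1}. So your plan is sound, but the execution is a few clean lines rather than the intricate bookkeeping you describe.
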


\begin{proof}

   It is not hard to see (\ref{equ. left Galois map for skew pairing}) factors through all the balanced tensor products. We can see
\begin{align*}
    (\alpha\dcrosstt{}&X)_{+}\o\di{}(\alpha\dcrosstt{}X)_{+}\t (\alpha\dcrosstt{}X)_{-}\\
    =&(\alpha_{+}\o\dcrosstt{}X_{+}\o)\di{}(\alpha_{+}\t\dcrosstt{}X_{+}\t)(1\dcrosstt{}X_{-})(\alpha_{-}\dcrosstt{}1)\\
    =&(\alpha_{+}\o\dcrosstt{}X_{+}\o)\di{}(\alpha_{+}\t\dcrosstt{}X_{+}\t X_{-})(\alpha_{-}\dcrosstt{}1)\\
    =&(\alpha_{+}\o\dcrosstt{}X)\di{}(\alpha_{+}\t\dcrosstt{}1)(\alpha_{-}\dcrosstt{}1)\\
     =&(\alpha_{+}\o\dcrosstt{}X)\di{}(\alpha_{+}\t\alpha_{-}\dcrosstt{}1)\\
     =&(\alpha\dcrosstt{}X)\di{}(1\dcrosstt{}1).\\
\end{align*}
   Similarly, we have
   \begin{align*}
       (\alpha\dcrosstt{}X)\o{}_{+}\ot_{\BB}(\alpha\dcrosstt{}X)\o{}_{-}(\alpha\dcrosstt{}X)\t=(\alpha\dcrosstt{}X)\ot_{\BB}(1\dcrosstt{}1).
   \end{align*}
   The anti-left Hopf algebroid structure can be similarly proved.
\end{proof}

In fact, by repeating the calculations in \cite{schau1}, one can show:
\begin{lem}\label{gencomult}
  Let $\Pi,\cL$ be two left Hopf algebroids, and $\kappa,\tau,\omega,\nu$ four skew pairings. Then
  \begin{enumerate}
      \item $\Pi\dcrosstk\cL:=\Pi\otimes_{B^e}\cL$ equipped with multiplication
      \begin{align}\label{equ. twist on two side products}
         (\alpha\dcrosstk X)\cdot_{\tau\kappa}(\beta\dcrosstk Y)=\alpha \tau[X\o|\beta\o]\beta\t{}_{+}\dcrosstt \overline{ \kappa[X\th|\beta\t{}_{-}]}X\t Y
      \end{align}
     is a $B^e$-ring.
      \item The map
      \begin{align*}
          \Delta\colon \Pi\otimes_{B^e}\cL &\to (\Pi\otimes_{B^e}\cL)\times_B (\Pi\otimes_{B^e}\cL)\\
          \alpha\ot X&\mapsto \alpha\o\ot X\o\ot \alpha\t\ot X\t
      \end{align*}
      making $\Pi\otimes_{B^e}\cL$ a $B$-coring with counit $\varepsilon(\alpha\ot X)=\varepsilon(\alpha \varepsilon(X))$.
      \item
      \begin{align*}
          \Delta=\Delta_{\tau\omega\kappa}\colon\Pi\dcrosstk\cL&\to (\Pi\dcrossto\cL)\times_B (\Pi\dcrossok\cL)
      \end{align*}
      are algebra maps.
    \end{enumerate}
  In particular, $\Pi\dcrosskt\cL$ is a left $\Pi\dcrosskk\cL$-comodule algebra and a right $\Pi\dcrosstt\cL$-comodule algebra.
\end{lem}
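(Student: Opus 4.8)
The plan is to obtain the entire statement by repeating, with careful bookkeeping of the four independent pairing symbols, Schauenburg's construction of $\Pi\dcrosstt\cL$ from \cite{schau1}. The conceptual point I would make precise first is that in the single-pairing proof every invocation of a pairing axiom from Definition~\ref{def. dual pairing for left bialgeboids} involves exactly one pairing, applied in a fixed structural position: in the product $\cdot_{\tau\kappa}$ of \eqref{equ. twist on two side products} the pairing $\tau$ is always evaluated on a first leg of $\cL$ against a first leg of $\Pi$, and $\kappa$ on a third leg of $\cL$ against a $\beta\t{}_{-}$-component coming from $\lambda_{\Pi}^{-1}$. Since these two evaluations are never composed with one another, one may replace the single $\tau$ of \cite{schau1} by two distinct pairings $\tau,\kappa$ without altering a single line of the verification; the same remark applies, with a third pairing $\omega$ in the middle, to the coproduct.

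First I would check that \eqref{equ. twist on two side products} is well defined on $\Pi\otimes_{B^e}\cL$ in each argument and takes values there: the $B^e$-bilinearity axiom (first bullet of Definition~\ref{def. dual pairing for left bialgeboids}) absorbs the relations defining $\otimes_{B^e}$ in the $\Pi$- and $\cL$-tensorands, and, together with the identities \eqref{equ. inverse lamda 9}--\eqref{equ. inverse lamda 10} for $\beta\t{}_{+}\ot_{\BB}\beta\t{}_{-}$, also handles the moves across the output tensor product. Next I would prove associativity of $\cdot_{\tau\kappa}$: expanding both bracketings of a triple product, one applies the quasi-multiplicativity axioms (second and third bullets of Definition~\ref{def. dual pairing for left bialgeboids}) of $\tau$ to the left-hand $\cL$-against-$\Pi$ evaluations, the corresponding axioms of $\kappa$ to the right-hand ones, and the left Hopf-algebroid identities \eqref{equ. inverse lamda 3}, \eqref{equ. inverse lamda 5}, \eqref{equ. inverse lamda 6} to reorganise the $\lambda_{\Pi}^{-1}$-images. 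The normalisation axioms $\tau[1|{-}]=\varepsilon=\tau[{-}|1]$, and likewise for $\kappa$, together with \eqref{equ. inverse lamda 4}, show that $1\ot 1$ is a unit; with the evident $B^e$-ring structure this gives part~(1).

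Part~(2) is the standard observation that $\Pi\otimes_{B^e}\cL$ is a $B$-coring: the coproduct $\alpha\ot X\mapsto\alpha\o\ot X\o\ot\alpha\t\ot X\t$ is coassociative because $\Delta_{\Pi}$ and $\Delta_{\cL}$ are, lands in the Takeuchi product $\times_B$ because each of them does, and $\varepsilon(\alpha\ot X)=\varepsilon(\alpha\varepsilon(X))$ is counital by the counit axioms of the two bialgebroids. For part~(3) I would compute $\Delta$ of a product $(\alpha\ot X)\cdot_{\tau\kappa}(\beta\ot Y)$ and compare it with the componentwise product of $\Delta(\alpha\ot X)$ and $\Delta(\beta\ot Y)$ in $(\Pi\dcrossto\cL)\times_B(\Pi\dcrossok\cL)$; splitting $\Delta_{\Pi}(\beta)$ now throws off an $\cL$-against-$\Pi$ evaluation which is precisely the middle pairing $\omega$, and it is disentangled using the quasi-multiplicativity of $\omega$ and the coproduct identities \eqref{equ. inverse lamda 5}--\eqref{equ. inverse lamda 6} for $\beta\t{}_{+}\ot_{\BB}\beta\t{}_{-}$ (equivalently \eqref{equ. left Galois map for skew pairing}). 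I expect this to be the main obstacle: one must verify that the $\omega$-evaluation produced on the right-hand side exactly cancels the one produced on the left, and that the iterated Takeuchi product $\times_B$ is legitimate at each intermediate step — this is where the standing faithful-flatness hypothesis, which makes the maps $\alpha,\alpha'$ of Section~\ref{sec2} isomorphisms, is used.

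Finally, the ``in particular'' clause follows by specialising the labels in part~(3). Taking $(\tau,\omega,\kappa)=(\kappa,\kappa,\tau)$ gives an algebra map $\Delta_{\kappa\kappa\tau}\colon\Pi\dcrosskt\cL\to(\Pi\dcrosskk\cL)\times_B(\Pi\dcrosskt\cL)$, which is coassociative and counital by part~(2) and hence is a left $\Pi\dcrosskk\cL$-comodule algebra structure on $\Pi\dcrosskt\cL$; taking $(\tau,\omega,\kappa)=(\kappa,\tau,\tau)$ gives an algebra map $\Delta_{\kappa\tau\tau}\colon\Pi\dcrosskt\cL\to(\Pi\dcrosskt\cL)\times_B(\Pi\dcrosstt\cL)$, a right $\Pi\dcrosstt\cL$-comodule algebra structure. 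In both cases the comodule axioms are inherited directly from the coassociativity and counitality established in part~(2).
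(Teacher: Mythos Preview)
Your proposal is correct and follows exactly the approach the paper takes: the paper's own proof consists of the single sentence ``by repeating the calculations in \cite{schau1}, one can show'' the lemma, and your outline is precisely an elaboration of what that repetition entails, with the key structural observation that the pairing invocations in Schauenburg's original argument occur in disjoint positional roles and hence may be replaced by independent pairings.
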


\begin{lem}\label{lem. 2 cocycle of skew pairing}
    Let $\tau[\bullet|\bullet]$ and $\kappa[\bullet|\bullet]$ be two skew pairings between left bialgebroids $\Pi$ and $\cL$, then $ \Gamma\in {}_{B^{e}}\Hom(\Pi\dcrosskk\cL \otimes_{B^e}\Pi\dcrosskk\cL, B)$ given by
    \begin{align}    \Gamma((\alpha\dcrosskk{} X), (\beta\dcrosskk{} Y))=&\varepsilon((\alpha\dcrosstk{} X)\cdot_{\tau\kappa} (\beta\dcrosstk{} Y))
    \end{align}
    is a 2-cocycle, with inverse given by
     \begin{align}
        \Gamma^{-1}((\alpha\dcrosskk{} X), (\beta\dcrosskk{} Y))=&\varepsilon((\alpha\dcrosskt{} X)\cdot_{\kappa\tau} (\beta\dcrosskt{} Y)).
    \end{align}
    Moreover, $\Pi\dcrosstt\cL$ is a Drinfeld cotwist via $\Gamma$, i.e.
  $\Pi\dcrosstt\cL=(\Pi\dcrosskk\cL)^{\Gamma}$, and $\Pi\dcrosstk\cL$ is the one sided twist in the sense of (\ref{equ. left side twisted product}).
\end{lem}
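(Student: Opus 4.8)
The plan is to verify directly that the proposed formulas for $\Gamma$ and $\Gamma^{-1}$ define an invertible normalised $2$-cocycle on $\Pi\dcrosskk\cL$, and then to recognise the twisted product as precisely the multiplication of $\Pi\dcrosstt\cL$. First I would check the elementary conditions in Definition \ref{def. left handed 2 cocycle}: that $\Gamma$ lies in ${}_{B^{e}}\Hom(\Pi\dcrosskk\cL\otimes_{B^e}\Pi\dcrosskk\cL, B)$ and satisfies the normalisation $\Gamma(1,-) = \varepsilon = \Gamma(-,1)$ and condition (iii). These all follow from the $B^e$-ring structure of $\Pi\dcrosstk\cL$ in Lemma \ref{gencomult}(1) together with the fact that $\varepsilon$ is a left character, using the formula \eqref{equ. twist on two side products} and the axioms for the skew pairings $\tau$ and $\kappa$; the unit $1\dcrosskk 1$ of $\Pi\dcrosskk\cL$ maps under the canonical identification to the unit $1\dcrosstk 1$, and $\varepsilon$ of a product with a unit collapses by the left character property.

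Next I would establish the cocycle identity (i). The cleanest route is not to expand both sides in terms of $\tau$, $\kappa$, and the antipode-like maps, but to exploit the coassociativity and the algebra-map property packaged in Lemma \ref{gencomult}(3): the comultiplication $\Delta_{\tau\omega\kappa}$ is an algebra map $\Pi\dcrosstk\cL\to (\Pi\dcrossto\cL)\times_B(\Pi\dcrossok\cL)$ for arbitrary intermediate skew pairings. Applying $\varepsilon$ after one of these comultiplications and using that $\varepsilon$ is a left character turns the associativity of the three-fold product $(\,\cdot\,)\cdot(\,\cdot\,)\cdot(\,\cdot\,)$ in $\Pi\dcrosstk\cL$ into exactly the shape of equation (i). Concretely, the term $\Gamma(\one Y,\one Z)\two Y\two Z$ appearing inside $\Gamma(X,-)$ should be identified, via the definition of $\Gamma$ and the counit axiom, with the product $(\beta\dcrosstk Y)\cdot_{\tau\kappa}(\gamma\dcrosstk Z)$ written back in $\Pi\dcrosskk\cL$ coordinates; associativity of this product, composed once more with $\varepsilon$, yields (i). The analogous argument with $\tau$ and $\kappa$ interchanged gives the right-handed cocycle identity \eqref{equ. right handed 2-cocycle} for $\Gamma^{-1}$.

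Then I would check that $\Gamma^{-1}$ as defined is genuinely a convolution inverse of $\Gamma$. This amounts to showing $\Gamma(\one A, \one B)\,\overline{\Gamma^{-1}(\two A,\two B)} = \varepsilon(A)\varepsilon(\varepsilon(B))$ (and the mirror identity) for $A,B\in\Pi\dcrosskk\cL$; translated through the definitions this is the statement that $\cdot_{\tau\kappa}$ followed by a "$\cdot_{\kappa\tau}$-twisted" counit contraction collapses, which is a direct consequence of the skew-pairing axioms for $\tau$ and $\kappa$ — in effect $\tau$ and $\kappa$ contribute inverse scalars once the Hopf-algebroid structure maps $X_+\otimes X_-$ of $\cL$ (and $\alpha_+\otimes\alpha_-$ of $\Pi$) are inserted and the identities \eqref{equ. inverse lamda 1}--\eqref{equ. inverse lamda 8} are used to telescope. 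Finally, the identification $\Pi\dcrosstt\cL = (\Pi\dcrosskk\cL)^\Gamma$ is obtained by substituting the formulas for $\Gamma$ and $\Gamma^{-1}$ into the twisted product \eqref{twistprod}, expanding with componentwise comultiplication, and matching the result against $\cdot_{\tau\tau}$; the assertion that $\Pi\dcrosstk\cL$ is the one-sided twist \eqref{equ. left side twisted product} is the same computation carried only halfway, using just $\Gamma$ and not $\Gamma^{-1}$.

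The main obstacle I expect is the bookkeeping in the cocycle identity (i): matching the nested Sweedler indices on $\Pi\dcrosskk\cL$ (which carries componentwise comultiplication) against the multi-step products in $\Pi\dcrosstk\cL$ (where the skew pairings shuffle the $\cL$- and $\Pi$-legs through each other and invoke $X_+\otimes X_-$), while simultaneously keeping track of the Takeuchi-product constraints and the $B$ versus $\BB$ actions. Phrasing everything in terms of the algebra maps of Lemma \ref{gencomult} rather than raw pairing formulas is what makes this tractable; without that reduction the verification becomes a long and error-prone string-diagram chase. Once (i) is in hand, the remaining parts are routine consequences of the skew-pairing axioms and the left-character property of $\varepsilon$, together with Theorem \ref{thm. 2 cocycle twist1} to know that the twisted object is again of the expected type.
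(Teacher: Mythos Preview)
Your overall strategy matches the paper's: both rely on Lemma~\ref{gencomult}(3) to avoid raw pairing computations, and both extract the cocycle identity (i) from associativity of $\cdot_{\tau\kappa}$ after first identifying $\Gamma(V\o,W\o)\,V\t\cdot_{\kappa\kappa}W\t$ with $V\cdot_{\tau\kappa}W$ via the algebra-map property of $\Delta_{\tau\kappa\kappa}$ and the counit axiom. The elementary checks (normalisation, $B^e$-linearity, condition (iii)) are handled the same way.

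The one genuine difference is in how you order the last two steps. You propose to verify directly that $\Gamma^{-1}$ is the convolution inverse of $\Gamma$ by telescoping with the identities \eqref{equ. inverse lamda 1}--\eqref{equ. inverse lamda 8}, and only afterwards to check the twisted-product identification by substituting into \eqref{twistprod}. The paper reverses this: it uses the algebra-map property of the \emph{triple}-indexed comultiplication $(\Delta_{\tau\kappa\kappa}\otimes\id)\circ\Delta_{\tau\kappa\tau}$ to obtain the formula
\[
V\cdot_{\tau\tau}W=\Gamma(V\o,W\o)\,\overline{\Gamma^{-1}(V\th,W\th)}\,V\t\cdot_{\kappa\kappa}W\t
\]
in one stroke, and then simply applies $\varepsilon$ to both sides to read off $\varepsilon(V\cdot_{\tau\tau}W)=\Gamma(V\o,W\o)\Gamma^{-1}(V\t,W\t)$, which is the convolution-inverse relation. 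This sidesteps the direct telescoping computation entirely; the invertibility of $\Gamma$ and the identification of the full twist are obtained simultaneously from a single application of Lemma~\ref{gencomult}(3). Your route works, but the paper's is shorter and never touches the $X_+\otimes X_-$ maps explicitly at this stage. (Also, Theorem~\ref{thm. 2 cocycle twist1} is not needed here; the lemma concerns only the bialgebroid twist, not the preservation of the Hopf structure.)
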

\begin{proof}
 Let $T=(\alpha \dcrosstk X), V=(\beta \dcrosstk Y)$ and $W=\gamma \dcrosstk Z$, we have

 \begin{align*}
   V\cdot_{\tau\kappa} W=&\varepsilon((V\cdot_{\tau\kappa} W)\o) (V\cdot_{\tau\kappa} W)\t=\varepsilon(V\o \cdot_{\tau\kappa} W\o) V\t \cdot_{\kappa\kappa} W\t \\
 =&\Gamma(V\o, W\o) V\t \cdot_{\kappa\kappa} W\t,
 \end{align*}
 where the 2nd step uses the fact that $\Delta_{\tau\kappa\kappa}$ is an algebra map.
 Since the algebra $\Pi\dcrosstk\cL$ is associative \cite{schau1}, we can see $\Gamma$ is a 2-cocycle. Indeed, on the one hand,

 \begin{align*}
     \varepsilon(T\cdot_{\tau\kappa}(V \cdot_{\tau\kappa} W))=\Gamma(T, (V \cdot_{\tau\kappa} W))=\Gamma(T, \Gamma(V\o, W\o) V\t\cdot_{\kappa\kappa}W\t),
 \end{align*}
on the other hand,

\begin{align*}
    \varepsilon((T\cdot_{\tau\kappa}V )\cdot_{\tau\kappa} W)=\Gamma((T\cdot_{\tau\kappa}V), W)=\Gamma( \Gamma(T\o, V\o) T\t\cdot_{\kappa\kappa}V\t, W).
\end{align*}
In other words, $\Pi \dcrosstk \cL$ is a left hand twist of $\Pi \dcrosskk \cL$ in the sense of (\ref{equ. left side twisted product}). Since $\Delta_{\tau\omega\kappa}$ is an algebra map, so $(\Delta_{\tau\kappa\kappa}\ot \id)\circ \Delta_{\tau\kappa\tau}$ is an algebra as well. Therefore, we have
\begin{align*}
    V\cdot_{\tau\tau}W=&\varepsilon((V\cdot_{\tau\tau}W)\o)\overline{\varepsilon((V\cdot_{\tau\tau}W)\th)}(V\cdot_{\tau\tau}W)\t\\
    =&\varepsilon(V\o\cdot_{\tau\kappa}W\o)\overline{\varepsilon(V\th\cdot_{\kappa\tau}W\th)}V\t\cdot_{\kappa\kappa}W\t\\
    =&\Gamma(V\o, W\o)\overline{\Gamma^{-1}(V\th, W\th)}V\t\cdot_{\kappa\kappa}W\t.
\end{align*}
Moreover, by applying $\varepsilon$ on both sides of the above equality, we have
\begin{align*}
    \varepsilon(V\cdot_{\tau\tau}W)=&\varepsilon(\Gamma(V\o, W\o)\overline{\Gamma^{-1}(V\th, W\th)}V\t\cdot_{\kappa\kappa}W\t)\\
    =&\Gamma(V\o, W\o)\varepsilon(V\t\cdot_{\kappa\kappa}W\t)\Gamma^{-1}(V\th, W\th)\\
    =&\Gamma(V\o, W\o)\varepsilon(V\t \varepsilon(W\t))\Gamma^{-1}(V\th, W\th)\\
    =&\Gamma(V\o, \overline{\varepsilon(W\t)}W\o)\varepsilon(V\t )\Gamma^{-1}(V\th, W\th)\\
    =&\Gamma(V\o, W\o)\Gamma^{-1}(\varepsilon(V\t )V\th, W\t)\\
    =&\Gamma(V\o, W\o)\Gamma^{-1}(V\t, W\t).
\end{align*}
Therefore, we show $\Pi\dcrosstt$ is a Drinfeld twist of $\Pi\dcrosskk \cL$.

\end{proof}

From the above Lemma, we know $\Pi\dcrosstt\cL=(\Pi\dcrosskk\cL)^{\Gamma}$, and $\Pi\dcrosstk\cL$ is the left hand twist via $\Gamma$. Since $\overline{B}\subseteq\Pi\dcrosskk\cL$ is a left $\Pi\dcrosskk\cL$-Galois extension by its regular left coaction, so
by theorem \ref{thm. twi have one sted left Hopf Galois extensions}, we have the following result:

\begin{cor}
    Let $\tau[\bullet|\bullet]$ and $\kappa[\bullet|\bullet]$ be two shew pairing between two Hopf algebroids $\Pi$ and $\cL$, then
 $\BB\subseteq\Pi\dcrosstk \cL$ with the product (\ref{equ. twist on two side products}) is a left $\Pi\dcrosstt \cL$-Galois extension.
\end{cor}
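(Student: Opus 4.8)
The plan is to realise $\BB\subseteq\Pi\dcrosstk\cL$ as a one-sided Drinfeld twist of the ``trivial'' Galois extension $\BB\subseteq\Pi\dcrosskk\cL$ and then to invoke Theorem~\ref{thm. twi have one sted left Hopf Galois extensions}.

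First I would note that $\Pi\dcrosskk\cL$ is itself a Hopf algebroid: this is exactly the Lemma asserting that $\Pi\dcrosstt\cL$ is a Hopf algebroid, applied with the skew pairing $\tau$ replaced by $\kappa$ (both $\Pi$ and $\cL$ being Hopf algebroids). In particular $\Pi\dcrosskk\cL$ is a left Hopf algebroid, so, regarding it as the regular left comodule over itself, the left canonical map ${}^{L}\can$ is precisely the map $\lambda$ of Definition~\ref{defHopf} and hence bijective; the left coinvariant subalgebra is the image of the target map, i.e.\ $\BB$, and $\Pi\dcrosskk\cL$ is faithfully flat as a left $\BB$-module by the standing assumption of Section~\ref{sec2}. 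Thus $\BB\subseteq\Pi\dcrosskk\cL$ is a faithfully flat left $\Pi\dcrosskk\cL$-Galois extension via its regular left coaction.

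Next I would feed this extension into the twisting machinery. By Lemma~\ref{lem. 2 cocycle of skew pairing}, $\Gamma((\alpha\dcrosskk X),(\beta\dcrosskk Y))=\varepsilon((\alpha\dcrosstk X)\cdot_{\tau\kappa}(\beta\dcrosstk Y))$ is an invertible normalised $2$-cocycle on $\Pi\dcrosskk\cL$ with $(\Pi\dcrosskk\cL)^{\Gamma}=\Pi\dcrosstt\cL$, and the one-sided twisted comodule algebra ${}_{\Gamma}(\Pi\dcrosskk\cL)$ of~\eqref{equ. left side twisted product} is literally $\Pi\dcrosstk\cL$ equipped with the product~\eqref{equ. twist on two side products}. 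Applying Theorem~\ref{thm. twi have one sted left Hopf Galois extensions} to the faithfully flat left $\Pi\dcrosskk\cL$-Galois extension $\BB\subseteq\Pi\dcrosskk\cL$ and the cocycle $\Gamma$ then gives at once that $\BB\subseteq{}_{\Gamma}(\Pi\dcrosskk\cL)=\Pi\dcrosstk\cL$ is a left $(\Pi\dcrosskk\cL)^{\Gamma}=\Pi\dcrosstt\cL$-Galois extension, and the twisted left translation map can be read off from the formula in that theorem.

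The hard part will not be a computation: the corollary is essentially bookkeeping that combines two results already in hand. The only things to check with care are that the hypotheses of Theorem~\ref{thm. twi have one sted left Hopf Galois extensions} really hold for $P=\Pi\dcrosskk\cL$ — that the coinvariants are genuinely $\BB$ and that the required faithful flatness is in force — and that the abstract twisted product ${}_{\Gamma}(-)$ of~\eqref{equ. left side twisted product} coincides on the nose with the explicit product~\eqref{equ. twist on two side products}, which is precisely what Lemma~\ref{lem. 2 cocycle of skew pairing} records.
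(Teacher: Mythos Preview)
Your proposal is correct and follows essentially the same approach as the paper: realise $\BB\subseteq\Pi\dcrosskk\cL$ as a left Galois extension via the regular coaction (since $\Pi\dcrosskk\cL$ is a left Hopf algebroid), invoke Lemma~\ref{lem. 2 cocycle of skew pairing} to identify $\Pi\dcrosstt\cL=(\Pi\dcrosskk\cL)^{\Gamma}$ and $\Pi\dcrosstk\cL={}_{\Gamma}(\Pi\dcrosskk\cL)$, and then apply Theorem~\ref{thm. twi have one sted left Hopf Galois extensions}. The paper's argument is precisely this chain of observations, stated in the paragraph preceding the corollary.
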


\section{Cleft extensions and twisted crossed products of Hopf algebroids}

In this section, we will study cleft extensions and twisted crossed products of Hopf algebroids. Moreover, we are going to show cleft extensions, twisted crossed products and Hopf Galois extensions with normal basis properties are equivalent. Finally, we will also study the equivalence classes of cleft extensions.

\subsection{Cleft extensions of Hopf algebroids}

\begin{defi}\label{def. cleft extension}
    Let $\cL$ be a left bialgebroid over $B$. A $\cL$-cleft extension consists of a left $\cL$-comodule algebra $P$ and an unital left $\cL$-comodule map $\gamma:  \cL\to P$ (called cleaving map), such that
    \begin{itemize}
        \item $\BB\subseteq N=P^{co\cL}$ (thus $P$ is a $\BB$-ring), and $\gamma$ is  $\BB$-bilinear, i.e. $\gamma(\Bar{a}X\Bar{b})=\Bar{a}\gamma(X)\Bar{b}$;
        \item $j: \cL\ot_{\BB}P\to \cL\diamond_{B}P$ given by $j(X\ot_{\BB} p)=X\o\di \gamma(X\t)p$ is bijective.
    \end{itemize}
\end{defi}
\begin{rem}
 Since $\gamma$ is right $\BB$-linear, we can see $j$ factors through $\cL\ot_{\BB}P$. Moreover,  since $\gamma$ is a left $\cL$-colinear, we have $\gamma(bXb')=b\gamma(X)b'$ for any $b, b'\in B$ and $X\in \cL$. We can also see the image of $j$ belongs to $\cL\diamond_{B} P$. Therefore, $j:\cL\ot_{\BB}P\to \cL\diamond_{B}P$ is well defined.
\end{rem}
If we denote $j^{-1}|_{\cL\di 1_{P}}: X\mapsto X^{\alpha}\ot_{\BB}X^{\beta}\in \cL\ot_{\BB}P$, we have

\begin{prop}
    Let $\cL$
be a left Hopf algebroid over $B$ and $N\subseteq P$ be a $\cL$-cleft extension with cleaving map $\gamma$, then we have
    \begin{align}\label{equ. cleaving map 1}
X^{\alpha}\o\di\gamma(X^{\alpha}\t)X^{\beta}=&X\di 1,\\
\label{equ. cleaving map 2}
X\o{}^{\alpha}\ot_{\BB} X\o{}^{\beta}\gamma(X\t)=&X\ot_{\BB}1,\\
\label{equ. cleaving map 5}
(a\Bar{a'}X b\Bar{b'})^{\alpha}\ot_{\BB}(a\Bar{a'}X b\Bar{b'})^{\beta}=&a X^{\alpha}b\ot_{\BB}b' X^{\beta} a',\\
\label{equ. cleaving map 6}
\Bar{b}X^{\alpha}\ot_{\BB}X^{\beta}=&X^{\alpha}\ot_{\BB}X^{\beta}\Bar{b},\\
\label{equ. cleaving map 3}
(X^{\alpha})\o\di(X^{\alpha})\t\ot_{\BB}X^{\beta}=&X\o\di(X\t)^{\alpha} \ot_{\BB} (X\t)^{\beta},\\
\label{equ. cleaving map 4}
X^{\alpha}\ot X^{\beta}\mo\ot X^{\beta}\z=&(X_{+})^{\alpha}\ot X_{-}\ot (X_{+})^{\beta}\in\int^{c,d} \int_{a,b}{}_{\Bar{c}}\cL_{\Bar{a}}\ot {}_{\Bar{b}}\cL_{\Bar{d}}\ot {}_{\Bar{a}, b}P_{\Bar{c},d},
\end{align}
for any $a, a', b, b'\in B$, $X\in \cL$ and $p\in P$.
\end{prop}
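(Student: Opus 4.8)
The six identities all concern the map $j^{-1}$ restricted to $\cL\diamond 1_P$, written $X\mapsto X^\alpha\ot_{\BB}X^\beta$. My strategy throughout is to verify each identity by applying an appropriate injective (in fact bijective) map to both sides and checking equality there — this is exactly the technique used for the translation map identities in Proposition~\ref{prop. properties of skew regular comodules} and for the skew translation identities. The natural map to apply is $j$ itself (or a variant of it built from $j$ and module maps on an extra tensor factor), since $j$ is bijective by the cleft hypothesis. Before anything else, I would record that $j^{-1}$ is left $B$-linear and right $\BB$-linear in the appropriate senses (inherited from the fact that the image of $j$ lies in the Takeuchi-type product $\cL\diamond_B P$ and that $\gamma$ is $\BB$-bilinear and $B$-bilinear), which already gives the module-linearity statements \eqref{equ. cleaving map 5} and \eqref{equ. cleaving map 6} after checking the images under $j$ agree; indeed \eqref{equ. cleaving map 6} is just the statement that $X^\alpha\ot_{\BB}X^\beta$ lies in the correct balanced tensor product, and \eqref{equ. cleaving map 5} follows from \eqref{equ. inverse lamda 9}-type bookkeeping combined with $B$-bilinearity of $\gamma$.

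For \eqref{equ. cleaving map 1}: this is essentially the defining property $j\circ j^{-1}=\id$ evaluated on $X\diamond 1_P$, since $j(X^\alpha\ot_{\BB}X^\beta)=X^\alpha{}_{(1)}\diamond\gamma(X^\alpha{}_{(2)})X^\beta$ must equal $X\diamond 1$. For \eqref{equ. cleaving map 2}: this is $j^{-1}\circ j=\id$ applied to $X\ot_{\BB}1_P$, unwinding $j(X\ot_{\BB}1_P)=X\o\diamond\gamma(X\t)$, then applying $j^{-1}$ to the right-hand factor after using left $\cL$-colinearity of $\gamma$ to move the coaction. For \eqref{equ. cleaving map 3}: the key point, exactly parallel to \eqref{equ. translation map 2} in the Hopf-Galois case, is that $\cL\diamond_B P$ and $\cL\ot_{\BB}P$ both carry natural left $\cL$-comodule structures (on $\cL\diamond_B P$ via $X\diamond p\mapsto X\o\diamond(X\t\diamond p)$ using coassociativity, on $\cL\ot_{\BB}P$ via $X\ot p\mapsto X\o\diamond(X\t\ot p)$) with respect to which $j$ is $\cL$-colinear; hence $j^{-1}$ is $\cL$-colinear, and \eqref{equ. cleaving map 3} is precisely the colinearity of $j^{-1}$ spelled out on $\cL\diamond 1_P$. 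I would set up these comodule structures carefully, check well-definedness and that their images land in the right Takeuchi products, and check $j$-colinearity by a direct computation using that $\gamma$ is $\cL$-colinear.

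For \eqref{equ. cleaving map 4}: here the left Hopf algebroid structure enters. The right-hand side involves $X_+\ot X_-\ot (X_+)^\beta$, so the natural approach is to apply a map built from $\lambda$ (the left Hopf-Galois map of $\cL$) to the second and third slots — i.e. compare images under $\id\ot(\text{something involving }\lambda)$ or equivalently under a map $\cL\ot(\cL\diamond P)\to\cL\ot P$ that uses $\lambda^{-1}$. Concretely, applying $\id_\cL\ot\,{}^{L}$-type contraction that sends $Y\ot p\mo\ot p\z$ back via $\lambda$ should collapse both sides; the identity \eqref{equ. cleaving map 3} just proved, together with \eqref{equ. inverse lamda 2} and the coaction axioms, should do it. Alternatively one applies $j$ in the first and third tensor factors and uses coassociativity of the $P$-coaction. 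I would check well-definedness of the comparison map on the stated balanced-tensor-product $\int^{c,d}\int_{a,b}{}_{\Bar{c}}\cL_{\Bar{a}}\ot{}_{\Bar{b}}\cL_{\Bar{d}}\ot{}_{\Bar{a},b}P_{\Bar{c},d}$ explicitly, since tracking the four bimodule structures is the bookkeeping-heavy part.

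\textbf{Main obstacle.} The genuine difficulty is not any single identity but setting up \eqref{equ. cleaving map 3} correctly: verifying that the formulas $X\diamond p\mapsto X\o\diamond X\t\diamond p$ on $\cL\diamond_B P$ and $X\ot p\mapsto X\o\diamond(X\t\ot p)$ on $\cL\ot_{\BB}P$ are well-defined $\cL$-comodule structures whose images lie in the appropriate iterated Takeuchi products, and that $j$ intertwines them. This requires careful handling of the non-commuting integral symbols $\int^a$ and $\int_c$ and of the fact that $\Delta\times_B\id$ and $\id\times_B\Delta$ only agree after applying $\alpha,\alpha'$. Once \eqref{equ. cleaving map 3} is in hand, \eqref{equ. cleaving map 4} follows by a colinearity argument exactly as \eqref{equ. translation map 7.5} followed from \eqref{equ. translation map 2} and \eqref{equ. translation map 4}, and the remaining identities are short. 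Throughout I would lean on the already-established parallel statements for the translation map $^L\tau$ in the Proposition following Lemma~\ref{lem. left Hopf Galois induce left Hopf}, since $j^{-1}$ is a cleft-extension analogue of $^L\can^{-1}$ and the proofs transpose almost verbatim with $\gamma$ playing the role that the comodule-algebra multiplication plays there.
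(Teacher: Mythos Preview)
Your approach is essentially the same as the paper's: each identity is verified by applying a suitable bijection (built from $j$ and, for \eqref{equ. cleaving map 4}, also from $\lambda$) to both sides. One calibration remark: you flag \eqref{equ. cleaving map 3} as the main obstacle, but in the paper it is a one-line check---apply $\id_{\cL}\diamond j$ to both sides and use \eqref{equ. cleaving map 1}; there is no need to set up and verify full comodule structures on $\cL\diamond_B P$ and $\cL\ot_{\BB}P$. The real work is \eqref{equ. cleaving map 4}: the paper constructs the explicit bijection $\phi=(\lambda\ot\id_P)\circ j_{1,3}$ (where $j_{1,3}(X\ot Y\ot p)=X\o\ot Y\ot\gamma(X\t)p$), checks it is well-defined on the stated balanced tensor product, writes down its inverse $\phi^{-1}(X\ot Y\ot p)=(X_+)^{\alpha}\ot X_-Y\ot(X_+)^{\beta}p$, and then shows both sides of \eqref{equ. cleaving map 4} map to $X\ot 1\ot 1$ under $\phi$. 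Your ``alternatively one applies $j$ in the first and third tensor factors'' is exactly this $j_{1,3}$, so you have the right ingredient, but the balance of effort is inverted relative to your plan.
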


\begin{proof}
    It is easy to see (\ref{equ. cleaving map 1}) and (\ref{equ. cleaving map 2}) are the direct results of the definition. We can see (\ref{equ. cleaving map 3})  can be proved by comparing the result of $\id_{\cL}\di j$ on both sides. We can see (\ref{equ. cleaving map 5}) and \ref{equ. cleaving map 6}  can be proved by comparing the results of $j$ on both sides. For (\ref{equ. cleaving map 4}), we first define a map $\phi:=(\lambda\ot \id_{P})\circ j_{1,3}: \int_{a,b}\cL_{\Bar{a}}\ot {}_{\Bar{b}}\cL\ot {}_{\Bar{a}, b}P\to \cL\di \cL\di P$, where $j_{1,3}:\int_{a,b}\cL_{\Bar{a}}\ot {}_{\Bar{b}}\cL\ot {}_{\Bar{a}, b}P\to \int_{a,b}  {}_{\Bar{b}}\cL_{\Bar{a}}\ot {}_{\Bar{a}}\cL\ot {}_{b}P
    $ is given by
    \[j_{1,3}(X\ot Y\ot p)=X\o\ot Y\ot \gamma(X\t)p,\]
     for any $X, Y\in \cL$ and $p\in P$. We can see $j_{1,3}$ is well defined since
     \begin{align*}
         j_{1,3}(X\ot \Bar{b}Y\ot p)=&X\o\ot \Bar{b}Y\ot \gamma(X\t)p=X\o\Bar{b}\ot Y\ot \gamma(X\t)p\\
         =&X\o\ot Y\ot \gamma(X\t b)p=X\o\ot Y\ot \gamma(X\t )bp.
     \end{align*}
    It is not hard to see $\phi$ is well defined and has a more precisely formula,
    \[\phi(X\ot Y\ot p)=X\o\ot X\t Y\ot \gamma(X\th)p.\]
     We can see $\phi$ is invertible with
    \[\phi^{-1}(X\ot Y\ot p)=(X_{+})^{\alpha}\ot X_{-}Y \ot (X_{+})^{\beta}p,\]
    which is also well-defined. Indeed, let $X\ot Y\ot p\in \cL\di\cL\di P$
    \begin{align*}
        \phi^{-1}(\Bar{b}X\ot Y\ot p)=(X_{+})^{\alpha}\ot X_{-}bY \ot (X_{+})^{\beta}p=\phi^{-1}(X\ot bY\ot p),
    \end{align*}
    and
    \begin{align*}
        \phi^{-1}(X\ot \Bar{b}Y\ot p)=&(X_{+})^{\alpha}\ot X_{-}\Bar{b} Y \ot (X_{+})^{\beta}p=(\Bar{b}X_{+})^{\alpha}\ot X_{-} Y \ot (\Bar{b}X_{+})^{\beta}p\\
        =&(X_{+})^{\alpha}\ot X_{-}Y \ot (X_{+})^{\beta}bp=\phi^{-1}(X\ot Y\ot bp).
    \end{align*}
    Moreover,
    \begin{align*}
        \phi\circ\phi^{-1}(X\ot Y\ot p)=&(X_{+})^{\alpha}\o\o\ot (X_{+})^{\alpha}\o\t X_{-}Y \ot \gamma((X_{+})^{\alpha}\t)(X_{+})^{\beta}p\\
        =&X_{+}\o\ot X_{+}\t X_{-}Y \ot p\\
       =&X\ot Y\ot P,
    \end{align*}
    and
    \begin{align*}
        \phi^{-1}\circ \phi(X\ot Y\ot p)=&X\o{}_{+}{}^{\alpha}\ot X\o{}_{-} X\t Y\ot X\o{}_{+}{}^{\beta}\gamma(X\th)p\\
        =&X\o{}^{\alpha}\ot Y\ot X\o{}^{\beta}\gamma(X\t)p\\
        =&X\ot Y\ot p.
    \end{align*}
    By applying $\phi$ on both side of (\ref{equ. cleaving map 4}), we have on the one hand,
    \begin{align*}
        \phi(X^{\alpha}\ot X^{\beta}\mo\ot X^{\beta}\z)=&X^{\alpha}\o\ot X^{\alpha}\t X^{\beta}\mo\ot \gamma(X^{\alpha}\th)X^{\beta}\z\\
        =&X^{\alpha}\o\ot (\gamma(X^{\alpha}\t) X^{\beta})\mo\ot (\gamma(X^{\alpha}\t) X^{\beta})\z\\
        =&X\ot 1\ot 1,
    \end{align*}
    where the second step uses $\gamma$ is left $\cL$-colinear. On the other hand, since the right hand side of (\ref{equ. cleaving map 4}) is $\phi^{-1}(X\ot 1\ot 1)$, so $\phi((X_{+})^{\alpha}\ot X_{-}\ot (X_{+})^{\beta})=X\ot 1\ot 1$.
\end{proof}

\begin{prop}\label{prop. cleft extension}
    Let $\cL$
be a left Hopf algebroid over $B$ and $N\subseteq P$ be a $\cL$-cleft extension with cleaving map $\gamma$, then
    \[p\mo{}^{\alpha}\ot_{\BB}p\mo{}^{\beta} p\z\in \cL\ot_{\BB}N\]
    for any $p\in P$.
\end{prop}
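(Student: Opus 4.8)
The plan is to show that the element $p\mo{}^{\alpha}\ot_{\BB}p\mo{}^{\beta}p\z$ lies in the equalizer of the two maps $\id_{\cL}\ot{}_{L}\delta$ and $\id_{\cL}\ot(1_{\cL}\di-)$ from $\cL\ot_{\BB}P$ to $\cL\ot_{\BB}(\cL\di P)$. Indeed, $N=P^{co\cL}$ is by definition the equalizer of ${}_{L}\delta$ and $p\mapsto 1_{\cL}\di p$ inside $\cL\di P$, and since (by the standing assumption) $\cL$ is flat as a right $\BB$-module, the functor $\cL\ot_{\BB}-$ preserves this equalizer; hence $\cL\ot_{\BB}N$ is exactly the equalizer above, so verifying the equalizer condition suffices.

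First I would compute $(\id_{\cL}\ot{}_{L}\delta)$ on $p\mo{}^{\alpha}\ot_{\BB}p\mo{}^{\beta}p\z$. Since $P$ is a left $\cL$-comodule algebra, ${}_{L}\delta$ is an algebra map, so ${}_{L}\delta(p\mo{}^{\beta}p\z)={}_{L}\delta(p\mo{}^{\beta})\cdot{}_{L}\delta(p\z)$; using coassociativity of the $\cL$-coaction on $P$, I would relabel the resulting iterated coaction in the form $p\mt\ot p\mo\ot p\z$ so that the entry fed into $(-)^{\alpha}$ and $(-)^{\beta}$ becomes the top leg $p\mt$, where $p\mt\ot p\mo$ is a coproduct of the middle coaction leg (the $(\Delta\times_{B}\id)$-presentation of the axiom). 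Then $(p\mt)^{\alpha}\ot{}_{L}\delta\big((p\mt)^{\beta}\big)$ is precisely the left-hand side of (\ref{equ. cleaving map 4}) with $X=p\mt$, and rewrites as $(p\mt{}_{+})^{\alpha}\ot p\mt{}_{-}\di(p\mt{}_{+})^{\beta}$. Multiplying through by ${}_{L}\delta(p\z)=p\mo\di p\z$ then yields
\[
(p\mt{}_{+})^{\alpha}\ot_{\BB}\big(p\mt{}_{-}\,p\mo\big)\di\big((p\mt{}_{+})^{\beta}\,p\z\big).
\]

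To conclude, I would apply (\ref{equ. inverse lamda 2}) in the form $\one{X}{}_{+}\ot_{\BB}\one{X}{}_{-}\two{X}=X\ot_{\BB}1$ to the coaction leg $X$ whose coproduct is $p\mt\ot p\mo$; this gives $p\mt{}_{+}\ot_{\BB}p\mt{}_{-}\,p\mo=p\mo\ot_{\BB}1_{\cL}$, the right-hand $p\mo$ now being the ordinary single-coaction leg of $p$. Feeding $p\mt{}_{+}\ot(p\mt{}_{-}\,p\mo)\ot p\z=p\mo\ot 1_{\cL}\ot p\z$ through the well-defined assignment $X\ot Z\di y\mapsto X^{\alpha}\ot_{\BB}Z\di X^{\beta}y$ (whose well-definedness on the balanced tensor products rests on the linearity relations (\ref{equ. cleaving map 5}) and (\ref{equ. cleaving map 6}) for $j^{-1}|_{\cL\di 1}$) collapses the displayed element to $p\mo{}^{\alpha}\ot_{\BB}1_{\cL}\di(p\mo{}^{\beta}p\z)$, which is exactly $(\id_{\cL}\ot(1_{\cL}\di-))$ applied to the original element. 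This is the required equalizer condition.

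The step I expect to be the main obstacle is the bialgebroid bookkeeping in the middle paragraph: in the iterated coaction $p\mt\ot p\mo\ot p\z$ one must keep straight which legs are linked by $\Delta$ and which by ${}_{L}\delta$ (the two equal presentations of the comodule-coassociativity axiom), and one must check that every balanced tensor product and Takeuchi product appearing is exactly the one in (\ref{equ. cleaving map 4}) and (\ref{equ. inverse lamda 2}) so that those identities apply verbatim rather than up to an unjustified identification. By contrast, the final flatness argument and the well-definedness of the auxiliary map are routine once the linearities (\ref{equ. cleaving map 5})--(\ref{equ. cleaving map 6}) are in hand.
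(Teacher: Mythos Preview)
Your proof is correct in outline, but it takes a different and more computational route than the paper. You work directly in $\cL\ot_{\BB}P$: apply $\id\ot{}_{L}\delta$, expand using multiplicativity of the coaction and (\ref{equ. cleaving map 4}), then collapse via (\ref{equ. inverse lamda 2}) after relabelling with coassociativity. The paper instead transfers the equalizer check across the isomorphism $j$. Since $j(p\mo{}^{\alpha}\ot p\mo{}^{\beta}p\z)=p\mo\di p\z$ by (\ref{equ. cleaving map 1}), the condition to verify on the $\cL\di P$ side becomes simply $(\id\di{}_{L}\delta)(p\mo\di p\z)=(\Delta\di\id_{P})(p\mo\di p\z)$, which is the comodule coassociativity axiom. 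Concretely, the paper sets up a commuting square with $j$ on top, the pair $(\id\di{}_{L}\delta,\ \Delta\di\id_{P})$ on the right, and $(\lambda\ot P)\circ j_{1,3}$ composed with the pair $(\id\ot{}_{L}\delta,\ i_{1,3})$ on the left; injectivity of $j_{1,3}$ and of $\lambda\ot P$ (the latter being exactly the left Hopf hypothesis) identifies $\cL\ot_{\BB}N$ with the equalizer of the left-down maps, and commutativity then reduces membership to the one-line check on the right. Your approach gives a self-contained elementwise verification using (\ref{equ. cleaving map 4}) and (\ref{equ. inverse lamda 2}) explicitly, at the cost of exactly the balanced-tensor bookkeeping you flag as the main obstacle; the paper's approach trades that for a near-trivial computation by pushing everything through $j$, at the cost of setting up the diagram and justifying the injectivity of the auxiliary maps.
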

\begin{proof}
 We can observe that in the following diagram both the diagram without dash arrows and the diagram with dash arrows commute

 \[
\begin{tikzcd}
  &\cL\ot_{\BB} P \arrow[d, "\cL\ot {}_{L}\delta" left ]\arrow[d, dashrightarrow,"i_{1,3}" right, shift left=1ex] \arrow[r, "j"] & \cL \diamond_{B} P  \arrow[dd, "\cL\ot {}_{L}\delta" left ]\arrow[dd,dashrightarrow, "\Delta\ot_{P}" right, shift left=1ex] &\\
  &\int_{c,d}\cL_{\Bar{c}}\ot{}_{\Bar{d}}\cL\ot {}_{\Bar{c}, d}P \arrow[d, " j_{1,3}"]\quad&\qquad&\\
   &\int_{c,d} {}_{\Bar{c}}\cL_{\Bar{d}}\ot{}_{\Bar{d}}\cL\ot {}_{c}P   \arrow[r, "\lambda\ot P"] & \cL\diamond_{B} \cL \diamond_{B} P, &
\end{tikzcd}
\]
where $i_{1, 3}:\cL\ot_{\BB}P\to \int_{c,d}\cL_{\Bar{c}}\ot{}_{\Bar{d}}\cL\ot {}_{\Bar{c}, d}P$ is given by $i_{1.3}(X\ot p)=X\ot 1\ot p$ for any $X\ot p\in \cL\ot_{\BB}P$, and $j_{1, 3}:\int_{c,d}\cL_{\Bar{c}}\ot{}_{\Bar{d}}\cL\ot {}_{\Bar{c}, d}P \to \int_{c,d} {}_{\Bar{c}}\cL_{\Bar{d}}\ot{}_{\Bar{d}}\cL\ot {}_{c}P$ is given by $j_{1,3}(X\ot Y\ot p)=X\o\ot Y\ot \gamma(X\t)p$.
Since $\gamma$ is $B$-bilinear, we can see $j_{1,3}$ is well defined. Indeed, $j_{1,3}(X\ot \Bar{b}Y\ot p)=X\o \ot \Bar{b}Y\ot \gamma(X\t)p=X\o \Bar{b}\ot Y\ot \gamma(X\t)p=X\o \ot Y\ot \gamma(X\t b)p=X\o \ot Y\ot \gamma(X\t)bp.$

Since $j_{1,3}$ and $\lambda\ot P$ are injective, $\cL\ot_{\BB}N$ is the coequalizer of the `left-down' morphisms from $\cL\ot_{\BB}P$ to $\cL\diamond_{B}\cL\diamond_{B}P$. So to check  $p\mo{}^{\alpha}\ot_{\BB}p\mo^{\beta} p\z\in \cL\ot_{\BB}N$, it is sufficient to check $p\mo{}^{\alpha}\ot_{\BB}p\mo^{\beta} p\z$ belongs to the coequalizer of the `up-right' morphisms from $\cL\ot_{\BB}P$ to $\cL\di{}\cL\di{}P$. Indeed,
\[(\cL\ot {}_{L}\delta)\circ j(p\mo{}^{\alpha}\ot_{\BB}p\mo^{\beta} p\z)=p\mt\ot p\mo\ot p\z\
=(\Delta\ot P)\circ j(p\mo{}^{\alpha}\ot_{\BB}p\mo^{\beta} p\z).\]
\end{proof}

\begin{lem}\label{lem. cleft extension is Galois}
   Let $\cL$
be a left Hopf algebroid over $B$, if $N\subseteq P$ is a $\cL$-cleft extension with cleaving map $\gamma$, then $N\subseteq P$ is a left $\cL$-Galois extension. More precisely, the left translation map is given by
    \begin{align}
        X\tuno{}\ot_{N}X\tdue{}=\gamma(X^{\alpha})\ot_{N}X^{\beta},
    \end{align}
    for any $X\in \cL$.
\end{lem}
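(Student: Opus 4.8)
The plan is to show that the map $X \mapsto \gamma(X^\alpha)\otimes_N X^\beta$ is a well-defined $N$-bimodule map into $P\otimes_N P$ and that it inverts the left canonical map $^L\can$ on $\cL\diamond 1$. First I would recall that by Proposition~\ref{prop. cleft extension}, for any $p\in P$ we have $p_{(-1)}{}^\alpha \otimes_{\BB} p_{(-1)}{}^\beta p_{(0)} \in \cL\otimes_{\BB}N$, so in particular $\gamma(X^\alpha)\otimes X^\beta$ lies in $P\otimes_N P$ once we observe (via \eqref{equ. cleaving map 6}) that $\bar{b}X^\alpha \otimes_{\BB} X^\beta = X^\alpha \otimes_{\BB} X^\beta\bar b$, which after applying $\gamma$ (left $\cL$-colinear, hence $\BB$-bilinear by hypothesis and also $B$-bilinear) becomes $\gamma(X^\alpha)b \otimes X^\beta = \gamma(X^\alpha)\otimes bX^\beta$, and the $N$-balanced tensor absorbs elements of $N\supseteq\BB$; more precisely the well-definedness over $\otimes_N$ follows directly from $\gamma(X^\alpha)\otimes_N X^\beta$ being the image of $X^\alpha\otimes_{\BB}X^\beta$ under $\gamma\otimes\id$ composed with the observation of Proposition~\ref{prop. cleft extension}.

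Next I would verify that this candidate map, call it $t$, is a two-sided inverse to $^L\can$ restricted appropriately. Computing $^L\can \circ t$: by definition $^L\can(\gamma(X^\alpha)\otimes_N X^\beta) = \gamma(X^\alpha)_{(-1)} \diamond \gamma(X^\alpha)_{(0)}X^\beta$. Since $\gamma$ is left $\cL$-colinear, $\gamma(X^\alpha)_{(-1)}\otimes \gamma(X^\alpha)_{(0)} = X^\alpha{}_{(1)} \otimes \gamma(X^\alpha{}_{(2)})$, so $^L\can(t(X)) = X^\alpha{}_{(1)} \diamond \gamma(X^\alpha{}_{(2)})X^\beta$. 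Now recognize the right-hand side as $j(X^\alpha \otimes_{\BB} X^\beta) = X \diamond 1$ by the definition of $X\mapsto X^\alpha\otimes_{\BB}X^\beta$ as $j^{-1}|_{\cL\diamond 1}$, i.e.\ this is exactly \eqref{equ. cleaving map 1}. For the other direction, I would use that $N\subseteq P$ being Galois requires $^L\can$ bijective; but here the cleaner route is to check $t\circ {}^L\can = \id$ on $P\otimes_N P$ directly, or simply to invoke that a right inverse of a surjection between the relevant modules together with injectivity gives the claim. Actually the most economical argument: show $j$ bijective (given) implies $^L\can$ is bijective by factoring $^L\can$ through $j$. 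Indeed there is a commuting triangle relating $^L\can\colon P\otimes_N P \to \cL\diamond P$ with $j\colon \cL\otimes_{\BB}P\to \cL\diamond P$ via the map $\cL\otimes_{\BB}P \to P\otimes_N P$, $X\otimes p \mapsto \gamma(X^\alpha)\otimes_N X^\beta p$ — checking this square commutes reduces to \eqref{equ. cleaving map 1} again, and checking the auxiliary map is an isomorphism uses Proposition~\ref{prop. cleft extension} together with \eqref{equ. cleaving map 2}. Once $^L\can$ is identified as a composite of isomorphisms, its inverse restricted to $\cL\diamond 1$ is computed as above to be $X\mapsto \gamma(X^\alpha)\otimes_N X^\beta$.

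The main obstacle I anticipate is the bookkeeping of the various balanced tensor products and the precise $B$- versus $\BB$-module structures: one must check carefully that the candidate inverse map $\cL\otimes_{\BB}P \to P\otimes_N P$ is well defined over $\otimes_{\BB}$ and lands in $P\otimes_N P$ (not just $P\otimes_{\BB}P$), which is precisely the content of Proposition~\ref{prop. cleft extension}, and that the identification $P\otimes_N P \cong \cL\otimes_{\BB}P$ is compatible with all structures. The actual algebraic identities are all immediate consequences of \eqref{equ. cleaving map 1}, \eqref{equ. cleaving map 2}, left $\cL$-colinearity of $\gamma$, and the definition of $j^{-1}$, so the computational content is light; the care is entirely in the module-theoretic well-definedness.
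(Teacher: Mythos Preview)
Your direct-verification approach is correct and is exactly what the paper does: apply ${}^{L}\can$ to $\gamma(X^{\alpha})\ot_{N}X^{\beta}$, use that $\gamma$ is left $\cL$-colinear to rewrite as $X^{\alpha}\o\di\gamma(X^{\alpha}\t)X^{\beta}$, and invoke \eqref{equ. cleaving map 1}; then for the other direction compute ${}^{L}\can^{-1}\circ{}^{L}\can(p\ot_{N}1)=\gamma(p\mo{}^{\alpha})\ot_{N}p\mo{}^{\beta}p\z$, use Proposition~\ref{prop. cleft extension} to slide $p\mo{}^{\beta}p\z\in N$ across the tensor, and collapse via \eqref{equ. cleaving map 1} and the counit. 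Your well-definedness remark (that $\gamma$ is $\BB$-bilinear so $\gamma\ot\id$ descends from $\cL\ot_{\BB}P$) is also what the paper records.

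One correction to your alternative ``commuting triangle'' route: the auxiliary map $\cL\ot_{\BB}P\to P\ot_{N}P$ that makes ${}^{L}\can$ factor through $j$ should be $X\ot p\mapsto \gamma(X)\ot_{N}p$, not $X\ot p\mapsto \gamma(X^{\alpha})\ot_{N}X^{\beta}p$. With the correct map one has ${}^{L}\can(\gamma(X)\ot_{N}p)=\gamma(X)\mo\di\gamma(X)\z p=X\o\di\gamma(X\t)p=j(X\ot p)$, which is the commutativity you want; with your map the composite sends $X\ot p$ to $X\di p$, which is not $j$. The inverse of the corrected auxiliary map is then $p\ot_{N}q\mapsto p\mo{}^{\alpha}\ot_{\BB}p\mo{}^{\beta}p\z q$, and checking that this is inverse is again Proposition~\ref{prop. cleft extension} together with \eqref{equ. cleaving map 1} and \eqref{equ. cleaving map 2}. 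Either way the computational content is the same as the paper's direct argument.
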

\begin{proof}
    First, we can see the left translation map is well defined since $\gamma$ is $\BB$-bilinear.
    Then we can see
    \begin{align*}
        {}^{L}\can\circ {}^{L}\tau (X)=&\gamma(X^{\alpha})\mo\di{}\gamma(X^{\alpha})\z X^{\beta}\\
        =&X^{\alpha}\o\di{}\gamma(X^{\alpha}\t) X^{\beta}\\
        =&X\di{}1.
    \end{align*}
   We can also see
     \begin{align*}
         {}^{L}\can^{-1}\circ {}^{L}\can(p\ot 1)=&\gamma(p\mo{}^{\alpha})\ot_{N}p\mo{}^{\beta}p\z=\gamma(p\mo{}^{\alpha})p\mo{}^{\beta}p\z\ot_{N} 1\\
         =&\varepsilon(p\mo{}^{\alpha}\o)\gamma(p\mo{}^{\alpha}\t)p\mo{}^{\beta}p\z\ot_{N} 1\\
         =&\varepsilon(p\mo)p\z\ot_{N} 1=p\ot_{N}1,
     \end{align*}
     where we use Proposition \ref{prop. cleft extension} in the 2nd step.
\end{proof}

\begin{prop}\label{prop. algebra structure given by cleft extension}
     Let $\cL$
be a left Hopf algebroid over $B$, if $N\subseteq P$ is a $\cL$-cleft extension with cleaving map $\gamma$, then $\cL\ot_{\BB}N$ is an algebra with the product given by
\[(X\ot n)(Y\ot m)=(X\o Y\o)^{\alpha}\ot (X\o Y\o)^{\beta}\gamma(X\t)n\gamma(Y\t)m,\]
for any $X\ot n, Y\ot m\in \cL\ot_{\BB} N$. We denote this algebra by $\cL\#^{\gamma} N$.
\end{prop}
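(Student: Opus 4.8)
The plan is to transport the algebra structure on $P$ to $\cL \ot_{\BB} N$ through the bijection $j$, exactly as one does in the classical cleft-extension theory, and then to verify that the transported product has the stated closed form. More precisely, by Lemma~\ref{lem. cleft extension is Galois} the extension $N \subseteq P$ is a left $\cL$-Galois extension, and by Proposition~\ref{prop. cleft extension} the map $\gamma$ is "convolution-invertible" in the sense that $p \mapsto p\mo{}^\alpha \ot_{\BB} p\mo{}^\beta p\z$ lands in $\cL \ot_{\BB} N$. I would first show that the composite
\[
P \xrightarrow{\ {}_{L}\delta\ } \cL \di{} P \xrightarrow{\ j^{-1}\ } \cL \ot_{\BB} P \xrightarrow{\ \cL \ot_{\BB}(\gamma^{-1}\ast\,\cdot\,)\ } \cL \ot_{\BB} N
\]
is a $k$-linear isomorphism with inverse $X \ot n \mapsto \gamma(X)n$; concretely the forward map is $p \mapsto p\mo{}^\alpha \ot_{\BB} p\mo{}^\beta p\z$ and it is well defined into $\cL\ot_{\BB}N$ by Proposition~\ref{prop. cleft extension}, while $(\cL\ot_{\BB}N) \to P$, $X\ot n\mapsto\gamma(X)n$, is its two-sided inverse using \eqref{equ. cleaving map 1}, \eqref{equ. cleaving map 2} and the counit property exactly as in the proof of Lemma~\ref{lem. cleft extension is Galois}. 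This gives a $k$-linear identification $\Theta\colon \cL\ot_{\BB}N \xrightarrow{\sim} P$, $\Theta(X\ot n)=\gamma(X)n$.

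Next I would transport the product: define $(X\ot n)(Y\ot m):=\Theta^{-1}\bigl(\gamma(X)n\,\gamma(Y)m\bigr)$. Since $\Theta^{-1}(p)=p\mo{}^\alpha\ot_{\BB}p\mo{}^\beta p\z$, this is
\[
\Theta^{-1}\bigl(\gamma(X)n\,\gamma(Y)m\bigr)
=\bigl(\gamma(X)n\gamma(Y)m\bigr)\mo{}^{\alpha}\ot_{\BB}\bigl(\gamma(X)n\gamma(Y)m\bigr)\mo{}^{\beta}\bigl(\gamma(X)n\gamma(Y)m\bigr)\z .
\]
Now I would compute the coaction on $\gamma(X)n\gamma(Y)m$. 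Because $P$ is a comodule algebra, $n,m\in N$ are coinvariant, and $\gamma$ is left $\cL$-colinear, the coaction of $\gamma(X)n\gamma(Y)m$ equals $X\o Y\o \di{} \gamma(X\t)n\gamma(Y\t)m$. Feeding this into $(\,\cdot\,)\mo{}^\alpha\ot_{\BB}(\,\cdot\,)\mo{}^\beta(\,\cdot\,)\z$ and using that $j^{-1}$ applied to $X\o Y\o\di{}q$ is $(X\o Y\o)^\alpha\ot_{\BB}(X\o Y\o)^\beta q$ (this is just the right-$P$-linearity of $j$ and $j^{-1}$, i.e. $j(X\ot p)=X\o\di{}\gamma(X\t)p$ so $j^{-1}(Z\di{}q)=Z^\alpha\ot_{\BB}Z^\beta q$), one arrives at
\[
(X\o Y\o)^{\alpha}\ot_{\BB}(X\o Y\o)^{\beta}\,\gamma(X\t)n\gamma(Y\t)m ,
\]
which is exactly the claimed formula. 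Finally, associativity and unitality of this product are automatic because $\Theta$ is a linear isomorphism intertwining it with the associative unital product of $P$ (the unit being $\Theta^{-1}(1_P)=1_\cL\ot 1_N$, using $\gamma$ unital and \eqref{equ. cleaving map 2}); one should also check the formula is well defined on the balanced tensor products, which follows from $\BB$-bilinearity of $\gamma$ together with \eqref{equ. cleaving map 5}--\eqref{equ. cleaving map 6}.

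The main obstacle I anticipate is purely bookkeeping rather than conceptual: one must be careful that every intermediate expression lives in the correct balanced tensor product and that the various $B$- and $\BB$-module structures match up, since over a bialgebroid the tensor products $\di{}$, $\ot_B$, $\ot_{\BB}$ are genuinely different and the Takeuchi subproduct constraints must be respected at each step. In particular, verifying that $\Theta^{-1}$ really takes values in $\cL\ot_{\BB}N$ (not just $\cL\ot_{\BB}P$) is where Proposition~\ref{prop. cleft extension} is essential, and checking that the displayed product descends to $\cL\ot_{\BB}N$ requires \eqref{equ. cleaving map 5} and \eqref{equ. cleaving map 6}. Once the well-definedness is settled, associativity is a formal consequence of transporting structure along the isomorphism $\Theta$, so no lengthy direct verification of the pentagon-type identities is needed.
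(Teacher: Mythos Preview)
Your argument is correct. Both you and the paper prove the proposition by transporting an existing associative product through a bijection, but you choose a different bijection and thereby get associativity for free.

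The paper's proof works with $j\colon \cL\ot_{\BB}P\to \cL\di P$ restricted to $\cL\ot_{\BB}N$: it observes that $j(\cL\ot_{\BB}N)\subseteq \cL\times_B P$, defines the product as $j^{-1}\circ\phi\circ(j\ot j)$ where $\phi$ is componentwise multiplication on the Takeuchi product $\cL\times_B P$, and then checks associativity by a direct two-line computation using \eqref{equ. cleaving map 1} and \eqref{equ. cleaving map 3}. Your route instead uses the bijection $\Theta\colon \cL\ot_{\BB}N\to P$, $X\ot n\mapsto \gamma(X)n$, with inverse $p\mapsto p\mo{}^\alpha\ot p\mo{}^\beta p\z$ supplied by Proposition~\ref{prop. cleft extension}; since the target $P$ is already an associative algebra, associativity is immediate once you verify the transported product has the stated closed form. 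In effect you are anticipating the bijection that the paper only introduces later, in the proof of Lemma~\ref{lem. cleft extension has normal basis property}, and using it here to streamline the argument. The paper's approach keeps Proposition~\ref{prop. algebra structure given by cleft extension} logically prior to and independent of that bijection, at the cost of an explicit associativity check; your approach is more economical but ties the two results together.
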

\begin{proof}

First, we can define a map $\phi: (\cL\times P)\ot(\cL\times P)\to \cL\times P$ by
\[\phi((X\ot p)\ot(Y\ot q)):=XY\ot pq,\]
for any $X\ot p, Y\ot q\in \cL\times P$. Since $B$ commutes with $N$ as subalgebras of $P$, we can also see $j(X\ot_{\BB}n)=X\o\ot \gamma(X\t)n\in \cL\times_{B}P$. As a result, $(X\ot n)(Y\ot m)=j^{-1}\circ \phi\circ (j\ot j)((X\ot n)\ot(Y\ot m))$ is well defined. Now, let's check the product is associative. Let $X\ot n, Y\ot m, Z\ot l\in \cL\ot_{\BB}N$, on the one hand,
\begin{align*}
    (&(X\ot n)(Y\ot n))(Z\ot l)\\
    =&((X\o Y\o)^{\alpha}\o Z\o)^{\alpha}\ot ((X\o Y\o)^{\alpha}\o Z\o)^{\beta}\gamma((X\o Y\o)^{\alpha}\t)(X\o Y\o)^{\beta}\gamma(X\t)n\gamma(Y\t)m\gamma(Z\t)l\\
    =&(X\o Y\o Z\o)^{\alpha}\ot (X\o Y\o Z\o)^{\beta}\gamma(X\t)n\gamma(Y\t)m\gamma(Z\t)l.
\end{align*}
   On the other hand,
   \begin{align*}
      (&X\ot n)((Y\ot n)(Z\ot l))\\
      =&(X\o (Y\o Z\o)^{\alpha}\o)^{\alpha}\ot (X\o (Y\o Z\o)^{\alpha}\o)^{\beta} \gamma(X\t) n \gamma((Y\o Z\o)^{\alpha}\t)(Y\o Z\o)^{\beta}\gamma(Y\t)m\gamma(Z\t)l\\
      =&(X\o Y\o Z\o)^{\alpha}\ot (X\o Y\o Z\o)^{\beta}\gamma(X\t)n\gamma(Y\t)m\gamma(Z\t)l.
   \end{align*}
\end{proof}

\begin{lem}\label{lem. cleft extension has normal basis property}
     Let $\cL$
be a left Hopf algebroid over $B$, if $N\subseteq P$ is a $\cL$-cleft extension with cleaving map $\gamma$, then $\cL\#^{\gamma}N$ is isomorphic to $P$ as a $\cL$-comodule algebra.
\end{lem}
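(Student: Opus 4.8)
The plan is to exhibit an explicit isomorphism $\Theta\colon \cL\#^{\gamma}N\to P$ and check it is a morphism of $\cL$-comodule algebras. The natural candidate, guided by the cleaving map, is
\[
\Theta(X\ot_{\BB}n)=\gamma(X)n,\qquad X\ot_{\BB}n\in \cL\#^{\gamma}N,
\]
with proposed inverse
\[
\Theta^{-1}(p)=p\mo{}^{\alpha}\ot_{\BB}p\mo{}^{\beta}p\z.
\]
First I would check both maps are well defined: for $\Theta$ this uses that $\gamma$ is $\BB$-bilinear and $B$ commutes with $N$ inside $P$, so the expression factors through $\ot_{\BB}$; for $\Theta^{-1}$ it uses Proposition~\ref{prop. cleft extension}, which guarantees $p\mo{}^{\alpha}\ot_{\BB}p\mo{}^{\beta}p\z\in\cL\ot_{\BB}N$, together with the $B$-linearity relations \eqref{equ. cleaving map 5}--\eqref{equ. cleaving map 6}.

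Next I would verify that $\Theta$ and $\Theta^{-1}$ are mutually inverse. For $\Theta\circ\Theta^{-1}=\id_P$: $\Theta(p\mo{}^{\alpha}\ot_{\BB}p\mo{}^{\beta}p\z)=\gamma(p\mo{}^{\alpha})p\mo{}^{\beta}p\z$, and using $\varepsilon(p\mo{}^{\alpha}{}\o)\gamma(p\mo{}^{\alpha}{}\t)=\gamma(p\mo{}^{\alpha})$ (left $\cL$-colinearity/counit of $\gamma$) one pushes the counit through, applies \eqref{equ. cleaving map 2} in the form $\varepsilon(p\mo{}^{\alpha}{}\o)p\mo{}^{\alpha}{}\t{}^{\beta}=\dots$ — actually the cleanest route is exactly the computation already carried out in the proof of Lemma~\ref{lem. cleft extension is Galois}: there one shows $\gamma(p\mo{}^{\alpha})p\mo{}^{\beta}p\z=\varepsilon(p\mo)p\z=p$. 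For $\Theta^{-1}\circ\Theta=\id$: apply $\Theta^{-1}$ to $\gamma(X)n$; since the coaction on $P$ is multiplicative and $\gamma$ is $\cL$-colinear, $(\gamma(X)n)\mo\ot(\gamma(X)n)\z=X\o\ot\gamma(X\t)n$ (as $n\in N$ is coinvariant), so $\Theta^{-1}(\gamma(X)n)=X\o{}^{\alpha}\ot_{\BB}X\o{}^{\beta}\gamma(X\t)n=X\ot_{\BB}n$ by \eqref{equ. cleaving map 2}.

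Then I would check $\Theta$ is an algebra map. By definition of the product in $\cL\#^{\gamma}N$ (Proposition~\ref{prop. algebra structure given by cleft extension}),
\[
\Theta\big((X\ot n)(Y\ot m)\big)=\gamma\big((X\o Y\o)^{\alpha}\big)(X\o Y\o)^{\beta}\gamma(X\t)n\gamma(Y\t)m.
\]
Applying \eqref{equ. cleaving map 1} with $X\o Y\o$ in place of $X$ — more precisely the identity $\gamma(Z^{\alpha}{}\o)\,?$ — the key simplification is $\gamma(Z^{\alpha})Z^{\beta}=\varepsilon(Z)1_P$ obtained from $\varepsilon(Z^{\alpha}{}\o)\gamma(Z^{\alpha}{}\t)Z^{\beta}$ and \eqref{equ. cleaving map 2}, applied to $Z=X\o Y\o$, collapsing the first two factors to $\varepsilon(X\o Y\o)1_P=\varepsilon(X\o)\varepsilon(Y\o)1_P$; then counit absorption gives $\gamma(X)n\gamma(Y)m=\Theta(X\ot n)\Theta(Y\ot m)$. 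Unitality is immediate from $\gamma$ being unital. Finally, $\Theta$ is $\cL$-colinear because $\gamma$ is $\cL$-colinear and the coaction on $\cL\#^{\gamma}N$ is the coproduct on the first leg while $N$ is coinvariant: ${}_{L}\delta(\Theta(X\ot n))=X\o\ot\gamma(X\t)n=(\id\di\Theta){}_{L}\delta(X\ot n)$.

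The main obstacle I expect is bookkeeping rather than conceptual: making sure every step respects the correct balanced tensor product over $B$ versus $\BB$ and that the coinvariance $N=P^{co\cL}$ is used exactly where needed (so that ${}_{L}\delta(\gamma(X)n)=X\o\ot\gamma(X\t)n$ with no stray coaction on $n$), and invoking the precise forms of \eqref{equ. cleaving map 1}--\eqref{equ. cleaving map 2} to collapse the $(\,\cdot\,)^{\alpha}(\,\cdot\,)^{\beta}$ pairs. No new ideas beyond the already-established properties of $j$, $\gamma$, and the coinvariant subalgebra should be required.
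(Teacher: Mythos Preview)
Your approach is essentially the paper's: you use the same map $\Theta=\phi^{-1}$ and its inverse, and the verifications of bijectivity and $\cL$-colinearity are identical. The only structural difference is that the paper checks the \emph{inverse} map $\phi$ is multiplicative, whereas you check $\Theta$ directly.

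There is one genuine slip in your algebra-map verification. You write
\[
\varepsilon(X\o Y\o)1_P=\varepsilon(X\o)\varepsilon(Y\o)1_P,
\]
but for a bialgebroid the counit is only a left character, $\varepsilon(XY)=\varepsilon(X\varepsilon(Y))$, not multiplicative; so this factorisation is false in general. The identity you need is still true, but for a different reason: since $\gamma$ is $\cL$-colinear and $n,m\in N$ are coinvariant,
\[
{}_L\delta\big(\gamma(X)n\gamma(Y)m\big)=X\o Y\o\di \gamma(X\t)n\gamma(Y\t)m,
\]
and applying the comodule counit $\varepsilon(\cdot)\,(\cdot)=\id_P$ to this gives exactly
\[
\varepsilon(X\o Y\o)\gamma(X\t)n\gamma(Y\t)m=\gamma(X)n\gamma(Y)m.
\]
With this correction your computation of $\Theta\big((X\ot n)(Y\ot m)\big)$ goes through. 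Alternatively, once bijectivity is established you can simply verify multiplicativity of $\Theta^{-1}$ as the paper does, using \eqref{equ. cleaving map 1} to collapse $(p\mo{}^{\alpha}\o q\mo{}^{\alpha}\o)^{\alpha}\ot(\cdots)^{\beta}\gamma(p\mo{}^{\alpha}\t)\cdots$ directly to $(p\mo q\mo)^{\alpha}\ot(p\mo q\mo)^{\beta}p\z q\z=\phi(pq)$; this avoids the counit issue altogether.
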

\begin{proof}
    We can see $\cL\#^{\gamma}N$ is a left $\cL$-comodule with the $B$-bimodule structure and coaction given by
    \[b.(X\ot n).b'=bXb'\ot n,\quad\delta(X\ot n)=X\o\ot X\t\ot n,\]
    for any $X\ot n\in \cL\#_{\gamma}N$ and $b,b'\in B$. We can also see this coaction is an algebra map
    \begin{align*}
        \delta((X\ot n)(Y\ot m))=&(X\o Y\o)^{\alpha}\o\ot (X\o Y\o)^{\alpha}\t \ot (X\o Y\o)^{\beta}\gamma(X\t)n\gamma(Y\t)m\\
        =&X\o Y\o\ot (X\t Y\t)^{\alpha}\ot (X\t Y\t)^{\beta}\gamma(X\th)n\gamma(Y\th)m\\
        =&\delta(X\ot n)\delta(Y\ot m),
    \end{align*}
    where the 2nd step uses (\ref{equ. cleaving map 3}). The isomorphism between $P$ and $\cL\#^{\gamma}N$ is given by Proposition \ref{prop. cleft extension}. Namely,
    \[\phi: P\to \cL\#_{\gamma}N, \quad p\mapsto p\mo{}^{\alpha}\ot_{\BB}p\mo{}^{\beta} p\z,\]
    for any $p\in P$. We can see $\phi$ is invertible with inverse $\phi^{-1}: \cL\#_{\gamma}N\to P$ given by $\phi^{-1}(X\ot n)=\gamma(X)n$, for any $X\ot n\in \cL\#_{\gamma}N$. We can see $\phi^{-1}$ is well defined. For any $X\ot n\in \cL\#_{\gamma}N$,
    \begin{align*}
        \phi\circ \phi^{-1}(X\ot n)=&\gamma(X)\mo{}^{\alpha}\ot \gamma(X)\mo{}^{\beta}\gamma(X)\z n
        = X\o{}^{\alpha}\ot X\o{}^{\beta}\gamma(X\t) n\\
        =&X\ot n,
    \end{align*}
 where the 2nd step uses the fact that $\gamma$ is left $\cL$-colinear, the 3rd step uses (\ref{equ. cleaving map 2}). Also,
 \begin{align*}
     \phi^{-1}\circ \phi(p)=\gamma(p\mo{}^{\alpha})p\mo{}^{\beta}p\z=\varepsilon(p\mo{}^{\alpha}\o)\gamma(p\mo{}^{\alpha}\t)p\mo{}^{\beta}p\z
     =\varepsilon(p\mo)p\z
     =p,
 \end{align*}
 where the 2nd step uses (\ref{equ. cleaving map 1}). By (\ref{equ. cleaving map 5}), we can see $\phi$ is $B$-bilinear. We can check $\phi$ is $\cL$-colinear:
 \begin{align*}
     \delta\circ \phi(p)=&p\mo{}^{\alpha}\o\ot p\mo{}^{\alpha}\t \ot p\mo{}^{\beta}p\z
     =p\mt\ot p\mo{}^{\alpha}\ot p\mo{}^{\beta}p\z\\
     =&(\id\ot \phi)\circ \delta(p),
 \end{align*}
where the 2nd step uses (\ref{equ. cleaving map 3}). Finally, we can check $\phi$ is an algebra map:
\begin{align*}
    \phi(p)\phi(q)=&(p\mo{}^{\alpha}\ot p\mo{}^{\beta}p\z)(q\mo{}^{\alpha}\ot q\mo{}^{\beta}q\z)\\
    =&(p\mo{}^{\alpha}\o q\mo{}^{\alpha}\o)^{\alpha}\ot (p\mo{}^{\alpha}\o q\mo{}^{\alpha}\o)^{\beta} \gamma(p\mo{}^{\alpha}\t)p\mo{}^{\beta}p\z \gamma(q\mo{}^{\alpha}\t)q\mo{}^{\beta}q\z\\
    =&(p\mo q\mo)^{\alpha}\ot (p\mo q\mo)^{\beta} p\z q\z\\
    =&\phi(pq),
\end{align*}
for any $p, q\in P$, where the 3rd step uses (\ref{equ. cleaving map 1}).
\end{proof}

\begin{defi}
    Let $\cL$ be a left bialgebroid over $B$ and $P$ be a left $\cL$-comodule algebra with $N={}^{co\cL}P$, we say $P$ has the left normal basis property if $\overline{B}\subseteq N$ and $P\simeq \cL\ot_{\BB}N$ as left $\cL$-comodules, left $\Bar{B}$-modules (with the left $\BB$-module structure given by $\Bar{b}.(X\ot_{\BB}n)=\Bar{b}X\ot_{\BB}n$, for any $X\ot_{\BB}n\in \cL\ot_{\BB}N$ and $\Bar{b}\in \Bar{B}$) and right $N$-modules (with its natural left $\cL$-comodule and right $N$-module structures).
\end{defi}

\begin{thm}
     Let $\cL$ be a left Hopf algebroid over $B$ and $P$ be a left $\cL$-comodule algebra with $N={}^{co\cL}P$, then $N\subseteq P$ is a cleft extension if and only if $N\subseteq P$ is a left $\cL$-Galois extension and $P$ has the left normal basis property.
\end{thm}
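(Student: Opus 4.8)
The plan is to prove the two implications separately, using the structural lemmas already established. The forward direction ($\Rightarrow$) is essentially done: if $N\subseteq P$ is a $\cL$-cleft extension with cleaving map $\gamma$, then Lemma~\ref{lem. cleft extension is Galois} already shows $N\subseteq P$ is a left $\cL$-Galois extension, and Lemma~\ref{lem. cleft extension has normal basis property} exhibits the $\cL$-comodule algebra isomorphism $P\simeq \cL\#^\gamma N$; restricting attention to the underlying $\cL$-comodule/left-$\BB$-module/right-$N$-module structure of $\cL\#^\gamma N$, which is manifestly $\cL\ot_{\BB}N$ with the obvious structures, gives the left normal basis property. (One should remark that $\BB\subseteq N$ holds by the first axiom in Definition~\ref{def. cleft extension}.) So the first paragraph of the proof simply assembles these two lemmas.

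**The converse.** For the harder direction, assume $N\subseteq P$ is a faithfully flat — wait, we only need it to be — a left $\cL$-Galois extension with the left normal basis property; fix an isomorphism $\theta: \cL\ot_{\BB}N \xrightarrow{\ \sim\ } P$ of left $\cL$-comodules, left $\BB$-modules and right $N$-modules. Define the candidate cleaving map by $\gamma(X) := \theta(X\ot_{\BB} 1_N)$. Then $\gamma$ is left $\cL$-colinear and unital because $\theta$ is colinear and $1\ot 1$ maps to the coinvariant structure correctly; $\gamma$ is $\BB$-bilinear on the left because $\theta$ is left $\BB$-linear, and $\BB$-bilinear on the right because — here one uses that the right $N$-module structure on $\cL\ot_{\BB}N$ restricts to the right $\BB$-action (since $\BB\subseteq N$), so $\gamma(X)\Bar b = \theta(X\ot \Bar b) = \theta(X\Bar b\ot 1) = \gamma(X\Bar b)$. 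The substantive point is to check that $j: \cL\ot_{\BB}P\to \cL\di P$, $X\ot p\mapsto X\o\di \gamma(X\t)p$, is bijective. The key identity is that under $\theta$ the map $j$ is intertwined with a standard isomorphism. Concretely, consider the composite
\[
\cL\ot_{\BB}P \xrightarrow{\ \id\ot \theta^{-1}\ }\cL\ot_{\BB}(\cL\ot_{\BB}N)\xrightarrow{\ ?\ } \cL\di(\cL\ot_{\BB}N)\xrightarrow{\ \id\di\theta\ }\cL\di P,
\]
and show the middle arrow is (essentially) $\lambda\ot_{\BB}\id_N$ composed with a relabelling — this uses that $P$ is a left $\cL$-comodule algebra, that $\theta$ is colinear, so the coaction on $\theta(Y\ot n)$ is $Y\o\di \theta(Y\t\ot n)$, and hence $j(X\ot\theta(Y\ot n)) = X\o \di \gamma(X\t)\theta(Y\ot n) = X\o\di \theta(X\t\ot 1)\theta(Y\ot n)$. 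At this point one needs that multiplication on $P$ transported through $\theta$ behaves compatibly: $\theta(X\t\ot 1)\cdot\theta(Y\ot n)$ lies in the image of $\theta$ and, projected appropriately, the assignment $X\ot Y\mapsto$ (this product) is precisely $\lambda$ of $\cL$, because $\gamma$ is a left-comodule \emph{section} of the coaction — this is where left-Hopf-ness of $\cL$ enters, via the invertibility of $\lambda$.

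**Main obstacle.** I expect the crux to be showing that $j$ is bijective \emph{without} presupposing more than the normal basis isomorphism $\theta$ — in particular, one does not a priori know that $\theta$ is multiplicative or that $\gamma$ is convolution-like invertible, only that $\theta$ is right $N$-linear and left $\cL$-colinear. The trick, as in the classical Hopf case, is to bypass multiplicativity: write $j = (\id\di \mathrm{mult}_P)\circ(\id\di \gamma\di\id)\circ(\text{coaction on first }\cL)$ and factor it as a composition of maps each of which is visibly bijective. The cleanest route is to realise $j$ as the composite $\cL\ot_{\BB}P \xrightarrow{\lambda\ot P}\cL\di\cL\di\,?\cdots$ using the diagram in Proposition~\ref{prop. cleft extension}'s style: namely $j$ fits into a commuting square with $\lambda\otimes\id_P$ on one side and isomorphisms built from $\theta$ on the others, so by faithful flatness (which we should add as a hypothesis, or note it follows) and invertibility of $\lambda$, $j$ is bijective. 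I would set up exactly that square — top edge $j$, left edge $\id_\cL\ot(\id\ot\theta^{-1})$-type map into $\cL\ot_{\BB}\cL\ot_{\BB}N$, bottom edge $\lambda\ot\id_N$, right edge $\id_\cL\di\theta$ followed by the coaction identification — verify commutativity by the colinearity of $\theta$ together with $\gamma(X)=\theta(X\ot 1)$ and right $N$-linearity of $\theta$, and conclude. The remaining checks ($\gamma$ unital, $\BB$-bilinear; $\BB\subseteq N$) are routine and handled in a closing sentence.
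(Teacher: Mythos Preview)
The forward direction is fine and matches the paper.

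For the converse there is a genuine gap. The square you propose, with middle arrow $\lambda\ot_{\BB}\id_N$, does not commute. Tracing $X\ot\theta(Y\ot n)$ through your composite yields $X\o\di\theta(X\t Y\ot n)$, whereas $j(X\ot\theta(Y\ot n))=X\o\di\gamma(X\t)\theta(Y\ot n)=X\o\di\theta(X\t\ot 1)\cdot\theta(Y\ot n)$. For these to agree you would need $\theta(X\t\ot 1)\,\theta(Y\ot n)=\theta(X\t Y\ot n)$, i.e.\ that $\gamma$ behaves multiplicatively in this sense --- but $\theta$ is only assumed to be a left $\cL$-comodule, left $\BB$-module and right $N$-module isomorphism, not an algebra map. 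You flag this obstacle yourself, but the ``bypass'' you sketch does not resolve it: the factorisation $j=(\id\di m_P)\circ(\id\di\gamma\di\id)\circ(\Delta\ot\id_P)$ is correct, yet none of the individual factors is bijective, and no rearrangement into a genuinely commuting square with $\lambda$ is available without the multiplicativity you lack. A further red flag is that your converse never invokes the Galois hypothesis at all (and faithful flatness, which you tentatively add, is neither assumed nor needed).

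The paper's argument uses the Galois structure essentially. With $\phi=\theta^{-1}\colon P\to\cL\ot_{\BB}N$ and $\gamma(X)=\phi^{-1}(X\ot 1)$ as you define, the left translation map $X\mapsto \tuno{X}\ot_N\tdue{X}$ furnishes an explicit inverse of $j$:
\[
j^{-1}(X\di 1)\ :=\ \phi(\tuno{X})\cdot\tdue{X}\ \in\ (\cL\ot_{\BB}N)\cdot P\ \subseteq\ \cL\ot_{\BB}P,
\]
well defined by right $N$-linearity of $\phi$. One then checks $j\circ j^{-1}=\id$ and $j^{-1}\circ j=\id$ using only the colinearity of $\phi$, the right $N$-linearity of $\phi^{-1}$, and the translation-map identities $\tuno{X}\mo\di\tuno{X}\z\tdue{X}=X\di 1$ and $\tuno{p\mo}\ot_N\tdue{p\mo}p\z=p\ot_N 1$. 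This is precisely the missing ingredient: the translation map plays the role that a convolution inverse of $\gamma$ would play in the classical Hopf algebra case.
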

\begin{proof}
    By Lemmas \ref{lem. cleft extension is Galois} and \ref{lem. cleft extension has normal basis property}, we know if $N\subseteq P$ is a cleft extension, then $N\subseteq P$ is a left $\cL$-Galois extension and $P$ has the left normal basis property. Conversely, we assume $\phi:P\to \cL\ot_{\BB}N$ is an isomorphism of left $\cL$-comodules, left $\BB$-modules and right $N$-modules. Define $\gamma:\cL\to P$ by $\gamma(X):=\phi^{-1}(X\ot 1)$. Since $\phi^{-1}$ is left $\cL$-colinear,  $\gamma$ is left $\cL$-colinear as well. Because $\phi^{-1}$ is right $N$-linear and $\BB\subseteq N$, $\gamma(X\Bar{b})=\phi^{-1}(X\ot_{\BB}\Bar{b})=\phi^{-1}(X\ot_{\BB}1)\Bar{b}=\gamma(X)\Bar{b}$. We can also see $\gamma$ is left $\BB$-linear since $\phi$ is. Now, let's show $j: \cL\ot_{\BB}P\to \cL\di P$ is bijective with its inverse $j^{-1}$ determined by
    \[j^{-1}(X\ot 1)=X^{\alpha}\ot X^{\beta}:=\phi(X\tuno{})X\tdue{},\]
    for any $X\in \cL$. In other words, if we denote the image of $\phi$ by $\phi(p)=:L(p)\ot R(p)\in \cL\ot_{\BB} N$, we have $X^{\alpha}\ot X^{\beta}=L(X\tuno{})\ot_{\BB}R(X\tuno{})X\tdue{}$. Since $\phi$ is right $N$-linear, $\phi(X\tuno{})X\tdue{}$ is well defined. Moreover, we have on the one hand,
    \begin{align*}
       j(L(X\tuno{})&\ot_{\BB}R(X\tuno{})X\tdue{}) \\=&L(X\tuno{})\o\ot\gamma(L(X\tuno{})\t)R(X\tuno{})X\tdue{}\\
        =&X\tuno{}\mo\ot \gamma(L(X\tuno{}\z))R(X\tuno{}\z)X\tdue{}\\
        =&X\tuno{}\mo\ot \phi^{-1}(L(X\tuno{}\z)\ot 1)R(X\tuno{}\z)X\tdue{}\\
        =&X\tuno{}\mo\ot \phi^{-1}(L(X\tuno{}\z)\ot R(X\tuno{}\z))X\tdue{}\\
        =&X\tuno{}\mo\ot \phi^{-1}(\phi(X\tuno{}\z))X\tdue{}\\
        =&X\tuno{}\mo\ot X\tuno{}\z X\tdue{}\\
        =&X\ot 1,
    \end{align*}
    where the 2nd step uses the fact that $\phi$ is left $\cL$-colinear, the 4th step uses the fact that $\phi^{-1}$ is right $N$-linear. On the other hand,
    \begin{align*}
        L(X\o\tuno{})&\ot R(X\o\tuno{})X\o\tdue{}\gamma(X\t)\\
        =&L(\gamma(X)\mo\tuno{})\ot R(\gamma(X)\mo\tuno{}) \gamma(X)\mo\tdue{} \gamma(X)\z\\
        =&L(\gamma(X))\ot R(\gamma(X))
        =\phi(\gamma(X))
        =\phi\circ \phi^{-1}(X\ot 1)\\
        =&X\ot 1.
    \end{align*}
    where the 1st step uses the fact that $\gamma$ is left $\cL$-colinear.
\end{proof}

\subsection{Twisted crossed products of Hopf algebroids}

\begin{defi}\label{defi. 2 cocycle and twisted module algebra}
    Let $\cL$ be a left bialgebroid over $B$ and $M$ be a $B$-ring. We call $\cL$ measures $M$, if there is a linear map (called measuring) $\la: \cL\ot_{B}{} M\to M$, $X\ot m\mapsto X\la m$, such that
    \begin{itemize}
        \item [(1)] $(b \Bar{b'}X)\la m=b(X\la m)b'$,
        \item[(2)] $X\la b=\varepsilon(Xb)=\varepsilon(X\Bar{b})$,
        \item[(3)] $X\la (mn)=(X\o\la m)(X\t \la n)$,
    \end{itemize}
    for any $m,n\in M$, $b, b'\in B$ and $X\in \cL$.
    Given such a measuring $\la$, a convolution invertible map $\sigma: \cL\ot_{B^{e}}\cL\to M$ is called a 2-cocycle of $\cL$ over $M$, if it satisfies:
    \begin{itemize}
        \item [(i)] $\sigma(b\Bar{b'}X, Y)=b\sigma(X, Y)b'$,
        \item[(ii)] $\sigma(X\ot_{B^{e}}1)=\sigma(1\ot_{B^{e}}X)=\varepsilon(X)$,
        \item[(iii)] $\sigma(X, Y\Bar{b})=\sigma(X, Yb)$,
        \item[(iv)] $\sigma(X\o, Y\o Z\o)(X\t\la \sigma(Y\t, Z\t))=\sigma(X\o Y\o, Z)\sigma(X\t, Y\t)$,
    \end{itemize}
    for any $X, Y, Z\in \cL$. Given such a measuring $\la$ and 2-cocycle $\sigma$, $M$ is called a $\sigma$-twisted left $\cL$-module algebra, if
    \begin{itemize}
        \item [(v)] $\sigma(X\o, Y\o) (X\t\la(Y\t \la m))=((X\o Y\o)\la m)\sigma(X\t, Y\t)$.
    \end{itemize}
\end{defi}

\begin{rem}
    By (1) and (i), we can see (3), (iv) and (v) are well defined. Moreover, if $M=B$, a 2-cocycle of $\cL$ over $B$ is a right-handed 2-cocycle given in Section \ref{sec. Drinfeld twist of Hopf algebroids}, with the measuring $\la$ given by $X\la b=\varepsilon(Xb)=\varepsilon(X\Bar{b})$. Indeed, the left hand of (iv) is
    \begin{align*}
       \sigma(X\o, Y\o Z\o)\varepsilon(X\t \sigma(Y\t, Z\t))=&\sigma(X\o \overline{ \sigma(Y\t, Z\t)}, Y\o Z\o)\varepsilon(X\t)\\
    =&\sigma(X\o, \overline{ \sigma(Y\t, Z\t)} Y\o Z\o)\varepsilon(X\t)\\
    =&\sigma(X, \overline{ \sigma(Y\t, Z\t)} Y\o Z\o),
    \end{align*}
 and  the right hand of (iv) is $\sigma(\overline{\sigma(X\t, Y\t)}X\o Y\o, Z)$. For the same reason as in \cite{HM22} and \cite{HM23}, we can see condition (iii) and (iv) implies $\sigma^{-1}$ is a `left-handed' 2-cocycle, namely, $\sigma^{-1}$ satisfies
 \[X\o\la \sigma^{-1}(Y\o, Z\o)\sigma^{-1}(X\t, Y\t Z\t)=\sigma^{-1}(X\o, Y\o)\sigma^{-1}(X\t Y\t, Z),\]
and (i), (ii), (iii). When $M=B$, $\sigma^{-1}$ is a 2-cocycle over $B$ in the sense of Definition \ref{def. left handed 2 cocycle}.
\end{rem}

Given a left $B$-bialgebroid $\cL$, the coopposite $B$-coring $\cL^{cop}$ is a left bialgebroid over $\BB$ with the source and target map given by $s^{cop}:=t: \BB\to \cL$ and $t^{cop}:=s: B\to \cL$. In the following, we will be more interested in  2-cocycles of $\cL^{cop}$ over a $\overline{B}$-ring, and we first have a useful proposition below:

\begin{prop}\label{prop. proposition of 2-cocycle over a ring}
    Let $\cL$ be a left Hopf algebroid over $B$ and $N$ be a $\BB$-ring, if $(\sigma, \la)$ is a 2-cocycle of $\cL^{cop}$ over $N$, then we have
    \[\sigma(X\t{}_{+}, X\t{}_{-}\t)(X\o\la \sigma^{-1}(X\t{}_{-}\o, X\th))=\varepsilon(X),\]
    for any $X\in \cL$.
\end{prop}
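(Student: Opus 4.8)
The plan is to prove the identity by a chain of Sweedler‑calculus rewritings: first clear the coproduct sitting on $X\t{}_{-}$, then use the cocycle axioms (read on $\cL^{cop}$) to cancel $\sigma$ against $\sigma^{-1}$ after feeding in the defining relation of the translation map, and finally collapse what is left to $\varepsilon(X)$ using the elementary properties of the translation map and the counit of $\cL$.

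Concretely, I would start by applying the left Hopf algebroid identity (\ref{equ. inverse lamda 6}) to the element $X\t$, which reads $X\t{}_{+}\ot\one{X\t{}_{-}}\ot\two{X\t{}_{-}}=X\t{}_{++}\ot X\t{}_{-}\ot X\t{}_{+-}$. Substituting this into the left‑hand side of the statement (with the first argument of $\sigma$ playing the role of $X\t{}_{+}$, its second argument the role of $\two{X\t{}_{-}}$, and the first argument of $\sigma^{-1}$ the role of $\one{X\t{}_{-}}$) turns the left‑hand side into $\sigma(X\t{}_{++},X\t{}_{+-})\bigl(X\o\la\sigma^{-1}(X\t{}_{-},X\th)\bigr)$, in which $X\t{}_{-}$ no longer carries a coproduct. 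One must check that all the balanced and ``integral'' tensor products are respected at this point; this is where the $B^{e}$‑linearity of $\sigma$ and of the measuring $\la$ (conditions (i) and (1) of Definition \ref{defi. 2 cocycle and twisted module algebra}) and the relations (\ref{equ. inverse lamda 9})--(\ref{equ. inverse lamda 10}) for the translation map enter. Next I would invoke the cocycle structure: since $\sigma$ is a $2$‑cocycle on $\cL^{cop}$, condition (iv) of Definition \ref{defi. 2 cocycle and twisted module algebra} holds with the co‑opposite comultiplication (all comultiplication indices reversed), $\sigma^{-1}$ is the associated left‑handed cocycle by the Remark following that definition, and $\sigma$ and $\sigma^{-1}$ are convolution inverse. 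Combining these identities — equivalently, recording first the mixed $\sigma$--$\sigma^{-1}$ identity that is the $N$‑valued analogue over $\cL^{cop}$ of Lemma \ref{lemma. 2 cocycle and its inverse} — and substituting the fundamental translation relation $\one{X}{}_{+}\ot_{\BB}\one{X}{}_{-}\two{X}=X\ot_{\BB}1$ of (\ref{equ. inverse lamda 2}), together with the measuring axioms (1)--(3), makes $\sigma$ and $\sigma^{-1}$ cancel. What survives is an expression involving only the translation map, which collapses via $X_{+}X_{-}=\varepsilon(X)$ (\ref{equ. inverse lamda 8}), $X=X_{+}\overline{\varepsilon(X_{-})}$ (\ref{equ. inverse lamda 7}) and the counit axioms of the left bialgebroid, yielding $\varepsilon(X)$.

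The main obstacle is not conceptual but combinatorial: every rewriting has to be justified on the precise balanced and integral tensor products carrying the iterated translation map, and since the cocycle lives on $\cL^{cop}$ one has to interchange consistently the roles of $B$ and $\BB$ and of $\one{X}$ and $\two{X}$ relative to Definition \ref{defi. 2 cocycle and twisted module algebra}. The heaviest part is the derivation and application of the mixed cocycle identity (the step where $\sigma$ and $\sigma^{-1}$ are actually brought together); once the cocycle terms have cancelled, the reduction to $\varepsilon(X)$ is routine.
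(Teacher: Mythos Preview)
Your plan names the right ingredients but misidentifies the pivotal identity and opens with an unhelpful move. Applying (\ref{equ. inverse lamda 6}) to $X\t$ merely shuffles the iterated translation: after your step 1 you still have a measuring sandwiched between $\sigma$ and $\sigma^{-1}$ whose arguments are in no convolution relation, and (\ref{equ. inverse lamda 2}) does not bite because $X\t{}_{-}$ and $X\th$ never occur multiplied together. The more serious problem is your appeal to ``the $N$-valued analogue over $\cL^{cop}$ of Lemma~\ref{lemma. 2 cocycle and its inverse}''. No such two-term identity exists: the identities of Lemma~\ref{lemma. 2 cocycle and its inverse} work by feeding a $B$-value of $\Gamma^{\pm1}$ back into an $\cL$-argument through source/target, and for an $N$-valued $\sigma$ there is no map $N\to\cL$ available to do this.

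What the paper actually does is eliminate the measuring first. From axiom (iv) for $\cL^{cop}$ one isolates $X\la\sigma(Y,Z)$ as a three-term product of $\sigma^{\pm1}$'s, and then (using (iii) and convolution inverse) deduces
\[
X\la\sigma^{-1}(Y,Z)=\sigma^{-1}(X\th,Y\th)\,\sigma^{-1}(X\t Y\t,Z\t)\,\sigma(X\o,Y\o Z\o).
\]
Substituting this removes $\la$ entirely and leaves a product of four $\sigma^{\pm1}$'s. A single reindexing via (\ref{equ. inverse lamda 5}) (shifting the translation from $X\t$ to $X\o$) then lines up adjacent pairs as convolution inverses; two cancellations followed by (\ref{equ. inverse lamda 1}) and (\ref{equ. inverse lamda 8}) give $\varepsilon(X)$. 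The three-term expansion of $X\la\sigma^{-1}$ is the step your outline is missing; without it there is no mechanism by which $\sigma$ and $\sigma^{-1}$ can be brought into a position where they cancel.
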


\begin{proof}
    We first show the formula above factors through all the balanced tensor products. Since for any $X\in \cL$,
    \[X_{+}\ot X_{-}\o\ot X_{-}\t\in \int^{a,b}\int_{c,d}{}_{\Bar{a}}\cL_{\Bar{c}}\ot {}_{ \Bar{d}}\cL_{\Bar{b}}\ot {}_{\Bar{c}, d}\cL_{\Bar{a},b},\]
    we have
    \[X\o\ot X\t{}_{+}\ot X\t{}_{-}\o\ot X\t{}_{-}\t\ot X\th{}\in \int^{a,b,e,f}\int_{c,d,g,h}{}_{\Bar{g}}\cL_{\Bar{e}}\ot {}_{\Bar{a},g}\cL_{\Bar{c},e}\ot {}_{ \Bar{d}, f}\cL_{\Bar{b}, h}\ot {}_{\Bar{c}, d}\cL_{\Bar{a},b}\ot {}_{h}\cL_{f} \]
    for any $X\in \cL$. Let $X\ot Y\ot Z\ot W\ot V\in \int^{a,b,e,f}\int_{c,d,g,h}{}_{\Bar{g}}\cL_{\Bar{e}}\ot {}_{\Bar{a},g}\cL_{\Bar{c},e}\ot {}_{ \Bar{d}, f}\cL_{\Bar{b}, h}\ot {}_{\Bar{c}, d}\cL_{\Bar{a},b}\ot {}_{h}\cL_{f}$. We can see
    \begin{align*}
        \sigma(Y, W)(\Bar{b}X\la \sigma^{-1}(Z, V))=\sigma(Y, W)\Bar{b}(X\la \sigma^{-1}(Z, V))
        =\sigma(bY, W)X\la \sigma^{-1}(Z, V)),
    \end{align*}
    and
    \begin{align*}
       \sigma(Y\Bar{b}, W)(X\la \sigma^{-1}(Z, V))=\sigma(Y, \Bar{b}W)(X\la \sigma^{-1}(Z, V)),
    \end{align*}
    and
    \begin{align*}
        \sigma(Y, W)(X\la \sigma^{-1}(\Bar{b}Z, V))=&\sigma(Y, W)((X\Bar{b})\la \sigma^{-1}(Z, V))
        =\sigma(Yb, W)(X\la \sigma^{-1}(Z, V))\\
        =&\sigma(Y, bW)(X\la \sigma^{-1}(Z, V)),
    \end{align*}
    and
    \begin{align*}
        \sigma(Y, W)(X\la \sigma^{-1}(Zb, V))=&\sigma(Y, W)(X\la \sigma^{-1}(Z, bV)).
    \end{align*}
    So the formula is well defined. By (iv), we have
    \[X\la\sigma(Y, Z)=\sigma^{-1}(X\th, Y\th Z\t)\sigma(X\t Y\t, Z\o)\sigma(X\o, Y\o),\]
    for any $X, Y, Z\in\cL$. Therefore, by (iii) we can get
    \[X\la \sigma^{-1}(Y, Z)=\sigma^{-1}(X\th, Y\th)\sigma^{-1}(X\t Y\t, Z\t)\sigma(X\o, Y\o Z\o).\]
    As a result,
    \begin{align*}
        \sigma(&X\t{}_{+}, X\t{}_{-}\t)(X\o\la \sigma^{-1}(X\t{}_{-}\o, X\th))\\
        =&\sigma(X\t{}_{+}, X\t{}_{-}\t)\sigma^{-1}(X\o\th, X\t{}_{-}\o\th)\\
        &\sigma^{-1}(X\o\t X\t{}_{-}\o\t, X\th\t)\sigma(X\o\o, X\t{}_{-}\o\o X\th\o)\\
        =&\sigma(X\o{}_{+}\t, X\o{}_{-}\t)\sigma^{-1}(X\o{}_{+}\o\th, X\o{}_{-}\o\th)\\
        &\sigma^{-1}(X\o{}_{+}\o\t X\o{}_{-}\o\t, X\t\t)\sigma(X\o{}_{+}\o\o, X\o{}_{-}\o\o X\t\o)\\
        =&\sigma^{-1}(X\o{}_{+}\t X\o{}_{-}\t, X\t\t)\sigma(X\o{}_{+}\o, X\o{}_{-}\o X\t\o)\\
        =&\varepsilon(X).
    \end{align*}
\end{proof}

\begin{rem}\label{rem. classical explanation for cleft extension}

Recall that given a Hopf algebra $H$, a twisted crossed product $H\#_{\Tilde{\sigma}} M$ can be given by a $\Tilde{\sigma}$-twisted right $H$-modules algebra $M$, namely, there is a right measuring $\ra: M\ot H\to H$ and a 2-cocycle $\Tilde{\sigma}:H\ot H\to M$, with the product of $H\#_{\Tilde{\sigma}} M$ given by
\[(h\ot n)(g\ot m)=h\o g\o\ot \Tilde{\sigma}(h\t, g\t)(n\ra g\th)m,\]
for any $h\ot n, g\ot m\in H\#_{\Tilde{\sigma}}M$. However, when we consider a left bialgebroid $\cL$, there is no proper way to give the notion of right measuring.
So in order to define a twisted left $\cL$-module algebra $M$, we must turn the `right-handed' twisted-module algebra into the `left' one. In the classical case, given a Hopf algebra $H$ and its $\Tilde{\sigma}$-twisted right module algebra $M$, this could be done by defining a left measuring and a 2-cocycle by $h\la m:=m\ra S^{-1}(h)$,  $\sigma(h,g):=\Tilde{\sigma}(S^{-1}(g), S^{-1}(h))$, for any $m\in M$ and $h,g\in H$. The resulting 2-cocycle $(\la, \sigma)$ is a 2-cocycle of $H^{cop}$ and $M$ is a $\sigma$-twisted $H^{cop}$-module algebra that fits Definition \ref{defi. 2 cocycle and twisted module algebra}. As it will turn out, the left measuring and the two-cocycle $\tilde\sigma$ have analogs in the case where $H$ is generalized to a Hopf algebroid $\cL$, while the right measuring and cocycle $\sigma$ do not.

\end{rem}

\begin{lem}\label{lem. twisted module algebra and crossed product}
 Let $\cL$ be a Hopf algebroid over $B$ and $N$ be a $\BB$-ring. If $(\sigma, \la)$ is a pair of maps satisfy (1)-(3) and (i)-(iii) associated to $\cL^{cop}$ and $N$, then $\cL\ot_{\BB}N$ is an algebra with the product
 \begin{align}\label{equ. product of the twisted crossed product}
     (X\ot n)(Y\ot m)=X_{+}Y_{+}\ot \sigma(Y_{-}\t, X_{-})(Y_{-}\o\la n)m,
 \end{align}
 for any $X\ot n, Y\ot m\in \cL\ot_{\BB}N$ if and only if $(\sigma,\la)$ satisfies (iv) and $N$ is a $\sigma$-twisted left $\cL^{cop}$-module algebra. We call the algebra $\sigma$-twisted crossed product which is denoted by $\cL\#_{\sigma}N$.
\end{lem}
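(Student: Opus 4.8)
The plan is to show that \eqref*{} — i.e.\ the map (\ref{equ. product of the twisted crossed product}) — is a well-defined unital product on $\cL\ot_{\BB}N$ under the standing hypotheses (1)--(3), (i)--(iii), and that its associativity is then equivalent to the conjunction of (iv) with the $\sigma$-twisted module condition (v). Both implications will be read off one computation: expanding $\bigl((X\ot n)(Y\ot m)\bigr)(Z\ot l)$ and $(X\ot n)\bigl((Y\ot m)(Z\ot l)\bigr)$ and simplifying via the universal left-Hopf identities reduces their difference to an identity in $\cL\ot_{\BB}N$, parametrised by $X,Y,Z\in\cL$ and $n,m,l\in N$, that holds for all arguments if and only if (iv) and (v) hold. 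Reading it forwards gives ``if''; specialising the arguments gives ``only if''. This is the bialgebroid transcription of the classical crossed-product associativity computation, and I expect the bulk of the work — and the only real obstacle — to be the Sweedler-index bookkeeping forced by the nested occurrences of $(\cdot)_{+}/(\cdot)_{-}$ and of the coopposite coproduct.

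\emph{Well-definedness.} A priori (\ref{equ. product of the twisted crossed product}) is a map on $\cL\ot N\ot\cL\ot N$, and I must check it descends through all four copies of $\ot_{\BB}$. Moving a $\Bar b$ across $X_{+}\ot_{\BB}X_{-}$ (or $Y_{+}\ot_{\BB}Y_{-}$) is governed by (\ref{equ. inverse lamda 9}) and (\ref{equ. inverse lamda 10}): a $\Bar b$ on $X_{+}$ is converted into one on $X_{-}$, which lives inside the $N$-leg of the output, where it is absorbed using the $\BB^{e}$-linearity (1), (i) and the ``$b$ versus $\Bar b$'' compatibility (iii) of $\la$ and $\sigma$ — exactly the normalisations built into Definition \ref{defi. 2 cocycle and twisted module algebra}. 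For the outer $\ot_{\BB}$, i.e.\ $(X\Bar b\ot n)(Y\ot m)=(X\ot \Bar b n)(Y\ot m)$, one combines the same left-Hopf identities with axioms (2)--(3) of the measuring to absorb the unit image of $\Bar b$ in $N$. Finally the image lies in $\cL\ot_{\BB}N$ with no further (Takeuchi-type) constraint, since $X_{+}Y_{+}\in\cL$ while the second leg is manifestly in $N$.

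\emph{Unit.} Using $1_{+}\ot_{\BB}1_{-}=1\ot_{\BB}1$ from (\ref{equ. inverse lamda 4}) one gets $(X\ot n)(1_{\cL}\ot 1_{N})=X_{+}\ot\sigma(1_{\cL},X_{-})(1_{\cL}\la n)$; then (ii) gives $\sigma(1_{\cL},X_{-})=\varepsilon(X_{-})$, unitality of the measuring gives $1_{\cL}\la n=n$, and (\ref{equ. inverse lamda 7}) in the form $X_{+}\overline{\varepsilon(X_{-})}=X$ closes it. Symmetrically $(1_{\cL}\ot 1_{N})(Y\ot m)=Y_{+}\ot\sigma(Y_{-}\t,1_{\cL})(Y_{-}\o\la 1_{N})\,m$; here (ii) and (2) turn the second leg into $\varepsilon(Y_{-}\o)\varepsilon(Y_{-}\t)\,m=\varepsilon(Y_{-})\,m$ (by the counit axiom of $\cL$ and left $B$-linearity of $\varepsilon$), and again $Y_{+}\overline{\varepsilon(Y_{-})}=Y$. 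Neither computation uses (iv) or (v).

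\emph{Associativity.} Expanding the left-hand side and applying (\ref{equ. inverse lamda 3}), $(UV)_{+}\ot_{\BB}(UV)_{-}=U_{+}V_{+}\ot_{\BB}V_{-}U_{-}$, with $U=X_{+}Y_{+}$, $V=Z_{+}$, and then (\ref{equ. inverse lamda 5}), (\ref{equ. inverse lamda 6}) and Proposition \ref{prop. anti left and left Galois maps} to resolve the iterated coproducts on the $(\cdot)_{-}$-legs, I expect to reach an expression involving a product $\sigma(\,\cdot\,,\,\cdot\,)\sigma(\,\cdot\,,\,\cdot\,)$ and a single $\la$-term; the right-hand side likewise produces $\bigl(\,\cdot\,\la\sigma(\,\cdot\,,\,\cdot\,)\bigr)$ times two $\sigma$-values. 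The two sides agree for all $X,Y,Z,n,m,l$ precisely when the $\sigma$-only part matches — the cocycle condition (iv) for $\cL^{cop}$ — and the $n$-carrying part matches — condition (v) for $\cL^{cop}$. For ``only if'' I would run this backwards: specialising associativity at $n=m=l=1_{N}$ (so that each $Y_{-}\o\la 1_{N}$ collapses to a counit) isolates (iv) after stripping the Hopf decorations via (\ref{equ. inverse lamda 7}), (\ref{equ. inverse lamda 8}), (ii) and (2); then specialising at $m=l=1_{N}$ with general $n$ and using (iv) to cancel the pure-$\sigma$ discrepancy isolates (v). Proposition \ref{prop. proposition of 2-cocycle over a ring} is available for auxiliary simplifications but should not be needed. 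No new idea is introduced beyond the classical one; the care required is in tracking which Sweedler component of which $(\cdot)_{+}/(\cdot)_{-}$ sits in which tensor factor while (\ref{equ. inverse lamda 3}), (\ref{equ. inverse lamda 5}), (\ref{equ. inverse lamda 6}) and Proposition \ref{prop. anti left and left Galois maps} are applied repeatedly, after which (iv) applies verbatim.
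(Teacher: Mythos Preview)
Your plan for well-definedness, the unit, and the ``if'' direction is sound and matches the paper (Proposition~\ref{prop. anti left and left Galois maps} is not actually needed for the forward implication; (\ref{equ. inverse lamda 3}), (\ref{equ. inverse lamda 5}), (\ref{equ. inverse lamda 6}) suffice to bring both associativity brackets to a common $\cL$-leg $X_{+}Y_{+}Z_{+}$, after which (iv) and (v) close the comparison).

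The gap is in ``only if''. After specialising to $n=m=l=1_{N}$ you arrive at an identity of the shape
\[
X_{+}Y_{+}Z_{+}\ot_{\BB}\bigl[\text{LHS of (iv)}-\text{RHS of (iv)}\bigr](Z_{-},Y_{-},X_{-})=0
\]
in $\cL\ot_{\BB}N$, but the tools you list --- (\ref{equ. inverse lamda 7}), (\ref{equ. inverse lamda 8}), (ii), (2) --- do not let you strip the $\cL$-leg. The elements $X_{+}\ot_{\BB}X_{-}$ do not span $\cL\ot_{\BB}\cL$ (they are $\lambda^{-1}(X\di 1)$), and there is no counit-type retraction $\cL\ot_{\BB}N\to N$ annihilating the first factor, since $\varepsilon$ lands in $B$ rather than $\BB$. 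What is required is the \emph{anti-left} Hopf structure. The paper's device is to evaluate the associator at $(Z_{[-]}\ot 1),(Y_{[-]}\ot 1),(X_{[-]}\ot 1)$ and then left-multiply the $\cL$-leg by $Z_{[+]}Y_{[+]}X_{[+]}$: Proposition~\ref{prop. anti left and left Galois maps} gives $X_{[+]}\ot X_{[-]+}\ot X_{[-]-}=X\t{}_{[+]}\ot X\t{}_{[-]}\ot X\o$, and together with (\ref{equ. inverse mu 8}) the first factor collapses to $\overline{\varepsilon(\cdot)}$, leaving (iv) in undecorated form. The same manoeuvre applied to $(1\ot n)(X_{[-]}\ot 1)(Y_{[-]}\ot 1)$ versus its rebracketing yields (v). This is precisely where the hypothesis that $\cL$ is a Hopf algebroid --- not merely a left Hopf algebroid --- is used, and your proposed stripping via (\ref{equ. inverse lamda 7}), (\ref{equ. inverse lamda 8}) does not access it.
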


\begin{proof}
  If $N$ is a $\sigma$-twisted left $\cL$-module algebra, the product factors through the balanced tensor products.
    \begin{align*}
        (X\Bar{b}\ot n)(Y\ot m)=&X_{+}Y_{+}\ot \sigma(Y_{-}\t, bX_{-})(Y_{-}\o\la n)m\\
        =&X_{+}Y_{+}\ot \sigma(Y_{-}\t b, X_{-})(Y_{-}\o\la n)m\\
        =&X_{+}Y_{+}\ot \sigma(Y_{-}\t , X_{-})((Y_{-}\o \Bar{b})\la n)m\\
        =&X_{+}Y_{+}\ot \sigma(Y_{-}\t , X_{-})(Y_{-}\o\la (\Bar{b}n))m\\
        =&(X\ot \Bar{b}n)(Y\ot m)
    \end{align*}
Also
\begin{align*}
    (X\ot n)(Y\Bar{b} \ot m)=&X_{+}Y_{+}\ot \sigma(Y_{-}\t, X_{-})((bY_{-}\o)\la n)m\\
    =&X_{+}Y_{+}\ot \sigma(Y_{-}\t, X_{-})(Y_{-}\o\la n)\Bar{b}m\\
    =&(X\ot n)(Y\ot \Bar{b}m).
\end{align*}
Since $Y_{+}\ot Y_{-}\o\ot Y_{-}\t\in \int^{a,b}\int_{c,d}{}_{\Bar{a}}\cL_{\Bar{c}}\ot {}_{\Bar{d}}\cL_{\Bar{b}}\ot {}_{\Bar{c},d}\cL_{\Bar{a},b}$.
Let $X\ot Y\in \cL\ot_{\BB}\cL$ and $Z\ot W\ot V\in \int^{a,b}\int_{c,d}{}_{\Bar{a}}\cL_{\Bar{c}}\ot {}_{\Bar{d}}\cL_{\Bar{b}}\ot {}_{\Bar{c},d}\cL_{\Bar{a},b}$. We can see
\begin{align*}
    X\Bar{b}Z\ot \sigma(V, Y)(W\la n)m= XZ\ot \sigma(V\Bar{b}, Y)(W\la n)m=XZ\ot \sigma(V, \Bar{b}Y)(W\la n)m,
\end{align*}
and
\begin{align*}
    XZ\Bar{b}\ot_{\BB} \sigma(V, Y)(W\la n)m=XZ\ot_{\BB} \sigma(\Bar{b}V, Y)(W\la n)m,
\end{align*}
    and
    \begin{align*}
    XZ\ot \sigma(V, Y)(\Bar{b}W\la n)m=XZ\ot \sigma(V, Y)\Bar{b}(W\la n)m=XZ\ot \sigma(bV, Y)(W\la n)m,
\end{align*}
so the product is well defined. Now, let's check the product is associative. On the one hand, for any $X\ot n, Y\ot m, Z\ot l\in \cL\ot_{\BB}N$
\begin{align*}
    ((X\ot n)&(Y\ot m))(Z\ot l)\\
    =&X_{+}Y_{+}Z_{+}\ot \sigma(Z_{-}\t, Y_{+-}X_{+-})(Z_{-}\o\la (\sigma(Y_{-}\t, X_{-})(Y_{-}\o\la n)m))l\\
     =&X_{+}Y_{+}Z_{+}\ot \sigma(Z_{-}\t, Y_{-}\th X_{-}\t)(Z_{-}\o\la (\sigma(Y_{-}\t, X_{-}\o)(Y_{-}\o\la n)m))l\\
     =&X_{+}Y_{+}Z_{+}\ot \sigma(Z_{-}\th, Y_{-}\th X_{-}\t)(Z_{-}\t\la \sigma(Y_{-}\t, X_{-}\o))(Z_{-}\o\la((Y_{-}\o\la n)m))l,
\end{align*}
on the other hand,
\begin{align*}
    (X\ot n)&((Y\ot m)(Z\ot l))\\
    =&X_{+}Y_{+}Z_{+}\ot \sigma(Z_{+-}\t Y_{+-}\t, X_{-})((Z_{+-}\o Y_{+-}\o)\la n)\sigma(Z_{-}\t, Y_{-})(Z_{-}\o\la m)l\\
    =&X_{+}Y_{+}Z_{+}\ot \sigma(Z_{-}\fo Y_{-}\th, X_{-})((Z_{-}\th Y_{-}\t)\la n)\sigma(Z_{-}\t, Y_{-}\o)(Z_{-}\o\la m)l\\
    =&X_{+}Y_{+}Z_{+}\ot \sigma(Z_{-}\fo Y_{-}\th, X_{-})\sigma(Z_{-}\th, Y_{-}\t)((Z_{-}\t\la (Y_{-}\o\la n))(Z_{-}\o\la m)l,
\end{align*}
where the 3rd step uses (v). So the product is associative by using (iv).

Conversely, if $\cL$ is a Hopf algebroid over $B$, $N$ is a $\BB$-ring and $(\sigma,\la)$ satisfy all the conditions as above without (iv) and (v), then the product (\ref{equ. product of the twisted crossed product}) is associative implies $(\sigma,\la)$ satisfies condition (iv) and (v). We can see (iv) can be proved by comparing $Z_{[+]}Y_{[+]}Z_{[+]}((Z_{[-]}\ot 1)(Y_{[-]}\ot 1))(X_{[-]}\ot 1)$ and $Z_{[+]}Y_{[+]}Z_{[+]}(Z_{[-]}\ot 1)((Y_{[-]}\ot 1)(X_{[-]}\ot 1))$. Indeed, it is not hard to see both of them are well defined, and
    \begin{align*}
        Z_{[+]}&Y_{[+]}X_{[+]}((Z_{[-]}\ot 1)(Y_{[-]}\ot 1))(X_{[-]}\ot 1)\\
        =&Z_{[+]}Y_{[+]}X_{[+]}X_{[-]+}Y_{[-]+}Z_{[-]+}\ot \sigma(Z_{[-]-}\t, Y_{[-]-}\t X_{[-]-}\t)(Z_{[-]-}\o\la \sigma(Y_{[-]-}\o, X_{[-]-}\o))\\
        =&Z\t{}_{[+]}Y\t{}_{[+]}X\t{}_{[+]}X\t{}_{[-]}Y\t{}_{[-]}Z\t{}_{[-]} \ot \sigma(Z\o\t, Y\o\t X\o\t)(Z\o\o\la \sigma(Y\o\o, X\o\o))\\
        =&\overline{\varepsilon(Z\t Y\t X\t)}\ot \sigma(Z\o\t, Y\o\t X\o\t)(Z\o\o\la \sigma(Y\o\o, X\o\o))\\
        =&1\ot\sigma(Z\t, Y\t X\t)Z\o\la\sigma(Y\o, X\o)
    \end{align*}
    where the 2nd step uses \ref{prop. anti left and left Galois maps}. Similarly,
    \begin{align*}
        Z_{[+]}&Y_{[+]}Z_{[+]}(Z_{[-]}\ot 1)((Y_{[-]}\ot 1)(X_{[-]}\ot 1))
        =1\ot \sigma(Z\t Y\t, X)\sigma(Z\o, Y\o).
    \end{align*}
    Similarly, (v) can be proved by comparing $Y_{[+]}X_{[+]}((1\ot n)(X_{[-]}\ot 1))(Y_{[-]}\ot 1)$ and $Y_{[+]}X_{[+]}(1\ot n)((X_{[-]}\ot 1)(Y_{[-]}\ot 1))$. Indeed,
    \begin{align*}
        Y_{[+]}&X_{[+]}((1\ot n)(X_{[-]}\ot 1))(Y_{[-]}\ot 1)\\
        =&Y_{[+]}X_{[+]}X_{[-]++}Y_{[-]+}\ot \sigma(Y_{[-]-}\t\ot, X_{[-]+-})Y_{[-]-}\o\la (X_{[-]-}\la n)\\
        =&Y_{[+]}X_{[+]}X_{[-]+}Y_{[-]+}\ot \sigma(Y_{[-]-}\t, X_{[-]-}\t)Y_{[-]-}\o\la (X_{[-]-}\o\la n)\\
        =&1\ot \sigma(Y\t, X\t)Y\o\la(X\o\la n),
    \end{align*}
    and similarly,
    \begin{align*}
        Y_{[+]}X_{[+]}(1\ot n)((X_{[-]}\ot 1)(Y_{[-]}\ot 1))=1\ot ((Y\t X\t)\la n) \sigma(Y\o, X\o).
    \end{align*}
\end{proof}

\begin{lem}\label{lem. twisted crossed product is cleft extension}
        Let $\cL$ be a Hopf algebroid over $B$ and $N$ be a $\BB$-ring. If $(\sigma, \la)$ is a 2-cocycle of $\cL^{cop}$ over $N$ and $N$ is a $\sigma$-twisted left $\cL^{cop}$-module algebra, then $N\subseteq\cL\#_{\sigma}N$ is a cleft extension with cleaving map $\gamma:\cL\to \cL\#_{\sigma}N$ given by $\gamma(X):=X\#_{\sigma} 1$, and
        \begin{align}\label{equ. inverse of cocycle cleaving map}
            X^{\alpha}\ot_{\BB} X^{\beta}=X_{+}\ot_{\BB} X_{-}\o{}_{+}\#_{\sigma}\sigma^{-1}(X_{-}\o{}_{-}, X_{-}\t),
        \end{align}
        for any $X\in \cL$.
\end{lem}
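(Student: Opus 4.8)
\emph{Strategy.} The plan is to equip $P:=\cL\#_{\sigma}N$ with the structure of a left $\cL$-comodule algebra, read off the cleaving map $\gamma$, and then prove that $j$ is invertible by writing down the inverse explicitly as in \eqref{equ. inverse of cocycle cleaving map}. For the comodule algebra structure I would take the $B$-bimodule structure $b(X\#_{\sigma}n)b'=bXb'\#_{\sigma}n$ and the coaction $\delta(X\#_{\sigma}n)=X\o\di(X\t\#_{\sigma}n)$. One checks that $\delta$ is well defined on $\cL\ot_{\BB}N$ (using that $\Delta$ is an algebra map and $\Delta(t(\Bar{b}))=1\ot t(\Bar{b})$), that it is coassociative, counital, takes values in $\cL\times_{B}P$, and that it is an algebra map; only this last point uses the left Hopf structure, and after expanding the product \eqref{equ. product of the twisted crossed product} it reduces to \eqref{equ. inverse lamda 5}. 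Since $P\cong\cL\ot_{\BB}N$ as a comodule and the coinvariants of the regular comodule $\cL$ are $t(\BB)$, the coinvariant subalgebra of $P$ is $1\#_{\sigma}N\cong N$, and it contains $\BB$ because $N$ is a $\BB$-ring. The map $\gamma(X)=X\#_{\sigma}1$ is visibly unital and left $\cL$-colinear ($\delta(\gamma(X))=X\o\di(X\t\#_{\sigma}1)$), and its $\BB$-bilinearity $\gamma(\Bar{a}X\Bar{b})=\Bar{a}\gamma(X)\Bar{b}$ follows from the normalization (i)--(iii) of $\sigma$ together with \eqref{equ. inverse lamda 9}--\eqref{equ. inverse lamda 10}.

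The core of the argument is the bijectivity of $j\colon\cL\ot_{\BB}P\to\cL\di P$, $j(X\ot p)=X\o\di\gamma(X\t)p$. I would define $k\colon\cL\di P\to\cL\ot_{\BB}P$ by
\[
k(X\di p)=X_{+}\ot_{\BB}\bigl(X_{-}\o{}_{+}\#_{\sigma}\sigma^{-1}(X_{-}\o{}_{-},X_{-}\t)\bigr)p ,
\]
whose restriction to $\cL\di 1$ is exactly \eqref{equ. inverse of cocycle cleaving map}. The first task is to check that $k$ is well defined, i.e.\ that it factors through every balanced tensor product involved; this uses the behaviour of $X\mapsto X_{+}\ot_{\BB}X_{-}$ under source and target maps, \eqref{equ. inverse lamda 6} and \eqref{equ. inverse lamda 9}--\eqref{equ. inverse lamda 10}, together with conditions (i)--(iii) for $\sigma$ and $\sigma^{-1}$. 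Then one verifies $j\circ k=\id$ and $k\circ j=\id$. For $j\circ k=\id$, substituting $k(X\di 1)$ into $j$ gives $X_{+}\o\di(X_{+}\t\#_{\sigma}1)\bigl(X_{-}\o{}_{+}\#_{\sigma}\sigma^{-1}(X_{-}\o{}_{-},X_{-}\t)\bigr)$; expanding the crossed product, rewriting the nested translation elements via \eqref{equ. inverse lamda 5}, \eqref{equ. inverse lamda 6} and Proposition \ref{prop. anti left and left Galois maps}, and contracting the two cocycle factors by Proposition \ref{prop. proposition of 2-cocycle over a ring} and \eqref{equ. inverse lamda 7}--\eqref{equ. inverse lamda 8} one obtains $X\di 1$; right $P$-linearity of both maps then gives $j\circ k=\id$ on all of $\cL\di P$. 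The identity $k\circ j=\id$ is the mirror computation, now using condition (iv) for $\sigma$ (equivalently the left-handed cocycle identity for $\sigma^{-1}$ recorded in the remark after Definition \ref{defi. 2 cocycle and twisted module algebra}) and \eqref{equ. inverse lamda 1}--\eqref{equ. inverse lamda 2}. Once $j$ is invertible all the requirements of Definition \ref{def. cleft extension} are satisfied, so $N\subseteq P$ is a cleft extension with the stated cleaving map and \eqref{equ. inverse of cocycle cleaving map}.

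The main obstacle is precisely this last pair of computations: one has to keep track of several layers of translation elements $X_{\pm}$ and their iterated coproducts while respecting the Takeuchi-type superscript/subscript constraints (cf.\ \eqref{equ. inverse lamda 6} and Proposition \ref{prop. anti left and left Galois maps}), and recognise the exact point where Proposition \ref{prop. proposition of 2-cocycle over a ring} is needed to collapse the cocycle terms. Everything else --- well-definedness of the auxiliary maps and the comodule algebra axioms --- is routine bookkeeping of balanced tensor products.
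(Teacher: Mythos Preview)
Your approach is essentially the same as the paper's: endow $P=\cL\#_{\sigma}N$ with the comodule structure induced by $\Delta$ on the $\cL$-factor, take $\gamma(X)=X\#_{\sigma}1$, write down the candidate inverse $k$ of $j$ as in \eqref{equ. inverse of cocycle cleaving map}, and verify $j\circ k=\id$ and $k\circ j=\id$ on $X\di 1$ and $X\ot_{\BB}1$.

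One point to correct: you have the roles of the two verifications reversed. In $j\circ k$ you compute $\gamma(X_{+}\t)\cdot\bigl(X_{-}\o{}_{+}\#_{\sigma}\sigma^{-1}(\ldots)\bigr)$; in the crossed-product formula the measuring then acts on $n=1$, so it trivialises and the cocycle factors collapse by the bare convolution identity $\sigma(Z_{-}\t,\,\cdot\,)\sigma^{-1}(Z_{-}\o,\,\cdot\,)$ --- no use of Proposition~\ref{prop. proposition of 2-cocycle over a ring} is needed there. It is in $k\circ j$ that the measuring acts on the nontrivial element $\sigma^{-1}(X\o{}_{-}\o{}_{-},X\o{}_{-}\t)$, producing exactly the pattern $\sigma(\,\cdot\,_{+},\,\cdot\,_{-}\t)(\,\cdot\,\la\sigma^{-1}(\,\cdot\,_{-}\o,\,\cdot\,))$ of Proposition~\ref{prop. proposition of 2-cocycle over a ring}; that proposition (which itself encodes condition~(iv)) is what makes the second computation close. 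So your plan works, but Proposition~\ref{prop. proposition of 2-cocycle over a ring} enters in the $k\circ j$ step, not in $j\circ k$.
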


\begin{proof}
We can see $\cL\#_{\sigma}N$ is a $\BB$-bimodule with the $\BB$-bimodule structure given by $\Bar{a}(X\ot n)\Bar{b}=\Bar{a}X\ot n\Bar{b}$ for any $\Bar{a}, \Bar{b}\in \BB$ and $X\ot n\in \cL\#_{\sigma}N$.
$\cL\#_{\sigma}N$ is also a left $\cL$-comodule with the left coaction on $\cL$, for which the $B$-bimodule structure is given by $a(X\ot n)b=aXb\ot n$ for any $a,b\in B$. So it is not hard to see $\gamma$ is left $\cL$-colinear and $\BB$-bilinear. Now, we check (\ref{equ. inverse of cocycle cleaving map}) is well defined.
We can see for any $X\in\cL$
\begin{align*}
    X_{+}\ot X_{-}\o{}_{+}\ot X_{-}\o{}_{-} \ot X_{-}\t\in \int^{a,b,c}\int_{d,e,f}{}_{\Bar{a}}\cL_{\Bar{d}}\ot {}_{\Bar
    {b}}\cL_{\Bar{e}}\ot {}_{c,\Bar{e}}\cL_{\Bar{b},f}\ot {}_{\Bar{d},f}\cL_{\Bar{a},c}
\end{align*}
Let $X\ot Y\ot Z\ot W\in \int^{a,b,c}\int_{d,e,f}{}_{\Bar{a}}\cL_{\Bar{d}}\ot {}_{\Bar
    {b}}\cL_{\Bar{e}}\ot {}_{c,\Bar{e}}\cL_{\Bar{b},f}\ot {}_{\Bar{d},f}\cL_{\Bar{a},c}$ and $b\in B$, we have
    \begin{align*}
         X\Bar{b}\ot_{\BB} Y\#_{\sigma}\sigma^{-1}(Z, W)=& X\ot_{\BB} \Bar{b}Y\#_{\sigma}\sigma^{-1}(Z, W)
         =X\ot_{\BB} Y\#_{\sigma}\sigma^{-1}(Z\Bar{b}, W)\\
         =&X\ot_{\BB} Y\#_{\sigma}\sigma^{-1}(Z, \Bar{b}W),
    \end{align*}
and
\begin{align*}
          X\ot_{\BB} Y\Bar{b}\#_{\sigma}\sigma^{-1}(Z, W)
         =X\ot_{\BB} Y\#_{\sigma}\Bar{b}\sigma^{-1}(Z, W)
         =X\ot_{\BB} Y\#_{\sigma}\sigma^{-1}(\Bar{b}Z, W),
    \end{align*}
and
\begin{align*}
         X\ot_{\BB} Y\#_{\sigma}\sigma^{-1}(Zb, W)= X\ot_{\BB} Y\#_{\sigma}\sigma^{-1}(Z, bW),
    \end{align*}
so (\ref{equ. inverse of cocycle cleaving map}) is well defined. One the one hand,
\begin{align*}
    X^{\alpha}\o&\ot \gamma(X^{\alpha}\t)X^{\beta}\\
    =&X_{+}\o\ot (X_{+}\t\#_{\sigma}1)(X_{-}\o{}_{+}\#_{\sigma}\sigma^{-1}(X_{-}\o{}_{-}, X_{-}\t))\\
    =&X_{+}\o\ot X_{+}\t{}_{+}X_{-}\o{}_{++}\#_{\sigma}\sigma(X_{-}\o{}_{+-}, X_{+}\t{}_{-})\sigma^{-1}(X_{-}\o{}_{-}, X_{-}\t)\\
    =&X_{++}\o\ot X_{++}\t X_{-}\o{}_{+}\#_{\sigma}\sigma(X_{-}\o{}_{-}\t, X_{+-})\sigma^{-1}(X_{-}\o{}_{-}\o, X_{-}\t)\\
    =&X_{+}\o\ot X_{+}\t X_{-}\o{}_{+}\#_{\sigma}\sigma(X_{-}\o{}_{-}\t, X_{-}\th)\sigma^{-1}(X_{-}\o{}_{-}\o, X_{-}\t)\\
    =&X_{+}\o\ot X_{+}\t X_{-}\o{}_{+}\#_{\sigma}\overline{\varepsilon(X_{-}\o{}_{-} X_{-}\t)}\\
    =&X_{+}\o\ot X_{+}\t X_{-}\o{}_{+}\#_{\sigma}\overline{\varepsilon(X_{-}\o{}_{-} \varepsilon(X_{-}\t))}\\
    =&X_{+}\o\ot X_{+}\t (\overline{(\varepsilon(X_{-}\t)}X_{-}\o){}_{+}\#_{\sigma}\overline{\varepsilon(\overline{(\varepsilon(X_{-}\t)}X_{-}\o){}_{-})}\\
    =&X_{+}\o\ot X_{+}\t X_{-+}\#_{\sigma}\overline{\varepsilon(X_{--})}\\
    =&X_{+}\o\ot X_{+}\t X_{-+}\overline{\varepsilon(X_{--})}\#_{\sigma}1\\
    =&X_{+}\o\ot X_{+}\t X_{-}\#_{\sigma}1\\
    =&X\ot 1\#_{\sigma}1.
\end{align*}
On the other hand,
\begin{align*}
X\o&{}^{\alpha}\ot X\o{}^{\beta}\gamma(X\t)\\
=&X\o{}_{+}\ot_{\BB} (X\o{}_{-}\o{}_{+}\#_{\sigma}\sigma^{-1}(X\o{}_{-}\o{}_{-}, X\o{}_{-}\t)(X\t\#_{\sigma}1)\\
=&X\o{}_{+}\ot_{\BB} X\o{}_{-}\o{}_{++}X\t{}_{+}\#_{\sigma}\sigma(X\t{}_{-}\t, X\o{}_{-}\o{}_{+-})X\t{}_{-}\o\la \sigma^{-1}(X\o{}_{-}\o{}_{-},X\o{}_{-}\t)\\
=&X\o{}_{+}\ot_{\BB} X\o{}_{-}\o{}_{+}X\t{}_{+}\#_{\sigma}\sigma(X\t{}_{-}\t, X\o{}_{-}\o{}_{-}\t)X\t{}_{-}\o\la \sigma^{-1}(X\o{}_{-}\o{}_{-}\o,X\o{}_{-}\t)\\
=&X\o{}_{++}\ot_{\BB} X\o{}_{-+}X\t{}_{+}\#_{\sigma}\sigma(X\t{}_{-}\t, X\o{}_{--}\t)X\t{}_{-}\o\la \sigma^{-1}(X\o{}_{--}\o,X\o{}_{+-})\\
=&X_{++}\ot_{\BB} X_{-[+]+}X_{-[-]+}\#_{\sigma}\sigma(X_{-[-]-}\t, X_{-[+]-}\t)X_{-[-]-}\o\la \sigma^{-1}(X_{-[+]-}\o,X_{+-})\\
=&X_{++}\ot_{\BB} X_{-+[+]}X_{-+[-]+}\#_{\sigma}\sigma(X_{-+[-]-}\t, X_{--}\t)X_{-+[-]-}\o\la \sigma^{-1}(X_{--}\o,X_{+-})\\
=&X_{++}\ot_{\BB} X_{-+}\t{}_{[+]}X_{-+}\t{}_{[-]}\#_{\sigma}\sigma(X_{-+}\o\t, X_{--}\t)X_{-+}\o\o\la \sigma^{-1}(X_{--}\o,X_{+-})\\
=&X_{++}\ot_{\BB} \overline{\varepsilon(X_{-+}\t)}\#_{\sigma}\sigma(X_{-+}\o\t, X_{--}\t)X_{-+}\o\o\la \sigma^{-1}(X_{--}\o,X_{+-})\\
=&X_{++}\ot_{\BB} 1\#_{\sigma}\sigma(X_{-+}\t, X_{--}\t)X_{-+}\o\la \sigma^{-1}(X_{--}\o,X_{+-})\\
=&X_{+}\ot_{\BB} 1\#_{\sigma}\sigma(X_{-}\o{}_{+}\t, X_{-}\o{}_{-}\t)X_{-}\o{}_{+}\o\la \sigma^{-1}(X_{-}\o{}_{-}\o,X_{-}\t)\\
=&X_{+}\ot_{\BB} 1\#_{\sigma}\sigma(X_{-}\t{}_{+}, X_{-}\t{}_{-}\t)X_{-}\o\la \sigma^{-1}(X_{-}\t{}_{-}\o,X_{-}\th)\\
=&X_{+}\ot_{\BB} 1\#_{\sigma}\overline{\varepsilon(X_{-})}\\
=&X\ot_{\BB} 1\#_{\sigma} 1,
\end{align*}
where the 12th step uses Proposition \ref{prop. proposition of 2-cocycle over a ring}.
\end{proof}

Recall that, given a Hopf algebra, and a left cleft extension $N\subseteq P$ with cleaving map $\gamma$, one can construct a `right-handed' measuring $\ra$ and 2-cocycle $\Tilde{\sigma}$ by
\[n\ra h:=\gamma(h\o)n\gamma^{-1}(h\t),\quad \Tilde{\sigma}(h,g):=\gamma^{-1}(h\o,g\o)\gamma(h\t)\gamma(g\t).\]
However, for a $\cL$-cleft extension $N\subseteq P$, as we do not have the inverse of the cleaving map, we cannot define a measuring and 2-cocycle in the same way. However, we can still define a `left handed' measuring and 2-cocycle fitting the idea outlined in Remark \ref{rem. classical explanation for cleft extension}, as proved in the following result.

\begin{thm}\label{thm. equivalence between cleft extension and twist crossed product}
    Let $\cL$ be a Hopf algebroid over $B$ and $P$ be a left $\cL$-comodule algebra with $N={}^{co\cL}P$, then $N\subseteq P$ is a cleft $\cL$-extension if and only if $P\simeq \cL\#_{\sigma}N$ as comodule algebras.
\end{thm}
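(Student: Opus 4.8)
The plan is to prove the two implications separately, in each case assembling the lemmas already established in this subsection.

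\textbf{The `if' direction.} Suppose $\psi\colon P\to\cL\#_{\sigma}N$ is an isomorphism of left $\cL$-comodule algebras, where $(\sigma,\la)$ is a $2$-cocycle of $\cL^{cop}$ over $N$ together with a $\sigma$-twisted left $\cL^{cop}$-module algebra structure, as in Lemma~\ref{lem. twisted crossed product is cleft extension}. First I would identify the coinvariants: the coaction $X\ot n\mapsto X\o\ot X\t\ot n$ only sees the $\cL$-factor, whose coinvariants are $\BB$ (as $\cL$ is left Hopf), and $1\#_{\sigma}(-)$ is an algebra map $N\to\cL\#_{\sigma}N$ by $1_{+}\ot 1_{-}=1\ot 1$ and $1_{\cL}\la n=n$, so $(\cL\#_{\sigma}N)^{co\cL}=1\#_{\sigma}N\cong N$, and it contains $\BB=1\#_{\sigma}\overline B$. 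Since morphisms of comodules carry coinvariants to coinvariants, $\psi$ restricts to an algebra isomorphism $N\xrightarrow{\ \sim\ }1\#_{\sigma}N$. By Lemma~\ref{lem. twisted crossed product is cleft extension} the extension $1\#_{\sigma}N\subseteq\cL\#_{\sigma}N$ is cleft with cleaving map $X\mapsto X\#_{\sigma}1$; then $\gamma:=\psi^{-1}\circ(X\mapsto X\#_{\sigma}1)$ is unital, left $\cL$-colinear and $\BB$-bilinear, and the corresponding map for $P$ is $\psi^{-1}\circ j_{\cL\#_{\sigma}N}\circ(\id_{\cL}\ot\psi)$, hence bijective. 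Thus $N\subseteq P$ is a cleft $\cL$-extension.

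\textbf{The `only if' direction, setup.} Conversely, let $N\subseteq P$ be cleft with cleaving map $\gamma$ and write $j^{-1}|_{\cL\di 1}\colon X\mapsto X^{\alpha}\ot_{\BB}X^{\beta}$. Because $\cL$ is a \emph{full} Hopf algebroid it is also anti-left Hopf, so all the identities~(\ref{equ. inverse mu 1})--(\ref{equ. inverse mu 10}) and Proposition~\ref{prop. anti left and left Galois maps} are at our disposal; this is exactly where the co-opposite formalism of the Introduction enters. I would define the measuring $\la\colon\cL^{cop}\ot_{\overline B}N\to N$ and a map $\sigma\colon\cL^{cop}\ot_{B^{e}}\cL^{cop}\to N$ by transporting the classical recipe of Remark~\ref{rem. classical explanation for cleft extension}, namely $X\la n=\gamma(S^{-1}(X\t))\,n\,\gamma^{-1}(S^{-1}(X\o))$ and $\sigma(X,Y)=\gamma^{-1}\!\big(S^{-1}(Y\t)S^{-1}(X\t)\big)\gamma(S^{-1}(Y\o))\gamma(S^{-1}(X\o))$: each $S^{-1}$ applied to a factor of a coproduct is replaced by the anti-left Hopf maps $X\mapsto X_{[-]},X_{[+]}$, and each instance of $\gamma^{-1}(Z)(-)$ is replaced by the $P$-component of $j^{-1}(Z\di(-))$. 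That the resulting maps take values in $N$ (not merely $P$) is checked with Proposition~\ref{prop. cleft extension} and the colinearity of $\gamma$, and the $\BB$- and $B^{e}$-linearity relations~(\ref{equ. cleaving map 5})--(\ref{equ. cleaving map 6}) make everything factor through the relevant balanced tensor products.

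\textbf{The `only if' direction, verifications and conclusion.} With $\la,\sigma$ fixed I would verify in turn: that $\la$ is a measuring, i.e.\ (1)--(3) of Definition~\ref{defi. 2 cocycle and twisted module algebra}, with (1),(2) coming from $\BB$-bilinearity and unitality of $\gamma$ and (3) from~(\ref{equ. cleaving map 3}) and coassociativity; that $\sigma$ is convolution invertible, the inverse obtained by swapping the two ``$j^{-1}$-slots'' and using~(\ref{equ. cleaving map 1})--(\ref{equ. cleaving map 2}); and that $\sigma$ obeys (i)--(iv) and $N$ is a $\sigma$-twisted left $\cL^{cop}$-module algebra, i.e.\ (v). Conditions (iv) and (v) are the heart of it: they are associativity of $P$ read off through the isomorphism $\phi\colon P\xrightarrow{\ \sim\ }\cL\#^{\gamma}N$ of Lemma~\ref{lem. cleft extension has normal basis property} together with the product formula of Proposition~\ref{prop. algebra structure given by cleft extension}, rewritten into the crossed-product shape~(\ref{equ. product of the twisted crossed product}) by means of~(\ref{equ. inverse lamda 3}), Proposition~\ref{prop. anti left and left Galois maps}, and a manipulation of the kind performed in Proposition~\ref{prop. proposition of 2-cocycle over a ring}. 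Finally, for $P\simeq\cL\#_{\sigma}N$ as comodule algebras I would take the normal-basis comodule isomorphism $\phi\colon p\mapsto p\mo{}^{\alpha}\ot_{\BB}p\mo{}^{\beta}p\z$ of Lemma~\ref{lem. cleft extension has normal basis property} and check that, with the above $\la,\sigma$, the algebra structure $\cL\#^{\gamma}N$ transported along $\phi$ coincides with $\cL\#_{\sigma}N$; since both products are determined by their values on the $\BB$-subring generated by the cleaving image and $N$, this reduces to the cleaving-map identities~(\ref{equ. cleaving map 1})--(\ref{equ. cleaving map 4}) and~(\ref{equ. inverse of cocycle cleaving map}).

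\textbf{Main obstacle.} The delicate part is the bookkeeping in the `only if' direction: pinning down formulas for $\la$ and $\sigma$ that are simultaneously well defined over $\ot_{\overline B}$, $\ot_{B^{e}}$ and $\ot_{\BB}$, take values in $N$, and satisfy the cocycle identity (iv) and the twisted-module-algebra identity (v). As $\cL$ has no antipode, every appeal to $S^{-1}$ or $\gamma^{-1}$ in the classical argument must be rerouted through the anti-left Hopf maps $X_{[-]},X_{[+]}$ and through $j^{-1}$, and one must keep the two anti-left legs (one inside a $\di$, one outside) — and the distinct Takeuchi-type spaces the iterated coproducts live in — carefully apart; this is where essentially all the work lies, the `if' direction, the measuring axioms and convolution invertibility being routine transport of structure.
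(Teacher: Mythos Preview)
Your proposal is correct and follows essentially the same route as the paper: the ``if'' direction is Lemma~\ref{lem. twisted crossed product is cleft extension} transported along the comodule-algebra isomorphism, and for the ``only if'' direction the paper defines $\la$ and $\sigma$ exactly as you describe (your heuristic ``replace $S^{-1}$ by $X_{[-]},X_{[+]}$ and $\gamma^{-1}$ by the $P$-leg of $j^{-1}$'' is precisely what the explicit formulas
\[
X\la n = X_{[+]}X_{[-]}\o{}^{\alpha}\ot_{\BB}X_{[-]}\o{}^{\beta}n\gamma(X_{[-]}\t),\qquad
\sigma(X,Y)=X_{[+]}Y_{[+]}(Y_{[-]}\o X_{[-]}\o)^{\alpha}\ot_{\BB}(Y_{[-]}\o X_{[-]}\o)^{\beta}\gamma(Y_{[-]}\t)\gamma(X_{[-]}\t)
\]
encode), checks (1)--(3), (i)--(iii) directly, and then rather than verifying (iv),(v) head-on performs the single computation $(X\#_{\sigma}n)(Y\#_{\sigma}m)=(X\o Y\o)^{\alpha}\#_{\sigma}(X\o Y\o)^{\beta}\gamma(X\t)n\gamma(Y\t)m$, so that $\cL\#_{\sigma}N=\cL\#^{\gamma}N$ on the nose; associativity of the latter (Proposition~\ref{prop. algebra structure given by cleft extension}) then yields (iv),(v) via Lemma~\ref{lem. twisted module algebra and crossed product}, and Lemma~\ref{lem. cleft extension has normal basis property} finishes. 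Your plan is the same, only stated more schematically.
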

\begin{proof}
    By the Lemma above, if $P$ is isomorphic to $\cL\#_{\sigma}N$, then $N\subseteq P$ is a cleft $\cL$-extension. Conversely, assume $N\subseteq P$ is a cleft $\cL$-extension with cleaving map $\gamma:\cL\to P$. We can construct a measuring $\la:\cL^{cop}\ot_{\BB}N\to N$ by
    \begin{align}\label{equ. measuring from a cleft extension}
        X\la n:=X_{[+]}X_{[-]}\o{}^{\alpha}\ot_{\BB}X_{[-]}\o{}^{\beta}n\gamma(X_{[-]}\t)\in \BB\ot_{\BB}N\simeq N,
    \end{align}
    for any $X\in \cL$ and $n\in N$.
    We will first check (\ref{equ. measuring from a cleft extension}) is well defined. We can see
    \begin{align*}
        X_{[+]}\ot X_{[-]}\o{}^{\alpha}\ot X_{[-]}\o{}^{\beta}\ot X_{[-]}\t\in \int^{a,b,c}\int_{d,e,f}{}_{a}\cL_{d}\ot {}_{\Bar{b},d}\cL_{a,\Bar{e}}\ot {}_{c,\Bar{e}}\cL_{\Bar{b},f}\ot {}_{f}\cL_{c}.
    \end{align*}
    Let $X\ot Y\ot Z\ot W\in \int^{a,b,c}\int_{d,e,f}{}_{a}\cL_{d}\ot {}_{\Bar{b},d}\cL_{a,\Bar{e}}\ot {}_{c,\Bar{e}}\cL_{\Bar{b},f}\ot {}_{f}\cL_{c}$, we can see
    \begin{align*}
        XY\ot_{\BB}Zb n\gamma(W)=XY\ot_{\BB}Z nb\gamma(W)=XY\ot_{\BB}Z n\gamma(bW),
    \end{align*}
    where the 1st step use the fact that $N$ commutes with $B$ as subalgebras in $P$ and the 2nd step uses the fact that $\gamma$ is $B$-bilinear. So (\ref{equ. measuring from a cleft extension}) is well defined. To see $X_{[+]}X_{[-]}\o{}^{\alpha}\ot_{\BB}X_{[-]}\o{}^{\beta}n\gamma(X_{[-]}\t)\in \BB\ot_{\BB}N$, we can check
    \begin{align*}
        (\Delta\ot \id_{N})&(X_{[+]}X_{[-]}\o{}^{\alpha}\ot_{\BB}X_{[-]}\o{}^{\beta}n\gamma(X_{[-]}\t))\\
        =&X_{[+]}\o X_{[-]}\o{}^{\alpha}\o\ot X_{[+]}\t X_{[-]}\t{}^{\alpha}\t\ot X_{[-]}\o{}^{\beta}n\gamma(X_{[-]}\t)\\
        =&X_{[+]}\o X_{[-]}\o\ot X_{[+]}\t X_{[-]}\t{}^{\alpha}\ot X_{[-]}\t{}^{\beta}n\gamma(X_{[-]}\th)\\
        =&X_{[+][+]}\o X_{[+][-]}\ot X_{[+][+]}\t X_{[-]}\o{}^{\alpha}\ot X_{[-]}\o{}^{\beta}n\gamma(X_{[-]}\t)\\
        =&1\ot X_{[+]}X_{[-]}\o{}^{\alpha}\ot X_{[-]}\o{}^{\beta}n\gamma(X_{[-]}\t),
    \end{align*}
    where the 2nd step uses (\ref{equ. cleaving map 3}). We also have
    \begin{align*}
        (\id_{\cL}\ot \delta)&(X_{[+]}X_{[-]}\o{}^{\alpha}\ot_{\BB}X_{[-]}\o{}^{\beta}n\gamma(X_{[-]}\t))\\
        =&X_{[+]}X_{[-]}\o{}^{\alpha}\ot X_{[-]}\o{}^{\beta}\mo\gamma(X_{[-]}\t)\mo\ot X_{[-]}\o{}^{\beta}\z n\gamma(X_{[-]}\t)\z\\
        =&X_{[+]}X_{[-]}\o{}_{+}{}^{\alpha}\ot X_{[-]}\o{}_{-} X_{[-]}\t \ot X_{[-]}\o{}_{+}{}^{\beta} n\gamma(X_{[-]}\th)\\
        =&X_{[+]}X_{[-]}\o{}^{\alpha}\ot 1\ot X_{[-]}\o{}^{\beta}n\gamma(X_{[-]}\t),
    \end{align*}
    where the 2nd step uses  (\ref{equ. cleaving map 4}). Since $X_{[+]}X_{[-]}\o{}^{\alpha}\ot_{\BB}X_{[-]}\o{}^{\beta}n\gamma(X_{[-]}\t)\in \BB\ot_{\BB}N$, we can write $X\la n=(X_{[+]}X_{[-]}\o{}^{\alpha})X_{[-]}\o{}^{\beta}n\gamma(X_{[-]}\t)$. We check that $\la$ is a measuring, we can see that
\begin{align*}
    (b\Bar{b'}X)\la n=(\Bar{b'}X_{[+]}X_{[-]}\o{}^{\alpha})X_{[-]}\o{}^{\beta}n\gamma(X_{[-]}\t \Bar{b})=\Bar{b'}(X\la n)\Bar{b}
\end{align*}
    for any $b,b'\in B$, $n\in N$ and $X\in \cL$. And
    \begin{align*}
        X\la \Bar{b}=&(X_{[+]}X_{[-]}\o{}^{\alpha})X_{[-]}\o{}^{\beta}\Bar{b}\gamma(X_{[-]}\t)=(X_{[+]}\Bar{b}X_{[-]}\o{}^{\alpha})X_{[-]}\o{}^{\beta}\gamma(X_{[-]}\t)\\
        =&((X\Bar{b})_{[+]}(X\Bar{b})_{[-]}\o{}^{\alpha})(X\Bar{b})_{[-]}\o{}^{\beta}\gamma((X\Bar{b})_{[-]}\t)=(X\Bar{b})_{[+]}(X\Bar{b})_{[-]}\\
        =&\overline{\varepsilon(X\Bar{b})},
    \end{align*}
    where the 2nd step uses (\ref{equ. cleaving map 6}). Moreover, for any $n, m\in N$ we have
    \begin{align*}
    (&X\t\la n)(X\o\la m)\\
=&(X\t{}_{[+]}X\t{}_{[-]}\o{}^{\alpha})X\t{}_{[-]}\o{}^{\beta}n\gamma(X\t{}_{[-]}\t)(X\o{}_{[+]}X\o{}_{[-]}\o{}^{\alpha})X\o{}_{[-]}\o{}^{\beta}m\gamma(X\o{}_{[-]}\t)\\
        =&(X\t{}_{[+]}X\t{}_{[-]}\o{}^{\alpha})X\t{}_{[-]}\o{}^{\beta}n\gamma(X\t{}_{[-]}\t X\o{}_{[+]}X\o{}_{[-]}\o{}^{\alpha})X\o{}_{[-]}\o{}^{\beta}m\gamma(X\o{}_{[-]}\t)\\
        =&(X_{[+]}\t{}_{[+]}X_{[+]}\t{}_{[-]}\o{}^{\alpha})X_{[+]}\t{}_{[-]}\o{}^{\beta}n\gamma(X_{[+]}\t{}_{[-]}\t X{}_{[+]}\o X{}_{[-]}\o{}^{\alpha})X{}_{[-]}\o{}^{\beta}m\gamma(X{}_{[-]}\t)\\
         =&(X_{[+]}\t{}_{[+][+]}X_{[+]}\t{}_{[+][-]}{}^{\alpha})X_{[+]}\t{}_{[+][-]}{}^{\beta}n\gamma(X_{[+]}\t{}_{[-]} X{}_{[+]}\o X{}_{[-]}\o{}^{\alpha})X{}_{[-]}\o{}^{\beta}m\gamma(X{}_{[-]}\t)\\
         =&(X_{[+][+]}X_{[+][-]}{}^{\alpha})X_{[+][-]}{}^{\beta}n\gamma(X{}_{[-]}\o{}^{\alpha})X{}_{[-]}\o{}^{\beta}m\gamma(X{}_{[-]}\t)\\
         =&(X_{[+][+]}X_{[+][-]}{}^{\alpha})X_{[+][-]}{}^{\beta}nm\varepsilon(X_{[-]}\o)\gamma(X{}_{[-]}\t)\\
          =&(X_{[+][+]}X_{[+][-]}{}^{\alpha})X_{[+][-]}{}^{\beta}nm\gamma(X{}_{[-]})\\
          =&(X_{[+]}X_{[-]}\o{}^{\alpha})X_{[-]}\o{}^{\beta}nm\gamma(X_{[-]}\t)\\
          =&X\la(nm),
    \end{align*}
    where the 6th step uses the fact that $\gamma(X^{\alpha})X^{\beta}=\varepsilon(X)\in B\subseteq P$ and $B$ commute with $N$ as subalgebras in $P$. In the following, we will always use the above method, for convenience we call it the method of cancelation.

    Now, we can construct a 2-cocycle on of $\cL^{cop}$ over $N$ by
    \begin{align}\label{equ. 2-cocycle from cleft extension}
        \sigma(X, Y):=X_{[+]}Y_{[+]}(Y_{[-]}\o X_{[-]}\o)^{\alpha}\ot_{\BB}(Y_{[-]}\o X_{[-]}\o)^{\beta}\gamma(Y_{[-]}\t)\gamma(X_{[-]}\t)\in \BB\ot_{\BB}N,
    \end{align}
    by almost the same method we can see $\sigma:\cL^{cop}\ot_{B^{e}}\cL^{cop}\to N$ is well defined and its image belongs to $\BB\ot_{\BB}N\simeq N$. Here we first check $\sigma$ factors through the balanced tensor product $\ot_{B^{e}}$. Indeed, for any $b, b'\in B$ and $X,Y\in \cL$
    \begin{align*}
        \sigma(Xb\Bar{b'}, Y)=&X_{[+]}\Bar{b'}Y_{[+]}(Y_{[-]}\o X_{[-]}\o)^{\alpha}\ot_{\BB}(Y_{[-]}\o X_{[-]}\o)^{\beta}\gamma(Y_{[-]}\t)\gamma(\Bar{b}X_{[-]}\t)\\
        =&X_{[+]}\Bar{b'}Y_{[+]}(Y_{[-]}\o X_{[-]}\o)^{\alpha}\ot_{\BB}(Y_{[-]}\o X_{[-]}\o)^{\beta}\gamma(Y_{[-]}\t)\Bar{b}\gamma(X_{[-]}\t)\\
        =&X_{[+]}\Bar{b'}Y_{[+]}(Y_{[-]}\o X_{[-]}\o)^{\alpha}\ot_{\BB}(Y_{[-]}\o X_{[-]}\o)^{\beta}\gamma(Y_{[-]}\t\Bar{b})\gamma(X_{[-]}\t)\\
        =&\sigma(X, b\Bar{b'}Y)
    \end{align*}
    where the 2nd step use $\gamma$ is left $\BB$-linear and the 3rd step uses $\gamma$ is right $\BB$-linear. By the same method, we can also see
    \begin{align*}
        \sigma(b\Bar{b'}X, Y)=\Bar{b'}\sigma(X, Y)\Bar{b},
    \end{align*}
    and
    \begin{align*}
        \sigma(X, 1)=\sigma(1, X)=\overline{\varepsilon(X)},
    \end{align*}
and
\begin{align*}
    \sigma(X, Yb)=&X_{[+]}Y_{[+]}(Y_{[-]}\o X_{[-]}\o)^{\alpha}\ot_{\BB}(Y_{[-]}\o X_{[-]}\o)^{\beta}\gamma(\Bar{b}Y_{[-]}\t)\gamma(X_{[-]}\t)\\
    =&X_{[+]}Y_{[+]}(Y_{[-]}\o X_{[-]}\o)^{\alpha}\ot_{\BB}(Y_{[-]}\o X_{[-]}\o)^{\beta}\Bar{b}\gamma(Y_{[-]}\t)\gamma(X_{[-]}\t)\\
    =&X_{[+]}Y_{[+]}\Bar{b}(Y_{[-]}\o X_{[-]}\o)^{\alpha}\ot_{\BB}(Y_{[-]}\o X_{[-]}\o)^{\beta}\gamma(Y_{[-]}\t)\gamma(X_{[-]}\t)\\
    =&\sigma(X, Y\Bar{b}),
\end{align*}
    where the 3rd step uses (\ref{equ. cleaving map 6}).  By Lemma \ref{lem. twisted module algebra and crossed product}, in order to show $\sigma$ satisfies the 2-cocycle condition (iv) and (v), it is sufficient to show the twisted crossed product $\cL\#_{\sigma}N$ is associative. We can see
    \begin{align*}
        (&X\#_{\sigma}n)(Y\#_{\sigma}m)\\
        =&X_{+}Y_{+}\#_{\sigma} \sigma(Y_{-}\t, X_{-})(Y_{-}\o\la n)m\\
        =&X_{+}Y_{+}\#_{\sigma} \sigma(Y_{-}\t, X_{-})(Y_{-}\o\la n)m\\
        =&X_{+}Y_{+}\#_{\sigma}(Y_{-}\t{}_{[+]} X_{-[+]}(X_{-[-]}\o Y_{-}\t{}_{[-]}\o)^{\alpha})(X_{-[-]}\o Y_{-}\t{}_{[-]}\o)^{\beta}\gamma(X_{-[-]}\t)\\
        &\gamma(Y_{-}\t{}_{[-]}\t)(Y_{-}\o{}_{[+]} Y_{-}\o{}_{[-]}\o{}^{\alpha})Y_{-}\o{}_{[-]}\o{}^{\beta}n\gamma(Y_{-}\o{}_{[-]}\t)m\\
        =&X_{+}Y_{+}(Y_{-}\t{}_{[+]} X_{-[+]}(X_{-[-]}\o Y_{-}\t{}_{[-]}\o)^{\alpha})\#_{\sigma}(X_{-[-]}\o Y_{-}\t{}_{[-]}\o)^{\beta}\gamma(X_{-[-]}\t)\\
        &\gamma(Y_{-}\t{}_{[-]}\t)(Y_{-}\o{}_{[+]} Y_{-}\o{}_{[-]}\o{}^{\alpha})Y_{-}\o{}_{[-]}\o{}^{\beta}n\gamma(Y_{-}\o{}_{[-]}\t)m\\
         =&X{}_{+}Y{}_{+}(Y{}_{-[+]} X{}_{-[+]}(X_{-[-]}\o Y_{-[-]}\o)^{\alpha})\#_{\sigma}(X_{-[-]}\o Y_{-[-]}\o)^{\beta}\gamma(X_{-[-]}\t)n\gamma(Y_{-[-]}\t)m\\
        =&X\o{}_{+}Y\o{}_{+}(Y\o{}_{-} X\o{}_{-}(X\t Y\t)^{\alpha})\#_{\sigma}(X\t Y\t)^{\beta}\gamma(X\th)n\gamma(Y\th)m\\
        =&(X\o Y\o)^{\alpha}\#_{\sigma} (X\o Y\o)^{\beta}\gamma(X\t)n\gamma(Y\t)m
    \end{align*}
    where the 5th step uses the method of cancelation as above. By Proposition \ref{prop. algebra structure given by cleft extension}, $\cL\#_{\sigma}N$ is equal to $\cL\#^{\gamma}N$ as algebra. Therefore, by Lemma \ref{lem. twisted module algebra and crossed product}, $(\sigma, \la)$ is a 2-cocycle. and $N$ is a $\sigma$-twisted left $\cL^{cop}$-module algebra. Moreover, by Lemma \ref{lem. cleft extension has normal basis property}, $P\simeq \cL\#^{\gamma} N = \cL\#_{\sigma} N$ as comodule algebras.

\end{proof}

\subsection{Equivalence classes of cleft extensions}

\begin{defi}
    Let $\cL$ be a left $B$-bialgebroid and $M$ is a $\BB$-ring. We define $C^{1}(\cL, M)$ to be the set of convolution invertible maps $u:\cL\to M$ such that
    \begin{align}\label{eq1}
 u(b\Bar{b'}X)=\Bar{b'}u(X)\Bar{b},\quad  u(Xb)=u(X\Bar{b}).
\end{align}

\end{defi}

This definition is a generalization of the one given in \cite{HM23}, which is the case for $M=B$.

\begin{prop}
        Let $(\sigma,\la)$ be a 2-cocycle which makes $N$ a $\sigma$-twisted left $\cL^{cop}$-module algebra. If $u\in C^{1}(\cL^{cop}, N)$, then the pair of maps $(\sigma',\la')$ given by
        \begin{align}
            X\la' n=&u(X\th)(X\t\la n)u^{-1}(X\o)\\
            \sigma'(X, Y)=&u(X\fo Y\th)\sigma(X\th, Y\t)(X\t\la u^{-1}(Y\o))u^{-1}(X\o),
        \end{align}
        for any $X, Y\in \cL$ and $n\in N$ is a 2-cocycle and makes $N$ a $\sigma'$-twisted left $\cL^{cop}$-module algebra. We called $(\sigma,\la)$ and $(\sigma',\la')$ are equivalent (via $u\in C^{1}(\cL^{cop}, N)$) if they satisfy the above relation.
\end{prop}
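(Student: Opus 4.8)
The plan is to check, for the pair $(\sigma',\la')$, the axioms of Definition \ref{defi. 2 cocycle and twisted module algebra}, splitting them into a routine part and a substantive part. The routine part is conditions (1)--(3) for the measuring $X\mapsto(n\mapsto X\la' n)$ and (i)--(iii) for $\sigma'$, together with convolution invertibility of $\sigma'$. These are all formal consequences of the corresponding properties of $(\sigma,\la)$: one inserts the definitions of $\la'$ and $\sigma'$ and uses the defining relations \eqref{eq1} of $u\in C^{1}(\cL^{cop},N)$ together with the fact that $u$ takes values in $N=P^{co\cL}$, hence commutes with $B\subseteq P$ and is $\cL$-coinvariant; the convolution inverse of $\sigma'$ is assembled from $u,u^{-1},\sigma^{-1}$ and $\la$ in the evident way and exists by the usual argument, its verification being pure bookkeeping.

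For the substantive conditions (iv) (the $2$-cocycle identity) and (v) (the $\sigma'$-twisted module identity) I would avoid a direct Sweedler grind and argue through the cleft picture, recycling the equivalence already established. By Lemma \ref{lem. twisted crossed product is cleft extension}, $N\subseteq P:=\cL\#_{\sigma}N$ is a cleft extension with cleaving map $\gamma(X)=X\#_{\sigma}1$ and with the inverse of $j$ on $\cL\di 1$ given by \eqref{equ. inverse of cocycle cleaving map}. Replace $\gamma$ by the gauge-transformed cleaving map $\gamma'$ obtained by convolving $\gamma$ with $u$ (forced by colinearity to be $\gamma'(X)=\gamma(X\o)u(X\t)$). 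One checks that $\gamma'$ is again unital, left $\cL$-colinear and $\BB$-bilinear --- colinearity since $u$ is coinvariant, $\BB$-bilinearity from \eqref{eq1} --- and that the associated map $j'$ differs from $j$ by composition with an operator of the shape $X\ot p\mapsto X\o\ot u(X\t)p$; this operator is invertible because $u$ is convolution invertible, where passing between $\cL$- and $\cL^{cop}$-convolution uses the left Hopf algebroid structure $X\mapsto X_{+}\ot X_{-}$ exactly as in the existing proofs. Hence $j'$ is bijective and $N\subseteq P$ is cleft with cleaving map $\gamma'$ as well.

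Now apply the cleft-to-crossed-product construction from the proof of Theorem \ref{thm. equivalence between cleft extension and twist crossed product} to the cleaving map $\gamma'$: through \eqref{equ. measuring from a cleft extension} and \eqref{equ. 2-cocycle from cleft extension} it yields a pair $(\sigma_{\gamma'},\la_{\gamma'})$ which, by that theorem together with Lemma \ref{lem. twisted module algebra and crossed product}, is \emph{automatically} a $2$-cocycle of $\cL^{cop}$ over $N$ making $N$ a $\sigma_{\gamma'}$-twisted left $\cL^{cop}$-module algebra. It therefore only remains to identify $(\sigma_{\gamma'},\la_{\gamma'})$ with $(\sigma',\la')$, i.e.\ to show that feeding $\gamma'=\gamma\ast u$ and the corresponding description of $(j')^{-1}$ on $\cL\di 1$ into \eqref{equ. measuring from a cleft extension} and \eqref{equ. 2-cocycle from cleft extension} reproduces the displayed formulas for $X\la' n$ and $\sigma'(X,Y)$. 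This identification is the only genuinely computational step and is where I expect the difficulty to concentrate: it is a Sweedler-index calculation using \eqref{equ. cleaving map 1}--\eqref{equ. cleaving map 6}, \eqref{equ. inverse of cocycle cleaving map}, the identities of Proposition \ref{prop. anti left and left Galois maps}, and the colinearity and \eqref{eq1} of $u$, with the bulk of the work being the bookkeeping of which tensor leg each Sweedler and each bar carries, and of the co-opposite flips. As a fallback one can instead establish (iv) and (v) directly in the manner of Lemma \ref{lem. twisted module algebra and crossed product}, by transporting associativity of $\cL\#_{\sigma}N$ along an algebra isomorphism $\cL\#_{\sigma'}N\to\cL\#_{\sigma}N$ built from $u$; but the cleft route is preferable as it reuses rather than redoes the crossed-product bookkeeping.
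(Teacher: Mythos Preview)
The paper omits the proof entirely, declaring it a direct computation verifying all the axioms of Definition~\ref{defi. 2 cocycle and twisted module algebra} for $(\sigma',\la')$. Your route is genuinely different: rather than grinding through (iv) and (v), you pull the verification back through the cleft--crossed-product equivalence already established in Lemma~\ref{lem. twisted crossed product is cleft extension} and Theorem~\ref{thm. equivalence between cleft extension and twist crossed product}. This is conceptually nicer because it explains \emph{why} the gauge-transformed data is again a $2$-cocycle (it comes from a cleft structure), and your fallback---transporting associativity of $\cL\#_{\sigma}N$ along an explicit $u$-built bijection and invoking the converse direction of Lemma~\ref{lem. twisted module algebra and crossed product}---is arguably the cleanest of all.

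Two cautions. First, your route imports the hypothesis that $\cL$ is a Hopf algebroid (both left and anti-left), since Lemmas~\ref{lem. twisted module algebra and crossed product}, \ref{lem. twisted crossed product is cleft extension} and Theorem~\ref{thm. equivalence between cleft extension and twist crossed product} all assume it; the proposition as stated does not, and the direct computation goes through at the bare bialgebroid level. Second, the identification step $(\sigma_{\gamma'},\la_{\gamma'})=(\sigma',\la')$ you flag as ``where the difficulty concentrates'' really is comparable in length to the direct verification of (iv) and (v): you must unpack \eqref{equ. measuring from a cleft extension} and \eqref{equ. 2-cocycle from cleft extension} with $\gamma'=\gamma(X\o)u(X\t)$ and the correspondingly shifted $(j')^{-1}$, then massage using \eqref{equ. cleaving map 1}--\eqref{equ. cleaving map 6}, Proposition~\ref{prop. anti left and left Galois maps}, and the $\cL^{cop}$-convolution of $u$. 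Be especially careful with the direction of convolution: invertibility of $u$ is in $\cL^{cop}$, so the operator $X\ot p\mapsto X\o\ot u(X\t)p$ is not inverted by $X\ot p\mapsto X\o\ot u^{-1}(X\t)p$ but rather via the left Hopf structure, as you anticipate. So your approach trades one Sweedler grind for another of similar size, plus an extra hypothesis; what it buys is a structural explanation and reuse of the existing cleft machinery.
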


We will omit the proof as it is a direct computation by checking $(\sigma',\la')$ to satisfy all the axioms of a 2-cocycle. Under the same idea of Remark \ref{rem. classical explanation for cleft extension}, we have

\begin{thm}
     Let $\cL$ be a left $B$-Hopf algebroid, $N$ be a $\BB$-ring and $(\sigma,\la)$,  $(\sigma',\la')$ be a pair of 2-cocycles that make $N$  twisted left $\cL^{cop}$-module algebras respectively, then $(\sigma,\la)$ and $(\sigma',\la')$ are equivalent if and only if $\cL\#_{\sigma}N\simeq \cL\#_{\sigma'}N$ as left $\cL$-comodule algebra.
\end{thm}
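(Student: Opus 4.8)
The plan is to prove both implications by explicitly transporting isomorphisms back and forth, using the correspondence between cleft extensions and crossed products established in Theorem \ref{thm. equivalence between cleft extension and twist crossed product} together with the fact that, for a cleft extension, the cleaving map determines both the measuring and the cocycle via formulas \eqref{equ. measuring from a cleft extension} and \eqref{equ. 2-cocycle from cleft extension}. First I would fix the $\BB$-ring $N$ and observe that any $\cL$-comodule algebra isomorphism $\Phi\colon \cL\#_{\sigma}N\to \cL\#_{\sigma'}N$ restricts to the identity on $N=({}^{co\cL})$ up to an inner adjustment; more precisely, since $\Phi$ is $\cL$-colinear it has the form $\Phi(X\#_\sigma n)=X\o{}_{+}\#_{\sigma'}u(X\o{}_{-})\cdots$ for a suitable convolution-invertible map, and the comodule-colinearity forces the "$\cL$-part" of $\Phi$ to be essentially the identity. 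The honest way to extract $u$ is to set $u(X):=(\varepsilon\otimes_{\BB}\id_N)\circ\Phi(X\#_\sigma 1)$ after composing with the counit of the first tensor factor; one then checks $u\in C^1(\cL^{cop},N)$ using that $\Phi$ is $\BB$-bilinear and unital, which gives exactly the normalization conditions \eqref{eq1}.

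Next I would compute, for the "only if" direction, that if $(\sigma,\la)\sim(\sigma',\la')$ via $u\in C^1(\cL^{cop},N)$ then the map $\Phi_u\colon \cL\#_{\sigma}N\to\cL\#_{\sigma'}N$ defined by $\Phi_u(X\#_\sigma n):=X\o{}_{+}\#_{\sigma'}u(X\o{}_{-})(X\t\la\text{-related term})$ — concretely, following the Hopf-algebra template, $\Phi_u(X\#_\sigma n)=X\o\#_{\sigma'}u(X\t)n$ reinterpreted through $\cL^{cop}$ — is an algebra map. This is a direct (if lengthy) calculation: one expands both $\Phi_u((X\#_\sigma n)(Y\#_\sigma m))$ and $\Phi_u(X\#_\sigma n)\Phi_u(Y\#_\sigma m)$ using \eqref{equ. product of the twisted crossed product} for $\sigma$ and $\sigma'$ respectively, and the equality reduces precisely to the two relations defining $(\sigma',\la')$ in terms of $(\sigma,\la)$ and $u$. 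Colinearity and $\BB$-bilinearity of $\Phi_u$ are immediate from the formula and the properties of $u$, and convolution-invertibility of $u$ gives invertibility of $\Phi_u$ with inverse $\Phi_{u^{-1}}$ (possibly after a convolution adjustment using the antipode-substitute $X_{[+]}\ot X_{[-]}$). Here Lemma \ref{lem. twisted module algebra and crossed product} and Proposition \ref{prop. proposition of 2-cocycle over a ring} are the tools that let one manipulate $\sigma^{-1}$ and the translation maps $X_+\ot X_-$, $X_{[+]}\ot X_{[-]}$ freely.

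For the converse, given an $\cL$-comodule algebra isomorphism $\Phi\colon\cL\#_\sigma N\to\cL\#_{\sigma'}N$, I would define $u$ as above and then verify directly that $u$ witnesses the equivalence of $(\sigma,\la)$ with $(\sigma',\la')$. The key point is that by Theorem \ref{thm. equivalence between cleft extension and twist crossed product} each crossed product is a cleft extension with cleaving map $\gamma(X)=X\#1$ (resp.\ $\gamma'(X)=X\#'1$), so $\Phi\circ\gamma$ is another cleaving map for $N\subseteq\cL\#_{\sigma'}N$; the discrepancy between two cleaving maps of the same cleft extension is exactly convolution-multiplication by an element of $C^1(\cL^{cop},N)$, namely $\Phi(\gamma(X))=\gamma'(X\o)u(X\t)$ in $\cL^{cop}$-indexing. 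Substituting $\gamma\rightsquigarrow\Phi\circ\gamma$ into the formulas \eqref{equ. measuring from a cleft extension} and \eqref{equ. 2-cocycle from cleft extension} that reconstruct $(\la,\sigma)$ from $\gamma$, and using that $\Phi$ is an algebra map so it preserves the reconstructed $(\la,\sigma)$ up to the transport, yields precisely $X\la' n=u(X\th)(X\t\la n)u^{-1}(X\o)$ and the corresponding formula for $\sigma'$.

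The main obstacle I anticipate is bookkeeping rather than conceptual: because we are working with $\cL^{cop}$ and with balanced tensor products over $B$ and $\BB$, every formula for $u$, $\Phi_u$, the measuring and the cocycle must be checked to factor through the correct balanced tensor products and to land in $\BB\otimes_{\BB}N\simeq N$ where required, exactly as in the proofs of Lemma \ref{lem. twisted crossed product is cleft extension} and Theorem \ref{thm. equivalence between cleft extension and twist crossed product}. In particular the antipode-substitute identities of Proposition \ref{prop. anti left and left Galois maps} and Proposition \ref{prop. proposition of 2-cocycle over a ring} will be needed repeatedly to move between the $X_{+}/X_{-}$ and $X_{[+]}/X_{[-]}$ notations when relating $\sigma$, $\sigma^{-1}$ and $u^{-1}$; getting these substitutions in the right order is the delicate part. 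Once the well-definedness is secured, the algebra-map verification is mechanical and essentially parallels the classical Hopf-algebra computation, so I would present it compactly, referring to "the method of cancelation" introduced in the proof of Theorem \ref{thm. equivalence between cleft extension and twist crossed product}.
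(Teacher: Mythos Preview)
Your plan is essentially the paper's: for the forward direction the paper defines $\Phi(X\#_{\sigma}n)=X_{+}\#_{\sigma'}u(X_{-})n$ (this is the precise ``reinterpretation through $\cL^{cop}$'' you are gesturing at---note it uses $X_{+}\ot_{\BB}X_{-}$, not $X_{(1)}\ot X_{(2)}$) and checks colinearity and multiplicativity directly; for the converse the paper identifies the two crossed products via the given isomorphism, so that $\gamma$ and $\gamma'$ become cleaving maps of one and the same cleft extension, and then sets
\[
u(X)=X_{[+]}X_{[-]}{}_{(1)}{}^{\alpha'}\ot_{\BB}X_{[-]}{}_{(1)}{}^{\beta'}\gamma(X_{[-]}{}_{(2)}),\qquad
u^{-1}(X)=X_{[+]}X_{[-]}{}_{(1)}{}^{\alpha}\ot_{\BB}X_{[-]}{}_{(1)}{}^{\beta}\gamma'(X_{[-]}{}_{(2)}),
\]
which is exactly your ``convolution difference of two cleaving maps'' written out explicitly, and the equivalence relations for $\la'$ and $\sigma'$ are then verified by the method of cancellation. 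Your proposed extraction $u(X)=(\varepsilon\ot_{\BB}\id_N)\Phi(X\#_{\sigma}1)$ is morally the same object but is awkward to make well-defined over the balanced tensor (the target of $\varepsilon$ is $B$, not $\BB$); it is cleaner to follow the paper and work inside the identified algebra using the $X_{[+]}\ot X_{[-]}$ and $(\,\cdot\,)^{\alpha}\ot(\,\cdot\,)^{\beta}$ formalisms directly.
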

\begin{proof}
    Assume $(\sigma,\la)$,  $(\sigma',\la')$ are equivelant via $u\in C^{1}(\cL^{cop}, N)$,  we can define an isomorphism $\Phi:\cL\#_{\sigma}N\to \cL\#_{\sigma'}N$ by
    \begin{align}
        \Phi(X\#_{\sigma} n):=X_{+}\#_{\sigma'} u(X_{-})n,
    \end{align}
    for any $X\#_{\sigma} n\in \cL\#_{\sigma}N$. It is not hard to see $\Phi$ is well defined. We can also see
    \begin{align*}
        X_{+}\o\ot X_{+}\t\#_{\sigma'} u(X_{-})n=X\o\ot X\t{}_{+}\#_{\sigma'} u(X\t{}_{-})n,
    \end{align*}
    so $\Phi$ is left $\cL$-colinear. Moreover, $\Phi$ is an algebra map. Indeed, on the one hand
    \begin{align*}
        \Phi((X\#_{\sigma}n)(Y\#_{\sigma}m))=&X_{++}Y_{++}\#_{\sigma'}u(Y_{+-}X_{+-})\sigma(Y_{-}\t, X_{-})(Y_{-}\o\la n)m\\
        =&X_{+}Y_{+}\#_{\sigma'}u(Y_{-}\th X_{-}\t)\sigma(Y_{-}\t, X_{-}\o)(Y_{-}\o\la n)m,
    \end{align*}
    on the other hand
    \begin{align*}
        \Phi(X\#_{\sigma}n)\Phi(Y\#_{\sigma}m)=&X_{++}Y_{++}\#_{\sigma'}\sigma'(Y_{+-}\t, X_{+-})(Y_{+-}\t\la'(u(X_{-})n))u(Y_{-})m\\
        =&X_{+}Y_{+}\#_{\sigma'}\sigma'(Y_{-}\th, X_{-}\t)(Y_{-}\t\la'(u(X_{-}\o)n))u(Y_{-}\o)m\\
        =&X_{+}Y_{+}\#_{\sigma'}u(Y_{-}\th X_{-}\t)\sigma(Y_{-}\t, X_{-}\o)(Y_{-}\o\la n)m.
    \end{align*}

    Conversely, let $\cL\#_{\sigma}N\simeq \cL\#_{\sigma'}N$ and $\gamma, \gamma'$ be the corresponding cleaving map of $(\sigma,\la)$ and $(\sigma',\la')$ respectively by Lemma \ref{lem. twisted crossed product is cleft extension}. We can define $u\in C^{1}(\cL^{cop}, N)$ by
    \begin{align}
        u(X):=X_{[+]}X_{[-]}\o{}^{\alpha'}\ot X_{[-]}\o{}^{\beta'} \gamma(X_{[-]}\t)\in \BB\ot_{\BB}N\simeq N
    \end{align}
    where $X^{\alpha'}\ot X^{\beta'}$ is the image of the corresponding map of $\gamma'$. By almost the same reason as (\ref{equ. measuring from a cleft extension}), we can see $u$ is well defined and satisfies $u(b\Bar{b'}X)=\Bar{b'}u(X)\Bar{b}$. Also,
    \begin{align*}
        u(Xb)=&X_{[+]}X_{[-]}\o{}^{\alpha'}\ot X_{[-]}\o{}^{\beta'}\gamma(\Bar{b}X_{[-]}\t)
        =X_{[+]}X_{[-]}\o{}^{\alpha'}\ot X_{[-]}\o{}^{\beta'}\Bar{b}\gamma(X_{[-]}\t)\\
        =&X_{[+]}\Bar{b}X_{[-]}\o{}^{\alpha'}\ot X_{[-]}\o{}^{\beta'}\gamma(X_{[-]}\t)
        =u(X\Bar{b}),
    \end{align*}
    where the 3rd step uses (\ref{equ. cleaving map 6}). Moreover, it is convolution invertible with
    \begin{align}
        u^{-1}(X)=X_{[+]}X_{[-]}\o{}^{\alpha}\ot X_{[-]}\o{}^{\beta} \gamma'(X_{[-]}\t).
    \end{align}
    Indeed,
    \begin{align*}
        u^{-1}&(X\t)u(X\o)\\
        =&(X\t{}_{[+]}X\t{}_{[-]}\o{}^{\alpha}) X\t{}_{[-]}\o{}^{\beta} \gamma'(X\t{}_{[-]}\t)(X\o{}_{[+]}X\o{}_{[-]}\o{}^{\alpha'}) X\o{}_{[-]}\o{}^{\beta'} \gamma(X\o{}_{[-]}\t)\\
        =&(X{}_{[+]}X{}_{[-]}\o{}^{\alpha}) X{}_{[-]}\o{}^{\beta}  \gamma(X{}_{[-]}\t)=X_{[+]}X_{[-]}\\
        =&\overline{\varepsilon(X)},
    \end{align*}
    where we use the method of cancelation (as in Theorem \ref{thm. equivalence between cleft extension and twist crossed product})
    in the 2nd step. Recall that
    \[X\la n=X_{[+]}X_{[-]}\o{}^{\alpha}\ot_{\BB}X_{[-]}\o{}^{\beta}n\gamma(X_{[-]}\t).\]
    So on the one hand,
\begin{align*}
    (X\t&\la' n)u(X\o)\\
    =&(X\t{}_{[+]}X\t{}_{[-]}\o{}^{\alpha'}) X\t{}_{[-]}\o{}^{\beta'} n\gamma'(X\t{}_{[-]}\t)(X\o{}_{[+]}X\o{}_{[-]}\o{}^{\alpha'}) X\o{}_{[-]}\o{}^{\beta'} \gamma(X\o{}_{[-]}\t)\\
    =&(X{}_{[+]}X{}_{[-]}\o{}^{\alpha'}) X{}_{[-]}\o{}^{\beta'}  n\gamma(X{}_{[-]}\t)
\end{align*}
By a similar computation
\begin{align*}
    u(X\t) X\o\la n=(X{}_{[+]}X{}_{[-]}\o{}^{\alpha'}) X{}_{[-]}\o{}^{\beta'}  n\gamma(X{}_{[-]}\t).
\end{align*}
So $(X\t\la' n)u(X\o)=u(X\t) X\o\la n$. Recall that
\[\sigma(X, Y)=X_{[+]}Y_{[+]}(Y_{[-]}\o X_{[-]}\o)^{\alpha}\ot_{\BB}(Y_{[-]}\o X_{[-]}\o)^{\beta}\gamma(Y_{[-]}\t)\gamma(X_{[-]}\t).\]
So on the one hand,
\begin{align*}
    u(X\t& Y\t)\sigma(X\o, Y\o)\\
    &=(X\t{}_{[+]}Y\t{}_{[+]}(Y\t{}_{[-]}\o X\t{}_{[-]}\o)^{\alpha'})(Y\t{}_{[-]}\o X\t{}_{[-]}\o)^{\beta'}\gamma(Y\t{}_{[-]}\t X\t{}_{[-]}\t)\\
    &(X\o{}_{[+]}Y\o{}_{[+]}(Y\o{}_{[-]}\o X\o{}_{[-]}\o)^{\alpha})(Y\o{}_{[-]}\o X\o{}_{[-]}\o)^{\beta}\gamma(Y\o{}_{[-]}\t)\gamma(X\o{}_{[-]}\t)\\
    &=X_{[+]}Y_{[+]}(Y_{[-]}\o X_{[-]}\o)^{\alpha'}\ot_{\BB}(Y_{[-]}\o X_{[-]}\o)^{\beta'}\gamma(Y_{[-]}\t)\gamma(X_{[-]}\t),
\end{align*}
    where the 2nd step uses the method of cancelation. On the other hand,
    \begin{align*}
        \sigma'(&X\th, Y\t)u(X\t)(X\o\la u(Y\o))=\sigma'(X\t\t, Y\t)u(X\t\o)(X\o\la u(Y\o))\\
        &=(X\t\t{}_{[+]}Y\t{}_{[+]}(Y\t{}_{[-]}\o X\t\t{}_{[-]}\o)^{\alpha'})(Y\t{}_{[-]}\o X\t\t{}_{[-]}\o)^{\beta'}\gamma'(Y\t{}_{[-]}\t)\gamma'(X\t\t{}_{[-]}\t)\\
        &(X\t\o{}_{[+]}X\t\o{}_{[-]}\o{}^{\alpha'}) X\t\o{}_{[-]}\o{}^{\beta'} \gamma(X\t\o{}_{[-]}\t)\\
        &(X\o{}_{[+]}X\o{}_{[-]}\o{}^{\alpha})X\o{}_{[-]}\o{}^{\beta}((Y\o{}_{[+]}Y\o{}_{[-]}\o{}^{\alpha'}) Y\o{}_{[-]}\o{}^{\beta'} \gamma(Y\o{}_{[-]}\t))\gamma(X\o{}_{[-]}\t)\\
        &=(X\t{}_{[+]}Y\t{}_{[+]}(Y\t{}_{[-]}\o X\t{}_{[-]}\o)^{\alpha'})(Y\t{}_{[-]}\o X\t{}_{[-]}\o)^{\beta'}\gamma'(Y\t{}_{[-]}\t)\gamma(X\t{}_{[-]}\t)\\
        &(X\o{}_{[+]}X\o{}_{[-]}\o{}^{\alpha})X\o{}_{[-]}\o{}^{\beta}((Y\o{}_{[+]}Y\o{}_{[-]}\o{}^{\alpha'}) Y\o{}_{[-]}\o{}^{\beta'} \gamma(Y\o{}_{[-]}\t))\gamma(X\o{}_{[-]}\t)\\
        &=(X_{[+]}Y\t{}_{[+]}(Y\t{}_{[-]}\o X_{[-]}\o)^{\alpha'})(Y\t{}_{[-]}\o X_{[-]}\o)^{\beta'}\gamma'(Y\t{}_{[-]}\t)\\
        &((Y\o{}_{[+]}Y\o{}_{[-]}\o{}^{\alpha'}) Y\o{}_{[-]}\o{}^{\beta'} \gamma(Y\o{}_{[-]}\t))\gamma(X_{[-]}\t)\\
        &=(X_{[+]}Y_{[+]}(Y_{[-]}\o X_{[-]}\o)^{\alpha'})(Y_{[-]}\o X_{[-]}\o)^{\beta'} \gamma(Y_{[-]}\t)\gamma(X_{[-]}\t),
    \end{align*}
    where the 2nd, 3rd and 4th steps use the method of cancelation. As a result, $\sigma'(X, Y)=u(X\fo Y\th)\sigma(X\th, Y\t)(X\t\la u^{-1}(Y\o))u^{-1}(X\o)$ and $(\sigma,\la)$ is equivalent to $(\sigma',\la')$.
\end{proof}

\end{document}